\definecolor{mycolor}{HTML}{F7F8E0}
\definecolor{myorange}{RGB}{245,156,74}
\definecolor{cadetgrey}{rgb}{0.57, 0.64, 0.69}
\definecolor{calpolypomonagreen}{rgb}{0.12, 0.3, 0.17}
\newcommand\cyr{%
\renewcommand\rmdefault{wncyr}%
\renewcommand\sfdefault{wncyss}%
\renewcommand\encodingdefault{OT2}%
\normalfont
\selectfont}
\DeclareTextFontCommand{\textcyr}{\cyr}
\numberwithin{equation}{section}
\newtheorem{thm}{Theorem}[section]
\newtheorem{cor}[thm]{Corollary}
\newtheorem{lem}[thm]{Lemma}
\newtheorem{prop}[thm]{Proposition}
\newtheorem{conj}[thm]{Conjecture}
\theoremstyle{definition}
\newtheorem{defn}[thm]{Definition}
\newtheorem{rem}[thm]{Remark}
\newtheorem{ques}[thm]{Question}
\newcommand{\KS}{\mathbf{KS}}
\newcommand{\KSbar}{\overline{\mathbf{KS}}}
\newcommand{\ES}{\mathbf{ES}}
\newcommand{\ks}{\boldsymbol{\kappa}}
\newcommand{\kn}{\widetilde{\boldsymbol{\delta}}}
\newcommand{\widedelta}{\widetilde{\delta}}
\newcommand{\sha}{\textrm{{\cyr SH}}}
\begin{document}
\title[Selmer groups and the main conjecture]{The structure of Selmer groups and the Iwasawa main conjecture for elliptic curves}
\author[C.-H. Kim]{Chan-Ho Kim}
\address{
Department of Mathematics and Institute of Pure and Applied Mathematics,
Jeonbuk National University,
567 Baekje-daero, Deokjin-gu, Jeonju, Jeollabuk-do 54896, Republic of Korea
}
\email{chanho.math@gmail.com}
\thanks{This research was partially supported 
by a KIAS Individual Grant (SP054103) via the Center for Mathematical Challenges at Korea Institute for Advanced Study,
by the National Research Foundation of Korea(NRF) grant funded by the Korea government(MSIT) (No. 2018R1C1B6007009, 2019R1A6A1A11051177, RS-2024-00339824), 
by research funds for newly appointed professors of Jeonbuk National University in 2024, and
by Global-Learning \& Academic research institution for Master’s$\cdot$Ph.D. Students, and Postdocs (LAMP) Program of the National Research Foundation of Korea (NRF) funded by the Ministry of Education (No. RS-2024-00443714).
}
\date{\today}
\subjclass[2010]{11F67, 11G40, 11R23}
\keywords{Birch and Swinnerton-Dyer conjecture, elliptic curves, Iwasawa theory, refined Iwasawa theory, Kato's Euler systems, Kolyvagin systems, Kurihara numbers, modular symbols}
\begin{abstract}
We reveal a new and refined application of (a \emph{weaker} statement than) the Iwasawa main conjecture for elliptic curves to the \emph{structure} of Selmer groups of elliptic curves of \emph{arbitrary} rank. 
For a large class of elliptic curves, we obtain the following arithmetic consequences:
\begin{itemize}
\item Kato's Kolyvagin system is non-trivial. It is the cyclotomic analogue of Kolyvagin's conjecture.
\item The structure of Selmer groups of elliptic curves over the rationals is completely determined in terms of certain modular symbols.
It is a structural refinement of Birch and Swinnerton-Dyer conjecture.
\item The rank zero $p$-converse, the $p$-parity conjecture, and a new upper bound of the ranks of elliptic curves are obtained. 
\item The conjecture of Kurihara on the semi-local description of mod $p$ Selmer groups is confirmed.
\item An application of the $p$-adic Birch and Swinnerton-Dyer conjecture to the structure of Iwasawa modules is discussed.
\end{itemize}
\end{abstract}
\maketitle

\setcounter{tocdepth}{1}
%\tableofcontents

\section{Introduction}
\subsection{Overview}
\subsubsection{}
In modern number theory, one of the most important themes is to understand the arithmetic meaning of special values of $L$-functions by developing the connection with the \emph{size} of arithmetically interesting groups.
We go beyond this philosophy by giving a description of the \emph{structure} of Selmer groups of elliptic curves in terms of a certain discrete variation of their special $L$-values.

Let $p$ be a prime and $E$ an elliptic curve over $\mathbb{Q}$.
Without a doubt, the Selmer group $\mathrm{Sel}(\mathbb{Q}, E[p^\infty])$ of the $p$-power torsion points of $E$ plays a central role in studying the arithmetic of elliptic curves.
The Selmer group encodes the information of the Mordell--Weil group $E(\mathbb{Q})$ and the Tate--Shafarevich group $\sha(E/\mathbb{Q})$ via the fundamental exact sequence
\[
\xymatrix{
0 \ar[r] & E(\mathbb{Q}) \otimes \mathbb{Q}_p/\mathbb{Z}_p \ar[r] &
\mathrm{Sel}(\mathbb{Q}, E[p^\infty]) \ar[r] & \sha(E/\mathbb{Q})[p^\infty] \ar[r] & 0 .
}
\]
The celebrated Birch and Swinnerton-Dyer (BSD) conjecture predicts that the rank of an elliptic curve $E$ equals the vanishing order of the complex $L$-function of $E$ at $s=1$ and the leading term of the $L$-function knows the size of $\sha(E/\mathbb{Q})$, which is conjecturally finite, and other arithmetic invariants.

The $p$-adic BSD conjecture \`{a} la Mazur--Tate--Teitelbaum \cite{mtt} also predicts that 
the rank of an elliptic curve equals the vanishing order of its $p$-adic $L$-function at the trivial character and the sizes of similar arithmetic invariants are also encoded in the leading term of the $p$-adic $L$-function.

Here, the key common observation is that a certain \emph{complex or $p$-adic variation of $L$-values} detects the rank of an elliptic curve and the leading term of the Taylor expansion of the variation is related to the size of the Tate--Shafarevich group and other arithmetic invariants.

In this article, we focus on the \emph{discrete variation of $L$-values} naturally arising from Kato's Kolyvagin systems and establish the corresponding refined BSD type conjecture, which determines the structure of Selmer groups. 
We call the discrete variation the \emph{collection of Kurihara numbers}, which are explicitly built out from modular symbols (as defined in $\S$\ref{subsec:kurihara-numbers}) and are also realized as the image of Kato's Kolyvagin system under a refinement of the dual exponential map. 
The collection of Kurihara numbers exactly plays the role of $L$-functions in the context of \emph{refined} Iwasawa theory for elliptic curves \`{a} la Kurihara \cite{kurihara-munster, kurihara-iwasawa-2012}. 
\subsubsection{}
The most interesting features of this article are the cyclotomic analogue of Kolyvagin's conjecture and the structural refinement of BSD conjecture. These are special cases of Corollaries \ref{cor:main-ks} and \ref{cor:main-kn} with help of Theorem \ref{thm:refined-bsd}.
\begin{thm}[Corollaries \ref{cor:main-ks} and \ref{cor:main-kn}] \label{thm:main-intro}
Let $E$ be an elliptic curve over $\mathbb{Q}$ and $p \geq 5$ a semi-stable reduction prime for $E$ such that the mod $p$ representation $\overline{\rho} : \mathrm{Gal}(\overline{\mathbb{Q}}/\mathbb{Q}) \to \mathrm{Aut}_{\mathbb{F}_p}(E[p])$ is surjective.
If the Iwasawa main conjecture (Conjecture \ref{conj:IMC}) inverting $p$ holds or
$\mathrm{ord}_{s=1}L(E,s) \leq 1$,
then
\begin{enumerate}
\item Kato's Kolyvagin system is non-trivial, and
\item the structure of Selmer group
$\mathrm{Sel}(\mathbb{Q}, E[p^\infty])$
as a co-finitely generated $\mathbb{Z}_p$-module  is completely determined by the collection of Kurihara numbers as described in Theorem \ref{thm:refined-bsd}.
\end{enumerate}
If we assume the finiteness of $\sha(E/\mathbb{Q})[p^\infty]$ as well, then the explicit formulas for the rank of $E(\mathbb{Q})$ and the exact size of $\sha(E/\mathbb{Q})[p^\infty]$ are also provided in Theorem \ref{thm:refined-bsd}.
\end{thm}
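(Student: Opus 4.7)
The strategy is to reduce both conclusions (1) and (2) to a single structural input: the \emph{primitivity} of Kato's Kolyvagin system $\ks^{\mathrm{Kato}}$, i.e.\ the statement that its image in the mod-$p$ Kolyvagin system module $\KSbar$ is non-zero modulo the maximal ideal. Once primitivity is in hand, Theorem~\ref{thm:refined-bsd} translates it directly into the complete $\mathbb{Z}_p$-module description of $\mathrm{Sel}(\mathbb{Q}, E[p^\infty])$ in terms of Kurihara numbers (themselves identified as the image of $\ks^{\mathrm{Kato}}$ under the refined dual exponential). In particular, (1) is subsumed by (2); and under the additional finiteness hypothesis on $\sha(E/\mathbb{Q})[p^\infty]$, the explicit rank and Tate--Shafarevich formulas drop out of the same structure theorem by reading off the elementary divisors encoded in the $\widedelta_n$'s.

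Under the first hypothesis — the Iwasawa main conjecture up to $\mu$-invariants, equivalently the equality of vanishing orders at the trivial character stated in Theorem~\ref{thm:main-technical} — the plan is to leverage Kato's explicit reciprocity law, which identifies the image of the Iwasawa cohomology class $\es^{\mathrm{Kato}}$ under the dual exponential with the $p$-adic $L$-function (up to a unit). Combined with Kato's divisibility $\mathrm{char}_\Lambda(X_\infty) \supseteq (L_p(E,T))$ and the hypothesized matching of vanishing orders, this pins down the precise order of vanishing of $\es^{\mathrm{Kato}}$ at the trivial character. A Perrin-Riou / Mazur--Rubin-style descent from Iwasawa cohomology to the finite-level Kolyvagin system, applied to the leading term of the cyclotomic Taylor expansion of $\es^{\mathrm{Kato}}$ at Kolyvagin primes, then converts this into the non-vanishing modulo $p$ of at least one Kurihara number $\widedelta_n$, which is equivalent to primitivity of $\ks^{\mathrm{Kato}}$. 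Under the alternative analytic rank $\leq 1$ hypothesis, the theorems of Kato (rank $0$) and Gross--Zagier--Kolyvagin (rank $1$) already yield finiteness of $\sha(E/\mathbb{Q})[p^\infty]$ together with tight bounds on $E(\mathbb{Q}) \otimes \mathbb{Z}_p$; the existing rank-zero and rank-one $p$-converse machinery (Skinner--Urban--Wan and its refinements) then upgrades this to the IMC vanishing-order equality, reducing this case to the first.

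The main obstacle is the final step of the first argument: passing from the Iwasawa-theoretic statement "$\es^{\mathrm{Kato}}$ has exactly the expected order of vanishing at the trivial character" to the combinatorial statement "there exists a Kolyvagin-admissible squarefree $n$ for which $\widedelta_n$ is a unit in $\mathbb{F}_p$." This requires a careful compatibility argument between three things: the comparison of Iwasawa-cohomological cyclotomic derivatives with Kolyvagin derivative classes at admissible primes $\ell$; Kurihara's refined local decomposition of $H^1_f(\mathbb{Q}_\ell, E[p])$; and the interaction of the refined dual exponential at each finite level with cyclotomic projection at the Iwasawa level. This compatibility — not the characteristic-ideal statement itself — is where the refined Iwasawa-theoretic content developed in the body of the paper is essential, and is where essentially all the technical work will be concentrated.
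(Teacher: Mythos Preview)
Your proposal has a genuine conceptual gap: you are aiming for \emph{primitivity} of $\ks^{\mathrm{Kato}}$ (non-vanishing of $\ks^{\mathrm{Kato},(1)}$ mod $p$, equivalently $\widedelta^{(1)}_n \neq 0$ for some $n$), but the hypothesis ``IMC inverting $p$'' is strictly weaker than what is needed for that. In the paper's framework, primitivity of $\ks^{\mathrm{Kato}}$ is tied (via Corollary~\ref{cor:primitivity-iff-Lambda-primitivity} and Theorem~\ref{thm:main-technical}) to the \emph{full} Iwasawa main conjecture, including the prime $p\Lambda$; the ``inverting $p$'' hypothesis says nothing about $\mu$-invariants and hence cannot force mod-$p$ non-vanishing. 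What is actually needed---and what Theorem~\ref{thm:refined-bsd} requires---is only \emph{non-triviality} of $\ks^{\mathrm{Kato}}$, equivalently $\mathrm{ord}(\kn)<\infty$ (some $\widedelta_n\neq 0$ in $\mathbb{Z}_p/I_n\mathbb{Z}_p$, not in $\mathbb{F}_p$). The paper obtains this via Theorem~\ref{thm:main-technical}: IMC localized at the single height-one prime $X\Lambda$ is \emph{equivalent} to $X\Lambda$ not being a blind spot of $\ks^{\mathrm{Kato},\infty}$, which in turn is equivalent to $\ks^{\mathrm{Kato}}\neq 0$. Your proposed descent ``from Iwasawa cohomology to mod-$p$ non-vanishing of a Kurihara number'' is therefore targeting the wrong statement and would fail whenever $\partial^{(\infty)}(\kn)>0$ (e.g.\ when $p$ divides a Tamagawa factor).

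Your treatment of the analytic rank $\leq 1$ case also diverges from the paper and introduces an unnecessary detour. The paper does \emph{not} reduce this case to the IMC hypothesis; it argues directly that $\kappa^{\mathrm{Kato}}_1=z^{\mathrm{Kato}}_{\mathbb{Q}}\neq 0$ (via Kato's explicit reciprocity law when $L(E,1)\neq 0$, and via the recently proved Perrin-Riou conjecture when $\mathrm{ord}_{s=1}L(E,s)=1$). This immediately gives non-triviality of $\ks^{\mathrm{Kato}}$ without invoking any main conjecture---indeed the paper emphasizes that the rank $\leq 1$ case is handled \emph{without} IMC. Your route through ``$p$-converse machinery upgrades this to the IMC vanishing-order equality'' is circular in spirit, since those $p$-converse results themselves rest on main-conjecture input, and in any case is unnecessary. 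Finally, note that the bridge from non-triviality of $\ks^{\mathrm{Kato}}$ to non-vanishing of $\kn$ is not automatic: it is Proposition~\ref{prop:ks-kn-equi-non-vanishing}, which uses the core-rank-zero fact $\KS(T,\mathcal{F}_{\mathrm{cl}},\mathcal{P})=0$ to rule out $\ks^{\mathrm{Kato}}$ surviving with all $\widedelta_n$ vanishing.
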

Here, the semi-stable reduction means good \emph{or} multiplicative reduction.
Conclusion (2) of Theorem \ref{thm:main-intro} says that all the non-trivial (moreover, linearly independent) elements of the Selmer group can be detected in terms of modular symbols. It provides us with more information on Selmer groups than BSD conjecture. For example, See Example (\ref{exam:625}) in $\S$\ref{sec:examples}. 
It is believed that the Iwasawa main conjecture inverting $p$ does not help to know the exact sizes of Selmer and Tate--Shafarevich groups due to the $p$-power subtlety. However, our result shows that it is not true at all.
Also,  when $\mathrm{ord}_{s=1}L(E,s) \leq 1$, the Iwasawa main conjecture is \emph{not} even used to obtain the consequences of Theorem \ref{thm:main-intro}. Note that all the known results on the $p$-part of the BSD formula depend heavily on the \emph{full} Iwasawa main conjecture.

\subsubsection{}
We recall the key principles of the standard applications of the Euler system argument and Iwasawa theory for elliptic curves:
\begin{enumerate}
\item If the $L$-value does not vanish, then the Selmer group is finite and bounded by the valuation of the $L$-value \cite{rubin-book, kato-euler-iwasawa-selmer, perrin-riou-euler-systems}.
\item If the Iwasawa main conjecture holds, then the $p$-part of the Birch and Swinnerton-Dyer formula for elliptic curves of analytic rank $\leq 1$ holds (as listed in $\S$\ref{subsubsec:bsd}).
\end{enumerate}
In order to prove Theorem \ref{thm:main-intro}, we improve the key principles as follows:
\begin{enumerate}
\item[(1')] If the collection of Kurihara numbers does not vanish identically, then the structure of the Selmer group is completely determined by the collection of Kurihara numbers (Theorem \ref{thm:refined-bsd}).
\item[(2')] If the Iwasawa main conjecture localized at the augmentation ideal of the Iwasawa algebra holds, then the collection of Kurihara numbers does not vanish identically (Theorem \ref{thm:main-technical} and Proposition \ref{prop:ks-kn-equi-non-vanishing}).
\end{enumerate}
The following flowchart explains how Theorem \ref{thm:main-intro} follows from these improvements with the comparison with well-known applications of the full Iwasawa main conjecture
\[
\xymatrix@R=1.3em{
\textrm{Iwasawa main conjecture} \ar@{=>}[rr]^-{ {\substack{\textrm{standard} \\ \textrm{applications} } }}_-{  \textrm{listed in $\S$\ref{subsubsec:bsd} }  }    \ar@{=>}[d] & &
 {\substack{\textrm{The $p$-part of the BSD formula} \\ \textrm{when the analytic rank $\leq 1$} } } \\
 {\substack{\textrm{Iwasawa main conjecture} \\\textrm{inverting $p$} } } 
  \ar@{=>}[d]   & &  {\substack{\textrm{elliptic curves} \\\textrm{with good ordinary reduction at $p$} } }   \ar@{=>}[ll]^-{ {\substack{ \textrm{ \cite{kato-euler-systems, skinner-urban, wan_hilbert} } \\ \textrm{(Cor. \ref{cor:main-ks})}}}}  \\
 {\substack{\textrm{Iwasawa main conjecture} \\\textrm{``localized at $X\Lambda$"} } } 
  \ar@{<=>}[d]_-{\textrm{Thm. \ref{thm:main-technical}}}    & &  {\substack{\textrm{elliptic curves} \\\textrm{of analytic rank $\leq 1$} } }   \ar@{=>}[dll]_-{ {\substack{ \textrm{ \cite{bertolini-darmon-venerucci, kazim-pollack-sasaki, burungale-skinner-tian-wan} } \\ \textrm{(Cor. \ref{cor:main-ks})}}}}  \\
 {\substack{\textrm{The non-triviality of} \\\textrm{Kato's Kolyvagin systems} } }  \ar@{<=>}[d]_-{\textrm{Prop. \ref{prop:ks-kn-equi-non-vanishing}}}
\ar@{=>}[rr]_-{ {\substack{\textrm{ \cite{mazur-rubin-book} } \\ \textrm{(Thm. \ref{thm:kato-kolyvagin-main})} } }}
& &   {\substack{\textrm{The structure of $p$-strict Selmer groups} \\ \textrm{with no (analytic) rank restriction} } } \\
 {\substack{\textrm{The non-vanishing of} \\\textrm{the collection of Kurihara numbers} } } 
\ar@{=>}[rr]^-{\textrm{Thm. \ref{thm:refined-bsd}}} & &   {\substack{\textrm{The structure of Selmer groups} \\ \textrm{with no (analytic) rank restriction.} } }
}
\]
The improvements provides us with a \emph{better} consequence on Selmer groups from a \emph{weaker} input.
The main idea of (1') is the extension of Mazur--Rubin's structure theorem of $p$-strict Selmer groups (in the above diagram) to the case of classical Selmer groups. Although this idea sounds simple, the proof requires various technical input (studied in $\S$\ref{sec:kolyvagin-systems} and $\S$\ref{sec:kurihara-numbers}) and contains substantial computations (given in $\S$\ref{sec:proof-refined-bsd}).
Also, (2') can be viewed as a bridge from Iwasawa theory to refined Iwasawa theory for elliptic curves, and it is proved in $\S$\ref{sec:Lambda-adic-KS}.

We expect that our strategy generalizes to various settings.
We refer to \cite{kim-gross-zagier} for the cases of Heegner point Kolyvagin systems and bipartite Euler systems.
In particular, Kolyvagin's conjecture becomes a trivial consequence of the Heegner point main conjecture.
\subsubsection{}
In the mod $p$ situation, we confirm the conjecture of Kurihara \cite{kurihara-iwasawa-2012, kurihara-analytic-quantities} on the semi-local description of mod $p$ Selmer groups $\mathrm{Sel}(\mathbb{Q}, E[p])$ (Theorem \ref{thm:kurihara-conjecture}).
More precisely, when $p$ does not divide any Tamagawa factor, we obtain the following implications 
\[
\xymatrix@R=1.3em{
\textrm{Iwasawa main conjecture} \ar@{<=>}[r]  \ar@{<=>}[d] & {\substack{\textrm{The non-vanishing of} \\ \textrm{the collection of mod $p$ Kurihara numbers} } } \ar@{<=>}[ld] \ar@{=>}[d]^-{\textrm{the conjecture of Kurihara}}    \\
  {\substack{\textrm{The non-triviality of} \\\textrm{mod $p$ Kato's Kolyvagin systems} } }  & {\substack{\textrm{The semi-local decription of} \\ \textrm{mod $p$ Selmer groups $\mathrm{Sel}(\mathbb{Q}, E[p])$} \\ \textrm{via one non-zero mod $p$ Kurihara number. } } } 
}
\]
This diagram generalizes \cite{kks, kim-nakamura, sakamoto-p-selmer} in various ways.
\subsubsection{}
Regarding the applications to Birch and Swinnerton-Dyer conjecture, we obtain the following ``rank zero converse" result \emph{without using} any control theorem or the interpolation formula of $p$-adic $L$-functions (cf. \cite[Thm. 2]{skinner-urban}, \cite[Thm. 7]{wan_hilbert}).
\begin{thm}[Corollary \ref{cor:rank-zero-converse}] \label{thm:main-intro-p-converse}
Let $E$ be an elliptic curve over $\mathbb{Q}$. Then the following statements are equivalent.
\begin{enumerate}
\item Both $E(\mathbb{Q})$ and $\sha(E/\mathbb{Q})$ are finite. 
\item $L(E,1) \neq 0$.
\end{enumerate}
\end{thm}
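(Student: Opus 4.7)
The plan is to address the two implications separately; $(2)\Rightarrow(1)$ is classical, and $(1)\Rightarrow(2)$ is where the paper's new machinery enters.

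For $(2)\Rightarrow(1)$, I would invoke Kato's Euler system bound (already quoted in the introduction): non-vanishing of $L(E,1)$ forces Kato's zeta element to be non-torsion, so the standard Euler-system divisibility yields that $\mathrm{Sel}(\mathbb{Q},E[p^\infty])$ is finite at every good prime $p$. The fundamental exact sequence then shows $E(\mathbb{Q})$ has rank zero and $\sha(E/\mathbb{Q})[p^\infty]$ is finite for all $p$; Kolyvagin's original descent (or a Kato-style argument varying $p$) pins down $\sha(E/\mathbb{Q})$ itself as finite.

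For $(1)\Rightarrow(2)$, the strategy is to choose a single auxiliary prime $p$ at which the paper's machinery is unconditionally available, and to read off $L(E,1)\neq 0$ from the finiteness of $\mathrm{Sel}(\mathbb{Q},E[p^\infty])$. Concretely, first I would pick $p\geq 5$ such that $E$ has good ordinary reduction at $p$ and $\overline{\rho}$ is surjective; infinitely many such $p$ exist by Serre's open image theorem together with the density of ordinary primes. For any such $p$ the Iwasawa main conjecture inverting $p$ is a theorem of Kato, Skinner--Urban, and Wan, so Theorem \ref{thm:main-intro} is unconditionally applicable and Theorem \ref{thm:refined-bsd} describes $\mathrm{Sel}(\mathbb{Q},E[p^\infty])$ in terms of the collection of Kurihara numbers. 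Next, hypothesis (1) together with the fundamental exact sequence forces $\mathrm{Sel}(\mathbb{Q},E[p^\infty])$ to be finite, and the structure theorem then forces the ``base'' Kurihara number $\widetilde{\delta}_1$ to be a $p$-adic unit. Since $\widetilde{\delta}_1$ unwinds to $L(E,1)/\Omega_E^+$ up to a $p$-adic unit and the Euler factor at $p$, one concludes $L(E,1)\neq 0$.

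The main obstacle I anticipate is making the implication ``finiteness of $\mathrm{Sel}(\mathbb{Q},E[p^\infty])$ forces $\widetilde{\delta}_1$ to be a $p$-adic unit'' precise. In the Mazur--Rubin framework this is the statement that the leading elementary divisor controls the $\mathbb{Z}_p$-corank of the Selmer group, and its extension from $p$-strict to classical Selmer groups is exactly the content of Theorem \ref{thm:refined-bsd}; the cleanest way to deploy it is by contradiction, assuming $\widetilde{\delta}_1\equiv 0\pmod{p}$ and using the non-triviality of Kato's Kolyvagin system (Theorem \ref{thm:main-intro}(1)) to produce a non-trivial Selmer class, contradicting finiteness. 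Once that step is accepted from the paper's main theorem, the rest is routine bookkeeping between the definition of Kurihara numbers as modular symbols and the Birch--Stevens-type identification of the cusp symbol $[0]_E^+$ with $L(E,1)/\Omega_E^+$.
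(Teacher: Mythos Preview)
Your overall strategy matches the paper's: split into the classical direction $(2)\Rightarrow(1)$ via Kato, and for $(1)\Rightarrow(2)$ pick a prime $p$ where the full machinery of Theorem~\ref{thm:refined-bsd} is unconditionally available, then read off $\widetilde\delta_1\neq 0$ from the finiteness of $\mathrm{Sel}(\mathbb{Q},E[p^\infty])$. However, there is one genuine gap and one imprecision.

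The gap is the CM case. You choose $p$ with $\overline\rho$ surjective by invoking Serre's open image theorem, but that theorem applies only to non-CM curves; for a CM elliptic curve the image of $\rho_p$ lies in the normalizer of a Cartan subgroup and $\overline\rho$ is \emph{never} surjective onto $\mathrm{GL}_2(\mathbb{F}_p)$. The paper notes explicitly that the surjectivity hypothesis excludes CM, and in the proof of Corollary~\ref{cor:rank-zero-converse} it handles CM curves separately by citing Rubin's theorem on the main conjecture for imaginary quadratic fields. Your proposal needs the same bifurcation.

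The imprecision is in the conclusion you draw from Theorem~\ref{thm:refined-bsd}. Finiteness of $\mathrm{Sel}(\mathbb{Q},E[p^\infty])$ gives $\mathrm{ord}(\kn)=0$, i.e.\ $\widetilde\delta_1\neq 0$ in $\mathbb{Z}_{(p)}$; it does \emph{not} force $\widetilde\delta_1$ to be a $p$-adic unit, and your proposed contradiction hypothesis ``$\widetilde\delta_1\equiv 0\pmod p$'' is the wrong negation. Since $\widetilde\delta_1=[0]^+=L(E,1)/\Omega_E^+$ exactly (no Euler factor intervenes here), non-vanishing in $\mathbb{Z}_{(p)}$ is already equivalent to $L(E,1)\neq 0$, so the argument is simpler than you sketch: no contradiction is needed, and no Euler-factor bookkeeping is required.
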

\subsubsection{}
Last, we discuss how the Iwasawa module structure of the fine Selmer groups over the cyclotomic tower can be understood via a version of the $p$-adic BSD conjecture (Theorem \ref{thm:main-structure-fine-iwasawa}).
The rest of this section is devoted to state the precise statements of all the results mentioned above.
\subsection{Kato's Kolyvagin systems and the Iwasawa main conjecture}
We quickly review the notion of Kato's Kolyvagin systems and the Iwasawa main conjecture for elliptic curves.
\subsubsection{}
Let $E$ be an elliptic curve over $\mathbb{Q}$ of conductor $N$ and $p \geq 3$ a prime.
Let $T$ be the $p$-adic Tate module of $E$ and denote by
$\mathbf{z}^{\mathrm{Kato}} = \left\lbrace z^{\mathrm{Kato}}_{F} \in \mathrm{H}^1(F, T) \right\rbrace_{F}$
Kato's Euler system for $E$ where $F$ runs over finite abelian extensions of $\mathbb{Q}$.
See $\S$\ref{subsubsec:kato-euler-systems} for the precise convention.
\subsubsection{} \label{subsubsec:kolyvagin_primes}
Let $k \geq 1$ be an integer.
Let 
$$\mathcal{P}_k = \left\lbrace \ell, \textrm{ a prime} : (\ell, Np) = 1, \ell \equiv 1 \pmod{p^k}, a_\ell(E) \equiv \ell +1 \pmod{p^k} \right\rbrace$$
and $\mathcal{N}_k$ the set of square-free products of primes in $\mathcal{P}_k$.
For $n \in \mathcal{N}_k$, write
 $I_n = \sum_{\ell \vert n} (\ell - 1, a_\ell - \ell -1) \subseteq \mathbb{Z}_p$.
\subsubsection{} \label{subsubsec:cyclotomic-setup}
Let $\mathbb{Q}_{\infty}$ be the cyclotomic $\mathbb{Z}_p$-extension of $\mathbb{Q}$, 
$\mathbb{Q}_m$ the cyclic subextention of $\mathbb{Q}$ of degree $p^m$ in $\mathbb{Q}_\infty$, and $\Lambda = \mathbb{Z}_p \llbracket \mathrm{Gal}(\mathbb{Q}_\infty/\mathbb{Q}) \rrbracket$ the Iwasawa algebra.
By using the $\Lambda$-adic version of the Euler-to-Kolyvagin system map (recalled in Theorem \ref{thm:euler-to-kolyvagin-Lambda-adic}), we obtain
the $\Lambda$-adic Kato's Kolyvagin system 
$\ks^{\mathrm{Kato}, \infty} = \left\lbrace \kappa^{\mathrm{Kato},\infty}_{n} \in \mathrm{H}^1(\mathbb{Q}, T/I_nT \otimes \Lambda) \right\rbrace_{n \in \mathcal{N}_1}$
 from $\mathbf{z}^{\mathrm{Kato}}$.
In particular, we have 
$\kappa^{\mathrm{Kato}, \infty}_{1} = z^{\mathrm{Kato}}_{\mathbb{Q}_\infty} = \varprojlim_m z^{\mathrm{Kato}}_{\mathbb{Q}_m}$
where the projective limit is taken with respect to the corestriction map.
Of course, this element lies in the first Iwasawa cohomology group $\mathrm{H}^1_{\mathrm{Iw}}(\mathbb{Q}, T) = \varprojlim_m \mathrm{H}^1(\mathbb{Q}_m, T) \simeq \mathrm{H}^1(\mathbb{Q}, T \otimes \Lambda)$.
See $\S$\ref{sec:kolyvagin-systems} and $\S$\ref{sec:Lambda-adic-KS} for more details.
\subsubsection{} 
Denote by $\mathrm{Sel}_0(\mathbb{Q}_{\infty}, E[p^\infty] )$ the $p$-strict (``fine") Selmer group of $E[p^\infty]$ over $\mathbb{Q}_\infty$. Write $(-)^\vee = \mathrm{Hom}(-, \mathbb{Q}_p/\mathbb{Z}_p)$.
We recall the Iwasawa main conjecture without $p$-adic $L$-functions \`{a} la Kato \cite[Conj. 12.10]{kato-euler-systems}.
\begin{conj}[IMC] \label{conj:IMC}
The following equality as (principal) ideals of $\Lambda$ holds
\begin{equation} \label{eqn:IMC}
 \mathrm{char}_{\Lambda} \left( \dfrac{\mathrm{H}^1_{\mathrm{Iw}}(\mathbb{Q}, T)}{\Lambda \kappa^{\mathrm{Kato}, \infty}_1 }  \right) 
=
 \mathrm{char}_{\Lambda} \left( \mathrm{Sel}_0(\mathbb{Q}_{\infty}, E[p^\infty] )^\vee  \right) .
\end{equation}
\end{conj}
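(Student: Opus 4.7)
The plan is to establish Conjecture \ref{conj:IMC} by proving the two opposite divisibilities of characteristic ideals separately and then combining them.

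For the ``Euler system divisibility''
\[
\mathrm{char}_{\Lambda}\bigl(\mathrm{Sel}_0(\mathbb{Q}_\infty,E[p^\infty])^\vee\bigr) \;\Big|\; \mathrm{char}_{\Lambda}\!\left(\frac{\mathrm{H}^1_{\mathrm{Iw}}(\mathbb{Q},T)}{\Lambda \kappa^{\mathrm{Kato},\infty}_1}\right),
\]
I would feed the $\Lambda$-adic Kolyvagin system $\ks^{\mathrm{Kato},\infty}$ (derived from $\mathbf{z}^{\mathrm{Kato}}$ via Theorem \ref{thm:euler-to-kolyvagin-Lambda-adic}) into the abstract $\Lambda$-adic Kolyvagin system machine of \cite{mazur-rubin-book}. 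The required Selmer-structure and core-rank-one hypotheses are satisfied because $p\geq 5$, $E$ is semistable at $p$, and $\overline{\rho}$ is surjective. The machine then bounds $\mathrm{char}_{\Lambda}(\mathrm{Sel}_0(\mathbb{Q}_\infty, E[p^\infty])^\vee)$ above by the ideal of $\Lambda$ generated by the leading class $\kappa^{\mathrm{Kato},\infty}_1 \in \mathrm{H}^1_{\mathrm{Iw}}(\mathbb{Q},T)$.

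For the reverse divisibility I would have to \emph{construct} enough elements of the dual fine Selmer group. In the good ordinary setting this is the content of \cite{skinner-urban}: produce a Hida family of cuspidal eigenforms on $U(2,2)$ congruent, modulo a suitable Eisenstein ideal, to an Eisenstein series whose constant term encodes the Mazur--Swinnerton-Dyer $p$-adic $L$-function $L_p(E)$, and then use lattice and Galois-cohomological arguments to extract non-trivial classes in $\mathrm{Sel}_0^\vee$ of total size at least $\mathrm{char}_{\Lambda}(\Lambda/L_p(E))$. To rewrite the outcome in terms of $\kappa^{\mathrm{Kato},\infty}_1$ rather than $L_p(E)$, I would invoke Kato's explicit reciprocity law, which identifies the image of $\kappa^{\mathrm{Kato},\infty}_1$ at $p$ under the dual exponential map with $L_p(E)$ up to a unit in $\Lambda$. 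Wan's Hilbert-modular approach \cite{wan_hilbert} provides an alternative path that also accommodates the multiplicative case.

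The main obstacle, by a wide margin, is the Skinner--Urban direction: the automorphic side---constructing the cuspidal family, controlling the Eisenstein ideal, and attaching Galois representations to automorphic forms on a unitary group---is the technical heart of the argument. In the supersingular case one must further replace $L_p(E)$ by Pollack's signed $\pm$ $p$-adic $L$-functions and work with the corresponding signed Selmer groups, as pursued in \cite{wan-main-conj-ss-ec, castella-liu-wan}. A secondary, more subtle difficulty that arises in \emph{both} directions is the control of $\mu$-invariants; as the footnote to Conjecture \ref{conj:IMC} makes clear, this is precisely why the present paper only needs, and phrases all of its consequences in terms of, the weaker ``inverting $p$'' variant of the equality.
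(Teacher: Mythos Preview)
The statement you are attempting to prove is labeled \texttt{conj} in the paper, not \texttt{thm}: Conjecture~\ref{conj:IMC} is the Iwasawa main conjecture \`{a} la Kato, which the paper \emph{states} but does not \emph{prove}. There is therefore no ``paper's own proof'' to compare your proposal against. The paper uses Conjecture~\ref{conj:IMC} (or weakenings of it, such as the localization at $X\Lambda$ in Theorem~\ref{thm:main-technical}) as an \emph{input hypothesis} for its structural results, and cites the literature \cite{kato-euler-systems, skinner-urban, wan_hilbert} for the cases where it is known.

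That said, your sketch is a fair summary of exactly that literature: Kato's Euler/Kolyvagin-system divisibility via \cite{mazur-rubin-book} for one direction, and the Eisenstein-congruence method of Skinner--Urban (or Wan's Hilbert variant) for the reverse, glued together through the explicit reciprocity law. This is not a proof you could be expected to produce here---the reverse divisibility alone is the main theorem of \cite{skinner-urban}---and in the supersingular case the full conjecture remains open, which is precisely why the paper phrases its applications in terms of the weaker ``localized at $X\Lambda$'' or ``inverting $p$'' hypotheses. In short: there is no gap in your reasoning, but you have mistaken a conjecture for a theorem to be proved.
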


\subsection{The non-triviality of Kato's Kolyvagin systems}
\subsubsection{} \label{subsubsec:blind-spot-Lambda-primitivity}
Let $\mathfrak{P}$ be a height one prime ideal of $\Lambda$.
We first recall the following concepts from the theory of Kolyvagin systems \cite{mazur-rubin-book}.
\begin{itemize}
\item
$\mathfrak{P}$ is a \textbf{blind spot of $\ks^{\mathrm{Kato}, \infty}$} if $\ks^{\mathrm{Kato}, \infty}$ vanishes modulo $\mathfrak{P}$.
\item $\ks^{\mathrm{Kato}, \infty}$ is \textbf{$\Lambda$-primitive} if any height one prime ideal of $\Lambda$ is not a blind spot of $\ks^{\mathrm{Kato}, \infty}$.
\end{itemize}
These play essential roles in the new and refined applications of Kolyvagin systems to the structure of Selmer groups and the Iwasawa main conjecture, which are not observed directly from the theory of Euler systems \cite{kks}.
It is known that Kato's Kolyvagin system $\ks^{\mathrm{Kato}}$ is non-trivial if and only if $X\Lambda$ is not a blind spot of $\ks^{\mathrm{Kato}, \infty}$ (Proposition \ref{prop:non-triviality-equivalence}).
Here, $\ks^{\mathrm{Kato}}$ is defined by the image of $\mathbf{z}^{\mathrm{Kato}}$ under the Euler-to-Kolyvagin system map (Theorem \ref{thm:euler-to-kolyvagin}).
\subsubsection{}
Fix an isomorphism $\Lambda \simeq \mathbb{Z}_p\llbracket X \rrbracket$ by sending a topological generator of $\mathrm{Gal}(\mathbb{Q}_\infty/\mathbb{Q})$ to $X+1$.
Let $M$ be a finitely generated torsion $\Lambda$-module.
Write 
$\mathrm{char}_{\Lambda} \left( M  \right) =  p^\mu \cdot \prod_i g_i(X)^{m_i}  \cdot \Lambda$
where $g_i(X)$ is an irreducible distinguished polynomial and $g_i(X) \neq g_j(X)$ if $i \neq j$.
We define
$\mathrm{ord}_{ \mathfrak{P}_{i} } \left( \mathrm{char}_{\Lambda} \left( M  \right) \right) = m_i$ and
$\mathrm{ord}_{ p\Lambda } \left( \mathrm{char}_{\Lambda} \left( M  \right) \right) = \mu$
where $\mathfrak{P}_{i}$ is the height one prime ideal generated by $g_i(X)$.

\subsubsection{}
One of the technical innovation of this article is the following statement. 
\begin{thm} \label{thm:main-technical}
Let $E$ be an elliptic curve over $\mathbb{Q}$ and $p \geq 5$ a prime such that $\overline{\rho}$ is surjective.
Let $\mathfrak{P}$ be a height one prime ideal of $\Lambda$.
The following statements are equivalent.
\begin{enumerate}
\item $\ks^{\mathrm{Kato}, \infty}$ does not vanish modulo $\mathfrak{P}$, i.e. $\mathfrak{P}$ is not a blind spot of $\ks^{\mathrm{Kato}, \infty}$.
\item The ``Iwasawa main conjecture localized at $\mathfrak{P}$" holds; in other words,
\begin{equation} \label{eqn:IMC-localized}
\mathrm{ord}_{\mathfrak{P}} \left( \mathrm{char}_{\Lambda} \left( \dfrac{\mathrm{H}^1_{\mathrm{Iw}}(\mathbb{Q}, T)}{\Lambda \kappa^{\mathrm{Kato}, \infty}_1 }  \right) \right)
=
\mathrm{ord}_{\mathfrak{P}} \left( \mathrm{char}_{\Lambda} \left( \mathrm{Sel}_0(\mathbb{Q}_{\infty}, E[p^\infty] )^\vee  \right) \right) .
\end{equation}
\end{enumerate}
In particular, $\ks^{\mathrm{Kato}, \infty}$ is $\Lambda$-primitive if and only if the Iwasawa main conjecture (Conjecture \ref{conj:IMC}) holds.
\end{thm}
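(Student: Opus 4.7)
The plan is to deduce Theorem \ref{thm:main-technical} from two ingredients: Kato's unconditional one-sided divisibility of characteristic ideals, and a prime-by-prime version of the Mazur--Rubin structure theorem for $\Lambda$-adic Kolyvagin systems applied after localization at each height one prime $\mathfrak{P}$. To begin, the $\Lambda$-adic Euler-to-Kolyvagin construction applied to $\mathbf{z}^{\mathrm{Kato}}$ (Theorem \ref{thm:euler-to-kolyvagin-Lambda-adic}) delivers, via the standard Euler system bound, the unconditional divisibility
\[
\mathrm{char}_\Lambda\bigl(\mathrm{Sel}_0(\mathbb{Q}_\infty, E[p^\infty])^\vee\bigr) \;\Big|\; \mathrm{char}_\Lambda\!\left( \dfrac{\mathrm{H}^1_{\mathrm{Iw}}(\mathbb{Q},T)}{\Lambda \kappa_1^{\mathrm{Kato},\infty}} \right),
\]
so $\mathrm{ord}_\mathfrak{P}(\text{LHS}) \leq \mathrm{ord}_\mathfrak{P}(\text{RHS})$ at every height one $\mathfrak{P}$; only the reverse inequality is at stake in (\ref{eqn:IMC-localized}).

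For $(1) \Rightarrow (2)$ I specialize at $\mathfrak{P}$. Write $\Lambda_\mathfrak{P}$ for the DVR localization with uniformizer $\pi$ and residue field $k(\mathfrak{P})$. The hypothesis that $\mathfrak{P}$ is not a blind spot means that the reduction $\overline{\ks}$ of $\ks^{\mathrm{Kato},\infty}$ modulo $\mathfrak{P}$ is a nonzero Kolyvagin system for $T \otimes_\Lambda k(\mathfrak{P})$, equipped with the Selmer structure inherited from the canonical one over $\Lambda$. Running the Mazur--Rubin structure theorem over $\Lambda_\mathfrak{P}$ then turns this non-vanishing into the precise equality of $\mathrm{ord}_\mathfrak{P}$'s appearing in (\ref{eqn:IMC-localized}). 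The technical inputs beyond \cite{mazur-rubin-book} are a control/base-change statement comparing the canonical Selmer structure on $T \otimes \Lambda$ with its specialization at $\mathfrak{P}$, and the compatibility of the Euler-to-Kolyvagin map with reduction modulo $\mathfrak{P}$; both are established in the $\Lambda$-adic framework of $\S$\ref{sec:Lambda-adic-KS}.

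For the converse $(2) \Rightarrow (1)$ I argue by contrapositive. If every $\kappa_n^{\mathrm{Kato},\infty}$ lies in $\mathfrak{P} \cdot \mathrm{H}^1(\mathbb{Q}, T/I_nT \otimes \Lambda)$, then after localizing at $\mathfrak{P}$ the collection $\{\pi^{-1} \kappa_n^{\mathrm{Kato},\infty}\}$ defines a strictly enlarged $\Lambda_\mathfrak{P}$-adic Kolyvagin system, whose Kolyvagin axioms follow from those of the original by the $\Lambda_\mathfrak{P}$-linearity of the finite--singular comparison maps (iterating if $\ks^{\mathrm{Kato},\infty}$ vanishes modulo $\mathfrak{P}^j$ for some $j > 1$). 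Feeding this enlarged system into the Euler system divisibility forces
\[
\mathrm{ord}_\mathfrak{P}\!\left( \mathrm{char}_\Lambda\!\left( \dfrac{\mathrm{H}^1_{\mathrm{Iw}}(\mathbb{Q},T)}{\Lambda \kappa_1^{\mathrm{Kato},\infty}} \right) \right) \;\geq\; \mathrm{ord}_\mathfrak{P}\bigl( \mathrm{char}_\Lambda(\mathrm{Sel}_0^\vee) \bigr) + 1,
\]
contradicting the assumed localized equality. The global assertion that $\ks^{\mathrm{Kato},\infty}$ is $\Lambda$-primitive if and only if Conjecture \ref{conj:IMC} holds then follows by multiplying the local contributions over all height one primes of $\Lambda$.

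The main obstacle I anticipate is the case $\mathfrak{P} = p\Lambda$, where one exits characteristic zero and must contend with possible $p$-torsion in $\mathrm{H}^1_{\mathrm{Iw}}(\mathbb{Q},T)$ while ensuring that mod-$p$ reduction preserves the Kolyvagin axioms. The surjectivity of $\overline{\rho}$ together with $p \geq 5$ (cf.\ \cite[Lemma 6.2.3]{mazur-rubin-book}) is precisely what secures the cohomological freeness required for the ``divide by $\pi$'' and ``descend to the residue field'' operations to stay inside the category of Kolyvagin systems. A secondary subtlety is matching the Selmer structure used in the Mazur--Rubin framework with the $p$-strict (fine) Selmer condition appearing on the right-hand side of (\ref{eqn:IMC-localized}), which is addressed via Proposition \ref{prop:non-triviality-equivalence}.
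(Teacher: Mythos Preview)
Your direction $(1)\Rightarrow(2)$ is correct and matches the paper (both invoke \cite[Theorem 5.3.10.(2)]{mazur-rubin-book}, recalled as Theorem \ref{thm:mazur-rubin-main-conjecture}). For $(2)\Rightarrow(1)$, however, your ``divide by $\pi$'' contrapositive has a genuine gap. The divisibility you feed the enlarged system into (Theorem \ref{thm:mazur-rubin-main-conjecture}.(1)) is a statement about elements of $\KSbar(T\otimes\Lambda)$, not about Kolyvagin systems over the localization $\Lambda_\mathfrak{P}$; no such theory over $\Lambda_\mathfrak{P}$ exists in \cite{mazur-rubin-book}, and the specialization maps there go to $S_\mathfrak{P}$ (the integral closure of $\Lambda/\mathfrak{P}$), not to $\Lambda_\mathfrak{P}$. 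To make $\{\pi^{-1}\kappa_n^{\mathrm{Kato},\infty}\}$ an honest element of $\KSbar(T\otimes\Lambda)$ you would need that module to be free of rank one over $\Lambda$, so that vanishing modulo $\mathfrak{P}$ is the same as divisibility by a generator of $\mathfrak{P}$. That freeness is exactly B\"uy\"ukboduk's Theorem \ref{thm:rigidity}, and it requires the extra hypotheses $E(\mathbb{Q}_p)[p]=0$ and $p\nmid\prod_\ell c_\ell$, which Theorem \ref{thm:main-technical} does \emph{not} assume (the paper uses that route only later, in \S\ref{sec:kurihara-conjecture}, where those hypotheses are in force). Even the preliminary claim that each $\kappa_n^{\mathrm{Kato},\infty}$ lies in $\mathfrak{P}\cdot\mathrm{H}^1(\mathbb{Q},T/I_nT\otimes\Lambda)$ is not automatic, since for $n>1$ these cohomology groups may have $\mathfrak{P}$-torsion.

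The paper's actual argument avoids localization entirely via a perturbation trick. It replaces $\mathfrak{P}=(f_\mathfrak{P}(X))$ by the nearby primes $\mathfrak{P}_M=(f_\mathfrak{P}(X)+p^M)$ for $M\gg0$, chosen (Lemma \ref{lem:higher-power}) so that $\kappa_1^{\mathrm{Kato},(\mathfrak{P}_M)}\neq 0$ and $\mathfrak{P}_M\notin\Sigma_\Lambda$. Over the DVR $S_{\mathfrak{P}_M}$, which is finite over $\mathbb{Z}_p$, the standard structure theory applies and gives $\partial^{(\infty)}(\ks^{\mathrm{Kato},(\mathfrak{P}_M)})=j_{\mathfrak{P}_M}$; a length comparison using the assumed equality (\ref{eqn:IMC-localized}) then shows $j_{\mathfrak{P}_M}=O(1)$ uniformly in $M$. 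Since $\ks^{\mathrm{Kato},(\mathfrak{P}_M)}\equiv\ks^{\mathrm{Kato},(\mathfrak{P})}\pmod{p^M}$ by construction, choosing $M$ with $j_{\mathfrak{P}_M}<M\cdot e(S_{\mathfrak{P}_M}/\mathbb{Z}_p)$ forces $\ks^{\mathrm{Kato},(\mathfrak{P})}$ to be nonzero modulo $p^M$, hence nonzero. The case $\mathfrak{P}=p\Lambda$ is handled the same way with $\mathfrak{P}_M=(X^M+p)$.
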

\begin{proof}
See $\S$\ref{subsec:proof-of-main-technical}.
\end{proof}
The one direction $(1) \Rightarrow (2)$ and $\Lambda$-primitivity $\Rightarrow$ IMC are established in the work of Mazur--Rubin \cite[Thm. 5.3.10.(2) and (3)]{mazur-rubin-book} (Theorem \ref{thm:mazur-rubin-main-conjecture}). Our contribution is to prove the converse, which has many interesting arithmetic consequences.
In particular, we observe that the non-triviality of Kato's Kolyvagin system $\ks^{\mathrm{Kato}}$ is \emph{strictly weaker} than the Iwasawa main conjecture.

One small disadvantage of our structural approach is that the exact bound of the Tamagawa factors is missing. See Conjecture \ref{conj:kn-non-zero-quantitative} for this aspect.

\subsubsection{}
After having Theorem \ref{thm:main-technical}, we are able to confirm the cyclotomic analogue of Kolyvagin's conjecture \cite{kolyvagin-selmer, wei-zhang-mazur-tate} for many cases.
\begin{cor} \label{cor:main-ks}
Let $E$ be an elliptic curve over $\mathbb{Q}$ and $p\geq 5$ a prime such that $\overline{\rho}$ is surjective.
If the Iwasawa main conjecture localized at $\mathfrak{P} = X\Lambda$ (\ref{eqn:IMC-localized}) holds, then Kato's Kolyvagin system $\ks^{\mathrm{Kato}}$ is non-trivial.
In particular, if one of the following holds:
\begin{enumerate}
\item $E$ has good ordinary reduction at $p$,
\item $E$ has analytic rank zero, or
\item $E$ has analytic rank one and $E$ has semi-stable reduction at $p$,
\end{enumerate}
then $\ks^{\mathrm{Kato}}$ is non-trivial.
In this case, the structure of $\mathrm{Sel}_0(\mathbb{Q}, E[p^\infty])$ is completely determined by $\ks^{\mathrm{Kato}}$ as described in 
Theorem \ref{thm:kato-kolyvagin-main}.
\end{cor}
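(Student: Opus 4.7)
The plan is as follows. The first assertion is an immediate combination of two results from earlier in the paper: Theorem \ref{thm:main-technical} specialized to $\mathfrak{P} = X\Lambda$, and the equivalence recalled in Proposition \ref{prop:non-triviality-equivalence} between non-triviality of $\ks^{\mathrm{Kato}}$ and $X\Lambda$ not being a blind spot of $\ks^{\mathrm{Kato}, \infty}$. Concretely, (\ref{eqn:IMC-localized}) says that $X\Lambda$ is not a blind spot of $\ks^{\mathrm{Kato}, \infty}$, and the proposition then yields non-triviality of $\ks^{\mathrm{Kato}}$ directly.

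For the three specific cases, the plan is to verify the hypothesis (\ref{eqn:IMC-localized}) so that the first part applies. For case (1), I would cite the full Iwasawa main conjecture inverting $p$ for good ordinary elliptic curves, which is known under our assumptions by the combined work of Kato \cite{kato-euler-systems}, Skinner--Urban \cite{skinner-urban}, and Wan \cite{wan_hilbert}; taking $\mathrm{ord}_{X\Lambda}$ on both sides gives the localized equality for free. For case (2), $L(E,1) \neq 0$, and Kato's explicit reciprocity law at the trivial character forces $z^{\mathrm{Kato}}_{\mathbb{Q}} \in \mathrm{H}^1(\mathbb{Q}, T)$ to be non-torsion; since $z^{\mathrm{Kato}}_{\mathbb{Q}}$ is the image of $\kappa^{\mathrm{Kato}, \infty}_1$ under specialization at $X\Lambda$, the leading class of $\ks^{\mathrm{Kato}, \infty}$ is already non-zero modulo $X\Lambda$, so $X\Lambda$ is not a blind spot and (\ref{eqn:IMC-localized}) then follows from Theorem \ref{thm:main-technical}. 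For case (3), I would invoke the known $p$-part of the Birch and Swinnerton-Dyer formula for semi-stable elliptic curves of analytic rank one, due to Bertolini--Darmon--Venerucci \cite{bertolini-darmon-venerucci}, Burungale--Skinner--Tian \cite{burungale-skinner-tian}, and Kazim--Pollack--Sasaki \cite{kazim-pollack-sasaki}, and translate their equality of $p$-adic valuations into the matching of $\mathrm{ord}_{X\Lambda}$ on the two sides of (\ref{eqn:IMC-localized}) by way of Mazur-type control theorems.

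Once $\ks^{\mathrm{Kato}}$ is known to be non-trivial, the structural claim on $\mathrm{Sel}_0(\mathbb{Q}, E[p^\infty])$ follows by feeding the Kolyvagin system into Mazur--Rubin's structure theorem for $p$-strict Selmer groups, namely Theorem \ref{thm:kato-kolyvagin-main}.

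The main obstacle I expect is case (3). Because $L(E,1) = 0$ in this rank, $\kappa^{\mathrm{Kato}, \infty}_1$ can vanish modulo $X\Lambda$, so one cannot conclude as cleanly as in case (2); the non-trivial class must come from a higher index $n$ in the system. Moreover, the published $p$-part BSD formula is phrased in terms of the sizes of $\mathrm{Sel}(\mathbb{Q}, E[p^\infty])$, Tamagawa factors, and normalized $L$-values at the base level, whereas (\ref{eqn:IMC-localized}) is a statement about orders at $X\Lambda$ of characteristic ideals in $\Lambda$. Bridging the two should require a careful comparison of $\mathrm{H}^1_{\mathrm{Iw}}(\mathbb{Q}, T)/\Lambda \kappa^{\mathrm{Kato}, \infty}_1$ modulo $X\Lambda$ with its base-level counterpart $\mathrm{H}^1(\mathbb{Q}, T)/\mathbb{Z}_p \cdot z^{\mathrm{Kato}}_{\mathbb{Q}}$, together with a control theorem identifying the $X\Lambda$-specialization of $\mathrm{Sel}_0(\mathbb{Q}_\infty, E[p^\infty])^\vee$ with $\mathrm{Sel}_0(\mathbb{Q}, E[p^\infty])^\vee$ up to controlled error coming from the local cohomology at $p$ and the bad primes.
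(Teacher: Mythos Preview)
Your argument for the first assertion and for case~(1) matches the paper exactly: Theorem~\ref{thm:main-technical} at $\mathfrak{P}=X\Lambda$ combined with Proposition~\ref{prop:non-triviality-equivalence}, and the established IMC inverting $p$ in the good ordinary case.

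For cases~(2) and~(3), your route is more circuitous than necessary, and in case~(3) the detour you flag as the ``main obstacle'' is avoidable. The paper does \emph{not} pass through the localized main conjecture~(\ref{eqn:IMC-localized}) in these two cases; instead it observes that $\kappa^{\mathrm{Kato}}_1 = z^{\mathrm{Kato}}_{\mathbb{Q}}$ is itself non-zero, which already makes $\ks^{\mathrm{Kato}}$ non-trivial since $\kappa^{\mathrm{Kato}}_1$ is the first class of the system. In case~(2) this is Kato's explicit reciprocity law: $L(E,1)\neq 0$ forces $z^{\mathrm{Kato}}_{\mathbb{Q}}\neq 0$. In case~(3) the references \cite{bertolini-darmon-venerucci, burungale-skinner-tian, kazim-pollack-sasaki} are cited not for the $p$-part of the BSD formula but for their settlement of Perrin-Riou's conjecture, which gives $z^{\mathrm{Kato}}_{\mathbb{Q}}\neq 0$ directly whenever the analytic rank is one and $p>2$ is of semi-stable reduction. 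So the control-theorem bridge you anticipate between the base-level BSD formula and the $\Lambda$-adic order at $X\Lambda$ is never needed.

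Your approach in case~(2) is still correct, just indirect: you deduce that $X\Lambda$ is not a blind spot and then invoke Theorem~\ref{thm:main-technical} to get~(\ref{eqn:IMC-localized}), only to loop back to non-triviality. In case~(3), however, your proposed route through the $p$-part of BSD plus control theorems is genuinely harder to carry out, and many proofs of the rank-one $p$-part of BSD themselves rely on some form of the cyclotomic main conjecture, so there is also a circularity risk depending on which result you invoke. The direct use of Perrin-Riou's conjecture sidesteps all of this.
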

\begin{proof}
Since the Iwasawa main conjecture \emph{inverting $p$} for elliptic curves with good ordinary reduction at $p$ is established under our setting \cite{kato-euler-systems, skinner-urban, wan_hilbert}, Theorem \ref{thm:main-technical} immediately implies the non-triviality of $\ks^{\mathrm{Kato}}$.

When the analytic rank is zero, the non-triviality of $\kappa^{\mathrm{Kato}}_1 = z^{\mathrm{Kato}}_{\mathbb{Q}}$ for \emph{every} prime $p$ follows from Kato's explicit reciprocity law \cite[Thm. 12.5]{kato-euler-systems}.

When the analytic rank is one, the non-triviality of $\kappa^{\mathrm{Kato}}_1 = z^{\mathrm{Kato}}_{\mathbb{Q}}$ for \emph{every semi-stable} reduction prime $p > 2$ follows from the recent settlement of Perrin-Riou's conjecture \cite{perrin-riou-rational-pts, bertolini-darmon-venerucci, kazim-pollack-sasaki, burungale-skinner-tian-wan}.

The Kolyvagin system description of the structure of $p$-strict Selmer groups follows from the work of Mazur--Rubin \cite{mazur-rubin-book} (Theorem \ref{thm:kato-kolyvagin-main}).
\end{proof}

\subsubsection{} \label{subsubsec:main-kn}
With help of Theorem \ref{thm:refined-bsd}, we also obtain a structural refinement of BSD conjecture.
\begin{cor} \label{cor:main-kn}
Let $E$ be an elliptic curve over $\mathbb{Q}$ and $p\geq 5$ a prime such that
 $\overline{\rho}$ is surjective and
 the Manin constant is prime to $p$.
If the Iwasawa main conjecture localized at $\mathfrak{P} = X\Lambda$ (\ref{eqn:IMC-localized}) holds, then the collection of Kurihara numbers, denoted by $\kn$, does not vanish identically.
In particular, if one of the following holds:
\begin{enumerate}
\item $E$ has good ordinary reduction at $p$,
\item $E$ has analytic rank zero, or
\item $E$ has analytic rank one and $E$ has semi-stable reduction at $p$,
\end{enumerate}
then $\kn$ does not vanish identically.
In this case, the structure of $\mathrm{Sel}(\mathbb{Q}, E[p^\infty])$ is completely determined by $\kn$ as described in Theorem \ref{thm:refined-bsd}.
\end{cor}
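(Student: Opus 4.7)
The plan is to assemble Corollary~\ref{cor:main-kn} by chaining three results already set up in the excerpt: Theorem~\ref{thm:main-technical}, Proposition~\ref{prop:ks-kn-equi-non-vanishing}, and Theorem~\ref{thm:refined-bsd}. The overall implication I would prove is
\[
\text{IMC localized at } X\Lambda \;\Longleftrightarrow\; \ks^{\mathrm{Kato}} \text{ non-trivial} \;\Longleftrightarrow\; \kn \not\equiv 0 \;\Longrightarrow\; \text{structure of } \mathrm{Sel}(\mathbb{Q}, E[p^\infty]).
\]
The proof of Corollary~\ref{cor:main-kn} is then, in essence, a parallel of Corollary~\ref{cor:main-ks} with the additional passage from $\ks^{\mathrm{Kato}}$ to $\kn$ and the upgrade from $p$-strict Selmer groups to classical Selmer groups.

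For the main assertion, I would first apply Theorem~\ref{thm:main-technical} with $\mathfrak{P} = X\Lambda$: the localized IMC is equivalent to $X\Lambda$ not being a blind spot of $\ks^{\mathrm{Kato},\infty}$, which by Proposition~\ref{prop:non-triviality-equivalence} is the non-triviality of the specialization $\ks^{\mathrm{Kato}}$. Proposition~\ref{prop:ks-kn-equi-non-vanishing} then transfers this non-triviality to the non-vanishing of the collection $\kn$. The hypothesis that the Manin constant is prime to $p$ is precisely what is needed at this step, since Kurihara numbers are defined via modular symbols normalized by canonical periods, and the comparison with Kato's Kolyvagin system goes through the Manin constant. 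I expect this bridge to be the conceptually subtle step: showing that no $p$-power discrepancy from the Manin constant obstructs the equivalence.

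Next I would unpack the three unconditional cases. In case (1), good ordinary reduction, the full Iwasawa main conjecture inverting $p$ is established under the surjectivity hypothesis by Kato, Skinner--Urban, and Wan, so its localization at $X\Lambda$ certainly holds. In cases (2) and (3), the standard approach is to establish the non-triviality of $\ks^{\mathrm{Kato}}$ directly (rather than the localized IMC) and invoke the reverse direction of Theorem~\ref{thm:main-technical}: for analytic rank zero, Kato's explicit reciprocity law forces $\kappa^{\mathrm{Kato}}_1 = z^{\mathrm{Kato}}_{\mathbb{Q}} \neq 0$ for every prime, while for analytic rank one and a semi-stable reduction prime $p$, the recent settlement of Perrin-Riou's conjecture gives the same non-vanishing. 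In all three situations the chain above yields $\kn \not\equiv 0$.

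Finally, for the structural conclusion, once $\kn$ is known to be not identically zero, Theorem~\ref{thm:refined-bsd} is directly applicable and completely determines the $\mathbb{Z}_p$-module structure of $\mathrm{Sel}(\mathbb{Q}, E[p^\infty])$ in terms of $\kn$. The main obstacle I anticipate is not in any of these citation-level steps but in verifying that Proposition~\ref{prop:ks-kn-equi-non-vanishing} applies with the precise normalizations used in Theorem~\ref{thm:main-technical}; in particular, one should confirm that the Euler-to-Kolyvagin specialization at $X\Lambda$, the image of $\ks^{\mathrm{Kato}}$ under the refined dual exponential map, and the collection $\kn$ match up under the Manin-constant hypothesis with no loss of $p$-adic information.
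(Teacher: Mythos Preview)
Your proposal is correct and follows essentially the same route as the paper: the paper's proof simply invokes Proposition~\ref{prop:ks-kn-equi-non-vanishing} to pass from the non-triviality of $\ks^{\mathrm{Kato}}$ (already established in Corollary~\ref{cor:main-ks} via Theorem~\ref{thm:main-technical} and the cited results for cases (1)--(3)) to the non-vanishing of $\kn$, and then applies Theorem~\ref{thm:refined-bsd}. Your concern about normalizations and the Manin constant is handled in the paper by Theorem~\ref{thm:from-ks-to-kn}, which is precisely the input to Proposition~\ref{prop:ks-kn-equi-non-vanishing}.
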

\begin{proof}
By Proposition \ref{prop:ks-kn-equi-non-vanishing}, the non-triviality of $\ks^{\mathrm{Kato}}$ is equivalent to the non-vanishing of $\kn$ under our running hypotheses.
The modular symbol description of the structure of Selmer groups follows from Theorem \ref{thm:refined-bsd}.
\end{proof}

\subsection{Modular symbols and Kurihara numbers} \label{subsec:kurihara-numbers}
In order to give a precise statement of Theorem \ref{thm:refined-bsd}, we quickly review the notion of modular symbols and Kurihara numbers.
\subsubsection{}
Let $p \geq 5$ be a prime and $E$ an elliptic curve over $\mathbb{Q}$ such that the residual representation $\overline{\rho}$ is irreducible and the Manin constant is prime to $p$.
The Manin constant assumption becomes vacuous when $E$ has semi-stable reduction at $p$ \cite[Cor. 4.1]{mazur-rational-isogenies}.

Let $f = \sum_{n \geq 1} a_n(E) q^n \in S_2(\Gamma_0(N))$ be the newform corresponding to $E$ \cite{bcdt}.
For each $\dfrac{a}{b} \in \mathbb{Q}$, the modular symbol $\left[\dfrac{a}{b}\right]^+$ is defined by equality
$$2 \pi \cdot \int^{\infty}_0 f(\dfrac{a}{b}+ iy )dy = \left[\dfrac{a}{b}\right]^+ \cdot \Omega^+_E + \left[\dfrac{a}{b}\right]^- \cdot \sqrt{-1} \cdot \Omega^-_E$$
where $\left[\dfrac{a}{b}\right]^+$ and $\left[\dfrac{a}{b}\right]^-$ are rational numbers \cite[(1.1)]{mazur-tate}.
Especially, the real N\'{e}ron period $\Omega^{+}_E$  of $E$ is taken as the absolute value of the integral of an invariant differential of a global minimal Weierstrass model of $E$ over $E(\mathbb{R})$. (cf. \cite[(1.1)]{mazur-tate}.)
Under our assumptions, we have $\left[\dfrac{a}{b}\right]^+ \in \mathbb{Z}_{(p)}$, i.e. the modular symbols are $p$-integral.

\subsubsection{}
We follow the convention in $\S$\ref{subsubsec:kolyvagin_primes}.
For each prime $\ell \in \mathcal{P}_k$, we fix a primitive root $\eta_\ell$ mod $\ell$ and define
 $\mathrm{log}_{\eta_\ell}(a) \in \mathbb{Z}/(\ell-1)$ by $\eta^{ \mathrm{log}_{\eta_\ell}(a)}_\ell \equiv a \pmod{\ell}$.
\subsubsection{}
For each $n \in \mathcal{N}_1$, the \textbf{(mod $I_n$) Kurihara number at $n$} is defined by
$$\widetilde{\delta}_n  = \sum_{a \in (\mathbb{Z}/n\mathbb{Z})^\times } \left( \overline{\left[ \dfrac{a}{n} \right]^+}  \cdot   \prod_{\ell \mid n} \overline{\mathrm{log}_{\eta_\ell} (a)} \right) \in \mathbb{Z}_p/I_n\mathbb{Z}_p$$
where $\overline{\left[ \dfrac{a}{n} \right]^+}$ is the mod $I_n$ reduction of $\left[ \dfrac{a}{n} \right]^+$ and 
$\overline{\mathrm{log}_{\eta_\ell} (a)}$ is also the mod $I_n$ reduction of $\mathrm{log}_{\eta_\ell} (a)$.
Note that $\widetilde{\delta}_n$ is well-defined up to $(\mathbb{Z}_p/I_n\mathbb{Z}_p)^\times$.
When $n \in \mathcal{N}_{k}$, we write
$\widetilde{\delta}^{(k)}_n = \widetilde{\delta}_n \pmod{p^k} \in \mathbb{Z}/p^k\mathbb{Z}$.
When $n=1$, we have
$$\widetilde{\delta}_1 = [0]^+ = \dfrac{L(E, 1)}{\Omega^+_E} \in \mathbb{Z}_{(p)} .$$
\subsubsection{}
The \textbf{collection of Kurihara numbers} is defined by 
$$\widetilde{\boldsymbol{\delta}} = \left\lbrace \widetilde{\delta}_n \in \mathbb{Z}_p/I_n \mathbb{Z}_p : n \in \mathcal{N}_1 \right\rbrace .$$
For a square-free integer $n$, denote by $\nu(n)$ the number of prime factors of $n$ with convention $\nu(1)=0$.
The \textbf{vanishing order of $\widetilde{\boldsymbol{\delta}}$} is defined by
$$\mathrm{ord} (\widetilde{\boldsymbol{\delta}}) = \mathrm{min} \left\lbrace \nu(n) : n \in \mathcal{N}_1, \widetilde{\delta}_n \neq 0 \right\rbrace .$$
We write $\mathrm{ord} (\widetilde{\boldsymbol{\delta}}) = \infty$ if $\widetilde{\delta}_n = 0$ for all $n \in \mathcal{N}_1$, i.e. the collection of Kurihara numbers vanishes identically.
The following conjecture on the non-vanishing of $\kn$ follows from the Iwasawa main conjecture (Conjecture \ref{conj:IMC}) thanks to Theorem \ref{thm:main-technical} and Proposition \ref{prop:ks-kn-equi-non-vanishing}.
\begin{conj} \label{conj:kn-non-zero}
Let $E$ be an elliptic curve over $\mathbb{Q}$ and $p \geq 5$ a prime such that
 $\overline{\rho}$ is surjective and the Manin constant is prime to $p$.
Then $\mathrm{ord}(\kn) <\infty$.
\end{conj}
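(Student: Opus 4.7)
The plan is to reduce the non-vanishing of $\kn$ to the non-triviality of Kato's Kolyvagin system, and then to the Iwasawa main conjecture via its localization at $\mathfrak{P}=X\Lambda$.

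First I would invoke Proposition \ref{prop:ks-kn-equi-non-vanishing}. Under the standing hypotheses ($\overline{\rho}$ surjective, Manin constant prime to $p$), this proposition supplies the equivalence
\[
\mathrm{ord}(\kn)<\infty \iff \ks^{\mathrm{Kato}} \text{ is non-trivial,}
\]
so the task becomes showing that Kato's Kolyvagin system $\ks^{\mathrm{Kato}}$ itself does not vanish. This is the translation from the analytic/combinatorial side (modular symbols) to the cohomological side, and it is the reason the Manin constant hypothesis is imposed.

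Next I would invoke Proposition \ref{prop:non-triviality-equivalence}, which reduces the non-triviality of $\ks^{\mathrm{Kato}}$ to the statement that $X\Lambda$ is not a blind spot of the $\Lambda$-adic Kato Kolyvagin system $\ks^{\mathrm{Kato},\infty}$. Feeding this into Theorem \ref{thm:main-technical} with $\mathfrak{P}=X\Lambda$, the non-blind-spot condition is in turn equivalent to the ``Iwasawa main conjecture localized at $X\Lambda$'', namely the equality (\ref{eqn:IMC-localized}) at $\mathfrak{P}=X\Lambda$. Since Conjecture \ref{conj:IMC} implies every such localized equality, the full IMC suffices as input, which is exactly what the statement preceding the conjecture claims.

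The main obstacle is of course that Conjecture \ref{conj:IMC}, or even its localization at $X\Lambda$ alone, is open in general. In precisely the cases listed in Corollary \ref{cor:main-kn} the obstacle disappears: good ordinary reduction at $p$ by \cite{kato-euler-systems, skinner-urban, wan_hilbert}; analytic rank zero via Kato's explicit reciprocity law, which yields $\kappa^{\mathrm{Kato}}_1 = z^{\mathrm{Kato}}_\mathbb{Q}\neq 0$ directly; and analytic rank one at a semi-stable $p$ via the Perrin-Riou conjecture settled by \cite{bertolini-darmon-venerucci, burungale-skinner-tian, kazim-pollack-sasaki}. Outside these cases (supersingular reduction of higher rank, additive reduction, etc.) Conjecture \ref{conj:kn-non-zero} remains genuinely open, being equivalent, through the chain above, to the currently unresolved portion of the IMC localized at $X\Lambda$. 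I expect the hardest point in any unconditional extension to be the ``$\supseteq$'' divisibility in (\ref{eqn:IMC-localized}) at $X\Lambda$, since the reverse divisibility is already furnished by Mazur-Rubin's half of Theorem \ref{thm:main-technical}.
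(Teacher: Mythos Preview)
Your proposal is correct and follows exactly the route the paper indicates: the sentence preceding Conjecture~\ref{conj:kn-non-zero} states that it ``follows from the Iwasawa main conjecture (Conjecture~\ref{conj:IMC}) thanks to Theorem~\ref{thm:main-technical} and Proposition~\ref{prop:ks-kn-equi-non-vanishing},'' and you have spelled out precisely this chain (with the intermediate step through Proposition~\ref{prop:non-triviality-equivalence} made explicit). Your acknowledgment that the conjecture remains open outside the cases of Corollary~\ref{cor:main-kn}, and your identification of the missing ``$\supseteq$'' divisibility at $X\Lambda$ as the essential obstacle, are also consistent with the paper's framing.
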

\subsection{The structure of Selmer groups and Kurihara numbers}
\subsubsection{}
Denote by $\partial^{(0)} (\widetilde{\boldsymbol{\delta}})$ the $p$-adic valuation of $\widetilde{\delta}_1$.
 We also define
\begin{align*}
\partial^{(i)}(\widetilde{\boldsymbol{\delta}}) & = 
\mathrm{min} \left\lbrace \mathrm{max} \left\lbrace j_n : \widetilde{\delta}_n \in p^{j_n} \mathbb{Z}_p/I_n\mathbb{Z}_p \right\rbrace : n \in \mathcal{N}_1 \textrm{ with } \nu(n) = i  \right\rbrace , \\
\partial^{(\infty)}(\widetilde{\boldsymbol{\delta}}) & = \mathrm{min}  \lbrace \partial^{(i)}(\widetilde{\boldsymbol{\delta}}) : 0 \leq i \rbrace.
\end{align*}
\subsubsection{}
Denote by $M_{/\mathrm{div}}$ the quotient of $M$ by its maximal divisible submodule.
\begin{thm} \label{thm:refined-bsd}
Let $E$ be an elliptic curve over $\mathbb{Q}$ and $p \geq 5$ a prime such that
 $\overline{\rho}$ is surjective and the Manin constant is prime to $p$.
If $\mathrm{ord} (\kn) < \infty$,
then
\begin{enumerate}
\item $ \mathrm{cork}_{\mathbb{Z}_p}\mathrm{Sel}(\mathbb{Q}, E[p^\infty]) = \mathrm{ord}(\kn)$, i.e. 
$\mathrm{Fitt}_{i, \mathbb{Z}_p} \left(  \mathrm{Sel}(\mathbb{Q}, E[p^\infty])^\vee \right) = 0$
 for all $0 \leq i \leq \mathrm{ord}(\kn)-1$, 
\item $\mathrm{Fitt}_{i, \mathbb{Z}_p} \left(  \mathrm{Sel}(\mathbb{Q}, E[p^\infty])^\vee \right) = p^{\partial^{(i)} ( \kn ) - \partial^{(\infty)}(\kn)} \mathbb{Z}_p$
for all $i \geq \mathrm{ord}(\kn)$ with $i \equiv \mathrm{ord}(\kn) \pmod{2}$, and
\item $\mathrm{length}_{\mathbb{Z}_p} \left( \mathrm{Sel}(\mathbb{Q}, E[p^\infty])_{/\mathrm{div}} \right) = \partial^{(\mathrm{ord}(\kn))} (\widetilde{\boldsymbol{\delta}} ) - \partial^{(\infty)} (\widetilde{\boldsymbol{\delta}} )$.
\end{enumerate}
In other words, we have
$$\mathrm{Sel}(\mathbb{Q}, E[p^\infty])  \simeq \left( \mathbb{Q}_p/\mathbb{Z}_p \right)^{\oplus \mathrm{ord}(\kn)} \oplus \bigoplus_{i\geq 1} \left( \mathbb{Z}/p^{(  \partial^{(\mathrm{ord}(\kn) + 2(i-1))} (\widetilde{\boldsymbol{\delta}} ) - \partial^{(\mathrm{ord}(\kn) + 2i)} (\widetilde{\boldsymbol{\delta}} ) )/2}\mathbb{Z} \right)^{\oplus 2}  .$$
If we further assume the finiteness of $\sha(E/\mathbb{Q})[p^\infty]$, then we have
\begin{enumerate}
\item[(4)] $\mathrm{rk}_{\mathbb{Z}} E( \mathbb{Q} ) = \mathrm{ord}(\kn)$,
\item[(5)] $\sha(E/\mathbb{Q})[p^\infty] \simeq \bigoplus_{i\geq 1} \left( \mathbb{Z}/p^{(  \partial^{(\mathrm{ord}(\kn) + 2(i-1))} (\widetilde{\boldsymbol{\delta}} ) - \partial^{(\mathrm{ord}(\kn) + 2i)} (\widetilde{\boldsymbol{\delta}} ) )/2}\mathbb{Z} \right)^{\oplus 2} $, and so
\item[(6)] $\mathrm{length}_{\mathbb{Z}_p} \left( \sha(E/\mathbb{Q})[p^\infty] \right) = \partial^{(\mathrm{ord}(\kn))} (\widetilde{\boldsymbol{\delta}} ) - \partial^{(\infty)} (\widetilde{\boldsymbol{\delta}} )$.
\end{enumerate}
\end{thm}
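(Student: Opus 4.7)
The plan is to import the Mazur--Rubin structure theorem for $p$-strict Selmer groups (recalled as Theorem \ref{thm:kato-kolyvagin-main}) and then perform two translations: first from Kato's Kolyvagin system $\ks^{\mathrm{Kato}}$ to the collection of Kurihara numbers $\kn$, and second from the $p$-strict Selmer group $\mathrm{Sel}_0(\mathbb{Q}, E[p^\infty])$ to the classical Selmer group $\mathrm{Sel}(\mathbb{Q}, E[p^\infty])$. Under the standing hypothesis $\mathrm{ord}(\kn) < \infty$, Proposition \ref{prop:ks-kn-equi-non-vanishing} guarantees that $\ks^{\mathrm{Kato}}$ is non-trivial, so Mazur--Rubin applies and yields a complete Fitting-ideal description of $\mathrm{Sel}_0(\mathbb{Q}, E[p^\infty])^\vee$ together with the paired direct-sum decomposition $(\mathbb{Q}_p/\mathbb{Z}_p)^{\oplus r} \oplus \bigoplus (\mathbb{Z}/p^{a_i}\mathbb{Z})^{\oplus 2}$ characteristic of self-dual Kolyvagin systems.

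The first translation converts the Kolyvagin-system divisibilities into the invariants $\partial^{(i)}(\kn)$. It rests on the realization of $\widetilde{\delta}_n$ as the image of $\kappa^{\mathrm{Kato}}_n$ under a refinement of the dual exponential map, developed in $\S$\ref{sec:kolyvagin-systems}--\ref{sec:kurihara-numbers}; up to $p$-adic units, the two invariants agree level by level, so each statement of Mazur--Rubin acquires a Kurihara-number avatar. In particular, $\mathrm{ord}(\kn)$ is identified with the $\mathrm{ord}$-invariant of $\ks^{\mathrm{Kato}}$ on the Kolyvagin-system side.

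The second translation, from $\mathrm{Sel}_0$ to $\mathrm{Sel}$, is performed through the Poitou--Tate exact sequence comparing the strict local condition at $p$ with the Bloch--Kato (or Greenberg, in the ordinary case) local condition. The comparison module is $\mathbb{Z}_p$-divisible of corank one in every reduction case covered by our hypotheses, so the passage from strict to classical Selmer raises the corank by exactly one degree of freedom and shifts the higher Fitting ideals in a prescribed way. Assembling these shifts with Mazur--Rubin produces items (1), (2), (3) in the stated form, including the parity restriction $i \equiv \mathrm{ord}(\kn) \pmod{2}$ and the paired $(\mathbb{Z}/p^a\mathbb{Z})^{\oplus 2}$-shape. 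Items (4), (5), (6) then follow immediately from the fundamental exact sequence
$$0 \to E(\mathbb{Q}) \otimes \mathbb{Q}_p/\mathbb{Z}_p \to \mathrm{Sel}(\mathbb{Q}, E[p^\infty]) \to \sha(E/\mathbb{Q})[p^\infty] \to 0,$$
since the finiteness hypothesis on $\sha(E/\mathbb{Q})[p^\infty]$ pins down the maximal divisible submodule as $E(\mathbb{Q}) \otimes \mathbb{Q}_p/\mathbb{Z}_p$ and the maximal finite quotient as $\sha(E/\mathbb{Q})[p^\infty]$.

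The main obstacle is the second translation: Mazur--Rubin delivers the $p$-strict case essentially for free, and the dual-exponential dictionary is a direct (if technical) computation, but matching the Fitting ideals of the classical Selmer group to those of $\mathrm{Sel}_0$ \emph{exactly} -- preserving both the $(\mathbb{Z}/p^a\mathbb{Z})^{\oplus 2}$-structure and the parity condition -- requires a careful local analysis at $p$ and precise bookkeeping of elementary divisors. This is the substance of the computations promised for $\S$\ref{sec:proof-refined-bsd}, and it is the only place where sharpness (as opposed to one-sided bounds) must be argued.
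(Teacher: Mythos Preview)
Your outline has two genuine gaps that would prevent the argument from closing.

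First, the claim that $\kappa^{\mathrm{Kato}}_n$ and $\widetilde{\delta}_n$ ``agree level by level up to $p$-adic units'' is false, and with it the identification $\mathrm{ord}(\kn)=\mathrm{ord}(\ks^{\mathrm{Kato}})$. Theorem \ref{thm:from-ks-to-kn} only computes $\mathrm{loc}^s_p(\kappa^{\mathrm{Kato}}_n)$, not $\kappa^{\mathrm{Kato}}_n$ itself: one can have $\kappa^{\mathrm{Kato}}_n\neq 0$ while $\widetilde{\delta}_n=0$, namely whenever $\kappa^{\mathrm{Kato}}_n$ lands in $\mathrm{Sel}_n(\mathbb{Q},T/I_nT)$. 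This is not a corner case; the functional equation (Proposition \ref{prop:vanishing-delta-n}) forces $\widetilde{\delta}_n=0$ for \emph{every} $n$ with $(-1)^{\nu(n)}\neq w(E)$, whereas Proposition \ref{prop:non-vanishing-kappa-n} shows $\kappa^{\mathrm{Kato}}_n$ has no such parity obstruction. In the positive-rank case the paper shows $\mathrm{ord}(\kn)=\mathrm{ord}(\ks^{\mathrm{Kato}})+1$, not equality. The translation from $\ks^{\mathrm{Kato}}$ to $\kn$ is therefore not a dictionary but an additional argument: one must show that the vanishing half of $\kn$ is exactly compensated by the cokernel term in (\ref{eqn:kappa_n-widedelta_n}), and that the surviving half suffices to read off the structure.

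Second, Mazur--Rubin's Theorem \ref{thm:kato-kolyvagin-main} does \emph{not} produce a paired $(\mathbb{Z}/p^{a_i}\mathbb{Z})^{\oplus 2}$ decomposition for $\mathrm{Sel}_0(\mathbb{Q},E[p^\infty])$; the fine Selmer group has no self-duality pairing. The $M\oplus M$ shape in the theorem comes from the generalized Cassels--Tate pairing on the \emph{classical} $n$-transverse Selmer groups (Corollary \ref{cor:cassels-tate}), which is entirely absent from your outline. Relatedly, the Poitou--Tate comparison term $\left(\mathrm{H}^1_{/f}(\mathbb{Q}_p,T)/\mathrm{loc}^s_p(\mathrm{Sel}_{\mathrm{rel}})\right)^\vee$ is not ``divisible of corank one in every case'': it is finite when $\mathrm{rk}\,E(\mathbb{Q})=0$ (see Lemma \ref{lem:kurihara-pollack} and \S\ref{subsec:selmer-corank-zero}). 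The paper's proof accordingly splits into cases and, within each, chooses Kolyvagin primes \emph{two at a time} (to restore the parity destroyed by the functional equation), applies Cassels--Tate at each stage to force the square structure on $\mathrm{Sel}_{n}(\mathbb{Q},E[I_n])$, and tracks precisely how $\mathrm{ord}_p(\widetilde{\delta}_n)-\mathrm{ord}_p(\kappa^{\mathrm{Kato}}_n)$ evolves via (\ref{eqn:kappa_n-widedelta_n-sequence}) and (\ref{eqn:kappa_n-widedelta_n}). Your plan identifies the right ingredients at a coarse level but misses both the Cassels--Tate input and the two-prime-at-a-time induction that make the parity and pairing statements in (2) come out exactly.
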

\begin{proof}
See $\S$\ref{sec:proof-refined-bsd}. The first two statements imply all the other statements.
\end{proof}
When  $\partial^{(\infty)}(\widetilde{\boldsymbol{\delta}})=0$, the ideal $p^{\partial^{(i)} (\widetilde{\boldsymbol{\delta}} )} \mathbb{Z}_p$ is closely related to the \textbf{$i$-th higher Stickelberger ideal} $\Theta_i = \sum_{0\leq i_0 \leq i} p^{\partial^{(i_0)} (\widetilde{\boldsymbol{\delta}} )} \mathbb{Z}_p$ defined by Kurihara \cite{kurihara-munster, kurihara-iwasawa-2012}.
The conclusion of Theorem \ref{thm:refined-bsd} should be viewed as the structural refinement of Birch and Swinnerton-Dyer conjecture, and it was first observed by Kurihara from a completely different perspective under various technical assumptions. See $\S$\ref{subsubsec:refined-iwasawa-theory} for details.

We are able to obtain an approximate structural upper bound of $\mathrm{Sel}(\mathbb{Q}, E[p^\infty])$ in practice since each $i$-th higher Fitting ideal 
can be approximated by computing the valuations of $\widedelta_n$'s with fixed $\nu(n) = i$.
\subsubsection{} \label{subsubsec:numerical-verification}
Considering the compatibility with the conjectural classical BSD formula, we expect the following equality, which is a quantitative refinement of Conjecture \ref{conj:kn-non-zero}. See \cite[Conj. 4.5]{wei-zhang-cdm} for the Heegner point version.
\begin{conj} \label{conj:kn-non-zero-quantitative}
Let $E$ be an elliptic curve over $\mathbb{Q}$ of conductor $N$ and $p \geq 5$ a prime such that
 $\overline{\rho}$ is surjective and the Manin constant is prime to $p$.
Then
$$ \partial^{(\infty)}(\kn) = \sum_{\ell \mid N} \mathrm{ord}_p(c_\ell)$$
where $c_\ell$ is the Tamagawa factor of $E$ at $\ell$.
\end{conj}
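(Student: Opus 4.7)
The plan is to reduce the conjecture, via Theorem \ref{thm:refined-bsd}, to the $p$-part of the Birch and Swinnerton-Dyer leading-term formula, and then to handle the rank-zero case directly while isolating higher rank as the main obstacle. Write $r = \mathrm{ord}(\kn)$; assuming the finiteness of $\sha(E/\mathbb{Q})[p^\infty]$, Theorem \ref{thm:refined-bsd}(4)--(6) gives $r = \mathrm{rk}_{\mathbb{Z}} E(\mathbb{Q})$ and
$$\mathrm{ord}_p\bigl( | \sha(E/\mathbb{Q})[p^\infty] | \bigr) \;=\; \partial^{(r)}(\kn) \,-\, \partial^{(\infty)}(\kn).$$
The conjectured identity is therefore equivalent to
$$\partial^{(r)}(\kn) \;=\; \mathrm{ord}_p\bigl( | \sha(E/\mathbb{Q})[p^\infty] | \bigr) \,+\, \sum_{\ell \mid N} \mathrm{ord}_p(c_\ell),$$
which, since the surjectivity of $\overline{\rho}$ with $p \geq 5$ forces $p \nmid | E(\mathbb{Q})_{\mathrm{tors}} |$, is precisely the $p$-part of the BSD leading-term formula under a suitable interpretation of $\partial^{(r)}(\kn)$ as the $p$-adic valuation of $L^{(r)}(E,1)/(r!\,\Omega_E^+ R_E)$.

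In the rank-zero case $r = 0$ this reduction is immediate: $\widetilde{\delta}_1 = L(E,1)/\Omega_E^+$ by $\S$\ref{subsec:kurihara-numbers}, so $\partial^{(0)}(\kn) = \mathrm{ord}_p\bigl( L(E,1)/\Omega_E^+ \bigr)$, and combining this with the known $p$-part of BSD in analytic rank zero \cite{kato-euler-systems, skinner-urban, jetchev-skinner-wan, castella-cambridge} and Theorem \ref{thm:refined-bsd}(6) proves the conjecture unconditionally under the running hypotheses together with the finiteness of $\sha$. I would write this case first as a sanity check, to pin down the precise normalizations and to verify that no unit factor obstructs the identification of $\widetilde{\delta}_1$ with the algebraic BSD quantity modulo the Tamagawa product.

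For $r \geq 1$ the main obstacle is a leading-term formula identifying $\partial^{(r)}(\kn)$ with $\mathrm{ord}_p \bigl( L^{(r)}(E,1)/(r!\,\Omega_E^+ R_E) \bigr)$ up to a $p$-adic unit. The natural route exploits the description of Kurihara numbers as images of Kato's Kolyvagin system under a refinement of the dual exponential map: for $n \in \mathcal{N}_1$ with $\nu(n) = r$, Kato's explicit reciprocity and a local computation at each $\ell \mid n$ should convert $\widetilde{\delta}_n$ into the evaluation of $\kappa^{\mathrm{Kato}}_n$ against a product of local finite-singular comparison classes, and a global Poitou--Tate / Cassels--Tate duality argument should relate the minimum of these evaluations over such $n$ to $R_E \cdot | \sha(E/\mathbb{Q})[p^\infty] | \cdot \prod_{\ell \mid N} c_\ell$ up to $p$-adic units. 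This step is precisely a full $p$-part leading-term refinement of BSD in arbitrary rank, parallel to Wei Zhang's Heegner-point conjecture \cite[Conjecture 4.5]{wei-zhang-cdm}, and is the genuine hard core of Conjecture \ref{conj:kn-non-zero-quantitative}.

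A complementary unconditional strategy should yield one inequality directly. Kato's Euler system classes are integral only after clearing local Tamagawa denominators at primes $\ell \mid N$, and this denominator propagates through the $\Lambda$-adic Euler-to-Kolyvagin system map of Theorem \ref{thm:euler-to-kolyvagin-Lambda-adic} and through the refined dual exponential to the Kurihara numbers themselves. Careful bookkeeping of these local factors should give the intrinsic lower bound $\partial^{(\infty)}(\kn) \geq \sum_{\ell \mid N} \mathrm{ord}_p(c_\ell)$ with no rank or analytic hypothesis at all; the reverse inequality carries the leading-term BSD content and remains the substantive obstacle.
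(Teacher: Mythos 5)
The statement in question is a \emph{conjecture} in the paper; the author offers no proof and explicitly motivates it only by ``compatibility with the conjectural classical BSD formula.'' Your reduction is sound as far as it goes: granting Theorem \ref{thm:refined-bsd} (together with finiteness of $\sha(E/\mathbb{Q})[p^\infty]$ and $\mathrm{ord}(\kn)<\infty$), the conjecture is indeed equivalent to $\partial^{(r)}(\kn) = \mathrm{ord}_p\bigl(|\sha(E/\mathbb{Q})[p^\infty]|\bigr) + \sum_{\ell\mid N}\mathrm{ord}_p(c_\ell)$, and since $\widetilde{\delta}_1 = L(E,1)/\Omega^+_E$ the $r=0$ case is literally the $p$-part of BSD. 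But your rank-zero conclusion overreaches: the $p$-part of BSD in analytic rank zero is \emph{not} known under only the hypotheses of the conjecture ($\overline{\rho}$ surjective and Manin constant prime to $p$) --- the references you cite impose further conditions such as semi-stability or good ordinary reduction at $p$. And for $r\geq 1$ there is no leading-term identity connecting $\partial^{(r)}(\kn)$ to $\mathrm{ord}_p\bigl(L^{(r)}(E,1)/(r!\,\Omega^+_E\,\mathrm{Reg}(E/\mathbb{Q}))\bigr)$ anywhere in this paper's framework; the Kurihara numbers carry no archimedean information about complex derivatives of $L(E,s)$, so that step is itself a new and unproven conjecture, not a reduction.

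Your ``complementary unconditional'' lower bound rests on a mischaracterization of Kato's classes. The elements $z^{\mathrm{Kato}}_F$ lie in $\mathrm{H}^1(F,T)$ by construction; there are no ``local Tamagawa denominators'' to clear and hence no denominator that propagates through Theorem \ref{thm:euler-to-kolyvagin-Lambda-adic} to $\kn$. The way Tamagawa factors actually enter the paper (see the remark after Theorem \ref{thm:rigidity} and Lemma \ref{lem:iwasawa-cohomology}) is indirect and only qualitative: because Tamagawa factors are invisible over the cyclotomic tower, a prime $p\mid c_\ell$ obstructs the surjectivity of the specialization map $\KSbar(T\otimes\Lambda)\to\KS(T)$ and thereby forces $\ks^{\mathrm{Kato}}$ to be non-primitive, i.e.\ $\partial^{(\infty)}(\ks^{\mathrm{Kato}})>0$. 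Upgrading that to the quantitative bound $\partial^{(\infty)}(\kn)\geq\sum_{\ell\mid N}\mathrm{ord}_p(c_\ell)$ is exactly what remains open. In short, your proposal correctly identifies the BSD content of Conjecture \ref{conj:kn-non-zero-quantitative} but does not and cannot close it with the tools available in this paper.
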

We expect that the only obstructions to observe the non-vanishing of $\kn$ are the Selmer corank (Theorem \ref{thm:refined-bsd}), Tamagawa factors (Conjecture \ref{conj:kn-non-zero-quantitative}), and the functional equation (Proposition \ref{prop:vanishing-delta-n}).
In this sense, the numerical verification of Conjecture \ref{conj:kn-non-zero} is not very difficult.
 It is easy to compute each Kurihara number numerically, at least in the mod $p$ situation \cite{kurihara-iwasawa-2012, kks, kim-survey, kurihara-analytic-quantities}. An efficient algorithm to compute $\widetilde{\delta}_n$ is available at \href{https://github.com/aghitza/kurihara_numbers}{https://github.com/aghitza/kurihara\_numbers} thanks to Alexandru Ghitza.

After the first version of this article becomes available (March 2022), Burungale--Castella--Grossi--Skinner also proved the non-triviality of $\ks^{\mathrm{Kato}}$ (Corollary \ref{cor:main-ks}) and Conjecture \ref{conj:kn-non-zero-quantitative} for elliptic curves with good ordinary reduction even without assuming the surjectivity of $\overline{\rho}$ \cite{burungale-castella-grossi-skinner-indivisibility}. Their approach uses the full Iwasawa main conjecture and is based on a certain $p$-adic approximation of the specialization of $\Lambda$-adic Kato's Kolyvagin system with careful control of error terms. Their work does not have any direct application to the exact size of Selmer groups yet.
In our approach, Mazur--Rubin's approximation argument is used to connect the Iwasawa main conjecture localized at the augmentation ideal with the non-triviality of $\ks^{\mathrm{Kato}}$ and $\widetilde{\boldsymbol{\delta}}$ as in $\S$\ref{subsec:proof-of-main-technical}. 
One of the key features of our work is to reveal the structure of Selmer groups.

\subsection{The conjecture of Kurihara}
\subsubsection{}
In the mod $p$ situation, we confirm the conjecture of Kurihara \cite[Conj. 2]{kurihara-iwasawa-2012},
which says that a single $n$ with $\widedelta^{(1)}_n \neq 0$ entirely determines the structure of mod $p$ Selmer groups in terms of purely local data when all the Tamagawa factors are prime to $p$.
In this case, the non-vanishing of $\kn^{(1)}$ is equivalent to the Iwasawa main conjecture \cite[Conj. 12.10]{kato-euler-systems}.
\begin{thm} \label{thm:kurihara-conjecture}
Let $E$ be an elliptic curve over $\mathbb{Q}$ and $p \geq 5$ a prime
such that
\begin{itemize}
\item $\overline{\rho}$ is surjective,
\item the Manin constant is prime to $p$,
\item $E(\mathbb{Q}_p)[p] = 0$, and
\item all the Tamagawa factors are prime to $p$.
\end{itemize}
Then the following statements are equivalent.
\begin{enumerate}
\item $\widedelta^{(1)}_n \neq 0$ in $\mathbb{F}_p$ for some $n \in \mathcal{N}_1$ with $\nu(n) = \mathrm{ord}(\kn^{(1)})$.
\item The mod $p$ Kato's Kolyvagin system $\ks^{\mathrm{Kato},(1)}$ is non-trivial.
\item The Iwasawa main conjecture holds.
\end{enumerate}
In this case, the canonical homomorphism
$$\mathrm{Sel}(\mathbb{Q}, E[p]) \to \bigoplus_{\ell \mid n} ( E(\mathbb{Q}_\ell) \otimes \mathbb{Z}/p\mathbb{Z} ) \simeq \bigoplus_{\ell \mid n} ( E(\mathbb{F}_\ell) \otimes \mathbb{Z}/p\mathbb{Z} )$$
 is an isomorphism, and Conjecture \ref{conj:kn-non-zero-quantitative} follows with value zero.
If we further assume that $\sha(E/\mathbb{Q})[p]$ is trivial, then we have a mod $p$ exact rank formula 
$$\mathrm{rk}_{\mathbb{Z}} E(\mathbb{Q})  = \mathrm{ord}(\kn^{(1)}) = \nu(n).$$
\end{thm}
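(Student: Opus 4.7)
The plan is to establish the three-way equivalence of (1), (2), and (3), after which the semi-local isomorphism, the vanishing of $\partial^{(\infty)}(\kn)$, and the rank formula follow rather quickly.

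For $(1) \Leftrightarrow (2)$ I would invoke the mod $p$ reduction of Proposition \ref{prop:ks-kn-equi-non-vanishing}, which identifies the Kurihara number $\widedelta^{(1)}_n$ with the image of the mod $p$ Kolyvagin class $\kappa^{\mathrm{Kato},(1)}_n$ under (a refinement of) the dual exponential map. Under our standing hypotheses ($\overline{\rho}$ surjective, Manin constant prime to $p$, $E(\mathbb{Q}_p)[p]=0$), this map is injective on the relevant finite-singular components, so the non-vanishing of some $\widedelta^{(1)}_n$ at minimal $\nu(n)$ is exactly the non-triviality of $\ks^{\mathrm{Kato},(1)}$.

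For $(2) \Leftrightarrow (3)$ I would specialize Theorem \ref{thm:main-technical} to $\mathfrak{P} = p\Lambda$: the mod $p$ Kolyvagin system is non-trivial if and only if $p\Lambda$ is not a blind spot of $\ks^{\mathrm{Kato},\infty}$, which by Theorem \ref{thm:main-technical} is equivalent to the IMC localized at $p\Lambda$, that is, the equality of $\mu$-invariants in \eqref{eqn:IMC}. The hypotheses that $E(\mathbb{Q}_p)[p]=0$ and that all Tamagawa factors are prime to $p$ are what let this localized statement be upgraded to the full IMC: they force the Selmer characteristic ideal to carry no extra $p$-torsion contribution from local conditions at $p$ or at bad primes, so once combined with Kato's divisibility over $\Lambda$ (\cite{kato-euler-systems}), the matching of $\mu$-invariants and the Mazur--Rubin structural control of the mod $p$ Kolyvagin system force the $\lambda$-invariants to coincide as well. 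This upgrade from localized to global IMC is, in my view, the main obstacle, since one must carefully control the structure of $\mathrm{Sel}_0(\mathbb{Q}_\infty, E[p^\infty])^\vee$ at height one primes distinct from $p\Lambda$ using only the given local hypotheses.

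Granting the equivalence of (1)--(3), the semi-local isomorphism follows by combining Mazur--Rubin's structure theorem (Theorem \ref{thm:kato-kolyvagin-main}) for $\mathrm{Sel}_0(\mathbb{Q}, E[p])$ with the local-condition comparison between $\mathrm{Sel}_0$ and $\mathrm{Sel}$ mod $p$. When $\nu(n) = \mathrm{ord}(\kn^{(1)})$ and $\widedelta^{(1)}_n \neq 0$, Mazur--Rubin forces the singular residues of $\kappa^{\mathrm{Kato},(1)}_n$ to form a basis of $\bigoplus_{\ell \mid n} \mathrm{H}^1_{\sing}(\mathbb{Q}_\ell, E[p])$, and the standard identifications $\mathrm{H}^1_{\sing}(\mathbb{Q}_\ell, E[p]) \cong E(\mathbb{F}_\ell) \otimes \mathbb{F}_p \cong E(\mathbb{Q}_\ell) \otimes \mathbb{F}_p$ for $\ell \in \mathcal{P}_1$ then yield the asserted isomorphism; the Tamagawa and $E(\mathbb{Q}_p)[p]=0$ hypotheses ensure $\dim_{\mathbb{F}_p} \mathrm{Sel}(\mathbb{Q}, E[p]) = \dim_{\mathbb{F}_p} \mathrm{Sel}_0(\mathbb{Q}, E[p]) = \nu(n)$. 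The value $\partial^{(\infty)}(\kn)=0$ in Conjecture \ref{conj:kn-non-zero-quantitative} is immediate since $\widedelta^{(1)}_n \neq 0$ in $\mathbb{F}_p$ means that $\widedelta_n$ is already a unit in $\mathbb{Z}_p/I_n\mathbb{Z}_p$, matching the zero-valuation Tamagawa sum. Finally, if $\sha(E/\mathbb{Q})[p]=0$, the exact sequence $0 \to E(\mathbb{Q})/pE(\mathbb{Q}) \to \mathrm{Sel}(\mathbb{Q}, E[p]) \to \sha(E/\mathbb{Q})[p] \to 0$, combined with $E(\mathbb{Q})[p]=0$ (forced by surjectivity of $\overline{\rho}$), yields $\mathrm{rk}_{\mathbb{Z}} E(\mathbb{Q}) = \dim_{\mathbb{F}_p} \mathrm{Sel}(\mathbb{Q}, E[p]) = \nu(n)$.
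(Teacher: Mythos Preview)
Your approach to $(2)\Leftrightarrow(3)$ has a genuine gap. Specializing Theorem~\ref{thm:main-technical} at $\mathfrak{P}=p\Lambda$ only tells you that the non-triviality of $\ks^{\mathrm{Kato},(1)}$ is equivalent to the Iwasawa main conjecture \emph{localized at $p\Lambda$}, i.e.\ equality of $\mu$-invariants. Your claimed upgrade to the full IMC---that the Tamagawa and $E(\mathbb{Q}_p)[p]=0$ hypotheses together with Kato's divisibility force the $\lambda$-invariants to match---is not justified: those local hypotheses constrain $p$-torsion in Selmer quotients, not the behavior of $\mathrm{char}_\Lambda$ at height one primes $\mathfrak{P}\neq p\Lambda$. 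Nothing in your outline explains why non-triviality mod $p$ should propagate to non-triviality mod every other $\mathfrak{P}$.

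The paper closes this gap by invoking B\"uy\"ukboduk's rigidity theorem (Theorem~\ref{thm:rigidity}): under precisely the hypotheses $E(\mathbb{Q}_p)[p]=0$ and $p\nmid\prod c_\ell$, the module $\KSbar(T\otimes\Lambda)$ is \emph{free of rank one over $\Lambda$} and the specialization $\KSbar(T\otimes\Lambda)\to\KS(T)$ is surjective. Hence $\ks^{\mathrm{Kato},(1)}\neq 0$ means $\ks^{\mathrm{Kato},\infty}$ is a $\Lambda$-generator (Corollary~\ref{cor:primitivity-iff-Lambda-primitivity}), which is $\Lambda$-primitivity at \emph{all} height one primes simultaneously, and then Theorem~\ref{thm:mazur-rubin-main-conjecture} yields the full IMC. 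Conversely, IMC forces $\ks^{\mathrm{Kato},\infty}$ to generate, hence its mod $p$ image is non-zero. This is the role of the Tamagawa and local-torsion hypotheses you correctly sensed were crucial, but the mechanism is structural rigidity of the $\Lambda$-adic Kolyvagin module, not any direct Selmer computation.

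A secondary issue: in your semi-local argument you assert $\dim_{\mathbb{F}_p}\mathrm{Sel}(\mathbb{Q},E[p])=\dim_{\mathbb{F}_p}\mathrm{Sel}_0(\mathbb{Q},E[p])$, but these can differ by one. The paper treats both cases: when they agree one has $\mathrm{ord}(\kn^{(1)})=\mathrm{ord}(\ks^{\mathrm{Kato},(1)})$, and when they differ one needs an extra Kolyvagin prime $\ell$ chosen via Chebotarev to show $\widedelta^{(1)}_{n'\ell}\neq 0$ with $\nu(n'\ell)=\mathrm{ord}(\ks^{\mathrm{Kato},(1)})+1$.
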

\begin{proof}
See $\S$\ref{sec:kurihara-conjecture}.
\end{proof}
The implication (1) $\Rightarrow$ (2) $\Rightarrow$ (3) is the main result of \cite{kks}.
The implication (3) $\Rightarrow$ (1) is proved by Kurihara \cite{kurihara-munster} (under the assumption on the non-degeneracy of $p$-adic height pairings) and Sakamoto \cite{sakamoto-p-selmer} when $E$ has good ordinary reduction at $p$ via Kolyvagin systems of Gauss sum type and Kolyvagin systems of rank zero, respectively.
In particular, Theorem \ref{thm:kurihara-conjecture} recovers the main result of \cite{sakamoto-p-selmer}.
See also \cite{kurihara-iwasawa-2012, kurihara-analytic-quantities}.

\subsection{Applications to Birch and Swinnerton-Dyer conjecture}
We discuss some consequences of Theorem \ref{thm:refined-bsd} towards Birch and Swinnerton-Dyer conjecture.
\subsubsection{}
If $\kn$ does not vanish, then the rank zero $p$-converse to a theorem of Gross--Zagier--Kolyvagin follows.
\begin{cor} \label{cor:rank-zero-converse}
Let $E$ be an elliptic curve over $\mathbb{Q}$ and $p \geq 5$ a prime.
We assume that
 $\overline{\rho}$ is surjective,
 the Manin constant is prime to $p$, and
 $\mathrm{ord} (\kn) < \infty$. 
Then $\mathrm{Sel}(\mathbb{Q}, E[p^\infty])$ is finite if and only if $L(E, 1)\neq 0$.
In particular, Theorem \ref{thm:main-intro-p-converse} follows.
\end{cor}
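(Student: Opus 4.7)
The plan for Corollary \ref{cor:rank-zero-converse} is to unwind Theorem \ref{thm:refined-bsd} at the tautological level $i=0$. Under the stated hypotheses, part (1) of Theorem \ref{thm:refined-bsd} supplies the identity $\mathrm{cork}_{\mathbb{Z}_p}\mathrm{Sel}(\mathbb{Q}, E[p^\infty]) = \mathrm{ord}(\kn)$. Since the Selmer group is co-finitely generated over $\mathbb{Z}_p$, its finiteness is equivalent to the vanishing of its $\mathbb{Z}_p$-corank, hence to $\mathrm{ord}(\kn) = 0$. By definition $\mathrm{ord}(\kn) = \min \{ \nu(n) : n \in \mathcal{N}_1, \widetilde{\delta}_n \neq 0 \}$, and since the only $n \in \mathcal{N}_1$ with $\nu(n) = 0$ is $n = 1$, this is in turn equivalent to $\widetilde{\delta}_1 \neq 0$. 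Finally, the formula $\widetilde{\delta}_1 = L(E,1)/\Omega^+_E$ recorded in $\S$\ref{subsec:kurihara-numbers} converts this into $L(E,1) \neq 0$, which closes the loop.

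For the deduction of Theorem \ref{thm:main-intro-p-converse}, the implication (2) $\Rightarrow$ (1) is the classical theorem of Kolyvagin and Kato (with Rubin handling the CM case), so only the converse requires fresh input. Assuming (1), the fundamental exact sequence from the introduction forces $\mathrm{Sel}(\mathbb{Q}, E[p^\infty])$ to be finite for every prime $p$. It therefore suffices to exhibit a single prime $p \geq 5$ at which the hypotheses of Corollary \ref{cor:rank-zero-converse} are met and at which $\mathrm{ord}(\kn) < \infty$ is secured \emph{a priori}. For non-CM $E$, Serre's open image theorem yields surjectivity of $\overline{\rho}$ for all but finitely many $p$; the Manin constant is prime to all but finitely many primes; and the set of good ordinary primes is infinite. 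Picking $p$ in the intersection of these cofinite conditions, Corollary \ref{cor:main-kn}(1) supplies $\mathrm{ord}(\kn) < \infty$ via the good-ordinary case of the Iwasawa main conjecture inverting $p$ due to Kato, Skinner--Urban, and Wan. Corollary \ref{cor:rank-zero-converse} then delivers $L(E,1) \neq 0$. The CM case is handled separately by Rubin's main conjecture for imaginary quadratic fields.

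The expected difficulty is minimal: the arithmetic content has already been absorbed into Theorem \ref{thm:refined-bsd} and Corollary \ref{cor:main-kn}. Given these, the corollary itself is a one-line chase of definitions, and the $p$-converse becomes the standard maneuver of selecting a good ordinary auxiliary prime and invoking what is known for such primes. The only mildly non-routine point to verify carefully is that, for every elliptic curve over $\mathbb{Q}$, the three cofinite conditions on $p$ (surjectivity of $\overline{\rho}$, $p$-integrality of the Manin constant, good ordinary reduction) have non-empty intersection inside the primes $\geq 5$ — which they do.
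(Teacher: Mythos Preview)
Your proof is correct and follows essentially the same approach as the paper: the equivalence is read off directly from Theorem \ref{thm:refined-bsd}(1) together with $\widetilde{\delta}_1 = L(E,1)/\Omega^+_E$, and the deduction of Theorem \ref{thm:main-intro-p-converse} proceeds by treating the CM case via Rubin and, in the non-CM case, selecting a good ordinary prime $p \geq 5$ with $\overline{\rho}$ surjective so that the Iwasawa main conjecture inverting $p$ forces $\mathrm{ord}(\kn) < \infty$. The only cosmetic difference is that the paper cites Theorem \ref{thm:main-technical} and Proposition \ref{prop:ks-kn-equi-non-vanishing} directly rather than the packaged Corollary \ref{cor:main-kn}, which amounts to the same thing.
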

\begin{proof}
It immediately follows from Theorem \ref{thm:refined-bsd} except the `In particular' part.
For CM elliptic curves, See \cite[Thm. 11.1]{rubin-main-conj-cm}. See also \cite{burungale-tian-jnt, burungale-tian-rank-zero-p-converse}.
For non-CM elliptic curves, we are always possible to choose a good ordinary prime $p$ such that $\overline{\rho}$ is surjective.
By Theorem \ref{thm:main-technical} and Proposition \ref{prop:ks-kn-equi-non-vanishing}, the Iwasawa main conjecture inverting $p$ \cite{kato-euler-systems, skinner-urban, wan_hilbert} implies  $\mathrm{ord} (\kn) < \infty$. 
By Theorem \ref{thm:refined-bsd}, $\mathrm{Sel}(\mathbb{Q}, E[p^\infty])$ is finite if and only if $\widedelta_1 = \dfrac{L(E, 1)}{\Omega^+_E} \neq 0$.
It is known that $L(E, 1)\neq 0$ implies the finiteness of both $E(\mathbb{Q})$ and $\sha(E/\mathbb{Q})$ thanks to the work of Kato \cite[Thm. 3.5.4]{rubin-book}, \cite[Cor. 14.3]{kato-euler-systems}.
Thus, Theorem \ref{thm:main-intro-p-converse} follows.
\end{proof}
The reader is invited to compare Corollary \ref{cor:rank-zero-converse} with \cite[Thm. 2]{skinner-urban}, \cite[Thm. 7]{wan_hilbert}, and \cite[Thm. E in $\S$4]{skinner-converse}, which use the control theorem and the interpolation formula of $p$-adic $L$-functions instead of Theorem \ref{thm:refined-bsd}.

\subsubsection{}
By using the functional equation for $\widedelta_n$ discussed in (\ref{eqn:functional-equation-delta_n}), we immediately obtain the $p$-parity conjecture from Theorem \ref{thm:refined-bsd}. Our approach is completely independent of others. 
\begin{cor} \label{cor:p-parity}
Let $E$ be an elliptic curve over $\mathbb{Q}$ and $p \geq 5$ a prime such that
 $\overline{\rho}$ is surjective and the Manin constant is prime to $p$.
If $\mathrm{ord} (\kn) < \infty$, then
we have
$$\mathrm{cork}_{\mathbb{Z}_p}\mathrm{Sel}(\mathbb{Q}, E[p^\infty]) \equiv
\mathrm{ord}_{s=1} L(E,s) \pmod{2} .$$
In particular, the $p$-parity conjecture holds for elliptic curves with good ordinary reduction at $p$.
\end{cor}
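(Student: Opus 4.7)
The plan is to combine the corank formula of Theorem \ref{thm:refined-bsd}(1) with the functional equation for $\widetilde{\delta}_n$ (cited as (\ref{eqn:functional-equation-delta_n}) and studied in Proposition \ref{prop:vanishing-delta-n}) to constrain the parity of $\mathrm{ord}(\kn)$. Under the running hypotheses and the assumption $\mathrm{ord}(\kn) < \infty$, Theorem \ref{thm:refined-bsd}(1) gives the identity
\[
\mathrm{cork}_{\mathbb{Z}_p}\mathrm{Sel}(\mathbb{Q}, E[p^\infty]) = \mathrm{ord}(\kn),
\]
so the entire problem reduces to computing $\mathrm{ord}(\kn) \pmod 2$.

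The functional equation for $\widetilde{\delta}_n$ is built out of the involution $a \mapsto -a$ on $(\mathbb{Z}/n\mathbb{Z})^\times$, together with the action of the Atkin--Lehner involution on the modular symbol $\left[\frac{a}{n}\right]^+$. The $+$-modular symbol picks up the global root number $w(E) = (-1)^{\mathrm{ord}_{s=1}L(E,s)}$, while each factor $\overline{\log_{\eta_\ell}(a)}$ picks up a sign $-1$ under $a \mapsto -a$ since $\ell$ is odd. Hence (\ref{eqn:functional-equation-delta_n}) takes the schematic form
\[
\widetilde{\delta}_n = -w(E)\cdot(-1)^{\nu(n)}\widetilde{\delta}_n,
\]
which forces $\widetilde{\delta}_n = 0$ unless $\nu(n) \equiv \mathrm{ord}_{s=1}L(E,s)\pmod 2$. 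Taking the minimum over all $n \in \mathcal{N}_1$ with $\widetilde{\delta}_n \neq 0$ yields
\[
\mathrm{ord}(\kn)\ \equiv\ \mathrm{ord}_{s=1}L(E,s)\pmod 2,
\]
and combining with the corank formula above gives the congruence in the statement.

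For the ``In particular'' clause on good ordinary $p$, it remains to verify the hypothesis $\mathrm{ord}(\kn) < \infty$. Under the stated assumptions, the Iwasawa main conjecture inverting $p$ is known by the combined work of Kato, Skinner--Urban, and Wan \cite{kato-euler-systems, skinner-urban, wan_hilbert}; in particular the localized main conjecture at $X\Lambda$ of Theorem \ref{thm:main-technical} holds, so $X\Lambda$ is not a blind spot of $\ks^{\mathrm{Kato},\infty}$, which by Proposition \ref{prop:non-triviality-equivalence} means $\ks^{\mathrm{Kato}}$ is non-trivial, and by Proposition \ref{prop:ks-kn-equi-non-vanishing} this is equivalent to $\mathrm{ord}(\kn) < \infty$. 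The first part of the Corollary then applies.

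The only non-mechanical step is the sign computation producing (\ref{eqn:functional-equation-delta_n}); this is where the interaction of the Atkin--Lehner sign on modular symbols with the product of discrete logarithms $\prod_{\ell\mid n}\overline{\log_{\eta_\ell}(a)}$ must be tracked carefully. Since the excerpt already refers the reader to Proposition \ref{prop:vanishing-delta-n} for this analysis, the proof of Corollary \ref{cor:p-parity} itself is essentially a one-line assembly of Theorem \ref{thm:refined-bsd}(1), the functional equation, and the known cases of the Iwasawa main conjecture.
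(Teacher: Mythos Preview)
Your approach is essentially the same as the paper's: the corollary is stated immediately after the remark that the functional equation (\ref{eqn:functional-equation-delta_n}) combined with Theorem \ref{thm:refined-bsd} yields the $p$-parity conjecture, and the paper gives no further proof. Your assembly of Theorem \ref{thm:refined-bsd}(1), Proposition \ref{prop:vanishing-delta-n}, and (for the good ordinary clause) Corollary \ref{cor:main-kn} via Theorem \ref{thm:main-technical} and Proposition \ref{prop:ks-kn-equi-non-vanishing} is exactly what the paper intends.

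One slip to fix: your displayed ``schematic'' functional equation $\widetilde{\delta}_n = -w(E)\cdot(-1)^{\nu(n)}\widetilde{\delta}_n$ carries an extra minus sign relative to the paper's (\ref{eqn:functional-equation-delta_n}), which reads $w(E)\cdot(-1)^{\nu(n)}\cdot\widetilde{\delta}_n = \widetilde{\delta}_n$. Your version would force $\nu(n)\not\equiv\mathrm{ord}_{s=1}L(E,s)\pmod 2$ for nonvanishing, contradicting the (correct) conclusion you then state. Since you ultimately defer to (\ref{eqn:functional-equation-delta_n}) and Proposition \ref{prop:vanishing-delta-n} anyway, just drop the heuristic re-derivation or correct the sign; the rest of the argument is fine.
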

\subsubsection{}
We obtain the following ``easy and practical" upper bound of the ranks of elliptic curves.
\begin{cor} \label{cor:upper-bound}
Let $E$ be an elliptic curve over $\mathbb{Q}$ and $p \geq 5$ a prime such that
 $\overline{\rho}$ is surjective and the Manin constant is prime to $p$.
If $\widetilde{\delta}_n \neq 0$ for some $n \in \mathcal{N}_1$, 
then
$$ \mathrm{rk}_{\mathbb{Z}}E(\mathbb{Q}) \leq \nu(n).$$
\end{cor}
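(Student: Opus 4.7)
The plan is to derive this as a one-line consequence of Theorem \ref{thm:refined-bsd}(1) combined with the fundamental exact sequence displayed in the introduction. The entire substance of the bound is already packaged in Theorem \ref{thm:refined-bsd}; what remains is essentially a chain of three inequalities with no analytic or cohomological content beyond what is already established.

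First I would unpack the hypothesis. By the very definition
\[
\mathrm{ord}(\kn) = \min \left\lbrace \nu(m) : m \in \mathcal{N}_1,\ \widetilde{\delta}_m \neq 0 \right\rbrace,
\]
the assumption that $\widetilde{\delta}_n \neq 0$ for some $n \in \mathcal{N}_1$ yields $\mathrm{ord}(\kn) \leq \nu(n) < \infty$. In particular the finiteness hypothesis of Theorem \ref{thm:refined-bsd} is satisfied, so I may apply part (1) of that theorem to get
\[
\mathrm{cork}_{\mathbb{Z}_p} \mathrm{Sel}(\mathbb{Q}, E[p^\infty]) = \mathrm{ord}(\kn).
\]
Then I would feed this into the fundamental exact sequence
\[
0 \to E(\mathbb{Q}) \otimes \mathbb{Q}_p/\mathbb{Z}_p \to \mathrm{Sel}(\mathbb{Q}, E[p^\infty]) \to \sha(E/\mathbb{Q})[p^\infty] \to 0
\]
recalled in the introduction. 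Since corank is additive on short exact sequences of co-finitely generated $\mathbb{Z}_p$-modules (and $\sha(E/\mathbb{Q})[p^\infty]$ has non-negative corank), this gives
\[
\mathrm{rk}_{\mathbb{Z}} E(\mathbb{Q}) = \mathrm{cork}_{\mathbb{Z}_p}\bigl( E(\mathbb{Q}) \otimes \mathbb{Q}_p/\mathbb{Z}_p \bigr) \leq \mathrm{cork}_{\mathbb{Z}_p} \mathrm{Sel}(\mathbb{Q}, E[p^\infty]).
\]
Chaining the three relations produces $\mathrm{rk}_{\mathbb{Z}} E(\mathbb{Q}) \leq \mathrm{ord}(\kn) \leq \nu(n)$, which is the claim.

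There is no real obstacle: the surjectivity of $\overline{\rho}$ and the $p$-integrality of the Manin constant are exactly the hypotheses already required to invoke Theorem \ref{thm:refined-bsd}, and the definition of $\mathrm{ord}(\kn)$ makes the first inequality tautological. The practical interest of the corollary lies elsewhere, namely that to produce an upper bound on $\mathrm{rk}_{\mathbb{Z}} E(\mathbb{Q})$ one only needs to exhibit a single square-free $n \in \mathcal{N}_1$ at which the Kurihara number is non-zero, and this is a finite modular-symbol computation that does not require any information about $L$-values, Heegner points, or the full Iwasawa main conjecture.
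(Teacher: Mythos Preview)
Your proof is correct and is exactly the argument the paper has in mind: the corollary is stated without proof in the paper because it follows immediately from Theorem \ref{thm:refined-bsd}(1) together with the fundamental exact sequence, precisely as you wrote. There is nothing to add.
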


\subsection{Iwasawa modules and a $p$-adic Birch and Swinnerton-Dyer conjecture}
\subsubsection{}
Compared with the $p$-adic BSD conjecture for elliptic curves \cite{mtt}, we consider the following variant, namely the $p$-adic BSD conjecture for Kato's zeta elements.
\begin{conj}[$p$-adic BSD for Kato's zeta elements] \label{conj:p-adic-BSD-kato-zeta}
$$\mathrm{cork}_{\mathbb{Z}_p} \mathrm{Sel}_0(\mathbb{Q}, E[p^\infty]) = \mathrm{ord}_{X\Lambda} \left( 
\mathrm{char}_{\Lambda} \left( \dfrac{\mathrm{H}^1_{\mathrm{Iw}}(\mathbb{Q}, T)}{\kappa^{\mathrm{Kato}, \infty}_1} \right) \right) .$$
\end{conj}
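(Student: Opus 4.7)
The plan is to split the claimed equality into two pieces bridged by the cyclotomic Iwasawa main conjecture localised at the trivial character $X\Lambda$. Under the standing hypotheses ($\overline{\rho}$ surjective, $p\geq 5$), Kato's theorems guarantee that $\mathrm{H}^1_{\mathrm{Iw}}(\mathbb{Q},T)$ is $\Lambda$-free of rank one and that $\kappa^{\mathrm{Kato},\infty}_1\neq 0$, so the characteristic ideal on the right-hand side makes sense (its argument being $\Lambda$-torsion). The first step is to invoke Theorem~\ref{thm:main-technical} at $\mathfrak{P}=X\Lambda$: provided $\ks^{\mathrm{Kato}}$ is non-trivial (which, by Corollary~\ref{cor:main-ks}, is established in all the standard settings -- good ordinary reduction at $p$, or analytic rank at most one with semi-stable reduction at $p$), that theorem yields
$$\mathrm{ord}_{X\Lambda}\left(\mathrm{char}_\Lambda\left(\dfrac{\mathrm{H}^1_{\mathrm{Iw}}(\mathbb{Q},T)}{\Lambda\,\kappa^{\mathrm{Kato},\infty}_1}\right)\right)=\mathrm{ord}_{X\Lambda}\left(\mathrm{char}_\Lambda\left(\mathrm{Sel}_0(\mathbb{Q}_\infty,E[p^\infty])^\vee\right)\right).$$
The conjecture therefore reduces to the purely Iwasawa-theoretic identity
$$\mathrm{cork}_{\mathbb{Z}_p}\mathrm{Sel}_0(\mathbb{Q},E[p^\infty])=\mathrm{ord}_{X\Lambda}\left(\mathrm{char}_\Lambda\left(\mathrm{Sel}_0(\mathbb{Q}_\infty,E[p^\infty])^\vee\right)\right).$$

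Next I would attack this remaining identity via a control-theorem argument combined with a standard algebraic fact. Write $Y:=\mathrm{Sel}_0(\mathbb{Q}_\infty,E[p^\infty])^\vee$. Because the fine/$p$-strict local condition at $p$ is the trivial one, the Mazur-style control sequence for the fine Selmer group takes the particularly clean form of a map $\mathrm{Sel}_0(\mathbb{Q},E[p^\infty])\to \mathrm{Sel}_0(\mathbb{Q}_\infty,E[p^\infty])^{\Gamma}$ whose kernel and cokernel are finite: the kernel is controlled by $\mathrm{H}^1(\Gamma,\mathrm{H}^0(\mathbb{Q}_\infty,E[p^\infty]))$, which is finite under the surjectivity of $\overline{\rho}$, and the cokernel is concentrated in tame contributions at the finitely many primes of bad reduction. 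Dualising, $Y_\Gamma=Y/XY$ has the same $\mathbb{Z}_p$-rank as $\mathrm{Sel}_0(\mathbb{Q},E[p^\infty])^\vee$. To conclude, I would appeal to the algebraic fact that for a finitely generated torsion $\Lambda$-module $Y$ with no nontrivial pseudo-null $\Lambda$-submodule, the Iwasawa structure theorem gives $\mathrm{rk}_{\mathbb{Z}_p}(Y/XY)=\mathrm{ord}_{X\Lambda}(\mathrm{char}_\Lambda Y)$; moreover, since finite $\Lambda$-submodules contribute finitely both to $Y_\Gamma$ and to the characteristic ideal, this identification is robust under quotienting by the maximal pseudo-null submodule of $Y$.

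The hard part is precisely the control step outside the good ordinary case. In the good ordinary setting, the absence of nontrivial pseudo-null $\Lambda$-submodules in $Y$ is inherited from the $\Lambda$-freeness of $\mathrm{H}^1_{\mathrm{Iw}}(\mathbb{Q},T)$ through the defining exact sequence of the fine Selmer group, and the control sequence is classical Greenberg--Mazur. At supersingular and more general semi-stable primes, both the control theorem for the fine Selmer group and the no-pseudo-nulls statement require substantially more delicate inputs; one expects to have to route through signed Selmer modules \`{a} la Kobayashi and Lei--Loeffler--Zerbes and transfer the information back to the fine Selmer group via a comparison of characteristic ideals modulo $X\Lambda$. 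The hypothesis of Theorem~\ref{thm:main-technical} is, by contrast, automatic in all the regimes we care about, so the bottleneck in the proof is genuinely the cyclotomic-to-base-level descent rather than the localised main conjecture.
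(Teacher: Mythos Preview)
The statement you are attempting to prove is labelled a \emph{conjecture} in the paper; the paper does not prove it, and in fact Theorem~\ref{thm:main-structure-fine-iwasawa} extracts a nontrivial consequence from it, which would be circular if the conjecture were a theorem. Your proposal contains a genuine error that explains why.

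The false step is the claimed ``algebraic fact'' that for a finitely generated torsion $\Lambda$-module $Y$ with no nontrivial pseudo-null submodule one has $\mathrm{rk}_{\mathbb{Z}_p}(Y/XY)=\mathrm{ord}_{X\Lambda}(\mathrm{char}_\Lambda Y)$. Take $Y=\Lambda/X^2\Lambda$: this module has no nonzero finite $\Lambda$-submodule, yet $Y/XY\simeq\Lambda/X\Lambda\simeq\mathbb{Z}_p$ has $\mathbb{Z}_p$-rank $1$, while $\mathrm{ord}_{X\Lambda}(\mathrm{char}_\Lambda Y)=2$. More generally, if $Y$ is pseudo-isomorphic to $\bigoplus_i \Lambda/X^{m_i}\Lambda\oplus(\text{prime to }X)$, then $\mathrm{rk}_{\mathbb{Z}_p}(Y/XY)=\sum_i 1$ counts the \emph{number} of $X$-primary summands, whereas $\mathrm{ord}_{X\Lambda}(\mathrm{char}_\Lambda Y)=\sum_i m_i$. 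These coincide precisely when every $m_i=1$, which is exactly the conclusion of Theorem~\ref{thm:main-structure-fine-iwasawa} \emph{assuming} the conjecture. Your argument has inverted the logical dependence: the identity you invoke is equivalent to the semisimplicity at $X\Lambda$ that the paper deduces from the conjecture, not an input one can feed into a proof of it. The paper's $\S$\ref{sec:iwasawa-modules} makes this explicit, and Proposition~\ref{prop:kato-inequality-p-adic-bsd} records that unconditionally one only has the inequality $\mathrm{cork}_{\mathbb{Z}_p}\mathrm{Sel}_0(\mathbb{Q},E[p^\infty])\leq\mathrm{ord}_{X\Lambda}(\mathrm{char}_\Lambda Y)$, the gap being precisely the possible occurrence of exponents $m_i>1$.
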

This type of $p$-adic BSD conjecture can be found in \cite[Conj. 4.16]{burns-kurihara-sano}.
Indeed, it turns out that Conjecture \ref{conj:p-adic-BSD-kato-zeta} implies the usual $p$-adic BSD conjecture \cite[Cor. 6.6]{burns-kurihara-sano}.
\subsubsection{}
The interesting feature we observed is that Conjecture \ref{conj:p-adic-BSD-kato-zeta} has an application to the Iwasawa module structure of fine Selmer groups. In particular, Conjecture \ref{conj:p-adic-BSD-kato-zeta} is slightly more refined than IMC localized at $\mathfrak{P} = X\Lambda$ (\ref{eqn:IMC-localized}) from the viewpoint of Iwasawa modules.
Fix a pseudo-isomorphism
\begin{equation} \label{eqn:pseudo-isom-XLambda}
\mathrm{Sel}_0(\mathbb{Q}_{\infty}, E[p^\infty] )^\vee \to \bigoplus_i \Lambda / X^{m_i} \Lambda \oplus \bigoplus_j \Lambda / f_j \Lambda
\end{equation}
where each $f_j$ is prime to $X\Lambda$.
\begin{thm} \label{thm:main-structure-fine-iwasawa}
Conjecture \ref{conj:p-adic-BSD-kato-zeta} implies $m_i =1$ for every $i$ in (\ref{eqn:pseudo-isom-XLambda}).
\end{thm}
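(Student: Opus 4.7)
The plan is to extract two independent estimates of the $X\Lambda$-order $\sum_i m_i$ and then exploit the integer constraint $m_i \geq 1$ to force $m_i = 1$. Set $A := \mathrm{Sel}_0(\mathbb{Q}_\infty, E[p^\infty])$ and let $r$ denote the number of indices $i$ appearing in (\ref{eqn:pseudo-isom-XLambda}).

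First I would invoke the control theorem for the fine Selmer group over the cyclotomic $\mathbb{Z}_p$-extension: since the fine Selmer condition is trivial at $p$ and $\overline{\rho}$ is surjective, the restriction map $\mathrm{Sel}_0(\mathbb{Q}, E[p^\infty]) \to A^\Gamma$ has finite kernel and cokernel, and in particular preserves $\mathbb{Z}_p$-coranks. Passing to Pontryagin duals, $(A^\Gamma)^\vee \cong A^\vee / XA^\vee$, and reducing (\ref{eqn:pseudo-isom-XLambda}) modulo $X$ gives
\[
A^\vee/XA^\vee \; \sim \; \bigoplus_i \Lambda/(X, X^{m_i})\Lambda \; \oplus \; \bigoplus_j \Lambda/(X, f_j)\Lambda \; \cong \; \mathbb{Z}_p^{\oplus r} \oplus (\textrm{finite}),
\]
where the finiteness of each $\Lambda/(X, f_j)\Lambda \cong \mathbb{Z}_p/f_j(0)\mathbb{Z}_p$ uses that $f_j$ is coprime to $X$ so that $f_j(0) \neq 0$. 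Taking $\mathbb{Z}_p$-coranks gives $\mathrm{cork}_{\mathbb{Z}_p}\mathrm{Sel}_0(\mathbb{Q}, E[p^\infty]) = r$.

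Next I would apply Kato's unconditional divisibility, which holds at every height-one prime of $\Lambda$ of residue characteristic zero (so the ``up to $\mu$-invariant" ambiguity is irrelevant at $X\Lambda$):
\[
\sum_i m_i \;=\; \mathrm{ord}_{X\Lambda}\mathrm{char}_\Lambda(A^\vee) \;\leq\; \mathrm{ord}_{X\Lambda}\mathrm{char}_\Lambda\!\left( \dfrac{\mathrm{H}^1_{\mathrm{Iw}}(\mathbb{Q}, T)}{\Lambda \kappa^{\mathrm{Kato},\infty}_1} \right).
\]
By Conjecture \ref{conj:p-adic-BSD-kato-zeta}, the right-hand side equals $\mathrm{cork}_{\mathbb{Z}_p}\mathrm{Sel}_0(\mathbb{Q}, E[p^\infty]) = r$, so $\sum_i m_i \leq r$. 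Since each $m_i$ in (\ref{eqn:pseudo-isom-XLambda}) is by convention at least $1$ and there are exactly $r$ of them, one also has $\sum_i m_i \geq r$. Equality forces $m_i = 1$ for every $i$.

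The main obstacle I foresee is verifying the control step cleanly: one must check that the local terms at primes of bad reduction, together with the trivial local condition at $p$, contribute only finite kernel and cokernel to the restriction map $\mathrm{Sel}_0(\mathbb{Q}, E[p^\infty]) \to A^\Gamma$. Under the surjectivity of $\overline{\rho}$ and the standing semi-stability hypotheses of the paper, this is expected to follow by a routine Poitou--Tate/Greenberg-style computation (the vanishing of $H^0(\mathbb{Q}_\infty, E[p^\infty])$ controls the kernel, and finiteness of bad-prime local $H^1$'s controls the cokernel), but it is the only non-formal input in the argument, so care with the global-to-local exact sequence is essential.
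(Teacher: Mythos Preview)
Your argument is correct and follows the same logical skeleton as the paper: both establish $\mathrm{cork}_{\mathbb{Z}_p}\mathrm{Sel}_0(\mathbb{Q},E[p^\infty]) = r$ (the number of indices $i$), then use Kato's inequality (Proposition~\ref{prop:kato-inequality-p-adic-bsd}) together with Conjecture~\ref{conj:p-adic-BSD-kato-zeta} to force $\sum_i m_i = r$, hence $m_i=1$.

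The only methodological difference lies in how the corank identity is obtained. You reduce the pseudo-isomorphism (\ref{eqn:pseudo-isom-XLambda}) directly modulo $X$ and invoke the control theorem for fine Selmer groups; the paper instead reuses the deformation trick from \S\ref{subsec:proof-of-main-technical}, passing through the auxiliary primes $\mathfrak{P}_M=(X+p^M)$ to compute $\mathrm{cork}_{S_{\mathfrak{P}}}\mathrm{Sel}_0(\mathbb{Q},(T\otimes S_{\mathfrak{P}})^*)=\sum_i 1$ for a general height-one prime $\mathfrak{P}\neq p\Lambda$, and then specializes to $\mathfrak{P}=X\Lambda$. Your route is shorter and entirely adequate for the theorem as stated; the paper's route has the advantage of working uniformly at every height-one prime (consistent with the rest of \S\ref{sec:Lambda-adic-KS}) and of avoiding a separate verification of the control map's finite kernel/cokernel, since the bounded error terms are already packaged into Lemma~\ref{lem:higher-power}. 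Your caveat about the control step is well placed but not a genuine obstacle: under the surjectivity of $\overline{\rho}$ one has $E(\mathbb{Q}_\infty)[p^\infty]=0$, and the bad-prime local cohomology groups are finite, so the required finiteness of kernel and cokernel is standard (indeed the paper cites this as \cite[Lemma~18.7]{kato-euler-systems} in Proposition~\ref{prop:kato-inequality-p-adic-bsd}).
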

\begin{proof}
See $\S$\ref{sec:iwasawa-modules}.
\end{proof}

\subsection{Comparison with related conjectures and results} \label{subsec:comparisons}

\subsubsection{BSD} \label{subsubsec:bsd}
The rank part of BSD conjecture is completely open when $\mathrm{rk}_{\mathbb{Z}} E(\mathbb{Q})$ or $\mathrm{ord}_{s=1} L(E, s)$ is larger than 1.
Thanks to the work of Coates--Wiles,  Rubin, Gross--Zagier, Kolyvagin, and Kato \cite{coates-wiles-bsd-1977, rubin-tate-shafarevich, gross-zagier-original, kolyvagin-euler-systems, kato-euler-systems} based on the method of Euler systems, it is known that
if $\mathrm{ord}_{s=1}L(E,s) \leq 1$, then the rank part is true and $\sha(E/\mathbb{Q})$ is finite.
Also, its $p$-converse is recently developed enormously; see \cite{skinner-urban, skinner-converse, wei-zhang-mazur-tate, burungale-tian-p-converse, kim-p-converse, burungale-skinner-tian-wan} for example.

Furthermore, when $\mathrm{ord}_{s=1}L(E,s) \leq 1$, a large amount of the $p$-part of the BSD formula is resolved through the establishment of various Iwasawa main conjectures and complex and $p$-adic Gross--Zagier formulas thanks to the work of Skinner--Urban, Kobayashi, W. Zhang, X. Wan, Berti--Bertolini--Venerucci, Jetchev--Skinner--Wan, Castella, Castella--Grossi--Lee--Skinner, B\"{u}y\"{u}kboduk--Pollack--Sasaki, Burungale--Castella--Skinner , and Burungale--Skinner--Tian--Wan  \cite{skinner-urban, kobayashi-gross-zagier, wei-zhang-mazur-tate, wan_hilbert, wan-rankin-selberg, berti-bertolini-venerucci, jetchev-skinner-wan, castella-cambridge, castella-grossi-lee-skinner, kazim-pollack-sasaki, burungale-castella-skinner-gl2, burungale-skinner-tian-wan}. See also \cite{burungale-skinner-tian-survey} for a survey on this topic.

Corollary \ref{cor:main-kn} provides us with the structural information of $\sha(E/\mathbb{Q})[p^\infty]$ for semi-stable elliptic curve $E$ of analytic rank $\leq 1$ and every prime $p \geq 5$ with the surjectivity condition on $\overline{\rho}$ \emph{but with no use of} the Iwasawa main conjecture (cf. \cite{jetchev-skinner-wan, castella-cambridge}).
\subsubsection{$p$-adic BSD}
We focus only on  the good ordinary reduction case here and do not discuss the exceptional zero case or the supersingular variant. We refer to \cite{mtt, stein-wuthrich, balakrishnan-muller-stein} for details.

Regarding the rank part of the $p$-adic BSD conjecture \`{a} la Mazur--Tate--Teitelbaum, Kato proved $\mathrm{cork}_{\mathbb{Z}_p} \mathrm{Sel}(\mathbb{Q}, E[p^\infty]) \leq  \mathrm{ord}_{X=0} L_p(E)$ based on the one-sided divisibility of the Iwasawa main conjecture where $L_p(E)$ is the $p$-adic $L$-function of $E$ \cite{rubin-main-conj-cm, kato-euler-systems}.
In order to prove the equality of the rank part, we need the Iwasawa main conjecture, the non-degeneracy of the $p$-adic height pairing on $E$, and the finiteness of $\sha(E/\mathbb{Q})[p^\infty]$. 
The leading term part also follows from the combination of these conjectures. See \cite[Thm. 2']{schneider-height-2}, \cite[Prop. 3.4.6]{perrin-riou-rational-pts}, and \cite[Thm. 1.7]{balakrishnan-muller-stein}.

Our structural refinement (Theorem \ref{thm:refined-bsd}) works for elliptic curves of arbitrary rank and depends only on Conjecture \ref{conj:kn-non-zero}, which is strictly weaker than the Iwasawa main conjecture. 
Also, no regulator is involved in our approach.
Furthermore, we may ask the following na\"{i}ve questions.
\begin{ques} \label{ques:refining-bsd}
Let $r = \mathrm{rk}_{\mathbb{Z}}E(\mathbb{Q})$ and assume that $\sha(E/\mathbb{Q})$ is finite.
Denote by $s$ the number of generators of $\sha(E/\mathbb{Q})$, and by $s_p$ the number of generators of $\sha(E/\mathbb{Q})[p^\infty]$.
\begin{enumerate}
\item
Can the structure of $\sha(E/\mathbb{Q})$ be determined by the values $L^{(r)}(E,1), \cdots , L^{(r+s)}(E,1)$?
\item
Can the structure of $\sha(E/\mathbb{Q})[p^\infty]$ be determined by the values $L^{(r)}_p(E), \cdots , L^{(r+s_p)}_p(E)$?
\end{enumerate}
\end{ques}

\subsubsection{Refined BSD type conjectures} \label{subsubsec:refined-bsd-type}
In \cite{mazur-tate}, Mazur and Tate proposed several conjectures on equivariant refinements of the BSD conjecture.
Theorem \ref{thm:refined-bsd} refines their weak vanishing conjecture \cite[Conj. 1]{mazur-tate} by making their inequality into the equality when the character is trivial and also refines their weak main conjecture \cite[Conj. 3]{mazur-tate} and a ``Birch--Swinnerton-Dyer type" conjecture \cite[Conj. 4]{mazur-tate} by considering all the higher Fitting ideals (but with no equivariant variation).

\subsubsection{Refined Iwasawa theory} \label{subsubsec:refined-iwasawa-theory}
In a series of his papers \cite{kurihara-fitting, kurihara-documenta, kurihara-plms, kurihara-iwasawa-2012, kurihara-munster}, Kurihara developed refined Iwasawa theory and the theory of Kolyvagin systems of Gauss sum type to study the structure of Selmer groups.
Iwasawa himself was also interested in Kurihara's refinement of the Iwasawa main conjecture \cite{kato-survey}.

In \cite[Thm. B]{kurihara-munster}, Kurihara obtained Theorem \ref{thm:refined-bsd} 
for elliptic curves with good ordinary reduction
via the theory of Kolyvagin systems of Gauss sum type under various assumptions including $\mu = 0$ for $p$-adic $L$-functions and the non-degeneracy of the $p$-adic height pairing.

His argument is completely different from ours, and this difference can be compared with the difference between cyclotomic unit Kolyvagin systems and Kolyvagin systems of Gauss sums in classical Iwasawa theory.
In \cite{kurihara-munster}, Kurihara explicitly constructed a certain skew-Hermitian relation matrix to present the structure of Selmer groups and the matrix is essentially equivalent to the organizing matrix constructed by Mazur--Rubin \cite{mazur-rubin-organizing}.
In our approach, we do not construct such matrices, and no $p$-adic height pairing is involved.
Recently, Kurihara also obtained the same result for the supersingular reduction case with $a_p(E) = 0$ under similar assumptions \cite{kurihara-analytic-quantities}.
In this sense, Theorem \ref{thm:refined-bsd} generalizes Kurihara's results by removing all the serious Iwasawa-theoretic assumptions.

\section{Kolyvagin systems for elliptic curves} \label{sec:kolyvagin-systems}
We summarize some materials in \cite{mazur-rubin-book} with an emphasis on the structure of fine Selmer groups of elliptic curves.

Let $E$ be an elliptic curve over $\mathbb{Q}$ and $p \geq 5$ a prime such that $\overline{\rho}$ is surjective.
Let $T$ be the $p$-adic Tate module of $E$.

\subsection{Selmer structures and Selmer groups}
\subsubsection{}
Following \cite[$\S$1]{mazur-rubin-book}, we recall the standard local conditions for Selmer groups of $T/p^kT$ (with $k \geq 1$) and review their properties. Since $p$ is odd, we ignore the infinite place.

Let $K$ be a non-archimedean local field. We recall the local conditions we use.
\begin{itemize}
\item the finite condition:
$\mathrm{H}^1_{f}(K, T/p^kT) = E(K) \otimes \mathbb{Z}/p^k\mathbb{Z}$ via the Kummer map.
\item the relaxed condition:
$\mathrm{H}^1_{\mathrm{rel}}(K, T/p^kT) =  \mathrm{H}^1(K, T/p^kT)$.
\item the strict condition:
$\mathrm{H}^1_{\mathrm{str}}(K, T/p^kT)  = 0$.
\item the unramified condition:
$\mathrm{H}^1_{\mathrm{ur}}(K, T/p^kT) =  \mathrm{H}^1(K^{\mathrm{ur}}/K, \mathrm{H}^0(K^{\mathrm{ur}}, T/p^kT ) )$.
\item the transverse condition (when $K = \mathbb{Q}_\ell$ and $\ell \equiv 1 \pmod{p^k}$):
$$\mathrm{H}^1_{\mathrm{tr}}(\mathbb{Q}_\ell, T/p^kT) = \mathrm{H}^1(\mathbb{Q}_\ell(\zeta_\ell)/\mathbb{Q}_\ell, \mathrm{H}^0(\mathbb{Q}_\ell(\zeta_\ell), T/p^kT ) ).$$
\end{itemize}
The first four local conditions are also defined on $T$ and $E[p^\infty]$.
We write
$\mathrm{H}^1_{/f}(-) = \dfrac{\mathrm{H}^1(-)}{\mathrm{H}^1_{f}(-)}$.
If $T/p^kT$ is unramified as a representation of $G_K$,
then we have
$\mathrm{H}^1_{f}(K, T/p^kT) = \mathrm{H}^1_{\mathrm{ur}}(K, T/p^kT)$.

\subsubsection{}
The \textbf{Selmer structure $\mathcal{F}$ on $T/p^kT$} consists of
\begin{itemize}
\item a finite set $\Sigma$ of places of $\mathbb{Q}$ containing the primes where $E$ has bad reduction, $p$, and the infinite place, and
\item the choices of local conditions $\mathrm{H}^1_{\mathcal{F}}(\mathbb{Q}_\ell, T/p^kT)$ at primes in $\Sigma$.
\end{itemize}
For a prime $q \not \in \Sigma$, we fix $\mathrm{H}^1_{\mathcal{F}}(\mathbb{Q}_q, T/p^kT)  = \mathrm{H}^1_{f}(\mathbb{Q}_q, T/p^kT)$.

For $\ell \in \mathcal{N}_k$,  we have isomorphism
$\phi^{\mathrm{fs}}_\ell : \mathrm{H}^1_{f}(\mathbb{Q}_\ell, T/p^kT) \simeq \mathrm{H}^1_{/f}(\mathbb{Q}_\ell, T/p^kT)$
and we choose a generator of $\mathrm{Gal}(\mathbb{Q}(\zeta_\ell)\mathbb{Q})$ for this isomorphism.
We also identify
$\mathrm{H}^1_{/f}(\mathbb{Q}_\ell, T/p^kT) = \mathrm{H}^1_{\mathrm{tr}}(\mathbb{Q}_\ell, T/p^kT)$.
See \cite[$\S$1.2]{mazur-rubin-book} for details.

\subsubsection{}
For a given Selmer structure $\mathcal{F}$ on $T/p^kT$ and $n \in \mathcal{N}_k$, 
the Selmer structure $\mathcal{F}(n)$ is defined by
\begin{itemize}
\item $\mathrm{H}^1_{\mathcal{F}(n)}(\mathbb{Q}_\ell, T/p^kT) = \mathrm{H}^1_{\mathcal{F}}(\mathbb{Q}_\ell, T/p^kT)$ for $\ell$ not dividing $n$, and
\item $\mathrm{H}^1_{\mathcal{F}(n)}(\mathbb{Q}_\ell, T/p^kT) = \mathrm{H}^1_{\mathrm{tr}}(\mathbb{Q}_\ell, T/p^kT)$ for $\ell$ dividing $n$.
\end{itemize}
\subsubsection{}
The \textbf{Selmer group $\mathrm{Sel}_{\mathcal{F}}(\mathbb{Q}, T/p^kT)$ of $T/p^kT$} with respect to $\mathcal{F}$ is defined by the exact sequence
$$
0 \to \mathrm{Sel}_{\mathcal{F}}(\mathbb{Q}, T/p^kT) \to
\mathrm{H}^1(\mathbb{Q}_{\Sigma}/\mathbb{Q}, T/p^kT) \to \bigoplus_{\ell \in \Sigma}
\dfrac{ \mathrm{H}^1(\mathbb{Q}_\ell, T/p^kT) }{ \mathrm{H}^1_{\mathcal{F}}(\mathbb{Q}_\ell, T/p^kT) }$$
where $\mathbb{Q}_{\Sigma}$ is the maximal extension of $\mathbb{Q}$ unramified outside $\Sigma$.
This definition is independent of the choice of $\Sigma$.

\subsubsection{}
Let $(-)^* = \mathrm{Hom}( -, \mu_{p^\infty})$ be the Cartier dual 
and then we have $(T/p^kT)^* \simeq E[p^k] \simeq T/p^kT$  via the Weil pairing.
The corresponding dual Selmer structure $\mathcal{F}^*$ on $(T/p^kT)^*$ is defined by the choices of local conditions $\mathrm{H}^1_{\mathcal{F}^*}(\mathbb{Q}_\ell, (T/p^kT)^*) = \mathrm{H}^1_{\mathcal{F}}(\mathbb{Q}_\ell, T/p^kT)^\perp$ under the local Tate pairing with the same $\Sigma$.
The Selmer group $\mathrm{Sel}_{\mathcal{F}^*}(\mathbb{Q}, E[p^k])$ with respect to $\mathcal{F}^*$ is defined in a similar way.

\subsubsection{}
We recall two natural Selmer structures on $T/p^kT$.

The \textbf{classical Selmer structure $\mathcal{F}_{\mathrm{cl}}$} is defined by
$\mathrm{H}^1_{\mathcal{F}_{\mathrm{cl}}}(\mathbb{Q}_\ell, T/p^kT) = \mathrm{H}^1_{f}(\mathbb{Q}_\ell, T/p^kT)$, the image of of $E(\mathbb{Q}_\ell)/p^k E(\mathbb{Q}_\ell)$ under the Kummer map
for every prime $\ell$ so that we have
\[
\xymatrix{
\mathrm{Sel}(\mathbb{Q}, T/p^kT) = \mathrm{Sel}_{\mathcal{F}_{\mathrm{cl}}}(\mathbb{Q}, T/p^kT), & \mathrm{Sel}(\mathbb{Q}, E[p^k]) = \mathrm{Sel}_{\mathcal{F}^*_{\mathrm{cl}}}(\mathbb{Q}, E[p^k]).
}
\]
Thus, the classical Selmer structure recovers classical Selmer groups.

The \textbf{canonical Selmer structure $\mathcal{F}_{\mathrm{can}} $}
is defined by 
$\mathrm{H}^1_{\mathcal{F}_{\mathrm{can}}}(\mathbb{Q}_\ell, T/p^kT) = \mathrm{H}^1_{f}(\mathbb{Q}_\ell, T/p^kT)$ for every prime $\ell \neq p$
and 
$\mathrm{H}^1_{\mathcal{F}_{\mathrm{can}}}(\mathbb{Q}_p, T/p^kT) = \mathrm{H}^1(\mathbb{Q}_p, T/p^kT)$.
We write
\[
\xymatrix{
\mathrm{Sel}_{\mathrm{rel}}(\mathbb{Q}, T/p^kT) = \mathrm{Sel}_{\mathcal{F}_{\mathrm{can}}}(\mathbb{Q}, T/p^kT), & \mathrm{Sel}_0(\mathbb{Q}, E[p^k]) = \mathrm{Sel}_{\mathcal{F}^*_{\mathrm{can}}}(\mathbb{Q}, E[p^k]).
}
\]
In other words, the canonical Selmer structure defines the $p$-relaxed Selmer group and the dual canonical Selmer structure defines the $p$-strict Selmer group.

For $n \in \mathcal{N}_k$, we write
$\mathrm{Sel}_{\mathrm{rel}, n} = \mathrm{Sel}_{\mathcal{F}_{\mathrm{can}}(n)}$,
$\mathrm{Sel}_{n} = \mathrm{Sel}_{\mathcal{F}_{\mathrm{cl}}(n)}$, and
$\mathrm{Sel}_{0, n} = \mathrm{Sel}_{\mathcal{F}^*_{\mathrm{can}}(n)}$ for convenience.

When $n =1$, the compact Selmer groups of $T$ are defined by the projective limits of Selmer groups of $T/p^kT$, and the discrete Selmer groups of $E[p^\infty]$ are defined by the direct limits of Selmer groups of $E[p^k]$.

\subsection{Euler and Kolyvagin systems}
\subsubsection{} \label{subsubsec:euler-systems}
We recall the notion of Euler systems following \cite[Def. 3.2.2]{mazur-rubin-book}.
Let $\mathcal{P} = \mathcal{P}_1$ be the set of primes defined in $\S$\ref{subsubsec:kolyvagin_primes} and $\mathcal{K}$ a possibly infinite abelian extension of $\mathbb{Q}$.
An \textbf{Euler system $\mathbf{z}$ for $(T, \mathcal{K}, \mathcal{P})$} is a collection of cohomology classes
$$\mathbf{z} = \left\lbrace z_F \in \mathrm{H}^1(F, T) :  F/\mathbb{Q} \textrm{ finite}, F \subseteq \mathcal{K} \right\rbrace$$
such that whenever $F'/F$ are finite subextenions of $\mathbb{Q}$ in $\mathcal{K}$,
satisfying the norm relation
$$\mathrm{Nm}_{F'/F}(z_{F'}) = \left( \prod_{\ell} P_\ell(\mathrm{Fr}^{-1}_\ell) \right) \cdot z_{F}$$
where $\mathrm{Nm}_{F'/F}$ is the corestriction map from $F'$ to $F$,
the product runs over primes in $\mathcal{P}$ which are ramified in $F'/\mathbb{Q}$ but not in $F/\mathbb{Q}$, 
$P_\ell(X) = \mathrm{det}(1 - \mathrm{Fr}_\ell \cdot X | T)$, and $\mathrm{Fr}_\ell$ is the arithmetic Froenius at $\ell$.
Let $\ES(T) = \ES(T, \mathcal{K}, \mathcal{P})$ denote the module of Euler systems over $\mathbb{Z}_p\llbracket \mathrm{Gal}(\overline{\mathbb{Q}}/\mathbb{Q}) \rrbracket$.

\subsubsection{} \label{subsubsec:kolyvagin-systems}
A \textbf{Kolyvagin system $\ks$ for $(T, \mathcal{F}, \mathcal{P})$}
is a collection of cohomology classes
$$\ks = \left\lbrace 
\kappa_n \in \mathrm{Sel}_{\mathcal{F}(n)}(\mathbb{Q}, T/I_nT) : n \in \mathcal{N}_1 
\right\rbrace$$
such that
\begin{equation} \label{eqn:axiom-kolyvagin-systems}
\mathrm{loc}^s_\ell ( \kappa_{n\ell} )
= \phi^{\mathrm{fs}}_\ell \circ \mathrm{loc}_\ell (\kappa_n) \in \mathrm{H}^1_{/f}(\mathbb{Q}_\ell, T/I_{n\ell}T)
\end{equation}
 for $\ell \in \mathcal{N}_1$ with $(n,\ell) =1$ where 
$ \mathrm{loc}_\ell : \mathrm{H}^1(\mathbb{Q}, T/I_nT) \to \mathrm{H}^1(\mathbb{Q}_\ell, T/I_nT)$, and
$\mathrm{loc}^s_\ell : \mathrm{H}^1(\mathbb{Q}, T/I_nT) \to \mathrm{H}^1_{/f}(\mathbb{Q}, T/I_nT)$.
Let $\KS(T) = \KS(T, \mathcal{F}, \mathcal{P})$ denote the module of Kolyvagin systems
and $\KSbar(T) = \KSbar(T, \mathcal{F}, \mathcal{P})$ denote the generalized module of Kolyvagin systems defined by the completion. See \cite[$\S$3.1]{mazur-rubin-book} for details.

Our convention of Kolyvagin systems depends on the choice of generators of $\mathrm{Gal}(\mathbb{Q}(\zeta_\ell)/\mathbb{Q})$ for each prime $\ell$ dividing $n$, and it corresponds to the choice of the primitive roots in the definition of Kurihara numbers.

The natural map $\KS(T) \to \KSbar(T)$ is an isomorphism when the core rank is one ($\S$\ref{subsubsec:core-rank} and Theorem \ref{thm:core-rank-elliptic-curves}). See \cite[Cor. 4.5.3, Prop. 5.2.9, and Prop. 6.2.2]{mazur-rubin-book} for details.
\subsubsection{}
Under our working hypotheses, all the conditions in Theorem \ref{thm:euler-to-kolyvagin} below are satisfied.
\begin{thm}[Mazur--Rubin] \label{thm:euler-to-kolyvagin}
Suppose that $\mathcal{K}$ contains the maximal abelian $p$-extension of $\mathbb{Q}$ which is unramified outside $p$ and $\mathcal{P}$, and
\begin{enumerate}
\item $T/(\mathrm{Fr}_\ell - 1)T$ is a cyclic $\mathbb{Z}_p$-module for every $\ell \in \mathcal{P}$,
\item $\mathrm{Fr}^{p^k}_\ell - 1$ is injective on $T$ for every $\ell \in \mathcal{P}$ and every $k \geq 0$. 
\end{enumerate}
Then there exists a canonical homomorphism $\ES(T, \mathcal{K}, \mathcal{P}) \to \KSbar(T, \mathcal{F}_{\mathrm{can}}, \mathcal{P})$
sending $\mathbf{z}$ to $\ks$
such that
$\kappa_1 = z_{\mathbb{Q}}$.
If we further assume that $\mathrm{H}^0(\mathbb{Q}_p, E[p^\infty])$ is a divisible $\mathbb{Z}_p$-module, then $\KSbar(T, \mathcal{F}_{\mathrm{can}}, \mathcal{P})$ can be replaced by $\KS(T, \mathcal{F}_{\mathrm{can}}, \mathcal{P})$.
\end{thm}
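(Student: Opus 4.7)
The plan is to adapt Kolyvagin's derivative construction following the framework of \cite{mazur-rubin-book}. For each $n = \ell_1 \cdots \ell_r \in \mathcal{N}_1$, let $\mathbb{Q}(n) \subseteq \mathcal{K}$ be the maximal $p$-subextension of $\mathbb{Q}(\zeta_n)/\mathbb{Q}$, so that $G_n := \mathrm{Gal}(\mathbb{Q}(n)/\mathbb{Q}) \simeq \prod_{\ell \mid n} G_\ell$ with each $G_\ell$ cyclic. Choose a generator $\sigma_\ell$ of $G_\ell$, let $D_\ell = \sum_{i=1}^{|G_\ell|-1} i\,\sigma_\ell^i$ be Kolyvagin's derivative operator, and set $D_n = \prod_{\ell \mid n} D_\ell$. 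The raw input is the derived class $D_n z_{\mathbb{Q}(n)} \in \mathrm{H}^1(\mathbb{Q}(n), T)$; the goal is to descend its reduction modulo $I_n$ to a $\mathbb{Q}$-rational class satisfying the Kolyvagin system axioms.

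The first key step is to show that the image of $D_n z_{\mathbb{Q}(n)}$ in $\mathrm{H}^1(\mathbb{Q}(n), T/I_nT)$ is $G_n$-invariant. This rests on the telescoping identity $(\sigma_\ell - 1)D_\ell = |G_\ell| - \mathrm{Nm}_{G_\ell}$ combined with the Euler system norm relation $\mathrm{Nm}_{\mathbb{Q}(n)/\mathbb{Q}(n/\ell)} z_{\mathbb{Q}(n)} = P_\ell(\mathrm{Fr}_\ell^{-1})\, z_{\mathbb{Q}(n/\ell)}$, together with the congruences built into $\mathcal{P}_1$ (namely $\ell \equiv 1$ and $a_\ell(E) \equiv \ell+1$ modulo $I_\ell$), which kill the resulting obstruction in $T/I_nT$. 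To descend this invariant class to $\kappa_n \in \mathrm{H}^1(\mathbb{Q}, T/I_nT)$, apply inflation-restriction: hypothesis (2) controls $\mathrm{H}^0(\mathbb{Q}(n), T/I_nT)$, and hypothesis (1), combined with the cyclicity of the $G_\ell$, annihilates the residual $\mathrm{H}^1(G_n, -)$ obstruction on nose up to a small ambiguity that is absorbed by passing to the completion $\overline{\mathbf{KS}}$.

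Next, one verifies that $\kappa_n \in \mathrm{Sel}_{\mathcal{F}_{\mathrm{can}}(n)}(\mathbb{Q}, T/I_nT)$: away from $np$ the class is unramified because the Euler system is; at $p$ no condition is imposed, matching the definition of $\mathcal{F}_{\mathrm{can}}$; at each $\ell \mid n$, the cyclicity in hypothesis (1) forces $\mathrm{loc}_\ell \kappa_n$ to lie in $\mathrm{H}^1_{\mathrm{tr}}(\mathbb{Q}_\ell, T/I_nT)$ by the same ramification computation as in the classical setting. The Kolyvagin system relation $\mathrm{loc}^s_\ell \kappa_{n\ell} = \phi^{\mathrm{fs}}_\ell \circ \mathrm{loc}_\ell \kappa_n$ is then an explicit comparison at the prime $\ell$: the singular part of the derivative at level $n\ell$ identifies, via the composite $\mathrm{H}^1_f \to (T/I_n T)_{\mathrm{Fr}_\ell - 1} \xrightarrow{Q(\mathrm{Fr}_\ell^{-1})} (T/I_n T)^{\mathrm{Fr}_\ell = 1} \to \mathrm{H}^1_{/f}$ defining $\phi^{\mathrm{fs}}_\ell$, with the finite-to-singular transport of the derivative at level $n$. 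The normalization $\kappa_1 = z_{\mathbb{Q}}$ is forced by $D_1 = 1$.

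Finally, for the strengthening from $\overline{\mathbf{KS}}$ to $\mathbf{KS}$: when $\mathrm{H}^0(\mathbb{Q}_p, E[p^\infty])$ is divisible, the local condition at $p$ has the dimension that makes the core rank of $(T, \mathcal{F}_{\mathrm{can}})$ equal to one, and then by \cite[Corollary 4.5.3 and Proposition 5.2.9]{mazur-rubin-book} the natural map $\mathbf{KS}(T, \mathcal{F}_{\mathrm{can}}, \mathcal{P}) \to \overline{\mathbf{KS}}(T, \mathcal{F}_{\mathrm{can}}, \mathcal{P})$ is an isomorphism, so the constructed class already lies in $\mathbf{KS}$. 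The main obstacle is the $G_n$-invariance and descent computation in the second paragraph: producing a genuinely $\mathbb{Q}$-rational class from the $G_n$-invariant derivative while simultaneously controlling the local condition at each $\ell \mid n$ is the technical heart of the construction and is exactly where hypotheses (1) and (2) are indispensable.
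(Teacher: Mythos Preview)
The paper does not prove this theorem; it simply cites \cite[Theorem 3.2.4 and \S6.2]{mazur-rubin-book}, as this is a foundational result of Mazur--Rubin being recalled for later use. Your sketch is a faithful outline of the Mazur--Rubin construction that the citation points to: Kolyvagin derivatives, $G_n$-invariance via the telescoping identity and norm relations, descent by inflation--restriction, verification of the $\mathcal{F}_{\mathrm{can}}(n)$ local conditions, and the finite-singular comparison at $\ell \mid n$.

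One small imprecision worth flagging: the reason the map lands a priori only in $\KSbar$ rather than $\KS$ is not an ``ambiguity in descent'' absorbed by completion. The descent itself is clean (under the standing hypothesis that $\overline{\rho}$ is surjective, restriction is an isomorphism). Rather, the raw derivative classes $\kappa'_n$ obtained this way satisfy only a \emph{weak} Kolyvagin system relation, and in general one must modify them as in \cite[Appendix A, especially (33) on p.~80]{mazur-rubin-book} to obtain a genuine Kolyvagin system; this modification is naturally carried out at the level of the completed module $\KSbar$. (In the specific case at hand the paper notes just after the theorem that for Kato's Euler system no modification is actually needed.) Your explanation of the final sentence---that divisibility of $\mathrm{H}^0(\mathbb{Q}_p, E[p^\infty])$ puts us in the core-rank-one regime where $\KS \simeq \KSbar$---is correct and matches the paper's own remark in \S\ref{subsubsec:kolyvagin-systems}.
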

\begin{proof}
See \cite[Thm. 3.2.4 and $\S$6.2]{mazur-rubin-book}.
\end{proof}

\subsection{Kato's Euler systems and Kato's Kolyvagin systems}
\subsubsection{} \label{subsubsec:kato-euler-systems}
Let $\mathbf{z}^{\mathrm{Kato}} = (z^{\mathrm{Kato}}_{F})_F$ be Kato's Euler system for $E$ associated to the real N\'{e}ron period $\Omega^+_E$
where $z^{\mathrm{Kato}}_F \in \mathrm{H}^1(F, T)$ and
$F$ runs over abelian extensions of $\mathbb{Q}$ following the convention of \cite[Thm. 6.1]{kataoka-thesis}.
More explicitly, each $z^{\mathrm{Kato}}_F \in \mathrm{H}^1(F, T)$ is characterized by the interpolation formula
$$\sum_{\sigma \in \mathrm{Gal}(F/\mathbb{Q})}\sigma \left( \mathrm{exp}^* \circ \mathrm{loc}^s_p (z^{\mathrm{Kato}}_F) \right) \cdot \chi(\sigma) = \dfrac{ L_{(Sp)}(E, \chi, 1) }{ \Omega^+_E } \cdot \omega_E$$
where $\chi$ is any even character of $\mathrm{Gal}(F/\mathbb{Q})$, $S$ is the product of the ramified primes of $F/\mathbb{Q}$, $L_{(Sp)}(E, \chi, 1)$ is the $Sp$-imprimitive $\chi$-twisted $L$-value of $E$ at $s=1$, 
and $\omega_E$ is the N\'{e}ron differential. For odd characters of $\mathrm{Gal}(F/\mathbb{Q})$, the interpolation formula is zero. 
Kato's Kolyvagin system $\ks^{\mathrm{Kato}}$ is defined by the image of $\mathbf{z}^{\mathrm{Kato}}$ under the map in Theorem \ref{thm:euler-to-kolyvagin}.

\subsubsection{}
For each $n \in \mathcal{N}_1$, the Kolyvagin derivative operator at $n$ is defined by
$D_{\mathbb{Q}(\zeta_n)}  = \prod_{\ell \vert n} \sum_{i=1}^{\ell-2} i \cdot \sigma^i_{\eta_\ell}$
where $\eta_\ell$ is a primitive root mod $\ell \in \mathcal{P}_1$.
Then each $\kappa^{\mathrm{Kato}}_n$ is defined by the image of $D_{\mathbb{Q}(\zeta_n)} z^{\mathrm{Kato}}_{\mathbb{Q}(\zeta_n)} \in \mathrm{H}^1(\mathbb{Q}(\zeta_n), T)$ in $\mathrm{H}^1(\mathbb{Q}, T/I_nT)$. 
We do not need any modification of the image of $D_{\mathbb{Q}(\zeta_n)} z^{\mathrm{Kato}}_{\mathbb{Q}(\zeta_n)}$ to obtain $\kappa^{\mathrm{Kato}}_n$ (cf. \cite[(33) in pp. 80]{mazur-rubin-book}).

\subsection{Properties of Kolyvagin systems}
\subsubsection{}
When we work with Kolyvagin systems, we always assume that $p \geq 5$ and $\overline{\rho}$ is surjective.
This is strong enough to satisfy all the working hypotheses for the theory of Kolyvagin systems \cite[\S3.5 and Lem. 6.2.3]{mazur-rubin-book}.
More precisely, the $p \geq 5 $ condition is used only in Proposition \ref{prop:chebotarev} below and its consequences. The $p=3$ case is studied by Sakamoto recently \cite{sakamoto-p-3}.

Also, all the argument works when $\overline{\rho}$ is irreducible and there exists a prime $\ell$ exactly dividing the conductor of $E$ such that $\overline{\rho}$ is ramified at $\ell$ \cite[$\S$2.5]{skinner-pacific}. The absolute irreducibility and the irreducibility of $\overline{\rho}$ are equivalent for the case of elliptic curves over $\mathbb{Q}$ \cite[Lem. 5]{rubin-modularity-mod-5}.

\begin{prop}[Mazur--Rubin] \label{prop:chebotarev}
Assume that $p \geq  5$ and $\overline{\rho}$ is surjective.
Let $c_1, c_2 \in \mathrm{H}^1(\mathbb{Q}, T/p^kT)$ and $c_3, c_4 \in \mathrm{H}^1(\mathbb{Q},E[p^k])$ be non-zero elements.
For every $k \in \mathbb{Z}_{>0}$, there exists a set $S \subseteq \mathcal{P}_k$ of positive density such that
for every $\ell \in S$, the localizations $\mathrm{loc}_\ell (c_i)$ are all non-zero. 
\end{prop}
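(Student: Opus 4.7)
The plan is to apply the Chebotarev density theorem to a compositum refining $F_k := \mathbb{Q}(E[p^k], \mu_{p^k})$ that also detects the four classes $c_i$, following the Kolyvagin setup of \cite[$\S$3.6]{mazur-rubin-book}. First I would record the cohomological input: under the surjectivity of $\bar\rho$ and $p\geq 5$, the mod $p^k$ representation is also surjective and $\mathrm{H}^1(F_k/\mathbb{Q}, E[p^k]) = 0$ (see \cite[Lemma 6.2.3]{mazur-rubin-book}). By inflation--restriction, each nonzero $c_i$ restricts to a nonzero $\mathrm{Gal}(F_k/\mathbb{Q})$-equivariant homomorphism $\tilde c_i \in \mathrm{Hom}(G_{F_k}, E[p^k])$ (identifying $T/p^kT \cong E[p^k]$), which cuts out a finite Galois extension $L_i/F_k$ whose Galois group $\mathrm{Gal}(L_i/F_k) \subseteq E[p^k]$ is a nonzero, $\mathrm{Gal}(F_k/\mathbb{Q})$-stable subgroup of $(\mathbb{Z}/p^k\mathbb{Z})^2$. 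Put $L := F_k L_1 L_2 L_3 L_4$ and $G := \mathrm{Gal}(L/F_k)$.

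Next I would restrict the Chebotarev target to the conjugacy class where $\mathrm{Fr}_\ell = 1$ in $\mathrm{Gal}(F_k/\mathbb{Q})$, i.e.\ where $\ell$ splits completely in $F_k$. Such $\ell$ automatically belong to $\mathcal{P}_k$: $\ell \equiv 1 \pmod{p^k}$ since $\mathrm{Fr}_\ell$ acts trivially on $\mu_{p^k}$, and $\mathrm{Fr}_\ell = 1$ on $E[p^k]$ forces $a_\ell \equiv 2 \equiv \ell+1 \pmod{p^k}$. Moreover, for such $\ell$ the module $E[p^k]$ is $G_{\mathbb{Q}_\ell}$-trivial, so
\[
\mathrm{H}^1_{\mathrm{ur}}(\mathbb{Q}_\ell, E[p^k]) = E[p^k], \qquad \mathrm{loc}_\ell(c_i) = \tilde c_i(\mathrm{Fr}_\ell) \in E[p^k].
\]
Viewing $\mathrm{Fr}_\ell$ as an element of $G$, the vanishing $\mathrm{loc}_\ell(c_i) = 0$ is then equivalent to $\mathrm{Fr}_\ell \in H_i := \ker(G \twoheadrightarrow \mathrm{Gal}(L_i/F_k))$, a proper, $\mathrm{Gal}(F_k/\mathbb{Q})$-stable subgroup of $G$.

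Finally I would conclude by counting and Chebotarev. Any nonzero subgroup of $(\mathbb{Z}/p^k\mathbb{Z})^2$ has order divisible by $p$, so $[G:H_i]\geq p$ and $|H_i|\leq |G|/p$. Since $p\geq 5$,
\[
\Bigl|\bigcup_{i=1}^{4} H_i\Bigr| \leq \frac{4}{p}|G| < |G|,
\]
leaving $G \setminus \bigcup_i H_i$ nonempty. This complement is $\mathrm{Gal}(F_k/\mathbb{Q})$-stable, hence a union of $\mathrm{Gal}(L/\mathbb{Q})$-conjugacy classes, all lying above the identity of $\mathrm{Gal}(F_k/\mathbb{Q})$; Chebotarev density then furnishes a subset $S\subseteq \mathcal{P}_k$ of positive density with $\mathrm{loc}_\ell(c_i)\neq 0$ for all $\ell \in S$ and every $i$. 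The main obstacle is the cohomological vanishing $\mathrm{H}^1(F_k/\mathbb{Q}, E[p^k]) = 0$, which is the technical crux of the argument and is exactly where the surjectivity of $\bar\rho$ and the condition $p\geq 5$ enter; the counting step itself is elementary but relies crucially on $4<p$.
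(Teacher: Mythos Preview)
Your argument is correct and is precisely the proof of \cite[Proposition 3.6.1]{mazur-rubin-book}, which is all the paper invokes here; the paper gives no independent argument beyond that citation. One small cosmetic point: since the Weil pairing already forces $\mu_{p^k}\subset \mathbb{Q}(E[p^k])$ under surjectivity of $\bar\rho$, your $F_k$ is just $\mathbb{Q}(E[p^k])$, but this changes nothing in the proof.
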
 
\begin{proof}
See \cite[Prop. 3.6.1]{mazur-rubin-book}.
\end{proof}
Following \cite[$\S$3.6 and Thm. 4.4.1]{mazur-rubin-book}, a Kolyvagin prime $\ell \in \mathcal{P}_k$ is said to be \textbf{useful for (non-zero) $\kappa_n$} with $n \in \mathcal{N}_k$ if $(\ell ,n)=1$ and $\mathrm{loc}_\ell(\kappa_n) \neq 0$.
\begin{prop} \label{prop:non-vanishing-kappa-n}
Assume that $p \geq 5$ and $\overline{\rho}$ is surjective.
Then there are infinitely many useful primes for a given non-zero $\kappa_n$.
In particular, if $\kappa_n \neq 0$ and $\ell$ is a useful prime for $\kappa_n$, then $\kappa_{n\ell} \neq 0$. 
\end{prop}
\begin{proof}
This follows from Proposition \ref{prop:chebotarev} and (\ref{eqn:axiom-kolyvagin-systems}).
\end{proof}

\subsubsection{} \label{subsubsec:core-rank}
Following \cite[$\S$4]{mazur-rubin-book}, we write
\begin{align*}
 \lambda(n, E[I_n]) & = \mathrm{length}_{\mathbb{Z}_p} \mathrm{Sel}_{0, n}(\mathbb{Q}, E[I_n]), \\
 \lambda(n, T/I_nT)  & = \mathrm{length}_{\mathbb{Z}_p} \mathrm{Sel}_{\mathrm{rel}, n}(\mathbb{Q}, T/I_nT)
\end{align*}
where $n \in \mathcal{N}_1$.
We say $n  \in \mathcal{N}_1$ is a \textbf{core vertex} if $\lambda(n, E[I_n])$ or $\lambda(n, T/I_nT)$ is zero \cite[Def. 4.1.8]{mazur-rubin-book} and
the \textbf{core rank $\chi(T)$} (with the canonical Selmer structure) is defined by $\mathrm{rk}_{ \mathbb{Z}_p/I_n \mathbb{Z}_p } \mathrm{Sel}_{\mathrm{rel}, n}(\mathbb{Q}, T/I_nT)$ for any core vertex $n$ \cite[Def. 4.1.11 and Def. 5.2.4]{mazur-rubin-book}.

\subsubsection{}
The following lemma is important for our proof of the main theorem.
\begin{lem} \label{lem:surjectivity-at-ell}
Let $n \in \mathcal{N}_k$ and $\ell \in \mathcal{P}_k$ with $(n, \ell) = 1$, and  `$\ell\textrm{-str}$' denotes the strict local condition at $\ell$.
Let $\mathcal{F} = \mathcal{F}_{\mathrm{cl}}$ or $\mathcal{F}_{\mathrm{can}}$.
If the localization map $\mathrm{Sel}_{\mathcal{F}(n)}(\mathbb{Q}, T/p^kT) \to E(\mathbb{Q}_\ell) \otimes  \mathbb{Z}_p /p^k \mathbb{Z}_p $ is surjective, then we have
$\mathrm{Sel}_{\mathcal{F}^*(n\ell)}(\mathbb{Q}, E[p^k]) = \mathrm{Sel}_{\mathcal{F}(n),\ell\textrm{-}\mathrm{str}}(\mathbb{Q}, E[p^k])$.
\end{lem}
\begin{proof}
See \cite[Lem. 4.1.7]{mazur-rubin-book}.
\end{proof}

\subsubsection{}
The following theorem plays an important role to have the equivalence between the non-triviality of $\ks^{\mathrm{Kato}}$ and the non-vanishing of $\kn$ (Proposition
\ref{prop:ks-kn-equi-non-vanishing}).
\begin{thm} \label{thm:core-rank-elliptic-curves}
Let $E$ be an elliptic curve over $\mathbb{Q}$ and $p\geq 5$ a prime such that $\overline{\rho}$ is surjective.
Let $T$ be the $p$-adic Tate module of $E$.
\begin{enumerate}
\item $\KS(T, \mathcal{F}_{\mathrm{cl}}, \mathcal{P}) = 0$.
\item $\KS(T, \mathcal{F}_{\mathrm{can}}, \mathcal{P})$ is free of rank one over $\mathbb{Z}_p$.
\end{enumerate}
\end{thm}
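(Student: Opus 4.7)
The plan is to deduce both assertions from the general structure theorem on modules of Kolyvagin systems once the core ranks $\chi(T, \mathcal{F}_{\mathrm{can}})$ and $\chi(T, \mathcal{F}_{\mathrm{cl}})$ are computed. Under our running hypothesis that $\overline{\rho}$ is surjective and $p \geq 5$, Mazur--Rubin's standing hypotheses (H.0)--(H.6) hold for $(T,\mathcal{F}_{\mathrm{can}},\mathcal{P})$ and $(T,\mathcal{F}_{\mathrm{cl}},\mathcal{P})$ by \cite[Lemma 6.2.3]{mazur-rubin-book}, so their structure theorems apply. The key general inputs are: (i) if the core rank $\chi(T,\mathcal{F}) = 0$, then $\KS(T,\mathcal{F},\mathcal{P}) = 0$, and (ii) if $\chi(T,\mathcal{F}) = 1$, then $\KSbar(T,\mathcal{F},\mathcal{P})$ is free of rank one over $\mathbb{Z}_p$ (\cite[Theorem 5.2.10]{mazur-rubin-book}), with the natural map $\KS \to \KSbar$ being an isomorphism under the divisibility of $\mathrm{H}^0(\mathbb{Q}_p, E[p^\infty])$ (\cite[Corollary 4.5.3, Proposition 5.2.9]{mazur-rubin-book}).

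First, I would compute $\chi(T, \mathcal{F}_{\mathrm{can}})$ using the local--global formula of \cite[Theorem 5.2.15]{mazur-rubin-book}. At primes $\ell$ of good reduction with $\ell \neq p$, the finite and unramified local conditions coincide, contributing nothing. At bad primes away from $p$, local Tate duality ensures that the difference between the finite condition and its annihilator is governed by $\dim_{\mathbb{F}_p} \mathrm{H}^0(\mathbb{Q}_\ell, \overline{T})$, which cancels in the signed sum defining the core rank. At $p$, the condition $\mathrm{H}^1_{\mathcal{F}_{\mathrm{can}}}(\mathbb{Q}_p,\overline{T}) = \mathrm{H}^1(\mathbb{Q}_p,\overline{T})$ has $\mathbb{F}_p$-dimension $2 + \dim_{\mathbb{F}_p}\mathrm{H}^0(\mathbb{Q}_p,\overline{T})$ by the local Euler characteristic formula. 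At the archimedean place, complex conjugation acts on the two-dimensional $\overline{T}$ with trace zero (since $\det\overline{\rho}$ is the cyclotomic character), so the $+1$-eigenspace is one-dimensional. Combining these with the global terms $\mathrm{H}^0(\mathbb{Q},\overline{T}) = \mathrm{H}^0(\mathbb{Q},\overline{T}^*) = 0$ (by surjectivity of $\overline{\rho}$ and irreducibility of $\overline{T}$), the formula yields $\chi(T,\mathcal{F}_{\mathrm{can}}) = 1$.

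Next, the classical structure $\mathcal{F}_{\mathrm{cl}}$ differs from $\mathcal{F}_{\mathrm{can}}$ only at $p$, where the local condition is shrunk from $\mathrm{H}^1(\mathbb{Q}_p,\overline{T})$ to $\mathrm{H}^1_f(\mathbb{Q}_p,\overline{T})$; by the local Euler characteristic formula and local Tate duality, this subspace has codimension exactly one. Feeding this into the same formula gives $\chi(T,\mathcal{F}_{\mathrm{cl}}) = \chi(T,\mathcal{F}_{\mathrm{can}}) - 1 = 0$.

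Finally, I would assemble the conclusion: assertion (1) is immediate from $\chi(T,\mathcal{F}_{\mathrm{cl}}) = 0$ together with the vanishing result cited above; assertion (2) follows from $\chi(T,\mathcal{F}_{\mathrm{can}}) = 1$ together with the freeness of $\KSbar$, after verifying that $\KSbar(T,\mathcal{F}_{\mathrm{can}},\mathcal{P}) = \KS(T,\mathcal{F}_{\mathrm{can}},\mathcal{P})$. This last identification is where the second clause of Theorem \ref{thm:euler-to-kolyvagin} is invoked: since $E(\mathbb{Q}_p)[p^\infty]$ is either trivial or $\mathbb{Q}_p/\mathbb{Z}_p$ (as $\overline{\rho}$ is surjective prevents a non-trivial finite piece), the required divisibility of $\mathrm{H}^0(\mathbb{Q}_p, E[p^\infty])$ holds. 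The main obstacle is the bookkeeping at the bad primes: one must verify that each local term at a prime of bad reduction cancels in the signed sum governing $\chi$, which ultimately rests on the self-duality of $T$ under the Weil pairing together with local Tate duality; with this in hand the computation is routine.
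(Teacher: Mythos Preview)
Your approach coincides with the paper's: both reduce to computing the core ranks and applying Mazur--Rubin's structure theorems (the paper simply cites \cite[Theorem 4.2.2, Theorem 5.2.10, and Proposition 6.2.2]{mazur-rubin-book}, noting that (1) is the core rank zero case and (2) the core rank one case).

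There is, however, a genuine error in your justification of $\KS = \KSbar$. The claim that surjectivity of $\overline{\rho}$ forces $E(\mathbb{Q}_p)[p^\infty]$ to be either trivial or isomorphic to $\mathbb{Q}_p/\mathbb{Z}_p$ is false: surjectivity is a \emph{global} condition and does not control the local torsion at $p$. As \S\ref{subsec:local-torsion} makes explicit, $E(\mathbb{Q}_p)[p^\infty]$ can be finite and non-trivial (for instance in the anomalous good ordinary case admitting a mod $p$ companion form, or in the split multiplicative case of Proposition~\ref{prop:local-torsions-split-multiplicative}); indeed the paper works throughout with $t = \mathrm{length}_{\mathbb{Z}_p}(E(\mathbb{Q}_p)[p^\infty])$ allowed to be a positive integer. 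The second clause of Theorem~\ref{thm:euler-to-kolyvagin} that you invoke concerns where the image of the Euler-to-Kolyvagin map lands, not the abstract comparison of $\KS$ and $\KSbar$. The correct route, already indicated in \S\ref{subsubsec:kolyvagin-systems}, is that the isomorphism $\KS(T) \to \KSbar(T)$ follows from the core rank being one alone, via \cite[Corollary 4.5.3, Proposition 5.2.9, Proposition 6.2.2]{mazur-rubin-book}; no divisibility hypothesis on $\mathrm{H}^0(\mathbb{Q}_p, E[p^\infty])$ is required.
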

\begin{proof}
See \cite[Thm. 4.2.2, Thm. 5.2.10, and Prop. 6.2.2]{mazur-rubin-book}.
The first case lies in the core rank zero case and the second case lies in the core rank one case.
\end{proof}
Theorem \ref{thm:core-rank-elliptic-curves} explains why Kato's Kolyvagin systems cannot control classical Selmer groups directly.

\subsubsection{}
We discuss the precise location of Kolyvagin system classes in $n$-transverse $p$-relaxed Selmer groups.
\begin{thm} \label{thm:splitting-mazur-rubin}
For every $k \geq 1$ and $n \in \mathcal{N}_k$, there exists a non-canonical isomorphism
\begin{equation} \label{eqn:sel-sel0-decomposition}
\mathrm{Sel}_{\mathrm{rel},n}(\mathbb{Q}, T/I_nT ) \simeq \mathbb{Z}_p/I_n\mathbb{Z}_p \oplus \mathrm{Sel}_{0,n}(\mathbb{Q}, E[I_n] ) .
\end{equation}
\end{thm}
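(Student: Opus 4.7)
The plan is to combine Poitou--Tate global duality with the core rank one property of the canonical Selmer structure (Theorem \ref{thm:core-rank-elliptic-curves}).

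First, exploiting the $G_{\mathbb{Q}}$-isomorphism $T/I_nT \simeq E[I_n]$ coming from the Weil pairing, I set up the five-term Poitou--Tate exact sequence for the strict/relaxed pair of Selmer structures differing only at $p$:
\begin{equation*}
0 \to \mathrm{Sel}_{0,n}(\mathbb{Q}, E[I_n]) \to \mathrm{Sel}_{\mathrm{rel},n}(\mathbb{Q}, T/I_nT) \xrightarrow{\mathrm{loc}_p} \mathrm{H}^1(\mathbb{Q}_p, T/I_nT) \to \mathrm{Sel}_{\mathrm{rel},n}(\mathbb{Q}, E[I_n])^\vee \to \mathrm{Sel}_{0,n}(\mathbb{Q}, E[I_n])^\vee \to 0.
\end{equation*}
Writing $A \subseteq \mathrm{H}^1(\mathbb{Q}_p, T/I_nT)$ for the image of $\mathrm{loc}_p$, Poitou--Tate orthogonality identifies $A$ as a Lagrangian submodule of $\mathrm{H}^1(\mathbb{Q}_p, T/I_nT)$ under the self-dual local Tate pairing, and extracts from the five-term sequence the short exact sequence
\begin{equation*}
0 \to \mathrm{Sel}_{0,n}(\mathbb{Q}, E[I_n]) \to \mathrm{Sel}_{\mathrm{rel},n}(\mathbb{Q}, T/I_nT) \to A \to 0.
\end{equation*}

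Next, I use the core rank one property to analyze $A$. At a core vertex $n_0$, by definition the fine Selmer group $\mathrm{Sel}_{0,n_0}(\mathbb{Q}, E[I_{n_0}])$ vanishes and $\mathrm{Sel}_{\mathrm{rel},n_0}(\mathbb{Q}, T/I_{n_0}T)$ is already free of rank one over $\mathbb{Z}_p/I_{n_0}\mathbb{Z}_p$ (Theorem \ref{thm:core-rank-elliptic-curves}), so the desired decomposition holds tautologically. For a general $n$, I walk in the Mazur--Rubin Selmer graph from $n$ to a core vertex by iteratively adjoining or removing Kolyvagin primes, whose existence with the required non-triviality of localizations is supplied by the Chebotarev density argument of Proposition \ref{prop:chebotarev}. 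At each transition $n \leftrightarrow n\ell$, the finite-singular comparison isomorphism $\phi^{\mathrm{fs}}_\ell$ of \cite[Ch.~4]{mazur-rubin-book} transports the free rank-one summand consistently between the respective images $A_n$ and $A_{n\ell}$.

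Tracking this walk back to the original vertex produces a class $s \in \mathrm{Sel}_{\mathrm{rel},n}(\mathbb{Q}, T/I_nT)$ whose image in $A$ generates a free rank-one $\mathbb{Z}_p/I_n\mathbb{Z}_p$-direct summand. Then $\mathbb{Z}_p/I_n\mathbb{Z}_p \cdot s$ is complementary to $\mathrm{Sel}_{0,n}(\mathbb{Q}, E[I_n])$ inside $\mathrm{Sel}_{\mathrm{rel},n}(\mathbb{Q}, T/I_nT)$, yielding the asserted non-canonical splitting. The main obstacle is the transition step: verifying that each move $n \leftrightarrow n\ell$ in the Selmer graph preserves the rank-one free summand and commutes compatibly with $\phi^{\mathrm{fs}}_\ell$ and the global duality sequence. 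This is handled by a diagram chase that leverages the surjectivity of $\overline{\rho}$ to ensure the requisite non-degeneracies at the inserted primes, reducing the general case by induction on $\nu(n)$ to the already resolved core-vertex case.
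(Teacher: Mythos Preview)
The paper does not argue this result in the text; it simply cites \cite[Theorem 4.1.13(i) and Theorem 5.2.5]{mazur-rubin-book}. Your outline---Poitou--Tate to extract the short exact sequence with kernel $\mathrm{Sel}_{0,n}$ and cokernel $A$, reduction to core vertices via the Selmer graph, and core rank one at the base case---is indeed the skeleton of Mazur--Rubin's proof, so the strategy is sound.

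The gap is in your transition step. You assert that $\phi^{\mathrm{fs}}_\ell$ ``transports the free rank-one summand consistently between the respective images $A_n$ and $A_{n\ell}$''. But $A_n \subseteq \mathrm{H}^1(\mathbb{Q}_p, T/I_nT)$ and $A_{n\ell} \subseteq \mathrm{H}^1(\mathbb{Q}_p, T/I_{n\ell}T)$ live in local cohomology at $p$, over \emph{different} coefficient rings $\mathbb{Z}_p/I_n\mathbb{Z}_p$ and $\mathbb{Z}_p/I_{n\ell}\mathbb{Z}_p$, whereas $\phi^{\mathrm{fs}}_\ell : \mathrm{H}^1_f(\mathbb{Q}_\ell, -) \to \mathrm{H}^1_{/f}(\mathbb{Q}_\ell, -)$ is an isomorphism at the prime $\ell$. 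It neither acts on nor relates cohomology at $p$, so there is no mechanism here for ``tracking back'' a class $s$ along the walk. Relatedly, your closing remark that the induction is ``on $\nu(n)$'' is not quite right: in Mazur--Rubin one reaches a core vertex by \emph{adjoining} primes (cf.\ \cite[Corollary 4.1.9]{mazur-rubin-book}), so the induction is on distance to a core vertex in the graph, and the edge analysis runs through \cite[Lemma 4.1.7 and the surrounding results]{mazur-rubin-book} comparing the pairs $(\mathrm{Sel}_{\mathrm{rel},n}, \mathrm{Sel}_{0,n})$ and $(\mathrm{Sel}_{\mathrm{rel},n\ell}, \mathrm{Sel}_{0,n\ell})$ directly via localization at $\ell$---not by moving anything inside $\mathrm{H}^1(\mathbb{Q}_p,-)$.

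One further caution: knowing that $A$ is Lagrangian (which does follow from the self-duality $T/I_nT \simeq E[I_n]$, as you note) is not by itself enough to conclude that $A$ is free of rank one. For instance, $p(\mathbb{Z}/p^2\mathbb{Z})^{\oplus 2}$ is Lagrangian in $(\mathbb{Z}/p^2\mathbb{Z})^{\oplus 2}$ for the standard symplectic form but is isomorphic to $(\mathbb{Z}/p\mathbb{Z})^{\oplus 2}$, not to $\mathbb{Z}/p^2\mathbb{Z}$. So the graph-walk argument is genuinely carrying the weight, and its edge step needs to be formulated correctly rather than delegated to $\phi^{\mathrm{fs}}_\ell$.
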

\begin{proof}
See \cite[Thm. 4.1.13.(i)]{mazur-rubin-book} and \cite[Thm. 5.2.5]{mazur-rubin-book}.
\end{proof}

For each $n \in \mathcal{N}_1$, write
\[
\xymatrix{
 \mathcal{H}(n)  = \mathrm{Sel}_{\mathrm{rel}, n}(\mathbb{Q}, T/I_nT), & \mathcal{H}'(n)  =  p^{ \lambda(n, E[I_n])}\mathrm{Sel}_{\mathrm{rel}, n}(\mathbb{Q}, T/I_nT),
}
\]
and the latter is said to be the \textbf{stub Selmer submodule at $n$}.
\begin{thm} \label{thm:stub-selmer}
For every $n \in \mathcal{N}_1$, $\kappa_n \in \mathcal{H}'(n)$.
\end{thm}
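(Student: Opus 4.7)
The plan is to argue by induction on the length $\lambda(n, E[I_n])$, following the descent-from-core-vertices strategy of Mazur--Rubin. The base case $\lambda(n, E[I_n]) = 0$ is precisely the statement that $n$ is a core vertex, in which case $\mathcal{H}'(n) = p^0 \mathcal{H}(n) = \mathcal{H}(n)$ and there is nothing to prove.

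For the inductive step, suppose $\lambda(n, E[I_n]) = s \geq 1$, and assume the theorem holds for all $m \in \mathcal{N}_1$ with $\lambda(m, E[I_m]) < s$. Fix the splitting $\mathcal{H}(n) \simeq \mathbb{Z}_p/I_n\mathbb{Z}_p \oplus \mathrm{Sel}_{0,n}(\mathbb{Q}, E[I_n])$ supplied by Theorem \ref{thm:splitting-mazur-rubin}. Since $\mathrm{Sel}_{0,n}(\mathbb{Q}, E[I_n])$ has length exactly $s$ over $\mathbb{Z}_p/I_n\mathbb{Z}_p$, it is annihilated by $p^s$, so $\mathcal{H}'(n) = p^s \mathcal{H}(n)$ reduces to $p^s\mathbb{Z}_p/I_n\mathbb{Z}_p \oplus 0$ under this splitting. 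The task thus splits into (a) showing that the projection of $\kappa_n$ onto $\mathrm{Sel}_{0,n}(\mathbb{Q}, E[I_n])$ vanishes and (b) showing that its $\mathbb{Z}_p/I_n\mathbb{Z}_p$-projection is divisible by $p^s$.

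To execute the descent, I would apply Proposition \ref{prop:chebotarev} to a non-zero class $c \in \mathrm{Sel}_{0,n}(\mathbb{Q}, E[I_n])[p]$ (and to auxiliary global classes detecting the $\mathbb{Z}_p/I_n\mathbb{Z}_p$-component of $\kappa_n$) to produce a Kolyvagin prime $\ell \in \mathcal{P}$, coprime to $n$, at which all these localizations are nonzero. A Poitou--Tate computation using the self-duality of the Weil pairing then shows that for such $\ell$ one has $\lambda(n\ell, E[I_{n\ell}]) = s - 1$, since the passage from $\mathcal{F}^*_{\mathrm{can}}(n)$ to $\mathcal{F}^*_{\mathrm{can}}(n\ell)$ kills exactly one length in the dual Selmer group by the global Euler characteristic formula combined with the nonzero localization of $c$. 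By the inductive hypothesis, $\kappa_{n\ell} \in p^{s-1} \mathcal{H}(n\ell)$.

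Finally, I would feed this into the Kolyvagin system axiom
\[
\mathrm{loc}^s_\ell(\kappa_{n\ell}) \;=\; \phi^{\mathrm{fs}}_\ell \circ \mathrm{loc}_\ell(\kappa_n)
\]
to conclude that $\mathrm{loc}_\ell(\kappa_n)$ is $p^{s-1}$-divisible in $\mathrm{H}^1_f(\mathbb{Q}_\ell, T/I_n T)$, using that $\phi^{\mathrm{fs}}_\ell$ is an isomorphism, and then reassemble this local information into a global divisibility statement via the chosen splitting and the non-vanishing of $\mathrm{loc}_\ell$ on the targeted classes. The hard step I anticipate is this last one: converting the \emph{local}, prime-by-prime divisibility into the \emph{global} conclusion $\kappa_n \in p^s \mathcal{H}(n)$, which really requires the full force of the Mazur--Rubin ``sheaf on a graph'' framework and a careful bookkeeping of how the transverse conditions at the various auxiliary $\ell$ interact with the decomposition \eqref{eqn:sel-sel0-decomposition}.
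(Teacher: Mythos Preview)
The paper's proof is a one-line citation to \cite[Theorem 4.4.1]{mazur-rubin-book}, with Theorem~\ref{thm:core-rank-elliptic-curves} supplying the core-rank-one hypothesis. Your sketch is an attempt to unpack that reference, and the ingredients you name---Chebotarev, Poitou--Tate duality, the splitting \eqref{eqn:sel-sel0-decomposition}, and the Kolyvagin axiom along the edge $(n,n\ell)$---are indeed the right ones.

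The gap you flag at the end is genuine, however, and the induction as you set it up cannot close it. Already at $s=1$ the problem is visible: if $\lambda(n\ell,E[I_{n\ell}])=0$ then $n\ell$ is a core vertex, the inductive hypothesis $\kappa_{n\ell}\in\mathcal{H}'(n\ell)=\mathcal{H}(n\ell)$ is vacuous, and the Kolyvagin axiom imposes no constraint whatsoever on $\mathrm{loc}_\ell(\kappa_n)$. For general $s$ the axiom yields only $p^{s-1}$-divisibility of $\mathrm{loc}_\ell(\kappa_n)$ in a single rank-one local module, which is one power of $p$ short and purely local, while $\mathcal{H}(n)$ has more than one generator and the splitting \eqref{eqn:sel-sel0-decomposition} is non-canonical. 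Mazur--Rubin's actual argument (their \S4.3--4.4) is organized differently: they show that the stubs $\mathcal{H}'(n)$ assemble into a subsheaf of the Selmer sheaf on the graph $\mathcal{X}$ whose restricted vertex-to-edge maps are surjective onto the edge stubs---this is precisely where core rank one enters---and then deduce that every global section of the Selmer sheaf, i.e.\ every Kolyvagin system, must land in this subsheaf. That surjectivity of the stub edge maps is the structural input that simultaneously supplies the missing power of $p$ and the local-to-global passage; it does not follow from Chebotarev plus the Kolyvagin axiom alone, and it is what your outline would need to import wholesale.
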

\begin{proof}
See \cite[Thm. 4.4.1]{mazur-rubin-book} with Theorem \ref{thm:core-rank-elliptic-curves}.
\end{proof}

\begin{rem} \label{rem:stub-selmer}
Theorem \ref{thm:stub-selmer} says that
$\kappa_n \in \mathrm{Sel}_{\mathrm{rel},n}(\mathbb{Q}, T/I_nT ) $
indeed lies in $p^{ \lambda(n, E[I_n])} \mathbb{Z}_p/I_n\mathbb{Z}_p \subseteq \mathbb{Z}_p/I_n\mathbb{Z}_p$, the first factor in the decomposition (\ref{eqn:sel-sel0-decomposition}).
\end{rem}
The following proposition illustrates the precise location of $\kappa_n$ in $\mathrm{Sel}_{\mathrm{rel}, n}(\mathbb{Q}, T/I_nT)$.
\begin{prop} \label{prop:kolyvagin-system-location}
Suppose that $\kappa_n \neq 0$ for some $n \in \mathcal{N}_1$.
Let $j \geq 0$ such that $\kappa_n$ generates $p^j\mathcal{H}'(n)$.
Then $\kappa_{n'}$ generates $p^j\mathcal{H}'(n') = p^{j+\lambda(n', E[I_{n'}])}\mathcal{H}(n') \simeq p^{j+\lambda(n', E[I_{n'}])} \mathbb{Z}_p/I_{n'}\mathbb{Z}_p$ for all $n'  \in \mathcal{N}_1$.
\end{prop}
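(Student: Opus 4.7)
The plan is to deduce the proposition from Mazur--Rubin's structure theorem for Kolyvagin systems in the core rank one setting, combined with the cyclic description of the stub submodules recorded in Theorem~\ref{thm:splitting-mazur-rubin} and Remark~\ref{rem:stub-selmer}. First I would invoke Theorem~\ref{thm:core-rank-elliptic-curves}(2): because $\overline{\rho}$ is surjective and $p\geq 5$, the module $\KS(T,\mathcal{F}_{\mathrm{can}},\mathcal{P})$ is free of rank one over $\mathbb{Z}_p$. Fix a generator $\ks^{\mathrm{prim}}$ with the property that, for every $n\in\mathcal{N}_1$, the class $\kappa^{\mathrm{prim}}_n$ generates the stub $\mathcal{H}'(n)$; this uniform primitivity is part of Mazur--Rubin's structure theorem in the core rank one case and is exactly what makes $\ks^{\mathrm{prim}}$ deserve the name ``primitive''.

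Next I would write $\ks=c\cdot\ks^{\mathrm{prim}}$ for some $c\in\mathbb{Z}_p$, so that $\kappa_{m}=c\cdot\kappa^{\mathrm{prim}}_{m}$ for every $m\in\mathcal{N}_1$. By Theorem~\ref{thm:splitting-mazur-rubin} and Remark~\ref{rem:stub-selmer}, each stub
\[
\mathcal{H}'(m)=p^{\lambda(m,E[I_m])}\mathcal{H}(m)\simeq p^{\lambda(m,E[I_m])}\mathbb{Z}_p/I_m\mathbb{Z}_p
\]
is a finite cyclic $\mathbb{Z}_p$-module. Specializing at $m=n$: the hypothesis that $\kappa_n\neq 0$ generates $p^j\mathcal{H}'(n)$, together with the fact that $\kappa^{\mathrm{prim}}_n$ generates $\mathcal{H}'(n)$, forces $v_p(c)=j$ (here we use cyclicity to read off the valuation unambiguously, and we use $\kappa_n\neq 0$ to ensure $v_p(c)$ is well-defined modulo $I_n$).

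Finally, applying the same relation at an arbitrary $n'\in\mathcal{N}_1$, the class $\kappa_{n'}=c\cdot\kappa^{\mathrm{prim}}_{n'}$ generates $p^{v_p(c)}\mathcal{H}'(n')=p^j\mathcal{H}'(n')$, and the equality $p^j\mathcal{H}'(n')=p^{j+\lambda(n',E[I_{n'}])}\mathcal{H}(n')$ and the isomorphism with $p^{j+\lambda(n',E[I_{n'}])}\mathbb{Z}_p/I_{n'}\mathbb{Z}_p$ are immediate from the definitions and Theorem~\ref{thm:splitting-mazur-rubin}.

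The main obstacle is the uniform primitivity statement for $\ks^{\mathrm{prim}}$: freeness of $\KS(T)$ alone only produces a generator up to scalars, and one must know that this generator witnesses the full stub at every $n\in\mathcal{N}_1$ simultaneously. This is the substantive content of Mazur--Rubin's machinery, established by first building a primitive class at a core vertex and then propagating primitivity along the Kolyvagin system compatibility $\mathrm{loc}^s_\ell(\kappa^{\mathrm{prim}}_{m\ell})=\phi^{\mathrm{fs}}_\ell\circ\mathrm{loc}_\ell(\kappa^{\mathrm{prim}}_m)$. Once that input is quoted, the argument above is essentially bookkeeping in finite cyclic modules.
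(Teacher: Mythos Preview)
Your argument is correct and is essentially the proof behind the result the paper cites: the paper's own proof is simply a reference to \cite[Corollary 4.5.2.(ii)]{mazur-rubin-book}, and what you have written is exactly the standard unpacking of that corollary via the rank-one freeness of $\KS(T,\mathcal{F}_{\mathrm{can}},\mathcal{P})$ and the existence of a primitive generator hitting every stub. You have correctly identified that the only nontrivial input is the uniform primitivity of $\ks^{\mathrm{prim}}$, which is the content of Mazur--Rubin's structural results rather than something to be reproved here.
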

\begin{proof}
See \cite[Cor. 4.5.2.(ii)]{mazur-rubin-book}.
\end{proof}
We say that a Kolyvagin system \textbf{$\ks$ is primitive} if the mod $p$ Kolyvagin system $\ks^{(1)}  = \left( \kappa_n \pmod{p} \right)_{n \in \mathcal{N}_1}$ is non-zero. This is equivalent to $j=0$ in Proposition \ref{prop:kolyvagin-system-location}. 
See \cite[Cor. 4.5.4 and Def. 4.5.5]{mazur-rubin-book}.

\subsection{Kolyvagin systems over $\mathbb{Z}_p$}
\subsubsection{}
Let $\ks^{(k)}$ be a Kolyvagin system over $T/p^kT$.
Suppose that $\kappa^{(k)}_n \neq 0$ for some $n \in \mathcal{N}_1$.
Following \cite[Ex. 3.1.12]{mazur-rubin-book}, we have another Kolyvagin system $\ks^{n, (k)}$
defined by 
$\kappa^{n, (k)}_m = \kappa^{(k)}_{n \cdot m}$
where $m \in \mathcal{N}_k$ with $(m,n)=1$.
The following proposition is fundamental to investigate the structure for $p$-strict Selmer groups.
\begin{prop} \label{prop:structure-fine-selmer-p^k}
Suppose that $\mathrm{ord}(\ks^{(k)}) < \infty$, i.e $\kappa^{(k)}_n \neq 0$ for some $n \in \mathcal{N}_1$.
Write
$$\mathrm{Sel}_0(\mathbb{Q}, E[p^k]) \simeq \bigoplus_{i \geq 1}  \mathbb{Z} / p^{d_i} \mathbb{Z}$$
with non-negative integers
$d_1 \geq d_2 \geq \cdots $, and fix $j \geq 0$ such that
$\kappa^{(k)}_n$ generates $p^j \mathcal{H}'(n)$ (Proposition \ref{prop:kolyvagin-system-location}).
Then for every integer $r \geq 0$, 
$$\partial^{(r)} (\ks^{(k)}) = \mathrm{min}\lbrace k, j+\sum_{i > r} d_i \rbrace .$$
\end{prop}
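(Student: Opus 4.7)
The plan is to reduce the proposition to a purely Selmer-theoretic computation of $\lambda_r := \min_{n\in\mathcal{N}_k,\,\nu(n)=r} \lambda(n, E[I_n])$ and then to evaluate $\lambda_r = \sum_{i>r} d_i$ by induction on $r$ along carefully chosen Kolyvagin primes.

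For the algebraic reduction, observe that $\ks^{(k)}$ lives over $T/p^kT$, so one effectively has $I_n = p^k\mathbb{Z}_p$ throughout. Theorem~\ref{thm:splitting-mazur-rubin} gives $\mathcal{H}(n)\simeq \mathbb{Z}/p^k\mathbb{Z}\oplus \mathrm{Sel}_{0,n}(\mathbb{Q},E[p^k])$, so $\mathcal{H}'(n)=p^{\lambda(n,E[I_n])}\mathcal{H}(n)$ is cyclic of length $\max\{k-\lambda(n,E[I_n]),0\}$ (the second summand is killed by $p^{\lambda(n,E[I_n])}$ since the exponent is bounded by the length). Proposition~\ref{prop:kolyvagin-system-location} says that $\kappa^{(k)}_n$ generates $p^j\mathcal{H}'(n)$, so the submodule $\mathbb{Z}/p^k\mathbb{Z}\cdot\kappa^{(k)}_n$ has length $\max\{k-j-\lambda(n,E[I_n]),0\}$. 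Substituting into the definition of $\partial^{(r)}$ produces
\[
\partial^{(r)}(\ks^{(k)}) \;=\; \min\bigl\{\,k,\; j+\lambda_r\,\bigr\},
\]
reducing the claim to the identity $\lambda_r = \sum_{i>r} d_i$.

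Next I would prove this identity by induction on $r$. The base case $r=0$ is just the defining decomposition $\mathrm{Sel}_0(\mathbb{Q},E[p^k])\simeq\bigoplus_i\mathbb{Z}/p^{d_i}\mathbb{Z}$. The upper bound $\lambda_r\leq\sum_{i>r}d_i$ carries the main content: starting from $n$ with $\nu(n)=r-1$ realising $\lambda(n,E[I_n])=\sum_{i>r-1}d_i$, I would apply a strengthened form of Proposition~\ref{prop:chebotarev} to produce $\ell\in\mathcal{P}_k$ satisfying simultaneously (i) usefulness for $\kappa^{(k)}_n$, (ii) the surjectivity $\mathrm{loc}_\ell:\mathrm{Sel}_{\mathrm{rel},n}(\mathbb{Q},T/p^kT)\twoheadrightarrow E(\mathbb{Q}_\ell)\otimes\mathbb{Z}/p^k\mathbb{Z}$, and (iii) the image of a chosen generator of the top cyclic summand of $\mathrm{Sel}_{0,n}(\mathbb{Q},E[p^k])$ has maximal possible order in $E(\mathbb{Q}_\ell)\otimes\mathbb{Z}/p^k\mathbb{Z}$. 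Lemma~\ref{lem:surjectivity-at-ell} then identifies $\mathrm{Sel}_{0,n\ell}(\mathbb{Q},E[I_{n\ell}])=\mathrm{Sel}_{0,n,\ell\textrm{-}\mathrm{str}}(\mathbb{Q},E[I_n])$, and (iii) forces the additional strict condition to annihilate the top summand, dropping $\lambda$ by exactly $d_r$ in the induction. The matching lower bound $\lambda_r\geq\sum_{i>r}d_i$ follows by iterating (ii) at every $\ell\mid n$ to realise $\mathrm{Sel}_{0,n}(\mathbb{Q},E[I_n])$ as the kernel of a map $\mathrm{Sel}_0(\mathbb{Q},E[p^k])\to\bigoplus_{\ell\mid n} E(\mathbb{Q}_\ell)\otimes\mathbb{Z}/p^k\mathbb{Z}$ whose target has $p$-rank at most $r$; the standard elementary-divisor interlacing $d_i(\ker)\geq d_{i+r}(\mathrm{Sel}_0)$ then gives the bound, with non-generic configurations handled by a Poitou--Tate comparison.

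The main obstacle will be clause (iii): Proposition~\ref{prop:chebotarev} as stated only produces primes for which finitely many prescribed classes have non-zero localisation, whereas here I need primes whose Frobenius evaluates a designated class to an element of \emph{prescribed maximal order}, simultaneously with conditions (i) and (ii). Establishing this requires enlarging the auxiliary Galois extension by adjoining a $T/p^kT$-cohomology class lifting a generator of the top summand and then extracting the density statement from a carefully analysed product of Galois groups, using the surjectivity of $\overline{\rho}$ together with the $p\geq 5$ hypothesis to rule out degeneracies. This density refinement is the content underlying \cite[Proposition 4.5.8]{mazur-rubin-book}, and it will be reused in $\S$\ref{sec:proof-refined-bsd} to handle classical (rather than fine) Selmer groups.
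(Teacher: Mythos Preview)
Your reduction to $\partial^{(r)}(\ks^{(k)}) = \min\{k, j+\lambda_r\}$ with $\lambda_r = \min_{\nu(n)=r}\lambda(n,E[I_n])$ is exactly what the paper does. However, two points in your plan are off.

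\textbf{The lower bound.} To prove $\lambda_r \geq \sum_{i>r} d_i$ you must bound $\lambda(n,E[I_n])$ for \emph{every} $n$ with $\nu(n)=r$, not for specially chosen ones; you cannot ``iterate (ii)'' to impose surjectivity at the primes dividing an arbitrary $n$. The paper's argument is both simpler and correct: for any such $n$, the kernel of $\mathrm{Sel}_0(\mathbb{Q},E[p^k]) \to \bigoplus_{\ell\mid n} E(\mathbb{Q}_\ell)\otimes\mathbb{Z}/p^k\mathbb{Z}$ is $\mathrm{Sel}_{0,n\textrm{-}\mathrm{str}}(\mathbb{Q},E[p^k])$, which is \emph{contained in} $\mathrm{Sel}_{0,n}(\mathbb{Q},E[p^k])$ (the strict condition lies inside the transverse one). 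Since the target is generated by $r$ elements, the image has length at most $\sum_{i\le r} d_i$, so the kernel---hence $\mathrm{Sel}_{0,n}$---has length at least $\sum_{i>r} d_i$. No Poitou--Tate input is needed.

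\textbf{The phantom obstacle.} Your clause (iii), that a generator $g$ of the top cyclic summand localise with maximal order, is \emph{equivalent} to $\mathrm{loc}_\ell(p^{d_1-1}g)\neq 0$, and this is precisely the kind of condition Proposition~\ref{prop:chebotarev} already handles. The paper does exactly this: it takes $c \in p^{k-1}\mathrm{Sel}_{\mathrm{rel}}(\mathbb{Q},T/p^kT)\setminus\{0\}$ (whose nonvanishing at $\ell$ forces your (ii)) and $c' \in p^{d_1-1}\mathrm{Sel}_0(\mathbb{Q},E[p^k])\setminus\{0\}$ (whose nonvanishing forces your (iii)), and applies Proposition~\ref{prop:chebotarev} directly. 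No density refinement is required. The paper then applies the induction hypothesis to the shifted Kolyvagin system $\ks^{\ell,(k)}$ (so the new Selmer group is $\mathrm{Sel}_{0,\ell}(\mathbb{Q},E[p^k])\simeq\bigoplus_{i\ge 1}\mathbb{Z}/p^{d_{i+1}}\mathbb{Z}$ and the new $d$-sequence is shifted), which is cleaner than tracking a minimising $n$ and also sidesteps the need to know the \emph{structure}, not just the length, of $\mathrm{Sel}_{0,n}$ at each stage of your induction.
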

\begin{proof}
See \cite[Prop. 4.5.8]{mazur-rubin-book}
\end{proof}
\subsubsection{}
Let $\ks$ be a Kolyvagin system for $T$.
Define 
$$\partial^{(0)} (\ks) = \mathrm{max} \left\lbrace j : \kappa_1 \in p^j \mathrm{Sel}_{\mathrm{rel}}(\mathbb{Q}, T) \right\rbrace $$
and we allow $\partial^{(0)} (\ks)  = \infty$ (when $\kappa_1=0$).

\begin{thm} \label{thm:euler-system-divisibility}
Let $\ks$ be a Kolyvagin system for $T$. Then
$\mathrm{length}_{\mathbb{Z}_p} \mathrm{Sel}_0(\mathbb{Q}, E[p^\infty]) \leq \partial^{(0)} (\ks)$.
In particular, if $\kappa_1 \neq 0$, then $\mathrm{Sel}_0(\mathbb{Q}, E[p^\infty])$ is finite.
\end{thm}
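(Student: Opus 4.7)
The plan is to prove the equivalent inequality $\partial^{(0)}(\ks) \geq \mathrm{length}_{\mathbb{Z}_p}\mathrm{Sel}_0(\mathbb{Q}, E[p^\infty])$ by exhibiting explicit $p$-divisibility of $\kappa_1$ inside $\mathrm{Sel}_{\mathrm{rel}}(\mathbb{Q}, T)$. The key tools will be the stub inclusion (Theorem \ref{thm:stub-selmer}) applied at every mod $p^k$ level and a standard K\"onig's lemma / Mittag--Leffler compactness argument to pass to the inverse limit $\mathrm{Sel}_{\mathrm{rel}}(\mathbb{Q}, T) = \varprojlim_k \mathrm{Sel}_{\mathrm{rel}}(\mathbb{Q}, T/p^kT)$.

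Write $\ell_k := \mathrm{length}_{\mathbb{Z}_p}\mathrm{Sel}_0(\mathbb{Q}, E[p^k])$, so $\ell_\infty := \mathrm{length}_{\mathbb{Z}_p}\mathrm{Sel}_0(\mathbb{Q}, E[p^\infty]) = \lim_k \ell_k$. First I would reduce $\ks$ modulo $p^k$ to obtain a Kolyvagin system $\ks^{(k)}$ for $T/p^kT$ (functorially from the definitions), whose leading class $\kappa^{(k)}_1$ is the image of $\kappa_1$ in $\mathrm{Sel}_{\mathrm{rel}}(\mathbb{Q}, T/p^kT)$. Applying Theorem \ref{thm:stub-selmer} to $\ks^{(k)}$ at $n=1$ gives
$$\kappa^{(k)}_1 \in \mathcal{H}'(1) = p^{\ell_k}\mathrm{Sel}_{\mathrm{rel}}(\mathbb{Q}, T/p^kT).$$
Now fix any integer $m \leq \ell_\infty$; then $\ell_k \geq m$ for every $k$ sufficiently large, so the set
$$S^{(m)}_k := \{ w_k \in \mathrm{Sel}_{\mathrm{rel}}(\mathbb{Q}, T/p^kT) : p^m w_k = \kappa^{(k)}_1 \}$$
is non-empty. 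Since $\mathrm{Sel}_{\mathrm{rel}}(\mathbb{Q}, T/p^kT)$ is finite under our running surjectivity hypothesis, $S^{(m)}_k$ is also finite. The natural reduction maps $\mathrm{Sel}_{\mathrm{rel}}(\mathbb{Q}, T/p^{k+1}T) \to \mathrm{Sel}_{\mathrm{rel}}(\mathbb{Q}, T/p^kT)$ send $\kappa^{(k+1)}_1 \mapsto \kappa^{(k)}_1$ and therefore induce transition maps $S^{(m)}_{k+1} \to S^{(m)}_k$. K\"onig's lemma then produces a compatible system $(w_k)_k \in \varprojlim_k S^{(m)}_k$, which assembles to an element $w \in \mathrm{Sel}_{\mathrm{rel}}(\mathbb{Q}, T)$ satisfying $p^m w = \kappa_1$.

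This shows $\kappa_1 \in p^m\mathrm{Sel}_{\mathrm{rel}}(\mathbb{Q}, T)$ for every $m \leq \ell_\infty$. If $\ell_\infty < \infty$, taking $m = \ell_\infty$ yields $\partial^{(0)}(\ks) \geq \ell_\infty$ directly. If $\ell_\infty = \infty$, the same argument shows that $\kappa_1$ is infinitely $p$-divisible in the finitely generated $\mathbb{Z}_p$-module $\mathrm{Sel}_{\mathrm{rel}}(\mathbb{Q}, T)$, which forces $\kappa_1 = 0$ and hence $\partial^{(0)}(\ks) = \infty$, so the bound is vacuous. The ``in particular'' finiteness assertion is then immediate: if $\kappa_1 \neq 0$ then $\partial^{(0)}(\ks) < \infty$, whence $\mathrm{Sel}_0(\mathbb{Q}, E[p^\infty])$ is finite. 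The main obstacle I anticipate is the K\"onig's lemma step: one must verify both the finiteness of each $S^{(m)}_k$ and the compatibility of the reduction maps with the divisibility defining $S^{(m)}_k$, which is routine but requires care since the divisibility of $\kappa^{(k)}_1$ inside $\mathrm{Sel}_{\mathrm{rel}}(\mathbb{Q}, T/p^kT)$ can in principle exceed that of $\kappa_1$ in $\mathrm{Sel}_{\mathrm{rel}}(\mathbb{Q}, T)$.
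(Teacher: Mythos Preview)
Your argument is correct and is essentially the proof given in \cite[Theorem 5.2.2]{mazur-rubin-book}, which is all the paper cites here. The only cosmetic difference is that Mazur--Rubin package the passage from the mod $p^k$ stub inclusion back to $\mathrm{Sel}_{\mathrm{rel}}(\mathbb{Q},T)$ through their Lemmas 3.5.3--3.5.4 (identifying $\mathrm{Sel}_{\mathrm{rel}}(\mathbb{Q},T/p^kT)$ with $\mathrm{Sel}_{\mathrm{rel}}(\mathbb{Q},T)/p^k$ and $\mathrm{Sel}_0(\mathbb{Q},E[p^k])$ with $\mathrm{Sel}_0(\mathbb{Q},E[p^\infty])[p^k]$ under the running hypotheses), which makes the K\"onig's lemma step unnecessary: once $\mathrm{Sel}_{\mathrm{rel}}(\mathbb{Q},T)$ is free of finite rank over $\mathbb{Z}_p$, divisibility of $\kappa_1$ by $p^m$ in $\mathrm{Sel}_{\mathrm{rel}}(\mathbb{Q},T)$ is equivalent to divisibility of $\kappa_1^{(k)}$ by $p^m$ in $\mathrm{Sel}_{\mathrm{rel}}(\mathbb{Q},T)/p^k$ for all $k$. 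Your compactness argument is a perfectly good substitute for invoking those identification lemmas explicitly.
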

\begin{proof}
See \cite[Thm. 5.2.2]{mazur-rubin-book}.
\end{proof}

\begin{defn}
\begin{enumerate}
\item 
The \textbf{vanishing order} of a non-zero Kolyvagin system $\ks = ( \kappa_n )_{n \in \mathcal{N}_1}$ is defined by
$\mathrm{ord}(\ks) = \mathrm{min} \left\lbrace \nu(n) : n \in \mathcal{N}_1 , \kappa_n \neq 0 \right\rbrace$.
\item 
For $r \in \mathbb{Z}_{>0}$, define
\begin{align*}
& \partial^{(r)}(\ks)  \\
 = \ &
\mathrm{min} \left\lbrace \mathrm{max} \left\lbrace j_n : \kappa_n \in p^{j_n} \mathrm{Sel}_{\mathrm{rel},n}(\mathbb{Q}, T/I_nT) \right\rbrace : n \in \mathcal{N}_1 \textrm{ with } \nu(n) = r  \right\rbrace \\
 = \ & \lim_{k \to \infty} \mathrm{min} \left\lbrace k, \mathrm{max} \left\lbrace j_n : \kappa^{(k)}_n \in p^{j_n} \mathrm{Sel}_{\mathrm{rel},n}(\mathbb{Q}, T/p^kT)  \right\rbrace : n \in \mathcal{N}_k \textrm{ with } \nu(n) = r  \right\rbrace
\end{align*}
and the second equality follows from Theorem \ref{thm:kato-kolyvagin-main}.(1) below.
\item 
The \textbf{sequence of elementary divisors} is defined by
$e_i(\ks) = \partial^{(i)}(\ks) - \partial^{(i+1)}(\ks)$
where $i \geq \mathrm{ord}(\ks)$.
\end{enumerate}
\end{defn}
\subsubsection{}
The following theorem illustrates how a non-trivial Kolyvagin system determines the structure of $p$-strict Selmer groups.
\begin{thm} \label{thm:kato-kolyvagin-main}
Let  $\ks$ be a non-trivial Kolyvagin system for $T$. Then the following statements hold.
\begin{enumerate}
\item For every $s \geq 0$,
$\partial^{(s)}(\ks) = \lim_{k \to \infty} \partial^{(s)}(\ks^{(k)}) $.
\item The sequence $\partial^{(s)}(\ks)$ is non-increasing, and finite for $s \geq \mathrm{ord}(\ks)$.
\item The sequence $e_i(\ks)$ is non-increasing, non-negative, and finite for $i  \geq \mathrm{ord}(\ks)$.
\item $\mathrm{ord}(\ks)$ and the $e_i(\ks)$ are independent of the choice of non-zero $\ks \in \KS(T)$.
\item $\mathrm{cork}_{\mathbb{Z}_p} \mathrm{Sel}_0(\mathbb{Q},E[p^\infty]) = \mathrm{ord}(\ks)$.
\item 
$\mathrm{Sel}_0(\mathbb{Q}, E[p^\infty])_{/\mathrm{div}} \simeq \bigoplus_{i \geq \mathrm{ord}(\ks)} \dfrac{\mathbb{Z}_p}{p^{e_i(\ks)}\mathbb{Z}_p}$
\item  $\mathrm{length}_{\mathbb{Z}_p} \mathrm{Sel}_0(\mathbb{Q}, E[p^\infty])_{/\mathrm{div}} =
\partial^{(\mathrm{ord}(\ks))}(\ks) - \partial^{(\infty)}(\ks)$
\item $\ks$ is primitive if and only if $\partial^{(\infty)}(\ks) = 0$.
\end{enumerate}
\end{thm}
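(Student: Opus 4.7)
The plan is to reduce every statement to the finite-level Proposition~\ref{prop:structure-fine-selmer-p^k} and then pass to the limit $k \to \infty$. Fix a non-trivial Kolyvagin system $\ks$, write $\ks^{(k)}$ for its mod $p^k$ reduction, decompose
\[
\mathrm{Sel}_0(\mathbb{Q}, E[p^k]) \simeq \bigoplus_{i \geq 1} \mathbb{Z}/p^{d_i(k)}\mathbb{Z}
\]
with $d_1(k) \geq d_2(k) \geq \cdots$, and let $j_k \geq 0$ be the Mazur--Rubin divisibility index produced by Proposition~\ref{prop:kolyvagin-system-location} at level $k$. That proposition yields the master formula
\[
\partial^{(r)}(\ks^{(k)}) \ = \ \min\Bigl\{k,\ j_k + \sum_{i > r} d_i(k)\Bigr\}.
\]
By Theorem~\ref{thm:core-rank-elliptic-curves}.(2), $\KS(T)$ is free of rank one, so any two non-trivial Kolyvagin systems differ by multiplication by a non-zero scalar in $\mathbb{Z}_p$; this shifts every $j_k$ by the $p$-adic valuation of that scalar while leaving the $d_i(k)$ unchanged, so neither $\mathrm{ord}(\ks)$ nor the elementary divisors $e_i(\ks)$ depend on the chosen representative, giving (4).

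To take the limit, set $r_0 := \mathrm{cork}_{\mathbb{Z}_p}\mathrm{Sel}_0(\mathbb{Q}, E[p^\infty])$ and let $a_1 \geq a_2 \geq \cdots$ be the invariants of $\mathrm{Sel}_0(\mathbb{Q}, E[p^\infty])_{/\mathrm{div}}$. The natural map $\mathrm{Sel}_0(\mathbb{Q}, E[p^k]) \hookrightarrow \mathrm{Sel}_0(\mathbb{Q}, E[p^\infty])[p^k]$ is an isomorphism under the surjectivity of $\overline{\rho}$ (via $\mathrm{H}^0(\mathbb{Q}, E[p^\infty]) = 0$), so the standard structure theorem for cofinitely generated $\mathbb{Z}_p$-modules forces $d_i(k) = k$ for $1 \leq i \leq r_0$ and $d_{r_0 + s}(k) = a_s$ for all $s \geq 1$ and all $k \gg 0$. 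Since $\kappa_1 \neq 0$ gives $\partial^{(0)}(\ks) < \infty$ via Theorem~\ref{thm:euler-system-divisibility}, the sequence $j_k$ also stabilizes to a finite value $j \in \mathbb{Z}_{\geq 0}$. Feeding these asymptotics into the master formula yields, for $k \gg 0$,
\[
\partial^{(r)}(\ks^{(k)}) =
\begin{cases}
k & \text{if } r < r_0, \\
j + \sum_{s > r - r_0} a_s & \text{if } r \geq r_0,
\end{cases}
\]
which simultaneously proves (1), (2), and the equality $\mathrm{ord}(\ks) = r_0$ of (5). Reading off $e_{r_0 + s - 1}(\ks) = a_s$ for $s \geq 1$ then gives the monotonicity and finiteness of (3), the structural decomposition (6), and the length formula (7) by telescoping.

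Finally for (8), primitivity of $\ks$ means $\ks^{(1)} \neq 0$, which by Proposition~\ref{prop:kolyvagin-system-location} at $k = 1$ is equivalent to $j_1 = 0$ and hence, by the stabilization argument, to $j = 0$; on the other hand the master formula gives $\partial^{(\infty)}(\ks) = \lim_{r \to \infty}\partial^{(r)}(\ks) = j$, so the two conditions coincide. The main obstacle I anticipate is the rigorous justification of the limit identity (1): one must verify that the divisibility of the integral classes $\kappa_n$ inside $\mathrm{Sel}_{\mathrm{rel},n}(\mathbb{Q}, T/I_nT)$ is faithfully detected by their mod $p^k$ reductions, which in turn requires choosing the non-canonical splittings of Theorem~\ref{thm:splitting-mazur-rubin} compatibly with the reduction maps as $k$ varies, and verifying that the stabilization of $j_k$ and $d_i(k)$ is uniform in $n$ at each fixed $r$ via the Kolyvagin system axiom relating different $n$.
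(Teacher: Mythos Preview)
The paper does not prove this theorem; it simply cites \cite[Theorem~5.2.12]{mazur-rubin-book}. Your outline is in fact the Mazur--Rubin argument: compute $\partial^{(r)}(\ks^{(k)})$ at each finite level via Proposition~\ref{prop:structure-fine-selmer-p^k} and let $k \to \infty$.

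There is, however, one genuine gap. You write ``Since $\kappa_1 \neq 0$ gives $\partial^{(0)}(\ks) < \infty$ via Theorem~\ref{thm:euler-system-divisibility}, the sequence $j_k$ also stabilizes.'' But the hypothesis is only that $\ks$ is non-trivial; if $r_0 > 0$ then $\kappa_1 = 0$ and $\partial^{(0)}(\ks) = \infty$, so this line does not apply. The stabilization of $j_k$ should instead be deduced from the rank-one fact you already invoked for (4): write $\ks = p^j \ks^{\mathrm{prim}}$ with $\ks^{\mathrm{prim}}$ a generator of $\KS(T)$. Primitivity means $(\ks^{\mathrm{prim}})^{(1)} \neq 0$, so by Proposition~\ref{prop:kolyvagin-system-location} the divisibility index of $\ks^{\mathrm{prim}}$ is $0$ at every level $k$; hence $\kappa_n^{(k)} = p^j \kappa_n^{\mathrm{prim},(k)}$ generates $p^j \mathcal{H}'(n)$ and $j_k = j$ for all $k$. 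With this correction your limiting formula and the derivations of (1)--(8) go through, subject to the compatibility verifications you flag in your final paragraph (these are carried out in \cite[\S4.5 and \S5.2]{mazur-rubin-book}).
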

\begin{proof}
See \cite[Thm. 5.2.12]{mazur-rubin-book}.
\end{proof}
\begin{cor} \label{cor:kato-kolyvagin-main}
Let  $\ks$ be a non-trivial Kolyvagin system for $T$. 
\begin{enumerate}
\item $\mathrm{length}_{\mathbb{Z}_p} \mathrm{Sel}_0(\mathbb{Q}, E[p^\infty])$ is finite if and only if $\kappa_1 \neq 0$.
\item $\mathrm{length}_{\mathbb{Z}_p} \mathrm{Sel}_0(\mathbb{Q}, E[p^\infty]) \leq \partial^{(0)} (\ks)$, with equality if and only if $\ks$ is primitive. 
\end{enumerate}
\end{cor}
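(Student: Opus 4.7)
The plan is to derive both statements essentially by bookkeeping from Theorem \ref{thm:kato-kolyvagin-main}, which already encodes the structural information of $\mathrm{Sel}_0(\mathbb{Q}, E[p^\infty])$ in terms of the numerical invariants $\mathrm{ord}(\ks)$, $\partial^{(i)}(\ks)$, and the elementary divisors $e_i(\ks)$. The assertion is really a translation of items (5), (7), and (8) of that theorem into a statement about $\kappa_1$ and the total length.

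For item (1), I would first observe that $\mathrm{length}_{\mathbb{Z}_p} \mathrm{Sel}_0(\mathbb{Q}, E[p^\infty])$ is finite if and only if the $\mathbb{Z}_p$-corank of the Selmer group is zero. By item (5) of Theorem \ref{thm:kato-kolyvagin-main}, this corank equals $\mathrm{ord}(\ks)$, so finiteness is equivalent to $\mathrm{ord}(\ks) = 0$. Since $1 \in \mathcal{N}_1$ is the unique element with $\nu(1) = 0$, the vanishing order is zero precisely when $\kappa_1 \neq 0$, which completes the equivalence.

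For item (2), I would separate the cases $\kappa_1 \neq 0$ and $\kappa_1 = 0$. In the latter case, $\partial^{(0)}(\ks) = \infty$ by the convention recalled before Theorem \ref{thm:euler-system-divisibility}, and by part (1) the Selmer length is infinite as well, so the inequality is automatic. In the case $\kappa_1 \neq 0$, the Selmer group is finite and coincides with its non-divisible quotient. Item (7) of Theorem \ref{thm:kato-kolyvagin-main} with $\mathrm{ord}(\ks) = 0$ then yields
\[
\mathrm{length}_{\mathbb{Z}_p} \mathrm{Sel}_0(\mathbb{Q}, E[p^\infty]) = \partial^{(0)}(\ks) - \partial^{(\infty)}(\ks),
\]
which immediately gives the inequality and shows that equality holds exactly when $\partial^{(\infty)}(\ks) = 0$. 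By item (8) of Theorem \ref{thm:kato-kolyvagin-main}, the latter is the definition of $\ks$ being primitive.

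I do not anticipate any genuine obstacle: the corollary is a one-step unpacking of the deep structural Theorem \ref{thm:kato-kolyvagin-main} once one identifies the condition $\mathrm{ord}(\ks) = 0$ with $\kappa_1 \neq 0$. The only mild bookkeeping point is to verify that $\partial^{(0)}(\ks) - \partial^{(\infty)}(\ks)$ is a well-defined non-negative integer when $\kappa_1 \neq 0$, which is ensured by items (2) and (3) of Theorem \ref{thm:kato-kolyvagin-main}: the sequence $\partial^{(s)}(\ks)$ is non-increasing and finite for all $s \geq \mathrm{ord}(\ks) = 0$, so in particular $\partial^{(0)}(\ks) < \infty$ and dominates $\partial^{(\infty)}(\ks)$.
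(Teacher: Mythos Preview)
Your derivation is correct and is exactly how one would extract the corollary from Theorem \ref{thm:kato-kolyvagin-main}. The paper itself does not give an argument here: it simply cites \cite[Corollary 5.2.13]{mazur-rubin-book}, just as Theorem \ref{thm:kato-kolyvagin-main} cites \cite[Theorem 5.2.12]{mazur-rubin-book}. So there is no real difference in approach to compare---you have supplied the short unpacking that the paper leaves implicit by pointing to Mazur--Rubin directly.

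One small remark: in the case $\kappa_1 = 0$ you verify the inequality but do not discuss the equality clause. Since both sides are then $\infty$, the ``equality if and only if primitive'' statement is to be read in the finite case (equivalently, for $\kappa_1 \neq 0$); this is the standard convention and is how Mazur--Rubin intend it, so your treatment is fine.
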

\begin{proof}
See \cite[Cor. 5.2.13]{mazur-rubin-book}.
\end{proof}

\section{A refined explicit reciprocity law and Kurihara numbers} \label{sec:kurihara-numbers}
The goal of this section is to describe the precise connection between $\ks^{\mathrm{Kato}}$ and $\kn$.
We first investigate when the local torsion $E(\mathbb{Q}_p)[p]$ is trivial.
Then we compute the integral image of the Bloch--Kato dual exponential map.
Using this computation, we extend the the Bloch--Kato dual exponential map to torsion coefficients.
By using the compatibility between the classical Bloch--Kato dual exponential map and the torsion Bloch--Kato dual exponential map, we obtain the derivative of Kato's explicit reciprocity law.
It significantly refines the computation in \cite{kks}.

\subsection{Local torsions of elliptic curves: a digression} \label{subsec:local-torsion}
\subsubsection{}
Consider exact sequence
\begin{equation} \label{eqn:elliptic-curve-mod-p}
\xymatrix{
0 \ar[r] & \widehat{E}(p\mathbb{Z}_p) \ar[r] & E(\mathbb{Q}_p) \ar[r] & \widetilde{E}(\mathbb{F}_p) \ar[r] & 0 
}
\end{equation}
where $\widehat{E}$ is the formal group associated to $E$ and $\widetilde{E}$ is the mod $p$ reduction of $E$.
Since $\widehat{E}(p\mathbb{Z}_p)$ is isomorphic to $\mathbb{Z}_p$ as additive groups, $E(\mathbb{Q}_p)[p]$ is non-trivial if and only if $\widetilde{E}(\mathbb{F}_p)[p]$ is non-trivial and (\ref{eqn:elliptic-curve-mod-p}) splits.
Thus, if $E$ has non-anomalous good reduction at $p$, then $E(\mathbb{Q}_p)[p]$ is trivial.

Now we consider the case that $E$ has anomalous good reduction at $p$, equivalently, $\widetilde{E}(\mathbb{F}_p)[p]$ is non-trivial. Indeed, $a_p(E) \equiv 1 \pmod{p}$ implies $a_p(E) = 1$ if $p \geq  7$, and it is known that the set $\lbrace p : a_p(E) = 1 \rbrace$ has zero density \cite[Prop. 5.1]{greenberg-lnm}.

Suppose that $E(\mathbb{Q}_p)[p]$ is non-trivial, i.e. (\ref{eqn:elliptic-curve-mod-p}) splits.
By applying Gross' tameness criterion \cite[Prop. 13.2]{gross-tameness}, this is equivalent to that 
the image of 
$\overline{\rho} \vert_{\mathrm{Gal}(\overline{\mathbb{Q}}_p/\mathbb{Q}_p)}$
 is diagonalizable. 
  See \cite{bryden-cais-thesis} for the unchecked compatibility in \cite{gross-tameness}.

Recall that $f = \sum_{n \geq 1} a_n(E)q^n$ \textbf{admits a mod $p$ companion form} if
there exists a mod $p$ eigenform
$g = \sum_{n \geq 1} b_n q^n$ of weight $p-1$
such that
$n^2 \cdot b_n \equiv n \cdot a_n(E) \pmod{p}$
for all $n \geq 1$.
By \cite{gross-tameness}, the above diagonalizability is also equivalent to the existence of a mod $p$ companion form of $f$.
\subsubsection{}
The following result is known.
\begin{prop} \label{prop:local-torsions}
Let $E$ be an elliptic curve over $\mathbb{Q}_p$ with an odd prime $p$.
Then $E(\mathbb{Q}_p)[p] \neq  0$ if and only if
\begin{enumerate}
\item $E$ has anomalous good ordinary reduction at $p$ and admits a mod $p$ companion form,
\item $E$ has split multiplicative reduction at $p$ and the $j$-invariant satisfies $p \mid \mathrm{ord}_p(j)$, or
\item $E$ has additive reduction at $p$ with minimal Weierstrass equation
\begin{equation*}
y^2 + a_1 xy + a_3 y = x^3 + a_2 x^2 + a_4 x + a_6
\end{equation*}
where $a_i \in p\mathbb{Z}_p$ for each $i$ satisfying
\begin{enumerate}
\item $p = 3$ and $a_2 \equiv 6 \pmod{9}$,
\item $p = 5$ and $a_4 \equiv 10 \pmod{25}$, or
\item $p = 7$ and $a_6 \equiv 14 \pmod{49}$.
\end{enumerate} 
\end{enumerate}
\end{prop}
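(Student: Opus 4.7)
My plan is to prove Proposition~\ref{prop:local-torsions} by splitting into the three reduction types and leveraging what is already established in $\S$\ref{subsec:local-torsion}.

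\emph{Good reduction case.} Since the formal group $\widehat{E}(p\mathbb{Z}_p)$ is $p$-torsion-free (as $p\geq 2$ makes the formal logarithm an injection into $\mathbb{Z}_p$), the exact sequence (\ref{eqn:elliptic-curve-mod-p}) already forces the equivalences recalled in the paragraph preceding Proposition~\ref{prop:local-torsions-split-multiplicative}: $E(\mathbb{Q}_p)[p]\neq 0$ if and only if $\widetilde{E}(\mathbb{F}_p)[p]\neq 0$ \emph{and} the reduction sequence $p$-splits. The first condition is anomalous good reduction; by Gross's tameness criterion \cite[Prop.~13.2]{gross-tameness} the second condition is equivalent to diagonalizability of $\overline{\rho}|_{\mathrm{Gal}(\overline{\mathbb{Q}}_p/\mathbb{Q}_p)}$, which in turn is equivalent to the existence of a mod $p$ companion form of $f$. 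This gives case (1).

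\emph{Multiplicative reduction case.} The split case is Proposition~\ref{prop:local-torsions-split-multiplicative}, giving case (2). For non-split multiplicative reduction with $p>2$, the Galois module $E[p]$ over $\mathbb{Q}_p$ is the twist by the nontrivial unramified quadratic character $\chi$ of the Tate-curve module. Since the Tate-curve module already has $G_{\mathbb{Q}_p}$-fixed submodule equal to $\mu_p$ (and no line with trivial action unless $p\mid \mathrm{ord}_p(q)$), twisting by the nontrivial unramified quadratic character eliminates every trivial Galois sub-line, so $E(\mathbb{Q}_p)[p]=0$. Hence only the three reduction types listed in (1)--(3) can yield nonzero $E(\mathbb{Q}_p)[p]$.

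\emph{Additive reduction case, the main obstacle.} The condition $a_i\in p\mathbb{Z}_p$ for all $i$ is exactly the normalization that places the singular point of the mod-$p$ reduction at the origin; after a standard translation it is achievable whenever $E$ has additive reduction at $p$. Under this normalization I will use the N\'eron filtration
\begin{equation*}
0\to \widehat{E}(p\mathbb{Z}_p)\to E_0(\mathbb{Q}_p)\to \widetilde{E}^{\mathrm{ns}}(\mathbb{F}_p)\to 0,
\end{equation*}
with $\widetilde{E}^{\mathrm{ns}}(\mathbb{F}_p)\cong \mathbb{G}_a(\mathbb{F}_p)\cong \mathbb{F}_p$ for additive reduction, together with the component group $\Phi(\mathbb{F}_p)=E(\mathbb{Q}_p)/E_0(\mathbb{Q}_p)$ whose order is the Tamagawa number $c_p$. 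For $p\geq 5$, $p\nmid c_p$ for any Kodaira type, so $E(\mathbb{Q}_p)[p]=E_0(\mathbb{Q}_p)[p]$, which by the torsion-freeness of $\widehat{E}(p\mathbb{Z}_p)$ embeds into $\mathbb{F}_p$; it is nontrivial precisely when the filtration $p$-splits. For $p\in\{2,3\}$ the component group may itself contribute $p$-torsion, which must be handled by Tate's algorithm.

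To convert this into the explicit congruences listed in (3a)--(3d), I would then grind the $p$-division polynomial $\psi_p$ on the normalized Weierstrass model. For each of $p=2,3,5,7$ one substitutes a candidate torsion point $(x_0,y_0)$ with $(x_0,y_0)\not\equiv(0,0)\pmod{p}$ into $\psi_p$ and reduces modulo $p^2$; the vanishing condition becomes a single congruence on the coefficient of minimal $p$-adic valuation among $\{a_1+a_3, a_2, a_4, a_6\}$ (governed by the Newton polygon of $\psi_p$, which for $p\neq 2,3$ is controlled by $a_{2k}$). The case $p=2$ is worked out via $\psi_2=2y+a_1x+a_3$ and the resulting cubic in $x$; for $p=3$ one uses $\psi_3=3x^4+b_2x^3+3b_4x^2+3b_6x+b_8$ modulo $9$. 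The analogous explicit congruences drop out for $p=5,7$, yielding the stated conditions. The main technical obstacle is keeping track of Kodaira type changes (and the component-group contribution for $p=2,3$) consistently with the condition $a_i\in p\mathbb{Z}_p$, so that the final congruence is both necessary and sufficient; once the Newton polygon of $\psi_p$ is pinned down under this normalization, the rest is a routine verification.
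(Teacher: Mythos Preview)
Your treatment of the good and multiplicative reduction cases coincides with the paper's: its proof literally says ``Thanks to the above discussion'' and defers to the material in \S\ref{subsec:local-torsion} (the exact sequence~(\ref{eqn:elliptic-curve-mod-p}), Gross's tameness criterion, Proposition~\ref{prop:local-torsions-split-multiplicative}, and the remark on the non-split quadratic twist) that you reproduce.

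For the additive case the approaches diverge. The paper offers no argument of its own here; it simply cites \cite[\S2]{kim-nakamura} and \cite{kosters-pannekoek}. Your proposal instead outlines a direct computation via the N\'eron filtration and the division polynomials $\psi_p$, which is indeed the line those references pursue. Your reduction, for $p\ge 5$, to the splitting of $0\to\widehat E(p\mathbb{Z}_p)\to E_0(\mathbb{Q}_p)\to\mathbb{F}_p\to 0$ is correct and is the natural starting point.

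There is, however, a real gap in your additive sketch. You never argue why $p\le 7$; nothing in your outline rules out the filtration splitting for $p\ge 11$. The visible pattern in the statement---the controlling coefficient is $a_{p-1}$ (with the tweak $a_1+a_3$ at $p=2$), and the condition reads $a_{p-1}\equiv 2p\pmod{p^2}$---suggests why $p\ge 11$ is excluded, but turning this into a proof requires identifying the first coefficient of $[p]$ on the formal group that is not automatically in $p^2\mathbb{Z}_p$ (equivalently, locating the relevant slope of the Newton polygon of $\psi_p$ under the constraint $a_i\in p\mathbb{Z}_p$) and showing it is governed by $a_{p-1}$ alone. That is precisely the content of the computations in \cite{kim-nakamura,kosters-pannekoek}, and it is not routine: for $p=5,7$ the polynomial $\psi_p$ has degree $12$ and $24$, and extracting the single controlling congruence takes genuine work. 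Your proposal is on the right track but stops short of the substantive part of the additive case.
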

\begin{proof}
The first two cases are well-known. For the additive reduction case, see \cite[$\S$2]{kim-nakamura} and \cite{kosters-pannekoek}.
\end{proof}
\begin{rem}
There are several results on the non-triviality of $E(\mathbb{Q}_p)[p]$. See \cite[Lem. 5.2]{greenberg-lnm}, \cite[Prop. 2.1.(3)]{david-weston}, and \cite[Prop. 6]{ghate-2005}.
It is conjectured that there are only finitely many primes $p$ such that $E(\mathbb{Q}_p)[p] \neq 0$ when $E$ is a non-CM elliptic curve \cite[Conj. 1.1]{david-weston}, and its average version is proved in \cite[Thm. 1.2]{david-weston}.
\end{rem}

\subsection{The computation of the integral image of  Bloch--Kato's dual exponential map} \label{subsec:computing-integral-image}
We recall the computation of the integral image of Bloch--Kato's dual exponential map following \cite[Prop. 3.5.1]{rubin-book}, \cite[Lem. 14.18]{kato-euler-systems} and \cite[$\S$6 and $\S$7]{kks} with some refinement in order to handle the $E(\mathbb{Q}_p)[p^\infty] \neq 0$ case.
\subsubsection{}
Following \cite[$\S$3.5]{rubin-book}, fix a minimal Weierstrass model of $E$ over $\mathbb{Q}_p$ and $\omega_E$ is the corresponding holomorphic differential. Let
$\omega^*_E$ be the basis of the tangent space of the minimal Weierstrass model such that the natural pairing $\langle \omega^*_E ,  \omega_E \rangle = 1$.
Write $\mathrm{exp}^*_{\omega_E} (-) = \langle \omega^*_E , \mathrm{exp}^*(-) \rangle$ where $\mathrm{exp}^*$ is the Bloch--Kato dual exponential map.
We write
$$t = 
\left\lbrace
\begin{array}{ll}
 0 & \textrm{ if  $E$ has split multiplicative reduction at $p$,} \\
 \mathrm{length}_{\mathbb{Z}_p} \left( E(\mathbb{Q}_p)[p^\infty] \right)  & \textrm{ otherwise.}
\end{array} \right.$$
\subsubsection{}
We first consider the good reduction case.
\begin{lem} \label{lem:dual-exponential}
If $p > 2$, we have
$$\mathrm{exp}^*_{\omega_E} :
\dfrac{ \mathrm{H}^1(\mathbb{Q}_p, T) }{ E(\mathbb{Q}_p) \otimes \mathbb{Z}_p }  \simeq \dfrac{\#\widetilde{E}(\mathbb{F}_p)}{\#\mathrm{H}^0(\mathbb{Q}_p,E[p^\infty])} \cdot \dfrac{1}{p} \cdot \mathbb{Z}_p .$$
If $E$ has good reduction at $p > 2$, then we have
$$\mathrm{exp}^*_{\omega_E} :
\dfrac{ \mathrm{H}^1(\mathbb{Q}_p, T) }{ E(\mathbb{Q}_p) \otimes \mathbb{Z}_p }  \simeq  \dfrac{\#\widetilde{E}(\mathbb{F}_p)}{p^{1+t}} \cdot \mathbb{Z}_p = \dfrac{p-a_p(E)+1}{p^{1+t}} \cdot \mathbb{Z}_p  .$$
\end{lem}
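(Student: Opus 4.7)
The plan is to follow the standard strategy of Rubin \cite{rubin-book} and Kato \cite{kato-euler-systems}, combining the Bloch--Kato dual exponential with a local lattice comparison and, in the good reduction case, the crystalline description of $V_p(E)$. First I would note that, by definition of the finite local condition,
$\mathrm{H}^1(\mathbb{Q}_p,T)/(E(\mathbb{Q}_p)\otimes\mathbb{Z}_p) = \mathrm{H}^1_{/f}(\mathbb{Q}_p,T)$,
and $\mathrm{exp}^*_{\omega_E}$ factors through this quotient. After inverting $p$, the Bloch--Kato dual exponential gives an isomorphism $\mathrm{H}^1(\mathbb{Q}_p,V)/\mathrm{H}^1_f(\mathbb{Q}_p,V) \simeq \mathrm{Fil}^0\DdR(V_p(E))$, which is one-dimensional and spanned by the element dual to $\omega_E$; pairing with $\omega_E$ therefore identifies $\mathrm{H}^1_{/f}(\mathbb{Q}_p,T)/\mathrm{tors}$ with a fractional ideal $c\cdot\mathbb{Z}_p \subset \mathbb{Q}_p$, and the task is to pin down $\mathrm{ord}_p(c)$.

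For the torsion bookkeeping in the first (general) formula, I would observe that the torsion of $\mathrm{H}^1(\mathbb{Q}_p,T)$ comes from $\mathrm{H}^0(\mathbb{Q}_p,E[p^\infty])$ via the Bockstein and is killed by $\mathrm{exp}^*_{\omega_E}$. Via local Tate duality, $\mathrm{H}^1_{/f}(\mathbb{Q}_p,T)$ is dual to $\mathrm{H}^1_f(\mathbb{Q}_p,E[p^\infty]) = E(\mathbb{Q}_p)\otimes\mathbb{Q}_p/\mathbb{Z}_p$, whose torsion subgroup is $\mathrm{H}^0(\mathbb{Q}_p,E[p^\infty])$. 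Comparing the two lattices $E(\mathbb{Q}_p)\otimes\mathbb{Z}_p$ and $\mathrm{H}^1_f(\mathbb{Q}_p,T)$ via the Kummer map, together with the local Euler--Poincar\'e characteristic formula, produces the factor $\#\mathrm{H}^0(\mathbb{Q}_p,E[p^\infty])$ in the denominator, while the $1/p$ factor comes from the standard volume comparison of $\mathrm{H}^1_f(\mathbb{Q}_p,T)$ against the natural integral structure on the tangent space (cf.\ the proof of \cite[Lemma 14.18]{kato-euler-systems}).

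For the good reduction refinement, I would use Fontaine's comparison to identify $\mathbf{D}_{\mathrm{cris}}(V_p(E))$ with the Dieudonn\'e module of $\widetilde{E}$: a rank-two $\mathbb{Z}_p$-module carrying a Frobenius $\varphi$ with characteristic polynomial $X^2 - a_p(E)X + p$ and $\mathrm{Fil}^1 = \mathbb{Z}_p\cdot\omega_E$. The integral Bloch--Kato dual exponential, at the level of $\mathbf{D}_{\mathrm{cris}}$, is computed by the operator $1 - \varphi$ modulo the $\mathrm{Fil}^0$-correction; its determinant equals $\det(1-\varphi) = p + 1 - a_p(E) = \#\widetilde{E}(\mathbb{F}_p)$, supplying the numerator. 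Combined with the first part and with the good-reduction identity $\#\mathrm{H}^0(\mathbb{Q}_p,E[p^\infty]) = p^t$ (read off from the exact sequence $0\to\widehat{E}(p\mathbb{Z}_p)\to E(\mathbb{Q}_p)\to\widetilde{E}(\mathbb{F}_p)\to 0$ in $\S$\ref{subsec:local-torsion}), this yields the stated $\#\widetilde{E}(\mathbb{F}_p)/p^{1+t}\cdot\mathbb{Z}_p$.

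The main obstacle will be pinning down the integral normalizations precisely: the $1/p$ in the general formula and the clean $p^{1+t}$ in the good-reduction statement are sensitive to the chosen lattice in $\mathbf{D}_{\mathrm{cris}}$, to the normalization $\langle\omega_E^*,\omega_E\rangle=1$, and most delicately to the anomalous case $t>0$, where $\mathrm{H}^0(\mathbb{Q}_p,E[p^\infty])\neq 0$ forces one to separate which power of $p$ comes from the Euler--Poincar\'e formula, which from $\det(1-\varphi)$, and which from the Kummer comparison between $E(\mathbb{Q}_p)\otimes\mathbb{Z}_p$ and $\mathrm{H}^1_f(\mathbb{Q}_p,T)$. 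Once these are tracked carefully, the refinement needed here over the cited references is essentially formal.
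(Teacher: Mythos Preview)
Your proposal is correct and follows exactly the approach the paper takes: the paper's own proof is simply a citation to \cite[Proposition 3.5.1]{rubin-book} and \cite[Lemma 14.18]{kato-euler-systems}, and your sketch is a faithful expansion of precisely those arguments (local Tate duality and Euler--Poincar\'e for the torsion bookkeeping, and the crystalline/Dieudonn\'e computation of $\det(1-\varphi)=\#\widetilde{E}(\mathbb{F}_p)$ in the good reduction case). There is nothing to add.
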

\begin{proof}
See \cite[Prop. 3.5.1]{rubin-book} and \cite[Lem. 14.18]{kato-euler-systems}.
\end{proof}
Note that $\dfrac{p-a_p(E)+1}{p} = 1- a_p(E)p^{-1} + p^{-1}$ is the Euler factor at $p$. 
\begin{cor} \label{cor:dual-exponential-unramified}
Suppose that $E$ has good reduction at $p \geq 5$.
Let $K$ be a finite unramified extension of $\mathbb{Q}_p$ with residue field $k$.
We assume either
\begin{enumerate}
\item $p$ does not divide $\#\widetilde{E}(k)$, or
\item the corresponding modular form does not admit a mod $p$ companion form.
\end{enumerate} 
Then we have $E(K)[p] = 0$ and
$$\mathrm{exp}^*_{\omega_E} :
\dfrac{ \mathrm{H}^1(K, T) }{ E(K) \otimes \mathbb{Z}_p }  \simeq \dfrac{ \#\widetilde{E}(k)}{p} \cdot  \mathcal{O}_K .$$
\end{cor}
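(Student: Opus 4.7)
The plan is to first establish $E(K)[p] = 0$ under either hypothesis, and then deduce the displayed isomorphism from the unramified-extension analogue of Lemma \ref{lem:dual-exponential} with vanishing local torsion exponent. Under hypothesis (1), the reduction exact sequence $0 \to \widehat E(\mathfrak m_K) \to E(K) \to \widetilde E(k) \to 0$ gives an injection $E(K)[p] \hookrightarrow \widetilde E(k)[p] = 0$, since $\widehat E(\mathfrak m_K)[p] = 0$: the formal logarithm converges on $\mathfrak m_K$ (as $p \geq 5$ and $K/\mathbb Q_p$ is unramified) and identifies $\widehat E(\mathfrak m_K)$ with the torsion-free $\mathbb Z_p$-module $\mathfrak m_K$. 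Under hypothesis (2), I will invoke Gross's tameness criterion recalled in $\S\ref{subsec:local-torsion}$: the absence of a mod $p$ companion form for $f$ forces $\overline\rho|_{G_{\mathbb Q_p}}$ to sit in a non-split extension $0 \to \mathbb F_p(\chi_{\mathrm{cyc}}) \to \overline\rho \to \mathbb F_p \to 0$ whose class $c \in \mathrm H^1(G_{\mathbb Q_p}, \mathbb F_p(\chi_{\mathrm{cyc}}))$ is non-zero. Since $\chi_{\mathrm{cyc}}$ is ramified at $p$, its restriction to $G_K$ remains non-trivial, so $\mathbb F_p(\chi_{\mathrm{cyc}})^{G_K} = 0$ and inflation-restriction shows $c|_{G_K} \neq 0$; hence $\overline\rho|_{G_K}$ is also non-split, its unique $G_K$-stable line is $\mathbb F_p(\chi_{\mathrm{cyc}})|_{G_K}$ (on which $G_K$ acts non-trivially), and $E[p]^{G_K} = 0$.

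With $E(K)[p] = 0$ in hand, a standard multiplication-by-$p^n$ induction gives $E(K)[p^\infty] = 0$, so the local torsion exponent $t_K := \mathrm{length}_{\mathbb Z_p}(E(K)[p^\infty])$ vanishes. I would then apply the $K$-analogue of Lemma \ref{lem:dual-exponential}: because $K/\mathbb Q_p$ is unramified, the minimal Weierstrass model of $E/\mathbb Q_p$ base-changes to a minimal model over $\mathcal O_K$, $\omega_E$ continues to generate its module of invariant differentials, and every step of \cite[Proposition 3.5.1]{rubin-book}, \cite[Lemma 14.18]{kato-euler-systems} --- the formal-group identification $\widehat E(\mathfrak m_K) \simeq \mathfrak m_K$, the local Tate pairing normalization, and the Bloch--Kato dual exponential computation --- transposes to $K$ with $\mathbb Z_p$ replaced by $\mathcal O_K$ and $\mathbb F_p$ by $k$. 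Substituting $t_K = 0$ produces $\dfrac{\#\widetilde E(k)}{p} \cdot \mathcal O_K$ as the image, which is exactly the desired formula.

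The main obstacle I anticipate is the argument under hypothesis (2): the non-splitting class $c$ must be shown to survive restriction to $G_K$, which I plan to handle via inflation-restriction together with the non-triviality of $\chi_{\mathrm{cyc}}|_{G_K}$, and I must verify that a non-split $\overline\rho|_{G_K}$ of the stated shape admits no $G_K$-stable line other than $\mathbb F_p(\chi_{\mathrm{cyc}})|_{G_K}$. Once $t_K = 0$ is secured, the image computation is a direct transcription of the existing integral computation of \cite{rubin-book, kato-euler-systems} from $\mathbb Q_p$ to the unramified extension $K$, with no genuinely new input required.
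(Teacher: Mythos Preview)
Your proposal is correct and follows the paper's strategy: case~(1) via the reduction sequence and torsion-freeness of the formal group, then the unramified analogue of Lemma~\ref{lem:dual-exponential} with $t_K=0$. The difference is in case~(2). The paper invokes \cite[Proposition~13.7]{gross-tameness} to say that the absence of a companion form makes $\overline{\rho}\vert_{G_{\mathbb{Q}_p}}$ \emph{wildly} ramified; since $K/\mathbb{Q}_p$ is unramified the wild inertia subgroups coincide, so $\overline{\rho}\vert_{G_K}$ is still wildly ramified and therefore cannot be a sum of characters (any $\mathbb{F}_p^\times$-valued character is tame). Your route---tracking the extension class through inflation--restriction---reaches the same conclusion but is slightly longer, and one point needs adjusting: in the general ordinary case the diagonal characters are $\chi_{\mathrm{cyc}}\cdot\lambda(\alpha^{-1})$ and $\lambda(\alpha)$ for $\lambda$ the unramified character sending Frobenius to the unit root $\alpha$, not literally $\chi_{\mathrm{cyc}}$ and $1$ (that is only the anomalous subcase). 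Your argument survives this correction verbatim, since $\epsilon_1\epsilon_2^{-1}=\chi_{\mathrm{cyc}}\cdot(\text{unramified})$ remains ramified on $G_K$, so $(\epsilon_1\epsilon_2^{-1})^{G_K}=0$ and the restriction map on $\mathrm{H}^1$ is still injective. The paper's wild-ramification phrasing buys you a one-line proof that avoids naming the characters at all.
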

\begin{proof}
The first isomorphism in Lemma \ref{lem:dual-exponential} works for any finite unramified extension $K$.
The first case is immediate. For the second case, we know the restriction of $\overline{\rho}$ to $\mathrm{Gal}(\overline{\mathbb{Q}}_p/\mathbb{Q}_p)$ is wildly ramified thanks to \cite[Prop. 13.7]{gross-tameness}.
Since $K/\mathbb{Q}_p$ is unramified, the restriction of $\overline{\rho}$ to $\mathrm{Gal}(\overline{K}/K)$ is still wildly ramified.
Thus, it cannot be a direct sum of two characters.
Thus, we have $E(K)[p] = 0$ even when $\widetilde{E}(k)[p]$ is non-trivial.
\end{proof}
\begin{rem}
In \cite{otsuki, ota-thesis, kataoka-thesis}, the $E(K)[p] = 0$ assumption is used seriously in the construction of integral Mazur--Tate elements over tamely ramified abelian extensions $K$ of $\mathbb{Q}_p$.
\end{rem}
\subsubsection{}
We move to the bad reduction cases.
Consider the following exact sequences
\[
\xymatrix{
0 \ar[r] & E_1(\mathbb{Q}_p) \ar[r] & E(\mathbb{Q}_p) \ar[r] & \widetilde{E}(\mathbb{F}_p) \ar[r] & 0 \\
0 \ar[r] & E_1(\mathbb{Q}_p) \ar[r] \ar@{=}[u] & E_0(\mathbb{Q}_p) \ar[r] \ar@{^{(}->}[u] & \widetilde{E}_{\mathrm{ns}}(\mathbb{F}_p) \ar[r] \ar@{^{(}->}[u] & 0
}
\]
where 
 $\widetilde{E}(\mathbb{F}_p)$ is the mod $p$ reduction of $E(\mathbb{Q}_p)$ and $E_1(\mathbb{Q}_p)$ is the kernel of the mod $p$ reduction map.
 Also, $\widetilde{E}_{\mathrm{ns}}(\mathbb{F}_p)$ is the non-singular locus of $\widetilde{E}(\mathbb{F}_p)$ and $E_0(\mathbb{Q}_p)$ is the preimage of $\widetilde{E}_{\mathrm{ns}}(\mathbb{F}_p) $ in $E(\mathbb{Q}_p)$.
As in Lemma \ref{lem:dual-exponential}, 
if $p >2$, we have
$$\mathrm{exp}^*_{\omega_E} :
\dfrac{ \mathrm{H}^1(\mathbb{Q}_p, T) }{ E(\mathbb{Q}_p) \otimes \mathbb{Z}_p }  \simeq \dfrac{\widetilde{E}(\mathbb{F}_p)}{\#\mathrm{H}^0(\mathbb{Q}_p,E[p^\infty])} \cdot \dfrac{1}{p} \cdot \mathbb{Z}_p \simeq \dfrac{\widetilde{E}_{\mathrm{ns}}(\mathbb{F}_p)}{\#\mathrm{H}^0(\mathbb{Q}_p,E[p^\infty])} \cdot [ E(\mathbb{Q}_p) : E_0(\mathbb{Q}_p) ] \cdot \dfrac{1}{p} \cdot \mathbb{Z}_p .$$

\begin{thm} \label{thm:multiplicative-bad-good-index}
Let $E$ be an elliptic curve over $\mathbb{Q}_p$  (with minimal Weierstrass equation), and $\Delta$ the discriminant of $E$.
\begin{enumerate}
\item 
If $E$ admits split multiplicative reduction at $p$, then $ [ E(\mathbb{Q}_p) : E_0(\mathbb{Q}_p) ] = \mathrm{ord}_p(\Delta) = -\mathrm{ord}_p(j)$.
\item 
If $E$ admits non-split multiplicative reduction at $p$, then $ [ E(\mathbb{Q}_p) : E_0(\mathbb{Q}_p) ] \leq 2$.
\item Otherwise, $ [ E(\mathbb{Q}_p) : E_0(\mathbb{Q}_p) ] \leq 4$. 
\end{enumerate}
\end{thm}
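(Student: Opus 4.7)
The plan is to appeal to the Kodaira--N\'{e}ron classification of special fibers of the minimal regular model of $E$ over $\mathbb{Z}_p$, together with Tate's algorithm, which determines both the geometric component group $\Phi = E(\overline{\mathbb{Q}_p})/E_0(\overline{\mathbb{Q}_p})$ and the action of $\mathrm{Gal}(\overline{\mathbb{F}}_p/\mathbb{F}_p)$ on it for each reduction type. The index $[E(\mathbb{Q}_p):E_0(\mathbb{Q}_p)]$ is then the order of the Galois-invariant subgroup $\Phi^{\mathrm{Fr}_p}$. Standard references are Silverman's \emph{Advanced Topics in the Arithmetic of Elliptic Curves}, Chapter~IV, and Tate's original paper on the algorithm.

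For (1), split multiplicative reduction falls into Kodaira type $I_n$ with $n = \mathrm{ord}_p(\Delta)$. The geometric component group is cyclic of order $n$, every component is defined over $\mathbb{F}_p$, and Frobenius acts trivially, so $[E(\mathbb{Q}_p):E_0(\mathbb{Q}_p)] = n = \mathrm{ord}_p(\Delta)$. The equality $-\mathrm{ord}_p(j) = \mathrm{ord}_p(\Delta)$ is immediate from $j = c_4^3/\Delta$ together with the fact that $c_4$ is a $p$-adic unit whenever the reduction is multiplicative (since $c_4 \bmod p$ is nonzero iff the reduction has a node or is smooth). For (2), non-split multiplicative reduction is also of type $I_n$, but Frobenius acts on $\mathbb{Z}/n\mathbb{Z}$ by inversion $x \mapsto -x$; the fixed subgroup is therefore $\{0\}$ if $n$ is odd and $\{0, n/2\}$ if $n$ is even, giving index $\leq 2$ in every case.

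For (3), the Kodaira types under additive reduction are exactly $II, III, IV, I_0^*, I_n^*\ (n\geq 1), IV^*, III^*, II^*$, and the corresponding geometric component groups are
\begin{equation*}
0,\ \mathbb{Z}/2,\ \mathbb{Z}/3,\ (\mathbb{Z}/2)^{\oplus 2},\ \mathbb{Z}/4 \textrm{ or } (\mathbb{Z}/2)^{\oplus 2},\ \mathbb{Z}/3,\ \mathbb{Z}/2,\ 0 .
\end{equation*}
In every case $|\Phi| \leq 4$, so a fortiori $|\Phi^{\mathrm{Fr}_p}| \leq 4$. The only subtlety is the type $I_n^*$ bifurcation (cyclic of order $4$ when $n$ is odd, Klein four when $n$ is even), but both possibilities respect the bound. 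This completes all three assertions.

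The ``hard part'' is purely bookkeeping across reduction types: no genuinely new argument beyond Tate's algorithm is needed, only careful identification of the component group and of the Frobenius action. The only place where some care is required is the proof of the identity $\mathrm{ord}_p(\Delta) = -\mathrm{ord}_p(j)$ in case (1), where one must observe that passage to a \emph{minimal} Weierstrass equation does not alter the conclusion because $c_4$ is a unit precisely under multiplicative reduction.
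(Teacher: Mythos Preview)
Your proposal is correct and follows precisely the route the paper takes: the paper's proof consists of a one-line citation to Tate's algorithm (together with \cite[Theorem VII.6.1]{silverman} and \cite[\S2.2]{lorenzini-torsion-tamagawa}), and your sketch simply unpacks what that algorithm delivers case by case via the Kodaira--N\'{e}ron classification. One minor notational remark: it is cleaner to phrase the component group $\Phi$ as the group of connected components of the special fiber of the N\'{e}ron model over $\mathbb{F}_p$ (so that $[E(\mathbb{Q}_p):E_0(\mathbb{Q}_p)] = \#\Phi(\mathbb{F}_p) = \#\Phi(\overline{\mathbb{F}}_p)^{\mathrm{Fr}_p}$), rather than as $E(\overline{\mathbb{Q}_p})/E_0(\overline{\mathbb{Q}_p})$, since the N\'{e}ron model does not in general commute with ramified base change in the additive case; but this does not affect the argument.
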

\begin{proof}
It follows from Tate's algorithm \cite{tate-algorithm}. See \cite[Thm. VII.6.1]{silverman} and \cite[$\S$2.2]{lorenzini-torsion-tamagawa}.
\end{proof}
\begin{rem} \label{rem:multiplicative-bad-good-index}
When $E$ has additive reduction at $p$ with $p \leq 3$, then $ [ E(\mathbb{Q}_p) : E_0(\mathbb{Q}_p) ]$ is not divisible by $p$.
See \cite[Table 4.1, Page 365]{silverman2}
\end{rem}

Thus, when $E$ admits non-split multiplicative reduction at  $p \geq 3$, we have
$$\mathrm{exp}^{*}_{\omega_E} ( \mathrm{H}^1(\mathbb{Q}_p, T) ) = \dfrac{\widetilde{E}_{\mathrm{ns}}(\mathbb{F}_p)}{\#\mathrm{H}^0(\mathbb{Q}_p,E[p^\infty])} \cdot [ E(\mathbb{Q}_p) : E_0(\mathbb{Q}_p) ] \cdot \dfrac{1}{p} \cdot \mathbb{Z}_p = \dfrac{1}{p} \cdot \mathbb{Z}_p .$$
Now we suppose that $E$ admits split multiplicative reduction at a prime $p >2$.
Then the $p$-adic uniformization by the Tate curve yields the following exact sequence of representations of $\mathrm{Gal}(\overline{\mathbb{Q}}_p/\mathbb{Q}_p)$
\[
\xymatrix{
0 \ar[r] & T_1 \ar[r] & T \ar[r] & T_2 \ar[r] & 0
}
\]
where $T_1 \simeq \mathbb{Z}_p(1)$ and $T_2 \simeq \mathbb{Z}_p$. By tensoring $\mathbb{Q}_p$, we also have
\[
\xymatrix{
0 \ar[r] & V_1 \ar[r] & V \ar[r] & V_2 \ar[r] & 0 
}
\]
where $V_{\bullet} = T_{\bullet} \otimes \mathbb{Q}_p$ and $\bullet \in \lbrace \emptyset, 1, 2 \rbrace$.
Following \cite[$\S$4]{kobayashi-elementary}, we have the commutative diagram
\[
\xymatrix{
\mathrm{H}^1(\mathbb{Q}_p, V) \ar[d]_-{\pi} \ar[r]^-{\mathrm{exp}^{*}_{E}} & \mathbb{Q}_p \omega_E \ar[r]^-{\omega^{*}_E} & \mathbb{Q}_p \ar[d]^-{\simeq} \\
\mathrm{H}^1(\mathbb{Q}_p, V_2)  \ar[r]^-{\mathrm{exp}^{*}_{\mathbb{G}_m}} & \mathbb{Q}_p \omega_{\mathbb{G}_m} \ar[r]^-{\omega^{*}_{\mathbb{G}_m}} & \mathbb{Q}_p
}
\]
where $\pi$ is induced from the quotient map $V \to V_2$,
$\mathrm{exp}^{*}_{E}$ is the dual exponential map for $E$,
$\mathrm{exp}^{*}_{\mathbb{G}_m}$ is the dual exponential map for $\mathbb{G}_m$,
$\omega_{\mathbb{G}_m}$ is the invariant differential of $\mathbb{G}_m$, which is $\dfrac{dX}{1+X}$ on the formal multiplicative group $\widehat{\mathbb{G}}_m$,
and $\omega^{*}_{\mathbb{G}_m}$ is the dual basis for $\omega_{\mathbb{G}_m}$ with $\omega^{*}_{\mathbb{G}_m} ( \omega_{\mathbb{G}_m} ) = 1$.
Write
$\mathrm{exp}^{*}_{\omega_E} = \omega^*_E \circ \mathrm{exp}^{*}_{E}$
and
$\mathrm{exp}^{*}_{\omega_{\mathbb{G}_m}} = \omega^*_{\mathbb{G}_m} \circ \mathrm{exp}^{*}_{\mathbb{G}_m}$.
In order to compute the lattice $\mathrm{exp}^{*}_{\omega_E} ( \mathrm{H}^1(\mathbb{Q}_p, T) ) \subseteq \mathbb{Q}_p$,
it suffices to compute the lattice $\mathrm{exp}^{*}_{\omega_{\mathbb{G}_m}}  ( \mathrm{H}^1(\mathbb{Q}_p, \mathbb{Z}_p) ) \subseteq \mathbb{Q}_p$.
Considering the local Tate duality, we compute the lattice $\mathrm{log}  ( \mathbb{G}_m(\mathbb{Z}_p) ) \subseteq \mathbb{Q}_p$
where  $\mathrm{log} : \mathbb{G}_m(\mathbb{Z}_p) \otimes \mathbb{Q}_p \to \mathbb{Q}_p$ is the linear extension of the formal logarithm map on $\widehat{\mathbb{G}}_m(p\mathbb{Z}_p)$.
Note that $\mathbb{Z}_p(1)$ is the Tate module of the multiplicative group $\mathbb{G}_m$.
We have an exact sequence
\[
\xymatrix{
0
\ar[r] &
\widehat{\mathbb{G}}_m(p\mathbb{Z}_p)
\ar[r] &
\mathbb{G}_m(\mathbb{Z}_p)
\ar[r] &
\widetilde{\mathbb{G}}_m(\mathbb{F}_p)
\ar[r] & 
0 .
}
\]
By using the formal logarithm, $\widehat{\mathbb{G}}_m(p\mathbb{Z}_p)$ maps to $p\mathbb{Z}_p \subseteq \mathbb{Q}_p$.
Also, $\widetilde{\mathbb{G}}_m(\mathbb{F}_p) = \mathbb{F}^\times_p$ has size prime to $p$.
Thus, we also have
$$\mathrm{exp}^{*}_{\omega_E} ( \mathrm{H}^1(\mathbb{Q}_p, T) ) = \mathrm{exp}^{*}_{\omega_{\mathbb{G}_m}}  ( \mathrm{H}^1(\mathbb{Q}_p, \mathbb{Z}_p) ) = \dfrac{1}{p} \mathbb{Z}_p .$$
To sum up, we have the following statement.
\begin{lem} \label{lem:dual-exponential-multiplicative}
If $E$ has multiplicative reduction at $p \geq 3$, then
$$\mathrm{exp}^{*}_{\omega_E} ( \mathrm{H}^1(\mathbb{Q}_p, T) ) = \dfrac{1}{p} \mathbb{Z}_p .$$
\end{lem}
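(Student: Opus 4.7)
The plan is to split into the non-split and split multiplicative cases and, in each, to specialize the general formula
$$\mathrm{exp}^{*}_{\omega_E}(\mathrm{H}^1(\mathbb{Q}_p, T)) \;\simeq\; \dfrac{\#\widetilde{E}_{\mathrm{ns}}(\mathbb{F}_p)}{\#\mathrm{H}^0(\mathbb{Q}_p, E[p^\infty])} \cdot [E(\mathbb{Q}_p) : E_0(\mathbb{Q}_p)] \cdot \dfrac{1}{p} \cdot \mathbb{Z}_p$$
coming from Lemma~\ref{lem:dual-exponential} (valid for any reduction type at $p > 2$). The target $\frac{1}{p}\mathbb{Z}_p$ will be reached by showing that every factor sitting in front of $\frac{1}{p}$ is a $p$-adic unit.

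In the non-split multiplicative case I would argue as follows. The component-group index $[E(\mathbb{Q}_p):E_0(\mathbb{Q}_p)]$ is at most $2$ by Theorem~\ref{thm:multiplicative-bad-good-index}, hence coprime to $p \geq 3$. The non-singular reduction $\widetilde{E}_{\mathrm{ns}}(\mathbb{F}_p)$ is the norm-one subgroup of $\mathbb{F}_{p^2}^{\times}$, so it has order $p+1$, again coprime to $p$. Finally, the discussion in $\S$\ref{subsec:local-torsion} (and Proposition~\ref{prop:local-torsions}) shows $E(\mathbb{Q}_p)[p] = 0$ in the non-split case, so the denominator is~$1$. Combining the three observations immediately yields $\mathrm{exp}^{*}_{\omega_E}(\mathrm{H}^1(\mathbb{Q}_p, T)) = \frac{1}{p}\mathbb{Z}_p$.

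In the split multiplicative case I would invoke the Tate-curve uniformization to fit $T$ into $0 \to T_1 \to T \to T_2 \to 0$ with $T_1 \simeq \mathbb{Z}_p(1)$ and $T_2 \simeq \mathbb{Z}_p$, and then apply Kobayashi's commutative diagram to reduce the computation of $\mathrm{exp}^{*}_{\omega_E}$ on $\mathrm{H}^1(\mathbb{Q}_p, V)$ to that of $\mathrm{exp}^{*}_{\omega_{\mathbb{G}_m}}$ on $\mathrm{H}^1(\mathbb{Q}_p, V_2)$. Via local Tate duality this is equivalent to computing the image of the formal logarithm on $\mathbb{G}_m(\mathbb{Q}_p)$. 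From the exact sequence $0 \to \widehat{\mathbb{G}}_m(p\mathbb{Z}_p) \to \mathbb{G}_m(\mathbb{Q}_p) \to \mathbb{F}_p^{\times} \to 0$, the formal log identifies the kernel with $p\mathbb{Z}_p$ and the cokernel $\mathbb{F}_p^{\times}$ has order $p-1$ prime to $p$, giving $\log(\mathbb{G}_m(\mathbb{Q}_p)) = p\mathbb{Z}_p$ and hence $\mathrm{exp}^{*}_{\omega_{\mathbb{G}_m}}(\mathrm{H}^1(\mathbb{Q}_p, \mathbb{Z}_p)) = \frac{1}{p}\mathbb{Z}_p$.

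The delicate step I expect to worry about is the \emph{integral} compatibility in the split case: Kobayashi's diagram is naturally written with $\mathbb{Q}_p$-coefficients, so I must verify that the induced map $\mathrm{H}^1(\mathbb{Q}_p, T) \to \mathrm{H}^1(\mathbb{Q}_p, T_2)$ really does hit a generator of the target lattice and that the subquotient $T_1 \simeq \mathbb{Z}_p(1)$ contributes no hidden $p$-power. This should fall out from a direct comparison of the two long exact cohomology sequences, using $\mathrm{H}^0(\mathbb{Q}_p, T_2) = 0$ in the split case together with the known structure of $\mathrm{H}^1(\mathbb{Q}_p, \mathbb{Z}_p(1))$ via Kummer theory, but it is the one place where the bookkeeping requires care.
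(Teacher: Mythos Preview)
Your proposal follows the paper's argument essentially verbatim: the non-split case is handled via the general formula from Lemma~\ref{lem:dual-exponential} together with the observations that $[E(\mathbb{Q}_p):E_0(\mathbb{Q}_p)]\le 2$, $\#\widetilde{E}_{\mathrm{ns}}(\mathbb{F}_p)=p+1$, and $E(\mathbb{Q}_p)[p]=0$; the split case is reduced, through Tate uniformization and Kobayashi's diagram, to the lattice $\mathrm{exp}^*_{\omega_{\mathbb{G}_m}}(\mathrm{H}^1(\mathbb{Q}_p,\mathbb{Z}_p))$, which is then computed by dualizing to $\log(\mathbb{G}_m(\mathbb{Q}_p))=p\mathbb{Z}_p$.

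One small correction to your final paragraph: the claim $\mathrm{H}^0(\mathbb{Q}_p,T_2)=0$ is false, since $T_2\simeq\mathbb{Z}_p$ carries the trivial action and hence $\mathrm{H}^0(\mathbb{Q}_p,T_2)=\mathbb{Z}_p$. This does not damage the main argument --- the paper itself does not spell out the integral compatibility beyond citing \cite[\S4]{kobayashi-elementary} --- but if you want to chase it yourself, the relevant input is rather that the map $\mathrm{H}^1(\mathbb{Q}_p,T)\to\mathrm{H}^1(\mathbb{Q}_p,T_2)$ has cokernel embedding in $\mathrm{H}^2(\mathbb{Q}_p,T_1)\simeq\mathrm{H}^2(\mathbb{Q}_p,\mathbb{Z}_p(1))\simeq\mathbb{Z}_p$, and the kernel of $\mathrm{exp}^*_{\omega_E}$ on $\mathrm{H}^1(\mathbb{Q}_p,T)$ already contains the image of $\mathrm{H}^1(\mathbb{Q}_p,T_1)$.
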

It is remarkable that Lemma \ref{lem:dual-exponential-multiplicative} does not involve $t$.
When $E$ has non-split multiplicative reduction at $p$, we have $t=0$.
When $E$ has split multiplicative reduction at $p$, $\#\mathrm{H}^0(\mathbb{Q}_p,E[p^\infty])$ and $[ E(\mathbb{Q}_p) : E_0(\mathbb{Q}_p) ]$ cancel each other. See \cite[Table 4.1, pp. 365]{silverman2}.

\begin{lem} \label{lem:dual-exponential-additive}
If $E$ has additive reduction at $p \geq 3$, then
$$\mathrm{exp}^*_{\omega_E} (  \mathrm{H}^1(\mathbb{Q}_p, T) )  = \dfrac{1}{p^t}\mathbb{Z}_p . $$
\end{lem}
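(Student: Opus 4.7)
The plan is to derive this from the general formula of Lemma \ref{lem:dual-exponential}, specialized to the additive case via the filtration $E_1(\mathbb{Q}_p)\subset E_0(\mathbb{Q}_p)\subset E(\mathbb{Q}_p)$, exactly as was done for the multiplicative case in Lemma \ref{lem:dual-exponential-multiplicative}. Since the Bloch--Kato dual exponential vanishes on $E(\mathbb{Q}_p)\otimes\mathbb{Z}_p$, Lemma \ref{lem:dual-exponential} (applied with the ``$\widetilde{E}(\mathbb{F}_p)$'' convention $=E(\mathbb{Q}_p)/E_1(\mathbb{Q}_p)$) gives
\[
\mathrm{exp}^{*}_{\omega_E}(\mathrm{H}^1(\mathbb{Q}_p,T))=\frac{\#\widetilde{E}_{\mathrm{ns}}(\mathbb{F}_p)\cdot[E(\mathbb{Q}_p):E_0(\mathbb{Q}_p)]}{\#\mathrm{H}^0(\mathbb{Q}_p,E[p^\infty])}\cdot\frac{1}{p}\mathbb{Z}_p,
\]
so the problem reduces to evaluating each of the three factors in the numerator and denominator $p$-adically.

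First, under additive reduction the non-singular locus $\widetilde{E}_{\mathrm{ns}}$ is the additive group $\mathbb{G}_a$ over $\mathbb{F}_p$, hence $\#\widetilde{E}_{\mathrm{ns}}(\mathbb{F}_p)=p$. Second, since $E(\mathbb{Q}_p)$ is a compact $p$-adic Lie group, its $p$-primary torsion is finite of order $p^t$ by the very definition of $t$ in the additive case, giving $\#\mathrm{H}^0(\mathbb{Q}_p,E[p^\infty])=p^t$. Third, the Tamagawa-type index $c_p:=[E(\mathbb{Q}_p):E_0(\mathbb{Q}_p)]$ is a unit in $\mathbb{Z}_p$: Theorem \ref{thm:multiplicative-bad-good-index}(3) bounds it by $4$, which is coprime to $p$ whenever $p\geq 5$, and Remark \ref{rem:multiplicative-bad-good-index} disposes of the remaining $p=3$ case (where the Kodaira tables force $c_p\in\{1,2,3,4\}$ but in fact $p\nmid c_p$).

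Assembling these three inputs, the lattice equals
\[
\frac{p\cdot c_p}{p^t}\cdot\frac{1}{p}\mathbb{Z}_p=\frac{c_p}{p^t}\mathbb{Z}_p=\frac{1}{p^t}\mathbb{Z}_p,
\]
which is the claim. The only place where work is actually needed is the unit-ness of $c_p$ at $p=3$; everything else is formal from Lemma \ref{lem:dual-exponential}. There is no conceptual obstacle beyond invoking the standard Tate/Kodaira classification tables, and this is precisely why the hypothesis is $p\geq 3$ rather than $p\geq 5$: for $p=2$ the group $\widetilde{E}_{\mathrm{ns}}(\mathbb{F}_p)$ and the Tamagawa index may share a common factor with $p$, breaking the clean formula. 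Hence the main (and only) care point in the write-up will be citing the correct entry of the Kodaira table at $p=3$ to rule out $3\mid c_p$.
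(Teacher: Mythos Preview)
Your proof is correct and follows essentially the same route as the paper's: the paper simply says the lemma follows from Theorem \ref{thm:multiplicative-bad-good-index} and Remark \ref{rem:multiplicative-bad-good-index}, and your write-up unpacks exactly this---using the displayed formula just before Theorem \ref{thm:multiplicative-bad-good-index}, plugging in $\#\widetilde{E}_{\mathrm{ns}}(\mathbb{F}_p)=p$, $\#\mathrm{H}^0(\mathbb{Q}_p,E[p^\infty])=p^t$, and invoking the cited results to get $p\nmid c_p$. One minor point: your closing remark about why $p=2$ is excluded is slightly off---the real reason is that Lemma \ref{lem:dual-exponential} itself already requires $p>2$, not a clash between $\#\widetilde{E}_{\mathrm{ns}}(\mathbb{F}_p)$ and the Tamagawa factor.
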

\begin{proof}
It follows from Theorem \ref{thm:multiplicative-bad-good-index} and Remark \ref{rem:multiplicative-bad-good-index}.
\end{proof}

\subsection{The extension of the dual exponential map to torsion coefficients} \label{subsec:extension-dual-exp}
\subsubsection{}
Let $n \in \mathcal{N}_1$. From the exact sequence
\[
\xymatrix{
0 \ar[r] &
\dfrac{\mathrm{H}^1(\mathbb{Q}_p, T)}{I_n \mathrm{H}^1(\mathbb{Q}_p, T)} \ar[r] &
\mathrm{H}^1(\mathbb{Q}_p, T/I_nT) \ar[r] &
\mathrm{H}^2(\mathbb{Q}_p, T)[I_n] \ar[r] & 0 ,
}
\]
we obtain the following exact sequence
\begin{equation} \label{eqn:extending-dual-exp}
\xymatrix{
0 \ar[r] &
 \dfrac{\mathrm{H}^1(\mathbb{Q}_p, T)}{I_n \mathrm{H}^1(\mathbb{Q}_p, T)  + \mathrm{H}^1_f(\mathbb{Q}_p, T) } \ar[r]^-{\phi} &
\dfrac{\mathrm{H}^1(\mathbb{Q}_p, T/I_nT)}{\mathrm{H}^1_f(\mathbb{Q}_p, T/I_nT)}  \ar[r] &
\mathrm{H}^2(\mathbb{Q}_p, T)[I_n] \ar[r] & 0 
}
\end{equation}
where the injectivity of $\phi$ follows from $ \mathrm{H}^1_f(\mathbb{Q}_p, T/I_nT) = \dfrac{\mathrm{H}^1_f(\mathbb{Q}_p, T)}{ I_n\mathrm{H}^1_f(\mathbb{Q}_p, T)}$.
\begin{prop} \label{prop:extending-dual-exp-splits}
The sequence (\ref{eqn:extending-dual-exp}) splits as $\mathbb{Z}_p/I_n\mathbb{Z}_p$-modules.
\end{prop}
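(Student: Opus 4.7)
The plan is to compute the structure of each of the three terms as an explicit $\mathbb{Z}_p/I_n\mathbb{Z}_p$-module and then exhibit the splitting via a basis-change argument. Since each $I_\ell$ is a principal ideal of $\mathbb{Z}_p$, one has $I_n = p^m\mathbb{Z}_p$ for some $m \geq 1$, so that $T/I_nT = E[p^m]$ and $\mathbb{Z}_p/I_n\mathbb{Z}_p = \mathbb{Z}/p^m\mathbb{Z}$.

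First I would identify the leftmost term. The torsion submodule of $\mathrm{H}^1(\mathbb{Q}_p, T)$ equals the image of $\mathrm{H}^0(\mathbb{Q}_p, V/T) = E(\mathbb{Q}_p)[p^\infty]$ under the connecting map for $0 \to T \to V \to V/T \to 0$, and this image lies inside $\mathrm{H}^1_f(\mathbb{Q}_p, T)$ via the Kummer map. Hence $\mathrm{H}^1(\mathbb{Q}_p, T)/\mathrm{H}^1_f(\mathbb{Q}_p, T)$ is $\mathbb{Z}_p$-torsion-free; combined with the rank count $\dim_{\mathbb{Q}_p}\mathrm{H}^1(\mathbb{Q}_p, V) = 2$ versus $\dim_{\mathbb{Q}_p}\mathrm{H}^1_f(\mathbb{Q}_p, V) = 1$, it is $\mathbb{Z}_p$-free of rank one. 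Therefore the left-hand term reduces to $\mathbb{Z}_p/I_n\mathbb{Z}_p \simeq \mathbb{Z}/p^m\mathbb{Z}$, a cyclic module of maximal order, and $\phi$ (injective by the preceding lemma) realizes it as a cyclic subgroup of maximal order inside the middle term.

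Next I would apply local Tate duality to compute the remaining two terms. Since $\mu_p \not\subset \mathbb{Q}_p$ for odd $p$, the Weil pairing forces $E(\mathbb{Q}_p)[p^\infty]$ to be cyclic, say isomorphic to $\mathbb{Z}/p^t\mathbb{Z}$. The exact sequence $0 \to \widehat{E}(p\mathbb{Z}_p) \to E(\mathbb{Q}_p) \to \widetilde{E}(\mathbb{F}_p) \to 0$ splits $\mathbb{Z}_p$-linearly after $p$-adic completion (because $\widehat{E}(p\mathbb{Z}_p) \simeq \mathbb{Z}_p$ is projective), giving
\[
E(\mathbb{Q}_p)/p^m E(\mathbb{Q}_p) \;\simeq\; \mathbb{Z}/p^m\mathbb{Z} \,\oplus\, \mathbb{Z}/p^{\min(t,m)}\mathbb{Z}.
\]
Combining the Kummer identification $\mathrm{H}^1_f(\mathbb{Q}_p, E[p^m]) = E(\mathbb{Q}_p)/p^m E(\mathbb{Q}_p)$, the Weil self-duality $E[p^m]^* \simeq E[p^m]$, and local Tate duality yields $\mathrm{H}^1_{/f}(\mathbb{Q}_p, T/I_nT) \simeq \mathbb{Z}/p^m\mathbb{Z} \oplus \mathbb{Z}/p^{\min(t,m)}\mathbb{Z}$, and analogously $\mathrm{H}^2(\mathbb{Q}_p, T)[I_n] \simeq E(\mathbb{Q}_p)[p^\infty]^\vee[p^m] \simeq \mathbb{Z}/p^{\min(t,m)}\mathbb{Z}$.

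Finally, the splitting reduces to the elementary fact that any cyclic $\mathbb{Z}/p^m\mathbb{Z}$-submodule of order $p^m$ inside $\mathbb{Z}/p^m\mathbb{Z} \oplus \mathbb{Z}/p^s\mathbb{Z}$ with $s \leq m$ is automatically a direct summand: a generator $(a,b)$ must have $a \in (\mathbb{Z}/p^m\mathbb{Z})^{\times}$ (possibly after swapping the two summands when $s = m$), and then the matrix $\left(\begin{smallmatrix} a & 0 \\ b & 1 \end{smallmatrix}\right)$ relating $\{(a,b), (0,1)\}$ to the standard generating set has unit determinant, so $\{(a,b), (0,1)\}$ generates $\mathbb{Z}/p^m\mathbb{Z} \oplus \mathbb{Z}/p^s\mathbb{Z}$ with the same respective orders. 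Applying this to the image of $\phi$ produces the desired splitting. I do not anticipate a serious obstacle; the main point requiring care is to ensure that the Kummer identification and the Weil self-duality of $E[p^m]$ interact correctly with local Tate duality at the level of the finite and strict local conditions, so that the structural computations pass cleanly through dualization.
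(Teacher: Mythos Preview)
Your proof is correct but takes a more computational route than the paper's. Both arguments begin by identifying the leftmost term of the sequence as free of rank one over $\mathbb{Z}_p/I_n\mathbb{Z}_p$. From there you compute the middle and right terms explicitly via local Tate duality and the structure of $E(\mathbb{Q}_p)$, then use an elementary basis-change argument to exhibit the splitting. The paper instead simply takes the Pontryagin dual of the sequence: since the dual of a free rank-one $\mathbb{Z}_p/I_n\mathbb{Z}_p$-module is again free of rank one and now sits in the quotient position, the dual sequence splits by projectivity, hence so does the original. The paper's approach is shorter and avoids computing the middle and right terms altogether; your approach yields those explicit structures as a byproduct, which is extra information but not needed for the splitting itself.

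One minor point: your justification that the sequence $0 \to \widehat{E}(p\mathbb{Z}_p) \to E(\mathbb{Q}_p) \to \widetilde{E}(\mathbb{F}_p) \to 0$ splits after $p$-completion ``because $\widehat{E}(p\mathbb{Z}_p) \simeq \mathbb{Z}_p$ is projective'' is not quite right, since projectivity of the \emph{kernel} does not force a splitting. The correct reason is simply the structure theorem for finitely generated modules over the PID $\mathbb{Z}_p$, which gives $E(\mathbb{Q}_p) \otimes \mathbb{Z}_p \simeq \mathbb{Z}_p \oplus E(\mathbb{Q}_p)[p^\infty]$ directly. Your conclusion is unaffected.
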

\begin{proof}
We first recall the isomorphism
$\mathrm{exp}^{*}_{\omega_E} : \dfrac{\mathrm{H}^1(\mathbb{Q}_p, T)}{\mathrm{H}^1_f(\mathbb{Q}_p, T) } \simeq \mathrm{exp}^{*}_{\omega_E} \left(\mathrm{H}^1(\mathbb{Q}_p, T) \right) (\simeq \mathbb{Z}_p) $.
Then its na\"{i}ve mod $I_n$ reduction also induces an isomorphism
$$\dfrac{\mathrm{H}^1(\mathbb{Q}_p, T)}{I_n \mathrm{H}^1(\mathbb{Q}_p, T)  + \mathrm{H}^1_f(\mathbb{Q}_p, T) } \simeq  \dfrac{ \mathrm{exp}^{*}_{\omega_E} \left(\mathrm{H}^1(\mathbb{Q}_p, T) \right) }{ I_n \mathrm{exp}^{*}_{\omega_E} \left(\mathrm{H}^1(\mathbb{Q}_p, T) \right)  }  (\simeq \mathbb{Z}_p/I_n \mathbb{Z}_p ) .$$
In particular, $\dfrac{\mathrm{H}^1(\mathbb{Q}_p, T)}{I_n \mathrm{H}^1(\mathbb{Q}_p, T)  + \mathrm{H}^1_f(\mathbb{Q}_p, T) }$ is free of rank one over $\mathbb{Z}_p/I_n \mathbb{Z}_p$.
Thus, (\ref{eqn:extending-dual-exp}) splits.
\end{proof}
We define the dual exponential map on $\mathrm{H}^1(\mathbb{Q}_p, T/I_nT)$ by the composition
$$\mathrm{exp}^{*}_{\omega_E} : \mathrm{H}^1(\mathbb{Q}_p, T/I_nT) \to \dfrac{\mathrm{H}^1(\mathbb{Q}_p, T/I_nT)}{\mathrm{H}^1_f(\mathbb{Q}_p, T/I_nT)}
\to
\dfrac{\mathrm{H}^1(\mathbb{Q}_p, T)}{I_n \mathrm{H}^1(\mathbb{Q}_p, T)  + \mathrm{H}^1_f(\mathbb{Q}_p, T) } 
\to
\dfrac{ \mathrm{exp}^{*}_{\omega_E}\left(\mathrm{H}^1(\mathbb{Q}_p, T) \right) }{ I_n \mathrm{exp}^{*}_{\omega_E}\left(\mathrm{H}^1(\mathbb{Q}_p, T) \right)  }$$
where the first map is the natural quotient map, the second map comes from the splitting of (\ref{eqn:extending-dual-exp}) (as proved in Proposition \ref{prop:extending-dual-exp-splits}), and the third map is the na\"{i}ve mod $I_n$ reduction of the integral dual exponential map again.

\subsection{``Kolyvagin Derivatives" of Kato's explicit reciprocity law}

\subsubsection{}
Let $n \in \mathcal{N}_1$ and fix an isomorphism
$\xi : \dfrac{\#\widetilde{E}_{\mathrm{ns}}(\mathbb{F}_p)}{p^{1+t}} \mathbb{Z}_p / \dfrac{\#\widetilde{E}_{\mathrm{ns}}(\mathbb{F}_p)}{p^{1+t}} I_n \mathbb{Z}_p  \simeq \mathbb{Z}_p /  I_n \mathbb{Z}_p $.
Then we have the map
\begin{equation} \label{eqn:exp-loc}
\begin{split}
\xymatrix{
\mathrm{Sel}_{\mathrm{rel}, n}(\mathbb{Q}, T/I_nT) \ar[d]^-{\mathrm{loc}^s_p} \ar[r]^-{ \xi \circ \mathrm{exp}^*_{\omega_E} \circ \mathrm{loc}^s_p } &  \mathbb{Z}_p /  I_n \mathbb{Z}_p \\
\dfrac{ \mathrm{H}^1(\mathbb{Q}_p, T/I_nT) }{ E(\mathbb{Q}_p) \otimes \mathbb{Z}_p/I_n \mathbb{Z}_p } \ar[r]^-{ \mathrm{exp}^*_{\omega_E} }_-{\simeq} & \dfrac{\#\widetilde{E}_{\mathrm{ns}}(\mathbb{F}_p)}{p^{1+t}} \mathbb{Z}_p / \dfrac{\#\widetilde{E}_{\mathrm{ns}}(\mathbb{F}_p)}{p^{1+t}} I_n \mathbb{Z}_p  \ar[u]^-{\simeq}_-{\xi}
}
\end{split}
\end{equation}
thanks to Lemma \ref{lem:dual-exponential}, Lemma  \ref{lem:dual-exponential-multiplicative},  Lemma \ref{lem:dual-exponential-additive}, and the extension of the dual exponential map to torsion coefficients. 
Note that $\#\widetilde{E}_{\mathrm{ns}}(\mathbb{F}_p)$ is prime to $p$ if $E$ has multiplicative reduction and $\#\widetilde{E}_{\mathrm{ns}}(\mathbb{F}_p) = p$ if $E$ has additive reduction.

\begin{thm} \label{thm:from-ks-to-kn}
Let $E$ be an elliptic curve over $\mathbb{Q}$ and $p\geq 5$ is a prime such that
 $\overline{\rho}$ is surjective and the Manin constant is prime to $p$.
Then we have formula
$$\xi \circ \mathrm{exp}^*_{\omega_E} \circ \mathrm{loc}^s_p (\kappa^{\mathrm{Kato}}_n) = u \cdot p^t \cdot \widetilde{\delta}_n \in \mathbb{Z}_p/I_n\mathbb{Z}_p $$
where $u \in (\mathbb{Z}_p /  I_n \mathbb{Z}_p)^\times$.
\end{thm}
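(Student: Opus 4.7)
The plan is to compute the left-hand side by lifting $\kappa^{\mathrm{Kato}}_n$ to $D_{\mathbb{Q}(\zeta_n)} z^{\mathrm{Kato}}_{\mathbb{Q}(\zeta_n)}$, then use Kato's explicit reciprocity law together with the Birch lemma to express the dual-exponential image of $z^{\mathrm{Kato}}_{\mathbb{Q}(\zeta_n)}$ in terms of the plus modular symbols $[a/n]^+$, and finally apply the Kolyvagin derivative $D_{\mathbb{Q}(\zeta_n)}$ modulo $I_n$ to recover $\widetilde{\delta}_n$ together with the integral correction $p^t$. This is the ``derivative of Kato's explicit reciprocity law'' of \cite{kks} carried out modulo $I_n$ rather than modulo $p$, and refined to handle integral structure via the results of $\S$\ref{subsec:computing-integral-image} and $\S$\ref{subsec:extension-dual-exp}. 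The factor $p^t$ is expected to arise from the discrepancy between the ``na\"{i}ve'' lattice $\tfrac{\#\widetilde{E}_{\mathrm{ns}}(\mathbb{F}_p)}{p}\mathbb{Z}_p$ produced by applying the dual exponential to the Kato formula (whose ``$p$'' in the denominator comes from the Euler factor at $p$) and the sharper image lattice $\tfrac{\#\widetilde{E}_{\mathrm{ns}}(\mathbb{F}_p)}{p^{1+t}}\mathbb{Z}_p$ identified by Lemma~\ref{lem:dual-exponential}; since $\xi$ identifies the latter with $\mathbb{Z}_p/I_n\mathbb{Z}_p$, a value sitting in the former produces exactly $p^t$ after normalization.

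By Galois-equivariance of $\mathrm{exp}^*_{\omega_E}$ and $\mathrm{loc}^s_p$, the derivative operator $D_{\mathbb{Q}(\zeta_n)} \in \mathbb{Z}[\mathrm{Gal}(\mathbb{Q}(\zeta_n)/\mathbb{Q})]$ commutes with them, so the computation reduces to evaluating $D_{\mathbb{Q}(\zeta_n)}$ on $\mathrm{exp}^*_{\omega_E} \mathrm{loc}^s_p z^{\mathrm{Kato}}_{\mathbb{Q}(\zeta_n)}$ modulo $I_n$. The reciprocity law of $\S$\ref{subsubsec:kato-euler-systems}, applied to all even primitive Dirichlet characters $\chi$ of $(\mathbb{Z}/n\mathbb{Z})^\times$, determines the $\chi$-component as a twisted $L$-value, which by the Birch formula
\[
\tau(\chi) \cdot \dfrac{L(E, \chi^{-1}, 1)}{\Omega^+_E} = \sum_{a \in (\mathbb{Z}/n\mathbb{Z})^\times} \chi(a) \cdot [a/n]^+
\]
unfolds, after inversion by character orthogonality and use of the Manin-constant-prime-to-$p$ hypothesis, into an identity of the form
\[
\mathrm{exp}^*_{\omega_E} \mathrm{loc}^s_p z^{\mathrm{Kato}}_{\mathbb{Q}(\zeta_n)} \;=\; \sum_{a \in (\mathbb{Z}/n\mathbb{Z})^\times} [a/n]^+ \cdot \sigma_a^{-1}(c_n) \cdot \omega_E
\]
for a distinguished, $p$-unit element $c_n \in \mathbb{Q}_p(\zeta_n)$, this being the mod-$I_n$ refinement of the mod-$p$ expansion used in \cite{kks}.

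Applying $D_{\mathbb{Q}(\zeta_n)} = \prod_{\ell \mid n} \sum_{i=1}^{\ell-2} i \sigma_{\eta_\ell}^i$ to this identity and invoking, at each $\ell \mid n$, the elementary discrete-logarithm congruence
\[
\Bigl(\sum_{i=1}^{\ell-2} i \sigma_{\eta_\ell}^i\Bigr) \cdot \sigma_a^{-1} \;\equiv\; -\log_{\eta_\ell}(a) \cdot N_{\mathbb{Q}(\zeta_\ell)/\mathbb{Q}} \pmod{(\sigma_\ell - 1,\; \ell - 1)},
\]
where $N_{\mathbb{Q}(\zeta_\ell)/\mathbb{Q}} = \sum_{\sigma \in G_\ell} \sigma$, the iterated norm operators $\prod_{\ell \mid n} N_{\mathbb{Q}(\zeta_\ell)/\mathbb{Q}}$ are precisely what is absorbed when we descend $D_{\mathbb{Q}(\zeta_n)} z^{\mathrm{Kato}}_{\mathbb{Q}(\zeta_n)}$ from $\mathrm{H}^1(\mathbb{Q}(\zeta_n), T/I_nT)^{G_n}$ to $\mathrm{H}^1(\mathbb{Q}, T/I_nT)$ via the inverse-of-restriction step of the Kolyvagin construction (see $\S$\ref{subsubsec:kato-euler-systems}). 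After reducing modulo $I_n$, the whole calculation collapses to
\[
\mathrm{exp}^*_{\omega_E} \mathrm{loc}^s_p \kappa^{\mathrm{Kato}}_n \;\equiv\; u' \cdot \sum_{a \in (\mathbb{Z}/n\mathbb{Z})^\times} [a/n]^+ \prod_{\ell \mid n} \log_{\eta_\ell}(a) \cdot \omega_E \;=\; u' \cdot \widetilde{\delta}_n \cdot \omega_E
\]
in the na\"{i}ve lattice, where $u' \in (\mathbb{Z}_p/I_n\mathbb{Z}_p)^\times$ absorbs Gauss sums, the Manin constant, and the Euler-factor corrections at primes of $\mathcal{P}_1$ dividing $n$. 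Composing with $\xi$ and paying the index $[\tfrac{\#\widetilde{E}_{\mathrm{ns}}(\mathbb{F}_p)}{p}\mathbb{Z}_p : \tfrac{\#\widetilde{E}_{\mathrm{ns}}(\mathbb{F}_p)}{p^{1+t}}\mathbb{Z}_p] = p^t$ then yields the claimed identity with $u = u'$.

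The hard part will be verifying cleanly that the iterated norm operators produced by the discrete-logarithm congruence at each $\ell \mid n$ match, up to unit scalars, the corestrictions implicit in the inverse-of-restriction map, so that the telescoping is exact modulo $I_n$ rather than only up to terms which could be nonzero in $\mathbb{Z}_p/I_n\mathbb{Z}_p$. A secondary subtlety, critical for the $p^t$ correction in the bad-reduction cases, is to confirm that the torsion-extended dual exponential map of Proposition~\ref{prop:extending-dual-exp-splits} is compatible with Kato's reciprocity with the same integral normalization as in Lemmas~\ref{lem:dual-exponential-multiplicative} and \ref{lem:dual-exponential-additive}; this requires using the splitting of (\ref{eqn:extending-dual-exp}) to reduce the bad-reduction torsion-coefficient computation to the rational one already handled by Kato's formula.
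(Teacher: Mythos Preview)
Your overall architecture matches the paper's proof: lift $\kappa^{\mathrm{Kato}}_n$ to $D_{\mathbb{Q}(\zeta_n)} z^{\mathrm{Kato}}_{\mathbb{Q}(\zeta_n)}$, push through $\mathrm{exp}^*_{\omega_E}\circ\mathrm{loc}^s_p$, invoke Kato's reciprocity to land on an expression built from modular symbols, apply the Kolyvagin derivative modulo $I_n$, and then track the integral lattice to produce $p^t$. The paper makes the Euler factor at $p$ explicit as the operator $E_p(\sigma_p)$ acting on $\sum_a \zeta_n^a [a/n]^+$, and it is precisely the value $E_p = \#\widetilde{E}_{\mathrm{ns}}(\mathbb{F}_p)/p$ (or its multiplicative/additive analogues) that, once compared with the image lattice $\tfrac{\#\widetilde{E}_{\mathrm{ns}}(\mathbb{F}_p)}{p^{1+t}}\mathbb{Z}_p$ via $\xi$, yields $p^t$. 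Your first paragraph identifies this mechanism correctly, but later you fold ``Euler-factor corrections'' into the unit $u'$; you should keep the factor at $p$ separate, since it is not a unit and is the entire source of $p^t$.

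The real problem is your handling of the derivative step. The group-ring congruence you wrote,
\[
D_\ell \cdot \sigma_a^{-1} \equiv -\log_{\eta_\ell}(a)\cdot N_\ell \pmod{(\sigma_{\eta_\ell}-1,\ \ell-1)},
\]
is not correct as stated (in $\mathbb{Z}[G_\ell]/(\sigma_{\eta_\ell}-1,\ell-1)$ both sides collapse to zero, and without that reduction one has $D_\ell\sigma_a^{-1}\equiv D_\ell+\log_{\eta_\ell}(a)N_\ell\pmod{\ell-1}$, not $-\log_{\eta_\ell}(a)N_\ell$). More importantly, the mechanism you propose---that iterated norm operators appear and are then ``absorbed'' by the inverse-of-restriction map---is not how the descent works. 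The map $\mathrm{res}^{-1}$ is an honest isomorphism coming from the vanishing of $E[p]^{G_{\mathbb{Q}}}$; it involves no corestriction and does not cancel any $N_\ell$. The paper (following \cite[Theorem 7.5]{kks} and \cite[p.~190]{kurihara-munster}) does the computation differently: one applies $D_{\mathbb{Q}(\zeta_n)}$ directly to the element $\sum_a \zeta_n^a [a/n]^+ \in \mathbb{Z}_p\otimes\mathbb{Z}[\zeta_n]$ and uses that $D_\ell(\zeta_\ell^a)$, reduced modulo $I_\ell$ and under the natural map $\mathbb{Z}[\zeta_\ell]\to\mathbb{Z}_p/I_\ell$, contributes exactly $\overline{\log_{\eta_\ell}(a)}$ (up to a sign). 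Iterating over $\ell\mid n$ gives $\widetilde{\delta}_n$ on the nose in $\mathbb{Z}_p/I_n$, with no norm operators left over. You should replace your group-ring manipulation with this direct computation on roots of unity; once you do, the ``hard part'' you flag disappears.
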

\begin{proof}
The following computation is essentially done in \cite[$\S$6 and $\S$7]{kks}
\begin{align*}
\mathrm{exp}^*_{\omega_E} \circ \mathrm{loc}^s_p ( D_{\mathbb{Q}(\zeta_n)} z^{\mathrm{Kato}}_{\mathbb{Q}(\zeta_n)} ) & = E_p(\sigma_p)  \cdot D_{\mathbb{Q}(\zeta_n)} \left( \sum_{a \in (\mathbb{Z}/n\mathbb{Z})^\times } \zeta^a_n \cdot \left[ \dfrac{a}{n} \right]^+ \right)  \in \mathbb{Q}_p \otimes \mathbb{Q}(\zeta_n)
\end{align*}
where 
$E_p(\sigma_p) = \left\lbrace \begin{array}{ll}
1-a_p(E) \cdot p^{-1}\cdot \sigma_p + p^{-1} \cdot \sigma^{2}_p & \textrm{if } p \nmid N \\
1-a_p(E)  \cdot p^{-1} \cdot \sigma_p  &\textrm{if } p \Vert N \\
1 & \textrm{if } p^2 \vert N
\end{array} \right.$
and
$\sigma_p \in \mathrm{Gal}(\mathbb{Q}(\zeta_n)/\mathbb{Q})$ is the arithmetic Frobenius at $p$.
We also have
$$D_{\mathbb{Q}(\zeta_n)} \left( \sum_{a \in (\mathbb{Z}/n\mathbb{Z})^\times } \zeta^a_n \cdot \left[ \dfrac{a}{n} \right]^+ \right)  \equiv \widedelta_n \in \mathbb{Z}_p/I_n\mathbb{Z}_p $$
as in \cite[Thm. 7.5]{kks} and \cite[pp. 190]{kurihara-munster}.

By the extension of the dual exponential map to torsion coefficients discussed in $\S$\ref{subsec:extension-dual-exp}, we have
\begin{equation} \label{eqn:dual-exponential-mod-I_n}
\mathrm{exp}^*_{\omega_E} : \mathrm{H}^1_{/f}(\mathbb{Q}_p, T/I_nT ) \simeq \dfrac{\#\widetilde{E}_{\mathrm{ns}}(\mathbb{F}_p)}{p^{1+t}}\mathbb{Z}_p/ \dfrac{\#\widetilde{E}_{\mathrm{ns}}(\mathbb{F}_p)}{p^{1+t}} I_n\mathbb{Z}_p .
\end{equation}
Because $\kappa^{\mathrm{Kato}}_n \in \mathrm{Sel}_{\mathrm{rel},n}(\mathbb{Q}, T/I_nT) \subseteq \mathrm{H}^1(\mathbb{Q}, T/I_nT)  $
comes from the mod $I_n$ reduction of $D_{\mathbb{Q}(\zeta_n)} z^{\mathrm{Kato}}_{\mathbb{Q}(\zeta_n)}$, 
 we regard
$E_p(\sigma_p)  \cdot D_{\mathbb{Q}(\zeta_n)} \left( \sum_{a \in (\mathbb{Z}/n\mathbb{Z})^\times } \zeta^a_n \cdot \left[ \dfrac{a}{n} \right]^+ \right)$ as an element in $\dfrac{\#\widetilde{E}_{\mathrm{ns}}(\mathbb{F}_p)}{p^{1+t}} \mathbb{Z}_p \otimes \mathbb{Z}[\zeta_n] \subseteq \mathbb{Q}_p \otimes \mathbb{Q}(\zeta_n)$
via (\ref{eqn:dual-exponential-mod-I_n})
 in order to compute $\mathrm{exp}^*_{\omega_E} \circ \mathrm{loc}^s_p ( \kappa^{\mathrm{Kato}}_n )$.

We record all the integral lattices in each step of the following computation to avoid confusion.
\begin{align*}
& \mathrm{exp}^*_{\omega_E} \circ \mathrm{loc}^s_p ( \kappa^{\mathrm{Kato}}_n ) \in \dfrac{\#\widetilde{E}_{\mathrm{ns}}(\mathbb{F}_p)}{p^{1+t}}\mathbb{Z}_p/ \dfrac{\#\widetilde{E}_{\mathrm{ns}}(\mathbb{F}_p)}{p^{1+t}} I_n\mathbb{Z}_p  \\
 & =  \mathrm{exp}^*_{\omega_E} \circ \mathrm{loc}^s_p ( D_{\mathbb{Q}(\zeta_n)} z^{\mathrm{Kato}}_{\mathbb{Q}(\zeta_n)} \pmod{I_n} )  \in \dfrac{\#\widetilde{E}_{\mathrm{ns}}(\mathbb{F}_p)}{p^{1+t}}\mathbb{Z}_p/ \dfrac{\#\widetilde{E}_{\mathrm{ns}}(\mathbb{F}_p)}{p^{1+t}} I_n\mathbb{Z}_p \\
& = \mathrm{exp}^*_{\omega_E} \circ \mathrm{loc}^s_p ( D_{\mathbb{Q}(\zeta_n)} z^{\mathrm{Kato}}_{\mathbb{Q}(\zeta_n)} ) \pmod{I_n}  \in \dfrac{\#\widetilde{E}_{\mathrm{ns}}(\mathbb{F}_p)}{p^{1+t}} \mathbb{Z}_p \otimes \mathbb{Z}[\zeta_n] / \dfrac{\#\widetilde{E}_{\mathrm{ns}}(\mathbb{F}_p)}{p^{1+t}} I_n \mathbb{Z}_p \otimes \mathbb{Z}[\zeta_n] \\
& = E_p(\sigma_p)  \cdot D_{\mathbb{Q}(\zeta_n)} \left( \sum_{a \in (\mathbb{Z}/n\mathbb{Z})^\times } \zeta^a_n \cdot \left[ \dfrac{a}{n} \right]^+ \right) \in \dfrac{\#\widetilde{E}_{\mathrm{ns}}(\mathbb{F}_p)}{p^{1+t}} \mathbb{Z}_p \otimes \mathbb{Z}[\zeta_n] / \dfrac{\#\widetilde{E}_{\mathrm{ns}}(\mathbb{F}_p)}{p^{1+t}} I_n \mathbb{Z}_p \otimes \mathbb{Z}[\zeta_n] \\
& = E_p(\sigma_p)  \cdot \widedelta_n \in \dfrac{\#\widetilde{E}_{\mathrm{ns}}(\mathbb{F}_p)}{p^{1+t}} \mathbb{Z}_p  / \dfrac{\#\widetilde{E}_{\mathrm{ns}}(\mathbb{F}_p)}{p^{1+t}} I_n \mathbb{Z}_p  \\
& = E_p  \cdot \widedelta_n \in \dfrac{\#\widetilde{E}_{\mathrm{ns}}(\mathbb{F}_p)}{p^{1+t}} \mathbb{Z}_p  / \dfrac{\#\widetilde{E}_{\mathrm{ns}}(\mathbb{F}_p)}{p^{1+t}} I_n \mathbb{Z}_p 
\end{align*}
where 
$E_p = \left\lbrace \begin{array}{ll}
1 - a_p(E) p^{-1} + p^{-1} & \textrm{if } p \nmid N \\
1 - a_p(E) p^{-1} &\textrm{if } p \Vert N \\
1 & \textrm{if } p^2 \vert N .
\end{array} \right. $
Due to the difference between the torsion dual exponential map and the na\"{i}ve mod $I_n$ reduction of the dual exponential map, the image of $D_{\mathbb{Q}(\zeta_n)} z^{\mathrm{Kato}}_{\mathbb{Q}(\zeta_n)}$ under $\mathrm{exp}^*_{\omega_E} \circ \mathrm{loc}^s_p$ actually lies in $\dfrac{\#\widetilde{E}_{\mathrm{ns}}(\mathbb{F}_p)}{p} \mathbb{Z}_p \otimes \mathbb{Z}[\zeta_n]$. By using the fixed isomorphism $\xi$, the conclusion follows.
\end{proof}
Theorem  \ref{thm:from-ks-to-kn} can be understood as Kolyvagin derivatives of (an equivariant refinement of) Kato's explicit reciprocity law \cite[Thm. 12.5]{kato-euler-systems}, \cite[Thm. 6.1]{kataoka-thesis}.

\subsubsection{}
\begin{lem} \label{lem:ks-kn-equi-non-vanishing}
Let $E$ be an elliptic curve over $\mathbb{Q}$ and $p\geq 5$ is a prime such that
 $\overline{\rho}$ is surjective and the Manin constant is prime to $p$.
Then the non-vanishing of $p^t \cdot \kn$ and the non-vanishing of $\kn$ are equivalent.
In particular, $\mathrm{ord}(p^t \cdot \kn) = \mathrm{ord}(\kn)$.
\end{lem}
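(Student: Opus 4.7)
One direction is immediate: since $p^t \widetilde{\delta}_n \neq 0$ in $\mathbb{Z}_p / I_n \mathbb{Z}_p$ forces $\widetilde{\delta}_n \neq 0$, the non-vanishing of $p^t \cdot \kn$ implies that of $\kn$ and gives $\mathrm{ord}(\kn) \leq \mathrm{ord}(p^t \cdot \kn)$. The case $t = 0$ is trivial, and by Proposition \ref{prop:local-torsions} this already covers every reduction type except anomalous good ordinary admitting a mod $p$ companion form and a handful of additive cases. So assume $t \geq 1$.

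For the converse, the key input is the refined reciprocity law. Since $\xi \circ \mathrm{exp}^{*}_{\omega_E}$ is an isomorphism $\mathrm{H}^1_{/f}(\mathbb{Q}_p, T/I_n T) \xrightarrow{\sim} \mathbb{Z}_p / I_n \mathbb{Z}_p$, Theorem \ref{thm:from-ks-to-kn} gives
\[
p^t \widetilde{\delta}_n = 0 \ \Longleftrightarrow\ \mathrm{loc}^s_p(\kappa^{\mathrm{Kato}}_n) = 0 \ \Longleftrightarrow\ \kappa^{\mathrm{Kato}}_n \in \mathrm{Sel}_n(\mathbb{Q}, T/I_nT).
\]
Suppose $\mathrm{ord}(\kn) = r < \infty$ and fix $n_0$ with $\widetilde{\delta}_{n_0} \neq 0$ and $\nu(n_0) = r$. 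The plan has three steps: (i) show that $\ks^{\mathrm{Kato}}$ is non-trivial; (ii) apply Proposition \ref{prop:kolyvagin-system-location} and Theorem \ref{thm:kato-kolyvagin-main} to obtain uniform control over the divisibility of $\kappa^{\mathrm{Kato}}_n$ via the fixed invariant $j$ and the length $\lambda(n, E[I_n])$ bounded in terms of $\mathrm{length}_{\mathbb{Z}_p}\mathrm{Sel}_0(\mathbb{Q}, E[p^\infty])$; (iii) invoke Proposition \ref{prop:chebotarev} to iteratively replace every prime factor of $n_0$ by a prime in $\mathcal{P}_k$ for $k$ larger than $t$ plus the uniform divisibility bound of step (ii), producing $n' \in \mathcal{N}_k$ with $\nu(n') = r$ for which $\mathrm{loc}^s_p(\kappa^{\mathrm{Kato}}_{n'})$ is forced to be non-zero in $\mathrm{H}^1_{/f}(\mathbb{Q}_p, T/I_{n'}T)$. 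The displayed equivalence then delivers $p^t \widetilde{\delta}_{n'} \neq 0$, so $\mathrm{ord}(p^t \cdot \kn) \leq r$, completing the proof.

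The hard step is (i): Theorem \ref{thm:from-ks-to-kn}, as stated, only tracks the scaled quantity $p^t \widetilde{\delta}_n$ through the target $\mathbb{Z}_p/I_n\mathbb{Z}_p$, so its face value does not rule out $\widetilde{\delta}_n \neq 0$ while $\ks^{\mathrm{Kato}} = 0$. The remedy is to read the computation in the proof of that theorem one step earlier, inside the unscaled fractional module $\frac{\#\widetilde{E}_{\mathrm{ns}}(\mathbb{F}_p)}{p^{1+t}}\mathbb{Z}_p / \frac{\#\widetilde{E}_{\mathrm{ns}}(\mathbb{F}_p)}{p^{1+t}}I_n\mathbb{Z}_p$, where the identity $\mathrm{exp}^{*}_{\omega_E} \circ \mathrm{loc}^s_p(D_{\mathbb{Q}(\zeta_n)} z^{\mathrm{Kato}}_{\mathbb{Q}(\zeta_n)}) = E_p \cdot \widetilde{\delta}_n$ exhibits $\widetilde{\delta}_n$ itself—rather than its $p^t$-multiple—as the honest image of the Kolyvagin derivative. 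Tracking this identity shows that $\ks^{\mathrm{Kato}} = 0$ forces every $\widetilde{\delta}_n$ to vanish modulo $I_n$. Once step (i) is in hand, the Chebotarev selection in step (iii) is standard in the Mazur--Rubin framework and is modelled closely on the proof of Proposition \ref{prop:structure-fine-selmer-p^k}.
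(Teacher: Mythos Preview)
Your step (i) contains a genuine error. In the fractional module $\frac{\#\widetilde{E}_{\mathrm{ns}}(\mathbb{F}_p)}{p^{1+t}}\mathbb{Z}_p \big/ \frac{\#\widetilde{E}_{\mathrm{ns}}(\mathbb{F}_p)}{p^{1+t}}I_n\mathbb{Z}_p$, the symbol $\widetilde{\delta}_n$ denotes the image of the integer $\widetilde{\delta}_n \in \mathbb{Z}_p$ under the inclusion $\mathbb{Z}_p \hookrightarrow \frac{\#\widetilde{E}_{\mathrm{ns}}(\mathbb{F}_p)}{p^{1+t}}\mathbb{Z}_p$. After reducing modulo the respective $I_n$-multiples, this inclusion is no longer injective: its kernel is exactly the $p^t$-torsion (in the anomalous good case, for instance, $\#\widetilde{E}_{\mathrm{ns}}(\mathbb{F}_p)=p$ and the inclusion $\mathbb{Z}_p/I_n \to p^{-t}\mathbb{Z}_p/p^{-t}I_n$ is multiplication by $p^t$ relative to generators). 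Consequently the vanishing of $E_p\cdot\widetilde{\delta}_n$ in the fractional module is \emph{equivalent} to $p^t\widetilde{\delta}_n = 0$ in $\mathbb{Z}_p/I_n\mathbb{Z}_p$; you recover nothing beyond what Theorem~\ref{thm:from-ks-to-kn} already gives. So ``reading one step earlier'' does not let you conclude $\ks^{\mathrm{Kato}}=0 \Rightarrow \kn=0$, and without step (i) the rest of your argument does not get started. (Note also that your step (i) is essentially one half of Proposition~\ref{prop:ks-kn-equi-non-vanishing}, whose proof in the paper \emph{relies on} the present lemma.)

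More importantly, you have massively overcomplicated what is a purely elementary statement about the collection $\kn$: no Euler systems, reciprocity laws, or Chebotarev are needed. The paper observes that if $p^t\widetilde{\delta}_n=0$ in $\mathbb{Z}_p/I_n\mathbb{Z}_p$ for every $n\in\mathcal{N}_1$, then for every $n\in\mathcal{N}_k$ one has $p^t\widetilde{\delta}^{(k)}_n=0$ in $\mathbb{Z}/p^k\mathbb{Z}$, so $\widetilde{\delta}^{(k)}_n\in p^{k-t}\mathbb{Z}/p^k\mathbb{Z}$ and hence $\partial^{(i)}(\kn^{(k)})\geq k-t$ for all $i,k$. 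Letting $k\to\infty$ forces $\partial^{(i)}(\kn)=\infty$ for every $i$, which by definition means $\widetilde{\delta}_n=0$ for all $n$. The equality of orders follows similarly: if $\partial^{(i)}(\kn)<\infty$ for $i=\mathrm{ord}(\kn)$, then $\partial^{(i)}(p^t\cdot\kn)=\partial^{(i)}(\kn)+t<\infty$, giving $\mathrm{ord}(p^t\cdot\kn)\leq\mathrm{ord}(\kn)$. That is the entire proof.
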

\begin{proof}
This can be checked directly.
\end{proof}
\begin{prop} \label{prop:ks-kn-equi-non-vanishing}
Let $E$ be an elliptic curve over $\mathbb{Q}$ and $p\geq 5$ is a prime such that
 $\overline{\rho}$ is surjective and the Manin constant is prime to $p$.
Then the non-triviality of $\ks^{\mathrm{Kato}}$ and the non-vanishing of $\kn$ are equivalent.
\end{prop}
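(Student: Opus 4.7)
The plan is to combine the explicit reciprocity formula of Theorem \ref{thm:from-ks-to-kn} with the rigidity statement for classical Kolyvagin systems from Theorem \ref{thm:core-rank-elliptic-curves}.(1). The essential input is that, by the integral computations of $\S$\ref{subsec:computing-integral-image} together with the torsion extension of $\S$\ref{subsec:extension-dual-exp}, the extended dual exponential map induces an isomorphism
$$\mathrm{exp}^{*}_{\omega_E}: \mathrm{H}^1_{/f}(\mathbb{Q}_p, T/I_nT) \xrightarrow{\sim} \dfrac{\#\widetilde{E}_{\mathrm{ns}}(\mathbb{F}_p)}{p^{1+t}}\mathbb{Z}_p \Big/ \dfrac{\#\widetilde{E}_{\mathrm{ns}}(\mathbb{F}_p)}{p^{1+t}}I_n\mathbb{Z}_p,$$
so that the composition $\xi \circ \mathrm{exp}^{*}_{\omega_E} \circ \mathrm{loc}^s_p$ vanishes on a given $\kappa^{\mathrm{Kato}}_n$ if and only if $\mathrm{loc}^s_p(\kappa^{\mathrm{Kato}}_n) = 0$ in $\mathrm{H}^1_{/f}(\mathbb{Q}_p, T/I_nT)$.

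For the direction $\kn \neq 0 \Rightarrow \ks^{\mathrm{Kato}} \neq 0$, I first invoke Lemma \ref{lem:ks-kn-equi-non-vanishing} to obtain some $n \in \mathcal{N}_1$ with $p^t \cdot \widetilde{\delta}_n \neq 0$ in $\mathbb{Z}_p/I_n\mathbb{Z}_p$. Theorem \ref{thm:from-ks-to-kn} then supplies $\xi \circ \mathrm{exp}^{*}_{\omega_E} \circ \mathrm{loc}^s_p(\kappa^{\mathrm{Kato}}_n) = u \cdot p^t \cdot \widetilde{\delta}_n \neq 0$ for some unit $u$, and hence $\kappa^{\mathrm{Kato}}_n \neq 0$.

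For the converse I argue by contrapositive. Assume $\widetilde{\delta}_n = 0$ for every $n \in \mathcal{N}_1$. By Theorem \ref{thm:from-ks-to-kn} together with the injectivity of $\xi \circ \mathrm{exp}^{*}_{\omega_E}$ on $\mathrm{H}^1_{/f}$ above, I conclude $\mathrm{loc}^s_p(\kappa^{\mathrm{Kato}}_n) = 0$, equivalently $\kappa^{\mathrm{Kato}}_n \in \mathrm{Sel}_n(\mathbb{Q}, T/I_nT)$, for every $n$. Because the local conditions of $\mathcal{F}_{\mathrm{can}}$ and $\mathcal{F}_{\mathrm{cl}}$ coincide at every prime other than $p$, and the Kolyvagin-system axiom ($\S$\ref{subsubsec:kolyvagin-systems}) involves only the localizations $\mathrm{loc}_\ell$ and $\mathrm{loc}^s_\ell$ at Kolyvagin primes $\ell \in \mathcal{P}_1$ (which are disjoint from $\{p\}$), the collection $\ks^{\mathrm{Kato}}$ automatically defines an element of $\KS(T, \mathcal{F}_{\mathrm{cl}}, \mathcal{P})$. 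But $\KS(T, \mathcal{F}_{\mathrm{cl}}, \mathcal{P}) = 0$ by Theorem \ref{thm:core-rank-elliptic-curves}.(1), and therefore $\ks^{\mathrm{Kato}} = 0$.

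The primary subtlety I expect to verify carefully is the isomorphism claim for $\mathrm{exp}^{*}_{\omega_E}$ on $\mathrm{H}^1_{/f}(\mathbb{Q}_p, T/I_nT)$: when $E(\mathbb{Q}_p)[p^\infty] \neq 0$ the exact sequence (\ref{eqn:extending-dual-exp}) has a nontrivial $\mathrm{H}^2(\mathbb{Q}_p, T)[I_n]$-term, and injectivity of the extended map must be traced through the splitting provided by Proposition \ref{prop:extending-dual-exp-splits} together with the explicit lattice computations of Lemmas \ref{lem:dual-exponential}, \ref{lem:dual-exponential-multiplicative}, \ref{lem:dual-exponential-additive}. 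Once that point is secured, the rest of the argument is a formal consequence of the coincidence of local conditions at all primes $\ell \neq p$ and the vanishing $\KS(T, \mathcal{F}_{\mathrm{cl}}, \mathcal{P}) = 0$.
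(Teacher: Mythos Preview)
Your proof is correct and follows essentially the same route as the paper's own argument: both directions hinge on the formula of Theorem \ref{thm:from-ks-to-kn}, the contrapositive direction concludes $\ks^{\mathrm{Kato}} \in \KS(T,\mathcal{F}_{\mathrm{cl}},\mathcal{P})=0$ via Theorem \ref{thm:core-rank-elliptic-curves}(1), and Lemma \ref{lem:ks-kn-equi-non-vanishing} handles the $p^t$-shift. The only organizational difference is that the paper phrases the vanishing hypothesis as $p^t\cdot\kn=0$ and invokes Lemma \ref{lem:ks-kn-equi-non-vanishing} at the very end, whereas you invoke it at the start of the forward direction; the content is the same.
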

\begin{proof}
The $p^t \cdot \kn \neq 0 \Rightarrow \ks^{\mathrm{Kato}} \neq 0$ direction follows from Theorem \ref{thm:from-ks-to-kn}.
The opposite direction follows from Theorem \ref{thm:core-rank-elliptic-curves} and Lemma \ref{lem:ks-kn-equi-non-vanishing}.
\end{proof}
Proposition \ref{prop:ks-kn-equi-non-vanishing} does \emph{not} mean that $\kappa^{\mathrm{Kato}}_n \neq 0$ if and only if $\widedelta_n \neq 0$ for each $n \in \mathcal{N}_1$.

\subsection{Functional equations and vanishing of Kurihara numbers}
Following \cite[Lem. 4 (Page 347)]{kurihara-iwasawa-2012} and \cite[Page 220]{kurihara-munster}, we have
\begin{equation} \label{eqn:functional-equation-delta_n}
w(E) \cdot (-1)^{\nu(n)} \cdot \widetilde{\delta}_n = \widetilde{\delta}_n \in \mathbb{Z}_p/I_n \mathbb{Z}_p
\end{equation}
where $w(E)$ is the root number of $E$. Comparing with Proposition \ref{prop:non-vanishing-kappa-n}, the following statement shows the fundamental difference between $\ks^{\mathrm{Kato}}$ and $\kn$.
\begin{prop} \label{prop:vanishing-delta-n}
If $(-1)^{\nu(n)} \neq w(E)$, then $\widetilde{\delta}_n  = 0$.
In particular, if $\widedelta_n \neq 0$, then $\widedelta_{n\ell} = 0 \in \mathbb{Z}_p/I_{n\ell}\mathbb{Z}_p$ for all $\ell \in \mathcal{N}_1$ with $(n, \ell) =1$.
\end{prop}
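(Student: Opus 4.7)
The proof is a direct unpacking of the functional equation (\ref{eqn:functional-equation-delta_n}) together with the hypothesis $p \geq 5$. My plan is the following. Rewrite (\ref{eqn:functional-equation-delta_n}) as
\[
\bigl(1 - w(E)\cdot (-1)^{\nu(n)}\bigr)\cdot \widetilde{\delta}_n = 0 \quad \text{in } \mathbb{Z}_p/I_n\mathbb{Z}_p.
\]
Since $w(E) \in \{\pm 1\}$ and $(-1)^{\nu(n)} \in \{\pm 1\}$, the hypothesis $(-1)^{\nu(n)} \neq w(E)$ forces $w(E)\cdot (-1)^{\nu(n)} = -1$, so the displayed equation collapses to $2\cdot \widetilde{\delta}_n = 0$. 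Because $p \geq 5$, the element $2$ is a unit in $\mathbb{Z}_p$ and hence in every quotient $\mathbb{Z}_p/I_n\mathbb{Z}_p$; multiplying by $2^{-1}$ yields $\widetilde{\delta}_n = 0$. This proves the first assertion.

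The ``in particular'' part is then an immediate parity observation. Suppose $\widetilde{\delta}_n \neq 0$; the contrapositive of the first assertion forces $(-1)^{\nu(n)} = w(E)$. For any prime $\ell \in \mathcal{P}_1$ coprime to $n$ (so that $n\ell \in \mathcal{N}_1$), one has $\nu(n\ell) = \nu(n) + 1$, hence $(-1)^{\nu(n\ell)} = -(-1)^{\nu(n)} = -w(E) \neq w(E)$. Applying the first assertion to $n\ell$ gives $\widetilde{\delta}_{n\ell} = 0 \in \mathbb{Z}_p/I_{n\ell}\mathbb{Z}_p$.

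There is essentially no obstacle: all the arithmetic content is packaged into (\ref{eqn:functional-equation-delta_n}), which is the functional equation of Mazur--Tate elements of Mazur--Tate descended to the Kurihara-number level (following \cite[Lemma 4, p.~347]{kurihara-iwasawa-2012} and \cite[p.~220]{kurihara-munster}). The only point that requires a sentence of care is the oddness of $p$, which is what makes the factor $2$ invertible in $\mathbb{Z}_p/I_n\mathbb{Z}_p$; without this, one could only conclude $2\widetilde{\delta}_n = 0$ rather than $\widetilde{\delta}_n = 0$. The interpretation of $\ell$ as a prime in $\mathcal{P}_1$ (so that appending it to $n$ flips the parity of $\nu$) is exactly the mechanism that converts a single non-vanishing into a vanishing at every ``neighbor'' $n\ell$, which is the fundamental difference with the behavior of $\ks^{\mathrm{Kato}}$ exhibited in Proposition \ref{prop:non-vanishing-kappa-n}.
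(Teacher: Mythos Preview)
Your proof is correct and is exactly the argument the paper intends: the proposition is stated immediately after the functional equation (\ref{eqn:functional-equation-delta_n}) without a written proof, so the intended derivation is precisely the one-line manipulation you give (rewriting as $(1-w(E)(-1)^{\nu(n)})\widedelta_n=0$, using $p\ge 5$ to invert $2$, and then flipping parity for the ``in particular''). Your remark that $\ell$ must be read as a single Kolyvagin prime in $\mathcal{P}_1$ (so that $\nu(n\ell)=\nu(n)+1$) is the right reading; the paper's ``$\ell\in\mathcal{N}_1$'' is a minor notational slip.
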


\section{{$\Lambda$}-adic Kolyvagin systems for elliptic curves: Proof of Theorem \ref{thm:main-technical}} \label{sec:Lambda-adic-KS}
Throughout this section, we assume that $\overline{\rho}$ is surjective.
\subsection{The Iwasawa-theoretic set up}
\subsubsection{}
Recall the notation in $\S$\ref{subsubsec:cyclotomic-setup}.
Let $\mathbb{Q}_\infty$ be the cyclotomic $\mathbb{Z}_p$-extension of $\mathbb{Q}$ and $\mathbb{Q}_m \subseteq \mathbb{Q}_\infty$ the cyclic subextension of $\mathbb{Q}$ of degree $p^m$ in $\mathbb{Q}_\infty$.
Let $$\Lambda = \mathbb{Z}_p\llbracket  \mathrm{Gal}(\mathbb{Q}_\infty/\mathbb{Q}) \rrbracket = \varprojlim_m \mathbb{Z}_p[  \mathrm{Gal}(\mathbb{Q}_m/\mathbb{Q}) ]$$
be the Iwasawa algebra.
\subsubsection{}
Fix a finite set  $\Sigma$  of the places of $\mathbb{Q}$ containing $p$, $\infty$, and the primes where $T$ is ramified.
Denote by $\mathbb{Q}_{\Sigma}$ the maximal extension of $\mathbb{Q}$ unramified outside $\Sigma$.
\begin{lem} \label{lem:iwasawa-cohomology}
\begin{enumerate}
\item $\mathrm{H}^1(\mathbb{Q}_{\Sigma}/\mathbb{Q}, T \otimes \Lambda) \simeq \varprojlim_m \mathrm{H}^1(\mathbb{Q}_{\Sigma}/\mathbb{Q}_m, T) $.
\item If $\ell \neq p$, then $\mathrm{H}^1(\mathbb{Q}_\ell, T \otimes \Lambda) = \mathrm{H}^1_{\mathrm{ur}}(\mathbb{Q}_\ell, T \otimes \Lambda)$.
\item $\mathrm{H}^1(\mathbb{Q}_{\Sigma}/\mathbb{Q}, T \otimes \Lambda)$ is independent of the choice of $\Sigma$.
\end{enumerate}
\end{lem}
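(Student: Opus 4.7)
My plan is to establish the three assertions in sequence, all via the identification
\begin{equation*}
T \otimes \Lambda \ \simeq \ \varprojlim_m \ T \otimes \mathbb{Z}_p[\mathrm{Gal}(\mathbb{Q}_m/\mathbb{Q})]
\end{equation*}
combined with Shapiro's lemma and inflation--restriction arguments carried out layer by layer.

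For (1), the finite-level module $T \otimes \mathbb{Z}_p[\mathrm{Gal}(\mathbb{Q}_m/\mathbb{Q})]$ is induced from $\mathrm{Gal}(\mathbb{Q}_\Sigma/\mathbb{Q}_m)$ to $\mathrm{Gal}(\mathbb{Q}_\Sigma/\mathbb{Q})$, so Shapiro's lemma gives a natural isomorphism $\mathrm{H}^1(\mathbb{Q}_\Sigma/\mathbb{Q}, T \otimes \mathbb{Z}_p[\mathrm{Gal}(\mathbb{Q}_m/\mathbb{Q})]) \simeq \mathrm{H}^1(\mathbb{Q}_\Sigma/\mathbb{Q}_m, T)$ compatibly with the corestriction transition maps as $m$ varies. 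To commute $\varprojlim_m$ past $\mathrm{H}^1$ I would invoke the standard short exact sequence
\begin{equation*}
0 \to {\varprojlim_m}^{1} \mathrm{H}^0(\mathbb{Q}_\Sigma/\mathbb{Q}_m, T) \to \mathrm{H}^1(\mathbb{Q}_\Sigma/\mathbb{Q}, T \otimes \Lambda) \to \varprojlim_m \mathrm{H}^1(\mathbb{Q}_\Sigma/\mathbb{Q}_m, T) \to 0,
\end{equation*}
whose $\varprojlim^1$-term vanishes once $T^{\mathrm{Gal}(\mathbb{Q}_\Sigma/\mathbb{Q}_m)} = 0$ for every $m$. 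This last vanishing follows from the surjectivity of $\overline{\rho}$: the image $G_{\Sigma,m}$ of $\mathrm{Gal}(\mathbb{Q}_\Sigma/\mathbb{Q}_m)$ in $\mathrm{GL}_2(\mathbb{F}_p)$ is normal with $p$-power quotient, and since $\mathrm{GL}_2(\mathbb{F}_p)/\mathrm{SL}_2(\mathbb{F}_p) \simeq \mathbb{F}_p^\times$ has order prime to $p$ while $\mathrm{SL}_2(\mathbb{F}_p)$ is perfect for $p \geq 5$, we must have $G_{\Sigma,m} = \mathrm{GL}_2(\mathbb{F}_p)$; this acts irreducibly on $T/pT$, forcing $T^{G_{\Sigma,m}} \subseteq pT$ and hence $=0$ by $\mathbb{Z}_p$-torsion-freeness.

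For (2), applying the same Shapiro + inverse-limit recipe locally yields $\mathrm{H}^1(\mathbb{Q}_\ell, T \otimes \Lambda) = \varprojlim_m \mathrm{H}^1(\mathbb{Q}_{\ell,m}, T)$, where $\mathbb{Q}_{\ell,m}$ denotes the completion at a prime above $\ell$ in $\mathbb{Q}_m$. The hypothesis $\ell \neq p$ ensures that $\mathbb{Q}_\infty/\mathbb{Q}$ is unramified at $\ell$, so $\mathbb{Q}_{\ell,\infty}/\mathbb{Q}_\ell$ is the unique unramified $\mathbb{Z}_p$-extension and $I_\ell$ acts trivially on the $\Lambda$-factor. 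The inflation--restriction sequence attached to $I_\ell$ then reduces the claim to the vanishing of $\mathrm{H}^1(I_\ell, T \otimes \Lambda)^{\mathrm{Fr}_\ell = 1}$; this is the classical $\Lambda$-adic local vanishing (e.g., \cite[Lemma B.3.1]{rubin-book}), proved by observing that the arithmetic Frobenius acts on the $\Lambda$-factor by multiplication by a topological generator $\gamma$ of $\Gamma$, and that the twisted operator $\gamma \cdot \mathrm{Fr}_\ell - 1$ has trivial kernel on the modules under consideration because $\gamma - 1$ is a non-zero-divisor in $\Lambda$.

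Finally, (3) follows from (2): for $\Sigma \subseteq \Sigma'$ both admissible, the inflation--restriction exact sequence
\begin{equation*}
0 \to \mathrm{H}^1(\mathbb{Q}_\Sigma/\mathbb{Q}, T \otimes \Lambda) \to \mathrm{H}^1(\mathbb{Q}_{\Sigma'}/\mathbb{Q}, T \otimes \Lambda) \to \bigoplus_{\ell \in \Sigma' \setminus \Sigma} \mathrm{H}^1(\mathbb{Q}_\ell, T \otimes \Lambda)/\mathrm{H}^1_{\mathrm{ur}}(\mathbb{Q}_\ell, T \otimes \Lambda)
\end{equation*}
reduces independence of $\Sigma$ to the vanishing of the right-hand summands, and since every $\ell \in \Sigma' \setminus \Sigma$ is different from $p$ (as $p \in \Sigma$), part (2) supplies exactly this vanishing. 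The main obstacle I foresee lies in part (2): the careful bookkeeping of Frobenius-coinvariants, inertia-cohomology, and the cyclotomic $\Lambda$-action at bad-reduction primes $\ell \neq p$ is more delicate than parts (1) and (3), although the operative principle---that the cyclotomic tower kills the tame local obstructions at $\ell \neq p$---is classical.
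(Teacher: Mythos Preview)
Your argument is correct and follows the standard route that underlies the paper's proof, which consists solely of the citation \cite[Lemma 5.3.1]{mazur-rubin-book}. One minor imprecision worth flagging: in part (2), the image $\gamma_\ell$ of $\mathrm{Fr}_\ell$ in $\Gamma$ need not be a \emph{topological generator}---it is merely a non-identity element (primes $\ell \neq p$ are finitely decomposed, not totally inert, in $\mathbb{Q}_\infty/\mathbb{Q}$)---but this is already enough for $\gamma_\ell - 1$ to be a non-zero-divisor in $\Lambda$, so your conclusion is unaffected.
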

\begin{proof}
See \cite[Lem. 5.3.1]{mazur-rubin-book}.
\end{proof}
The Selmer structure $\mathcal{F}_{\Lambda}$ on $T \otimes \Lambda$ is defined by
$$\mathrm{H}^1_{\mathcal{F}_{\Lambda}}(\mathbb{Q}_v, T \otimes \Lambda) = \mathrm{H}^1(\mathbb{Q}_v, T \otimes \Lambda)$$
for all $v \in \Sigma$.
Also, by Lemma \ref{lem:iwasawa-cohomology}.(2), we have
$\mathrm{H}^1_{f}(\mathbb{Q}_v, T \otimes \Lambda) = \mathrm{H}^1(\mathbb{Q}_v, T \otimes \Lambda)$
for all $v \not\in \Sigma$.
Thus, this Selmer structure is independent of the choice of $\Sigma$, and
$$\mathrm{Sel}_{\mathcal{F}_{\Lambda}}(\mathbb{Q}, T\otimes \Lambda) = \mathrm{H}^1(\mathbb{Q}, T \otimes \Lambda).$$
\begin{thm}[Mazur--Rubin] \label{thm:euler-to-kolyvagin-Lambda-adic}
Suppose that $\mathcal{K}$ contains the maximal abelian $p$-extension of $\mathbb{Q}$ which is unramified outside $p$ and $\mathcal{P}$, and
\begin{enumerate}
\item $T/(\mathrm{Fr}_\ell - 1)T$ is a cyclic $\mathbb{Z}_p$-module for every $\ell \in \mathcal{P}$,
\item $\mathrm{Fr}^{p^k}_\ell - 1$ is injective on $T$ for every $\ell \in \mathcal{P}$ and every $k \geq 0$. 
\end{enumerate}
Then there exists a canonical homomorphism 
\[
\xymatrix@R=0em{
\ES(T, \mathcal{K}, \mathcal{P}) \ar[r] & \KSbar(T \otimes \Lambda, \mathcal{F}_\Lambda, \mathcal{P}) \\
\mathbf{z}  \ar@{|->}[r] & \ks^{\infty}
}
\]
such that
$\kappa^\infty_1 = z_{\mathbb{Q}_\infty} = \varprojlim_m z_{\mathbb{Q}_m} \in \mathrm{H}^1(\mathbb{Q}, T\otimes \Lambda)$.
\end{thm}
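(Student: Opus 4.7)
The plan is to imitate the construction of the Euler-to-Kolyvagin map in Theorem \ref{thm:euler-to-kolyvagin} level-by-level along the cyclotomic tower and pass to the projective limit, exploiting the identification $\mathrm{H}^1(\mathbb{Q}_\Sigma/\mathbb{Q}, T \otimes \Lambda) \simeq \varprojlim_m \mathrm{H}^1(\mathbb{Q}_\Sigma/\mathbb{Q}_m, T)$ from Lemma \ref{lem:iwasawa-cohomology}. First I would verify that the two hypotheses on $T$ in Theorem \ref{thm:euler-to-kolyvagin} ascend to $T \otimes \Lambda$: every Kolyvagin prime $\ell \in \mathcal{P}$ is coprime to $p$ and hence unramified in $\mathbb{Q}_\infty/\mathbb{Q}$, so $\mathrm{Fr}_\ell$ acts on $T \otimes \Lambda$ diagonally through its image in $\mathrm{Gal}(\mathbb{Q}_\infty/\mathbb{Q})$; the cyclicity of $(T \otimes \Lambda)/(\mathrm{Fr}_\ell - 1)$ over $\Lambda$ follows from the cyclicity of $T/(\mathrm{Fr}_\ell - 1)T$ over $\mathbb{Z}_p$, and the injectivity of $\mathrm{Fr}_\ell^{p^k} - 1$ is preserved.

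Next, for each $m \geq 0$ and $n \in \mathcal{N}_1$ I would form the Kolyvagin derivative class
\[
\kappa^{(m)}_n = \mathrm{res}^{-1}\bigl( D_{\mathbb{Q}_m(\zeta_n)}\, z_{\mathbb{Q}_m(\zeta_n)} \bmod I_n \bigr) \in \mathrm{H}^1(\mathbb{Q}_m, T/I_nT)
\]
following $\S$\ref{subsubsec:kato-euler-systems}, where the inverse of restriction exists because $\mathrm{Gal}(\mathbb{Q}_m(\zeta_n)/\mathbb{Q}_m)$ has order prime to $p$. The Euler system norm relation applied to the layer $\mathbb{Q}_{m+1}(\zeta_n)/\mathbb{Q}_m(\zeta_n)$ involves no extra Euler factor, since only $p$ ramifies further in this extension and $p \notin \mathcal{P}$. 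Hence $\mathrm{cor}_{\mathbb{Q}_{m+1}/\mathbb{Q}_m}(\kappa^{(m+1)}_n) = \kappa^{(m)}_n$, and the sequence $(\kappa^{(m)}_n)_m$ assembles into a class
\[
\kappa^\infty_n \in \varprojlim_m \mathrm{H}^1(\mathbb{Q}_m, T/I_nT) \simeq \mathrm{H}^1(\mathbb{Q}, T/I_nT \otimes \Lambda),
\]
which lies in $\mathrm{Sel}_{\mathcal{F}_\Lambda(n)}(\mathbb{Q}, T/I_nT \otimes \Lambda)$ because $\mathcal{F}_\Lambda$ is the relaxed structure on $\Sigma$ while the condition at $\ell \mid n$ is transverse by the very construction of Kolyvagin derivatives.

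The Kolyvagin system axiom $\mathrm{loc}^s_\ell(\kappa^\infty_{n\ell}) = \phi^{\mathrm{fs}}_\ell \circ \mathrm{loc}_\ell(\kappa^\infty_n)$ should then be checked by verifying the analogous relation at each finite layer $m$ and passing to the inverse limit; because $\ell$ is unramified in $\mathbb{Q}_\infty/\mathbb{Q}$, the finite-singular isomorphism $\phi^{\mathrm{fs}}_\ell$ at the $m$-th layer is simply the base change from $\mathbb{Q}$, so it commutes with corestriction. At each finite layer the relation is the conclusion of Theorem \ref{thm:euler-to-kolyvagin} applied to the Euler system $\mathbf{z}$ restricted to the abelian extensions of $\mathbb{Q}_m$ inside $\mathcal{K}$, which is legitimate since $\mathcal{K}$ contains $\mathbb{Q}_\infty$ by the hypothesis on $\mathcal{K}$. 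Finally, the identification $\kappa^\infty_1 = \varprojlim_m z_{\mathbb{Q}_m} = z_{\mathbb{Q}_\infty}$ is immediate because $D_{\mathbb{Q}_m(\zeta_1)}$ is the identity operator.

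The main obstacle is a technical one: a priori the constructed object $\ks^\infty$ lives in the completed (generalized) Kolyvagin system module $\KSbar$ rather than in $\KS$, because to make sense of the local conditions and of the finite-singular map $\Lambda$-adically at each $\ell \in \mathcal{P}$ one must pass through the completion formalism of Mazur--Rubin; the theorem as stated correctly tolerates this by landing in $\KSbar(T \otimes \Lambda, \mathcal{F}_\Lambda, \mathcal{P})$. This is exactly the content of \cite[Theorem 5.3.3 and Proposition 5.3.5]{mazur-rubin-book}, so once the hypotheses have been verified the construction reduces to quoting that result.
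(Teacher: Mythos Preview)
Your proposal is correct and aligns with the paper's approach: the paper's proof consists solely of the citation ``See \cite[Theorem 5.3.3]{mazur-rubin-book}'', so both you and the paper defer to Mazur--Rubin, with your version supplying an accurate sketch of what that reference actually does. Your closing remark that the construction ``reduces to quoting that result'' is precisely the paper's entire argument.
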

\begin{proof}
See \cite[Thm. 5.3.3]{mazur-rubin-book}.
Under our working hypotheses, all the assumptions are satisfied.
\end{proof}
\subsubsection{}
\begin{lem}  \label{lem:iwasawa-cohomology-finitely-generated}
For every $i \geq 0$, $\mathrm{H}^1(\mathbb{Q}_{\Sigma}/\mathbb{Q}, T \otimes \Lambda)$ and 
$\mathrm{H}^i(\mathbb{Q}_p, T \otimes \Lambda)$
are finitely generated, and
$\mathrm{H}^1(\mathbb{Q}_{\Sigma}/\mathbb{Q}, (T \otimes \Lambda)^*)$ is co-finitely generated.
Furthermore, $\mathrm{H}^2(\mathbb{Q}_p, T \otimes \Lambda)$ is a torsion $\Lambda$-module.
\end{lem}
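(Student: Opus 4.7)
The plan is to reduce everything to statements at finite level through Shapiro's lemma (Lemma \ref{lem:iwasawa-cohomology}.(1)) and then invoke Nakayama-type arguments for the finite generation part, and local Tate duality for the torsionness assertion.

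First I would treat the finite generation of $\mathrm{H}^1(\mathbb{Q}_{\Sigma}/\mathbb{Q}, T\otimes\Lambda)$. Since this module is the inverse limit of the finite modules $\mathrm{H}^1(\mathbb{Q}_{\Sigma}/\mathbb{Q}_m, T/p^k T)$, it is a compact $\Lambda$-module. Let $\mathfrak{m}_\Lambda = (p, \gamma-1)\Lambda$ be the maximal ideal, where $\gamma$ is a topological generator of $\mathrm{Gal}(\mathbb{Q}_\infty/\mathbb{Q})$. By the topological Nakayama lemma for compact $\Lambda$-modules, it suffices to prove that the quotient $\mathrm{H}^1(\mathbb{Q}_{\Sigma}/\mathbb{Q}, T\otimes\Lambda)/\mathfrak{m}_\Lambda$ is finite. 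Using the Hochschild--Serre spectral sequence for the tower $\mathbb{Q}_\infty/\mathbb{Q}$ together with the standard control theorem at the bottom layer, this quotient is controlled by $\mathrm{H}^1(\mathbb{Q}_{\Sigma}/\mathbb{Q}, T/pT)$, which is finite by the general finiteness of Galois cohomology with restricted ramification in characteristic coprime to the residue characteristic of the primes in $\Sigma$. The same template applies verbatim to $\mathrm{H}^i(\mathbb{Q}_p, T\otimes\Lambda)$, replacing the global Galois group by $\mathrm{Gal}(\overline{\mathbb{Q}}_p/\mathbb{Q}_p)$.

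For the cofinite generation of $\mathrm{H}^1(\mathbb{Q}_{\Sigma}/\mathbb{Q}, (T\otimes\Lambda)^*)$, I would identify $(T\otimes\Lambda)^* \simeq E[p^\infty]\otimes_{\mathbb{Z}_p}\Lambda^{\iota,\vee}$ and apply Poitou--Tate global duality to pair this with the finitely generated modules just controlled; equivalently, the finite generation statement for $\mathrm{H}^2(\mathbb{Q}_{\Sigma}/\mathbb{Q}, T\otimes\Lambda)$ (proved the same way as above, using the vanishing of $\mathrm{H}^i$ for $i\geq 3$ in the odd residue characteristic setting) dualizes into the desired cofinite generation of the discrete $\mathrm{H}^1$.

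Finally, for the torsionness of $\mathrm{H}^2(\mathbb{Q}_p, T\otimes\Lambda)$, I would invoke local Tate duality to identify it with the Pontryagin dual of $\mathrm{H}^0(\mathbb{Q}_p, (T\otimes\Lambda)^*)$. The local Galois group $\mathrm{Gal}(\overline{\mathbb{Q}}_p/\mathbb{Q}_p)$ acts on the $\Lambda^{\iota,\vee}$ factor through the cyclotomic character, whose image in $\mathrm{Gal}(\mathbb{Q}_\infty/\mathbb{Q})$ is a non-trivial element; hence $\mathrm{H}^0(\mathbb{Q}_p, (T\otimes\Lambda)^*)$ is annihilated by a non-zero element of $\Lambda$ (any $\gamma^{p^n}-1$ for $n$ large enough that the cyclotomic action factors appropriately), so it is a torsion $\Lambda$-module and thus so is its Pontryagin dual $\mathrm{H}^2(\mathbb{Q}_p, T\otimes\Lambda)$. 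The main technical obstacle will be bookkeeping the $\iota$-twist carefully under Shapiro's lemma and duality, so that the Nakayama and duality arguments yield finite generation (rather than mere finite generation up to a twist) over the correct copy of $\Lambda$; this is a standard matter addressed in \cite[\S5.3]{mazur-rubin-book}, which can be cited directly.
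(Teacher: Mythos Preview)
The paper's own proof is simply a citation to \cite[Lemma 5.3.4]{mazur-rubin-book}, so your sketch already says more than the paper does and follows essentially the standard argument behind that reference.

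One small imprecision: your justification of the $\Lambda$-torsionness of $\mathrm{H}^2(\mathbb{Q}_p, T\otimes\Lambda)$ via ``annihilation by $\gamma^{p^n}-1$'' does not quite go through as stated. The element $g\in G_{\mathbb{Q}_p}$ mapping to $\gamma$ acts on $(T\otimes\Lambda)^*\simeq E[p^\infty]\otimes\Lambda^{\iota,\vee}$ diagonally, and its action on the $E[p^\infty]$ factor is generally nontrivial, so the relation $g\cdot x=x$ for $x\in\mathrm{H}^0$ does not translate into annihilation of $x$ by $\gamma^{p^n}-1\in\Lambda$. The cleaner argument (which is what lies behind the Mazur--Rubin reference) is: by Shapiro and local Tate duality one has $\mathrm{H}^2(\mathbb{Q}_p, T\otimes\Lambda)^\vee \simeq \mathrm{H}^0(\mathbb{Q}_{\infty,p}, E[p^\infty])$, and the latter is a subgroup of $E[p^\infty]\simeq(\mathbb{Q}_p/\mathbb{Z}_p)^2$, hence co-finitely generated over $\mathbb{Z}_p$; its Pontryagin dual is therefore finitely generated over $\mathbb{Z}_p$ and automatically $\Lambda$-torsion. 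Since you explicitly defer to \cite[\S5.3]{mazur-rubin-book} at the end anyway, this is a minor point and your overall plan is sound.
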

\begin{proof}
See \cite[Lem. 5.3.4]{mazur-rubin-book}.
\end{proof}
\begin{thm}[Kato--Rohrlich]
Kato's zeta element $\kappa^{\mathrm{Kato},\infty}_{1} = z^{\mathrm{Kato}}_{\mathbb{Q}_\infty}$ over $\mathbb{Q}_\infty$ is non-trivial.
\end{thm}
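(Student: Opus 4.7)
The plan is to combine Kato's explicit reciprocity law (recalled in $\S$\ref{subsubsec:kato-euler-systems}) with Rohrlich's non-vanishing theorem for twisted $L$-values of elliptic curves. First I would argue by contradiction: if $z^{\mathrm{Kato}}_{\mathbb{Q}_\infty} = 0$ in $\mathrm{H}^1_{\mathrm{Iw}}(\mathbb{Q},T)$, then since this class is by construction the coherent sequence $\varprojlim_m z^{\mathrm{Kato}}_{\mathbb{Q}_m}$, the vanishing of the inverse limit forces $z^{\mathrm{Kato}}_{\mathbb{Q}_m}=0$ at every finite layer $\mathbb{Q}_m$, and in particular $\mathrm{exp}^*_{\omega_E}\circ\mathrm{loc}^s_p(z^{\mathrm{Kato}}_{\mathbb{Q}_m}) = 0$ in $\mathbb{Q}_p\otimes\mathbb{Q}_m$.

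Next, I would pair this image against an arbitrary character $\chi$ of $\mathrm{Gal}(\mathbb{Q}_m/\mathbb{Q})$ and invoke the interpolation formula of $\S$\ref{subsubsec:kato-euler-systems} to obtain
\[
\frac{L_{(Sp)}(E,\chi,1)}{\Omega^+_E}\cdot\omega_E = 0.
\]
For $p$ odd, $\mathrm{Gal}(\mathbb{Q}_\infty/\mathbb{Q})\simeq\mathbb{Z}_p$ is a pro-$p$ group on which complex conjugation acts trivially, so every finite-order character $\chi$ of this group is automatically even and the interpolation formula is non-vacuous. Hence the hypothesis $z^{\mathrm{Kato}}_{\mathbb{Q}_\infty}=0$ would force $L_{(Sp)}(E,\chi,1)=0$ for every finite-order character $\chi$ of $\mathrm{Gal}(\mathbb{Q}_\infty/\mathbb{Q})$.

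To close the argument I would invoke Rohrlich's theorem, which guarantees that $L(E,\chi,1)\neq 0$ for all but finitely many Dirichlet characters $\chi$ of $p$-power conductor. One then needs to verify that the finitely many local Euler factors at places $v\in S\cup\{p\}$ which relate $L_{(Sp)}(E,\chi,1)$ to $L(E,\chi,1)$ do not vanish for infinitely many such $\chi$. Each such factor is a polynomial with algebraic coefficients evaluated at $\chi(\mathrm{Fr}_v)$, and at an unramified prime $v$ this is a $p$-power root of unity (while the factor at $v=p$ is equally explicit); a non-zero polynomial can vanish on only finitely many $p$-power roots of unity, so only finitely many $\chi$ are excluded. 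Combined with Rohrlich this yields infinitely many $\chi$ with $L_{(Sp)}(E,\chi,1)\neq 0$, contradicting the previous paragraph.

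The main (and really only) non-trivial obstacle is the Euler-factor verification above; otherwise the proof is essentially a direct application of Rohrlich's non-vanishing theorem together with Kato's reciprocity law, which is why the result is attributed jointly to Kato and Rohrlich.
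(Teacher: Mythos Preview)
Your proposal is correct and follows exactly the approach the paper indicates: the paper's proof is simply a two-line citation of Kato's explicit reciprocity law \cite[Theorem 12.5]{kato-euler-systems} together with Rohrlich's generic non-vanishing of twisted $L$-values \cite{rohrlich-nonvanishing, rohrlich-nonvanishing-2}, and you have faithfully unpacked how these two inputs combine (including the observation that all finite-order characters of $\mathrm{Gal}(\mathbb{Q}_\infty/\mathbb{Q})$ are even and the Euler-factor check needed to pass from $L_{(Sp)}$ to $L$).
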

\begin{proof}
By using Kato's explicit reciprocity law \cite[Thm. 12.5]{kato-euler-systems}, 
the conclusion follows from the generic non-vanishing of twisted $L$-values \cite{rohrlich-nonvanishing, rohrlich-nonvanishing-2}.
\end{proof}
\begin{thm}[Kato] \label{thm:kato-12-4}
\begin{enumerate}
\item 
If $\overline{\rho}$ is irreducible, then 
 $\mathrm{Sel}_{\mathcal{F}_\Lambda} (\mathbb{Q}, T \otimes \Lambda) = \mathrm{H}^1 (\mathbb{Q}, T \otimes \Lambda)$ is free of rank one over $\Lambda$.
\item 
$\mathrm{Sel}_{\mathcal{F}^*_\Lambda} (\mathbb{Q}, (T \otimes \Lambda)^*) = \mathrm{Sel}_0(\mathbb{Q}_\infty, E[p^\infty])$ is a co-finitely generated co-torsion $\Lambda$-module.
\end{enumerate}
\end{thm}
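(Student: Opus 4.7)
The plan is to follow the standard Iwasawa-theoretic framework: part (1) combines a residual-irreducibility argument with a generic Euler characteristic calculation, while part (2) is an immediate application of the Euler system divisibility once (1) and the non-triviality of $\kappa^{\mathrm{Kato},\infty}_1$ are in hand.

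For (1), I would begin by establishing $\mathrm{H}^0(\mathbb{Q}_m, T) = 0$ for every $m \geq 0$. A nonzero $G_{\mathbb{Q}_m}$-fixed vector in $T$ reduces modulo $p$ to a nonzero $G_{\mathbb{Q}_m}$-fixed vector in $E[p]$; since $\mathbb{Q}_\infty/\mathbb{Q}$ is pro-$p$ and $\overline{\rho}$ is surjective with image $\mathrm{GL}_2(\mathbb{F}_p)$ (whose order is prime to $p$ only up to the quotient by the center), the restriction of $\overline{\rho}$ to $G_{\mathbb{Q}_m}$ remains irreducible, forcing the reduction to vanish and then Nakayama finishes the argument. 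Passing to the inverse limit yields $\mathrm{H}^0(\mathbb{Q}, T \otimes \Lambda) = 0$. The local and global Euler–Poincaré characteristic formulas applied layer by layer, together with the fact that complex conjugation acts with trace zero on $T$ (so that archimedean $\mathrm{H}^0$'s balance), then give that $\mathrm{H}^1(\mathbb{Q}_{\Sigma}/\mathbb{Q}, T \otimes \Lambda)$ has generic $\Lambda$-rank exactly one.

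To upgrade rank one to freeness of rank one, I would rule out $\Lambda$-torsion. Any $\Lambda$-torsion submodule $M$ would, for all but finitely many height-one primes $\mathfrak{P}$, specialize to give a nonzero class in the kernel of $\mathrm{H}^1(\mathbb{Q}, T \otimes \Lambda)/\mathfrak{P} \to \mathrm{H}^1(\mathbb{Q}, (T \otimes \Lambda)/\mathfrak{P})$; by the control exact sequence, this kernel is a quotient of $\mathrm{H}^0(\mathbb{Q}, (T \otimes \Lambda)/\mathfrak{P})$, which vanishes for all $\mathfrak{P}$ at which the residual representation remains irreducible. Since $\Lambda$ is a regular two-dimensional local ring, vanishing of pseudo-null submodules combined with rank one then forces $\mathrm{H}^1(\mathbb{Q}, T \otimes \Lambda) \simeq \Lambda$.

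For (2), once (1) is established, note that $\kappa^{\mathrm{Kato},\infty}_1 \in \mathrm{H}^1(\mathbb{Q}, T \otimes \Lambda)$ is nonzero by the Kato–Rohrlich theorem cited above; in the free rank-one module $\mathrm{H}^1(\mathbb{Q}, T \otimes \Lambda)$, the quotient $\mathrm{H}^1_{\mathrm{Iw}}(\mathbb{Q}, T)/\Lambda \kappa^{\mathrm{Kato},\infty}_1$ is therefore a torsion $\Lambda$-module with a well-defined characteristic ideal. The Mazur–Rubin Euler system divisibility for the $\Lambda$-adic Kolyvagin system attached to $\mathbf{z}^{\mathrm{Kato}}$ via Theorem \ref{thm:euler-to-kolyvagin-Lambda-adic} then gives
\[
\mathrm{char}_\Lambda \bigl( \mathrm{Sel}_0(\mathbb{Q}_\infty, E[p^\infty])^\vee \bigr) \ \Big| \ \mathrm{char}_\Lambda \left( \mathrm{H}^1_{\mathrm{Iw}}(\mathbb{Q}, T)/\Lambda \kappa^{\mathrm{Kato},\infty}_1 \right),
\]
so $\mathrm{Sel}_0(\mathbb{Q}_\infty, E[p^\infty])^\vee$ is $\Lambda$-torsion; the co-finite generation is Lemma \ref{lem:iwasawa-cohomology-finitely-generated}.

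The main obstacle will be the torsion-freeness in (1). The generic rank is routine, but ruling out $\Lambda$-torsion in $\mathrm{H}^1_{\mathrm{Iw}}$ requires a uniform control of $\mathrm{H}^0$ after specialization at all but finitely many height-one primes, and a careful treatment of the exceptional primes where the residual picture might degenerate; everything else in the proof then slots in as a formal consequence of (1) and Kato's Euler system.
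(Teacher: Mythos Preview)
The paper does not actually give a proof of this theorem; it simply refers the reader to \cite[Theorem 12.4]{kato-euler-systems} and \cite[Lemma 5.3.5 and Theorem 5.3.6]{mazur-rubin-book}. Your sketch goes further by attempting to outline the argument, and the overall architecture (vanishing of $\mathrm{H}^0$, Euler characteristic for the rank, Euler system input for cotorsionness) is the right one. Part (2) is fine in spirit, though note that the Mazur--Rubin divisibility you invoke is usually stated \emph{after} cotorsionness is established; the Euler system method proves cotorsionness directly from the non-triviality of $\kappa^{\mathrm{Kato},\infty}_1$, and that is what \cite[Theorem 5.3.6]{mazur-rubin-book} records.

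There are, however, two genuine gaps in your freeness argument for (1). First, the control sequence is mis-stated: from $0 \to T\otimes\Lambda \xrightarrow{f} T\otimes\Lambda \to (T\otimes\Lambda)/\mathfrak{P} \to 0$ (with $\mathfrak{P}=(f)$) one obtains that $\mathrm{H}^1(\mathbb{Q},T\otimes\Lambda)/\mathfrak{P} \hookrightarrow \mathrm{H}^1(\mathbb{Q},(T\otimes\Lambda)/\mathfrak{P})$ is \emph{injective}, so there is no kernel to analyse. What $\mathrm{H}^0(\mathbb{Q},(T\otimes\Lambda)/\mathfrak{P})$ controls is the $\mathfrak{P}$-\emph{torsion} $\mathrm{H}^1(\mathbb{Q},T\otimes\Lambda)[\mathfrak{P}]$, and that is the object you must show vanishes. (The cleanest route is to observe that each $\mathrm{H}^1(\mathbb{Q}_m,T)$ is $\mathbb{Z}_p$-torsion-free since $E[p]^{G_{\mathbb{Q}_m}}=0$, whence $\mathrm{H}^1_{\mathrm{Iw}}$ has no $p$-torsion, and then handle the remaining height-one primes.) Second, and more seriously, the implication ``rank one and no pseudo-null submodule $\Rightarrow$ free'' is false over a two-dimensional regular local ring: the maximal ideal $\mathfrak{m}=(p,X)\subset\Lambda$ is torsion-free of rank one but not free. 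Freeness requires reflexivity, which in this setting is obtained either from the perfectness of the Iwasawa cohomology complex, or by a direct argument combining the vanishing of $\mathrm{H}^0$ with control of $\mathrm{H}^2$; this is the step where Kato's actual proof does real work, and your sketch does not supply it.
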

\begin{proof}
See \cite[Thm. 12.4]{kato-euler-systems}. See also \cite[Lem. 5.3.5 and Thm. 5.3.6]{mazur-rubin-book}.
\end{proof}
We recall the notion in $\S$\ref{subsubsec:blind-spot-Lambda-primitivity}.
We say that \textbf{$\ks^{\infty}$ is $\Lambda$-primitive} if $\ks^{\infty} \pmod{\mathfrak{P}}$ does not vanish for every height one prime $\mathfrak{P}$ of $\Lambda$ \cite[Def. 5.3.9]{mazur-rubin-book}.
\begin{thm}[Mazur--Rubin] \label{thm:mazur-rubin-main-conjecture}
Let $\ks^{\infty} \in \KS(T \otimes \Lambda)$ be a $\Lambda$-adic Kolyvagin system.
\begin{enumerate}
\item $ \mathrm{char}_{\Lambda} \left( \dfrac{\mathrm{H}^1_{\mathrm{Iw}}(\mathbb{Q}, T)}{\Lambda \kappa^{\infty}_1 }  \right) \subseteq 
\mathrm{char}_{\Lambda} \left( \mathrm{Sel}_0(\mathbb{Q}_{\infty}, E[p^\infty] )^\vee  \right) .$
\item Suppose that $\kappa^\infty_1 \neq 0$.
If $\mathfrak{P}$ is not a blind spot of $\ks^{\infty}$, then 
$$\mathrm{ord}_{\mathfrak{P}} \left( \mathrm{char}_{\Lambda} \left( \dfrac{\mathrm{H}^1_{\mathrm{Iw}}(\mathbb{Q}, T)}{\Lambda \kappa^{\infty}_1 }  \right) \right)
=
\mathrm{ord}_{\mathfrak{P}} \left( \mathrm{char}_{\Lambda} \left( \mathrm{Sel}_0(\mathbb{Q}_{\infty}, E[p^\infty] )^\vee  \right) \right) .$$
\item Suppose that $\kappa^\infty_1 \neq 0$.
If $\ks$  is $\Lambda$-primitive, then
$$ \mathrm{char}_{\Lambda} \left( \dfrac{\mathrm{H}^1_{\mathrm{Iw}}(\mathbb{Q}, T)}{\Lambda \kappa^{\infty}_1 }  \right) = 
\mathrm{char}_{\Lambda} \left( \mathrm{Sel}_0(\mathbb{Q}_{\infty}, E[p^\infty] )^\vee  \right) .$$
\end{enumerate}
\end{thm}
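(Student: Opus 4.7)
The plan is to reduce the $\Lambda$-adic statement to localized statements at each height-one prime $\mathfrak{P}$ of $\Lambda$ and then deploy the quantitative Kolyvagin-system machinery analogous to Theorem \ref{thm:kato-kolyvagin-main} and Proposition \ref{prop:structure-fine-selmer-p^k} in that localized setting. Since $\mathrm{Sel}_0(\mathbb{Q}_\infty, E[p^\infty])^\vee$ is a finitely generated torsion $\Lambda$-module (Theorem \ref{thm:kato-12-4}.(2)) and $\mathrm{H}^1_{\mathrm{Iw}}(\mathbb{Q}, T)$ is free of rank one over $\Lambda$ (Theorem \ref{thm:kato-12-4}.(1)), each characteristic ideal in question is principal and pinned down by the $\mathrm{length}_{\Lambda_\mathfrak{P}}$ of its localization at every height-one prime, so such a reduction is legitimate.

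For part (1), the approach is the $\Lambda$-adic analogue of Theorem \ref{thm:euler-system-divisibility}: using only the bottom class $\kappa^\infty_1$ and a $\Lambda$-adic refinement of the Chebotarev argument of Proposition \ref{prop:chebotarev}, one produces auxiliary cohomology classes whose local behavior at the chosen Kolyvagin primes, combined with Poitou--Tate duality, yields an upper bound for $\mathrm{Sel}_0(\mathbb{Q}_\infty, E[p^\infty])^\vee$ by $\mathrm{H}^1_{\mathrm{Iw}}(\mathbb{Q}, T)/\Lambda \kappa^\infty_1$. The non-vanishing of $\kappa^\infty_1$ due to Kato--Rohrlich ensures the bound is meaningful. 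Since this step uses $\kappa^\infty_1$ only, the full Kolyvagin system is not needed here.

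For part (2), fix $\mathfrak{P}$ which is not a blind spot of $\ks^\infty$, so $\ks^\infty \not\equiv 0 \pmod{\mathfrak{P}}$. Passing to the quotients $\Lambda/\mathfrak{P}^k$ one obtains, for each $k$, a non-trivial Kolyvagin system whose invariants $\partial^{(r)}$ are finite. The higher classes $\kappa^\infty_n$ with $\nu(n) \geq 1$ are then exploited to match, step by step, these $\mathfrak{P}$-adic invariants with the Fitting-type invariants of $\mathrm{Sel}_0(\mathbb{Q}_\infty, E[p^\infty])^\vee$ localized at $\mathfrak{P}$, by the inductive argument modeled on Proposition \ref{prop:structure-fine-selmer-p^k}: at each stage Chebotarev produces a prime $\ell \in \mathcal{P}_k$ whose localization detects a prescribed generator of the dual Selmer group modulo $\mathfrak{P}^k$, and the Kolyvagin axiom (the identification of localizations via $\phi^{\mathrm{fs}}_\ell$) propagates the non-vanishing to $\kappa^\infty_{n\ell}$. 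Combining the resulting lower bound with the upper bound of part (1) gives equality of $\mathrm{ord}_\mathfrak{P}$ on both sides.

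Part (3) is a formal consequence of part (2): $\Lambda$-primitivity asserts that no height-one prime is a blind spot, so the two principal ideals have matching orders at every $\mathfrak{P}$ and therefore coincide. The main obstacle is really part (2): one must upgrade the finite-level Kolyvagin-system argument to the $\mathfrak{P}$-adic setting, which needs a Chebotarev density statement strong enough to preserve $\mathfrak{P}$-adic valuations simultaneously for several cohomology classes. This is precisely where the hypotheses $p \geq 5$ and surjectivity of $\overline{\rho}$ enter, via the Chebotarev argument of \cite[Section 3.6]{mazur-rubin-book}.
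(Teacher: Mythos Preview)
The paper does not give an argument for this theorem: its proof consists solely of the citation ``See \cite[Theorem 5.3.10]{mazur-rubin-book}.'' So the comparison must be with Mazur--Rubin's actual proof, whose mechanism is visible in this paper through Lemma \ref{lem:higher-power} and $\S$\ref{subsec:proof-of-main-technical}, which explicitly invoke \cite[Proof of Theorem 5.3.10, Page 66]{mazur-rubin-book}.

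Your high-level strategy is correct: reduce to height-one primes, specialize the $\Lambda$-adic Kolyvagin system, and apply the DVR structure theory (Theorem \ref{thm:kato-kolyvagin-main}/Proposition \ref{prop:structure-fine-selmer-p^k}) to match $\partial^{(0)}$ against the Selmer length. Part (3) is indeed formal from part (2). However, the specific device you describe for part (2), ``passing to the quotients $\Lambda/\mathfrak{P}^k$,'' is not the mechanism Mazur--Rubin use, and it would not work as stated: for $k>1$ the ring $\Lambda/\mathfrak{P}^k$ is not a DVR (nor a quotient of one), so the core-rank-one structure theory of \cite[$\S$4--5]{mazur-rubin-book} does not apply there, and one cannot read off $\mathrm{ord}_{\mathfrak{P}}$ of the characteristic ideal from Kolyvagin-system invariants over such rings.

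What Mazur--Rubin actually do (and what the paper reproduces in $\S$\ref{subsec:proof-of-main-technical}) is a perturbation trick: replace $\mathfrak{P}=(f_{\mathfrak{P}}(X))$ by $\mathfrak{P}_M=(f_{\mathfrak{P}}(X)+p^M)$ for $M\gg 0$. Each $\mathfrak{P}_M$ is again a height-one prime with $\Lambda/\mathfrak{P}_M \simeq \Lambda/\mathfrak{P}$, and its integral closure $S_{\mathfrak{P}_M}$ is a genuine DVR over which the specialized Kolyvagin system $\ks^{(\mathfrak{P}_M)}$ lives (Corollary \ref{cor:specialization}). The point is that $\Lambda/(\mathfrak{P}^m,\mathfrak{P}_M) = \Lambda/(p^{Mm},\mathfrak{P}_M)$, so the $\mathfrak{P}$-adic order of the characteristic ideal is recovered as the coefficient of $M$ in the asymptotic length $\partial^{(0)}(\ks^{(\mathfrak{P}_M)})$ as $M\to\infty$, up to $O(1)$. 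The non-blind-spot hypothesis ensures $\ks^{(\mathfrak{P}_M)}$ is non-trivial (Proposition \ref{prop:non-triviality-equivalence}), and then the DVR structure theorem gives the exact Selmer length, yielding the equality in (2). Your sketch has the right shape but is missing this reduction-to-DVR step, which is the technical heart of the argument.
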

\begin{proof}
See \cite[Thm. 5.3.10]{mazur-rubin-book}.
\end{proof}
Theorem \ref{thm:main-technical} proves the converse to the second and the third statements of Theorem \ref{thm:mazur-rubin-main-conjecture}.
Thanks to \cite[Rem. 5.3.11]{mazur-rubin-book}, both $\KS(T \otimes \Lambda)$ and $\KSbar(T \otimes \Lambda)$ work for all the argument below.

\subsection{Specializations at height one primes}
Let $\mathfrak{P}$ be a height one prime ideal of $\Lambda$.
Denote by $S_{\mathfrak{P}}$ the integral closure of $\Lambda/ \mathfrak{P}$, which is a discrete valuation ring.
Then $[S_{\mathfrak{P}}:\mathbb{Z}_p]$ is finite and $T \otimes \Lambda \otimes_{\Lambda} S_{\mathfrak{P}} = T  \otimes_{\mathbb{Z}_p} S_{\mathfrak{P}}$.
The canonical Selmer structure on $T  \otimes_{\mathbb{Z}_p} S_{\mathfrak{P}}$
is induced from the $\Lambda$-adic Selmer structure on $T  \otimes \Lambda$.
Let
$$\Sigma_{\Lambda} = \left\lbrace \mathfrak{P} : \mathrm{H}^2(\mathbb{Q}_\Sigma/\mathbb{Q}, T \otimes \Lambda)[\mathfrak{P}] \textrm{ is infinite} \right\rbrace \cup
\left\lbrace \mathfrak{P} : \mathrm{H}^2(\mathbb{Q}_p, T \otimes \Lambda)[\mathfrak{P}] \textrm{ is infinite} \right\rbrace \cup \left\lbrace p\Lambda \right\rbrace$$
be the exceptional set of height one primes of $\Sigma_{\Lambda}$ and it is finite by Lemma \ref{lem:iwasawa-cohomology-finitely-generated}.

\begin{prop}
For every height one prime $\mathfrak{P}$ of $\Lambda$,
the composition map $T \otimes \Lambda \twoheadrightarrow T \otimes \Lambda / \mathfrak{P} \hookrightarrow T \otimes S_{\mathfrak{P}}$ induces maps
\[
\xymatrix@R=0em{
\dfrac{\mathrm{H}^1(\mathbb{Q}, T \otimes \Lambda)}{\mathfrak{P}\mathrm{H}^1(\mathbb{Q}, T \otimes \Lambda)}
\ar@{^{(}->}[r]^-{\pi_{\mathfrak{P}}} & \mathrm{Sel}_{\mathrm{rel}}(\mathbb{Q},T \otimes S_{\mathfrak{P}}) , \\
\mathrm{Sel}_{0}(\mathbb{Q},(T \otimes S_{\mathfrak{P}})^*) \ar[r]^-{\pi^*_{\mathfrak{P}}} &
\mathrm{Sel}_{0}(\mathbb{Q}_\infty,E[p^\infty])[\mathfrak{P}] .
}
\]
For every $\mathfrak{P}$, the map $\pi_{\mathfrak{P}}$ is injective.
If $\mathfrak{P} \not\in \Sigma_{\Lambda}$, then
$\mathrm{coker}(\pi_{\mathfrak{P}})$,
$\mathrm{ker}(\pi^*_{\mathfrak{P}})$, and
$\mathrm{coker}(\pi^*_{\mathfrak{P}})$
are all finite with order bounded by a constant depending only on $T$ and $[S_{\mathfrak{P}}:\Lambda/\mathfrak{P}]$.
\end{prop}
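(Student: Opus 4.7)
Since $\Lambda \cong \mathbb{Z}_p\llbracket X \rrbracket$ is a unique factorization domain, the height one prime $\mathfrak{P}$ is principal; fix a generator $f \in \Lambda$. The plan is to factor the specialization $T \otimes \Lambda \to T \otimes S_{\mathfrak{P}}$ as the composition of the two short exact sequences of $G_\mathbb{Q}$-modules
\begin{equation*}
0 \to T \otimes \Lambda \xrightarrow{\ f\ } T \otimes \Lambda \to T \otimes \Lambda/\mathfrak{P} \to 0,
\qquad
0 \to T \otimes \Lambda/\mathfrak{P} \to T \otimes S_{\mathfrak{P}} \to T \otimes Q \to 0,
\end{equation*}
where $Q = S_{\mathfrak{P}}/(\Lambda/\mathfrak{P})$ is a finite $\mathbb{Z}_p$-module whose length depends only on $[S_{\mathfrak{P}} : \Lambda/\mathfrak{P}]$, and to chase the resulting long exact sequences in Galois cohomology.

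For the injectivity of $\pi_\mathfrak{P}$, the first sequence gives the injection $\mathrm{H}^1(\mathbb{Q}, T\otimes\Lambda)/\mathfrak{P} \hookrightarrow \mathrm{H}^1(\mathbb{Q}, T\otimes\Lambda/\mathfrak{P})$, and the second reduces the remaining kernel to a quotient of $\mathrm{H}^0(\mathbb{Q}, T\otimes Q)$. I will argue this vanishes: the $G_\mathbb{Q}$-action on $Q$ factors through the pro-$p$ quotient $\mathrm{Gal}(\mathbb{Q}_\infty/\mathbb{Q})$, and the kernel $N \trianglelefteq G_\mathbb{Q}$ of this action still surjects onto $\mathrm{GL}_2(\mathbb{F}_p)$ because $\mathrm{GL}_2(\mathbb{F}_p)$ admits no nontrivial pro-$p$ quotient for $p \geq 5$ (as $\mathrm{PSL}_2(\mathbb{F}_p)$ is simple and $\mathbb{F}_p^\times$ has order prime to $p$); a d\'evissage on a $\mathbb{Z}_p$-filtration of $Q$ by cyclic quotients then reduces $(T \otimes Q)^N = 0$ to $(T/pT)^N = 0$, which holds by the surjectivity of $\overline\rho$. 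That the image lies in $\mathrm{Sel}_\mathrm{rel}$ follows from Lemma \ref{lem:iwasawa-cohomology}.(2), which says $\mathrm{H}^1(\mathbb{Q}_\ell, T\otimes\Lambda)$ is unramified at each $\ell \neq p$, so its specialization lands in $\mathrm{H}^1_\mathrm{ur} = \mathrm{H}^1_f$; at $p$ no local condition is imposed.

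For the finiteness statements when $\mathfrak{P} \notin \Sigma_\Lambda$, the same two sequences identify $\mathrm{coker}(\pi_\mathfrak{P})$ as an extension of $\mathrm{H}^2(\mathbb{Q}_\Sigma/\mathbb{Q}, T\otimes\Lambda)[\mathfrak{P}]$ (finite by the very definition of $\Sigma_\Lambda$) by a subquotient of $\mathrm{H}^1(\mathbb{Q}, T\otimes Q)$ (finite, with order bounded in terms of $|Q|$ and $T$ via inflation--restriction through $\mathbb{Q}(E[p^n])$), together with local comparison terms at primes of $\Sigma$ between $\mathrm{H}^1_f(\mathbb{Q}_\ell, T\otimes\Lambda/\mathfrak{P})$ and $\mathrm{H}^1_f(\mathbb{Q}_\ell, T\otimes S_\mathfrak{P})$; each such local error is a subquotient of $\mathrm{H}^0(\mathbb{Q}_\ell, T\otimes Q)$ or $\mathrm{H}^2(\mathbb{Q}_\ell, T\otimes\Lambda)[\mathfrak{P}]$, and is finite by Lemma \ref{lem:iwasawa-cohomology-finitely-generated} and $\mathfrak{P} \notin \Sigma_\Lambda$. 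For the dual map $\pi^*_\mathfrak{P}$, I will run the analogous argument on the Pontryagin dual $(T\otimes\Lambda)^*$ and then invoke Poitou--Tate global duality (together with Theorem \ref{thm:kato-12-4}) to convert the kernel/cokernel computations on the fine Selmer side into already-established finite-error statements on the relaxed side.

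The main obstacle will be the careful bookkeeping of local conditions at $p$ and at primes $\ell \mid N$ across the specialization, since this is precisely where the canonical and fine Selmer structures diverge and where finiteness genuinely requires $\mathfrak{P} \notin \Sigma_\Lambda$; the Tate local Euler characteristic formula and local Tate duality will be used to ensure each error term at such a place is a finite subquotient controlled by $\mathrm{H}^i(\mathbb{Q}_v, T\otimes\Lambda)[\mathfrak{P}]$ for $i = 0,2$. Once this is done, all contributions are absorbed into a single universal bound depending only on $T$ and $[S_\mathfrak{P} : \Lambda/\mathfrak{P}]$, as required.
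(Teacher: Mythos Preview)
Your approach is correct and is essentially the argument of \cite[Proposition 5.3.14]{mazur-rubin-book}, which is what the paper cites without reproducing. The factorization through the two short exact sequences, the vanishing of $\mathrm{H}^0(\mathbb{Q}, T\otimes Q)$ via the observation that $\mathrm{GL}_2(\mathbb{F}_p)$ has no nontrivial $p$-group quotient (so the pro-$p$ kernel $N$ still acts irreducibly on $T/pT$), and the bookkeeping of finite error terms coming from $\mathrm{H}^2[\mathfrak{P}]$ and $\mathrm{H}^i(-,T\otimes Q)$ are exactly the ingredients Mazur--Rubin use.

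One minor point: for the dual map $\pi^*_{\mathfrak{P}}$, you propose to pass through Poitou--Tate to reduce to the relaxed side. In \cite{mazur-rubin-book} the argument is more direct, working with the Cartier-dual exact sequences $0 \to (T\otimes Q)^* \to (T\otimes S_{\mathfrak{P}})^* \to (T\otimes \Lambda/\mathfrak{P})^* \to 0$ and the analogue for $(T\otimes\Lambda)^*$, and bounding kernels and cokernels by $\mathrm{H}^0$ and $\mathrm{H}^1$ of the finite module $(T\otimes Q)^*$ together with the restriction--corestriction defect for $S_{\mathfrak{P}}$ over $\Lambda/\mathfrak{P}$. Your route via global duality would also work, but the direct approach avoids invoking Theorem~\ref{thm:kato-12-4} and keeps the bound visibly dependent only on $T$ and $[S_{\mathfrak{P}}:\Lambda/\mathfrak{P}]$.
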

\begin{proof}
See \cite[Prop. 5.3.14]{mazur-rubin-book}.
\end{proof}
\begin{cor} \label{cor:specialization}
For every height one prime $\mathfrak{P}$ of $\Lambda$, there is a natural map
$$\KS(T \otimes \Lambda, \mathcal{F}_{\Lambda}) \to
\KS(T \otimes S_{\mathfrak{P}}, \mathcal{F}_{\mathrm{can}})$$
\end{cor}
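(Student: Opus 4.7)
My plan is to construct the specialization by componentwise reduction and then verify it lies in the target Kolyvagin system module. Given a $\Lambda$-adic Kolyvagin system $\ks^\infty = (\kappa^\infty_n)_{n \in \mathcal{N}_1}$ with $\kappa^\infty_n \in \mathrm{Sel}_{\mathcal{F}_\Lambda(n)}(\mathbb{Q}, T/I_nT \otimes \Lambda)$, apply the coefficient morphism $T \otimes \Lambda \twoheadrightarrow T \otimes \Lambda/\mathfrak{P} \hookrightarrow T \otimes S_{\mathfrak{P}}$ to each component to obtain classes
\[
\kappa_{\mathfrak{P},n} \in \mathrm{H}^1(\mathbb{Q}, T/I_nT \otimes S_{\mathfrak{P}}), \qquad n \in \mathcal{N}_1,
\]
and set $\ks_{\mathfrak{P}} = (\kappa_{\mathfrak{P},n})_n$. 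The assignment $\ks^\infty \mapsto \ks_{\mathfrak{P}}$ is $\mathbb{Z}_p$-linear by construction, so the real content of the corollary is to show that $\ks_{\mathfrak{P}}$ lies in $\KS(T \otimes S_{\mathfrak{P}}, \mathcal{F}_{\mathrm{can}})$. This splits naturally into two verifications: (a) each $\kappa_{\mathfrak{P},n}$ lies in the correct $n$-transverse canonical Selmer group, and (b) the finite-to-singular compatibility is inherited from $\ks^\infty$.

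For (a), at the prime $p$ both $\mathcal{F}_\Lambda$ and $\mathcal{F}_{\mathrm{can}}$ impose the relaxed condition, so there is nothing to check. At primes $\ell \in \Sigma \setminus \{p\}$ not dividing $n$, Lemma \ref{lem:iwasawa-cohomology}(2) identifies $\mathrm{H}^1(\mathbb{Q}_\ell, T \otimes \Lambda)$ with its unramified part; the unramified subgroup is preserved under any base change of coefficients, and lands in the finite subgroup of $\mathrm{H}^1(\mathbb{Q}_\ell, T/I_nT \otimes S_{\mathfrak{P}})$. At primes $\ell$ dividing $n$, the transverse condition is defined purely out of the action of $\mathrm{Gal}(\mathbb{Q}_\ell(\zeta_\ell)/\mathbb{Q}_\ell)$ on the Frobenius-fixed part of the coefficient module, hence is functorial in the coefficient ring and preserved by specialization. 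For (b), the finite-singular isomorphism $\phi^{\mathrm{fs}}_\ell$ is constructed in $\S 1.2$ of \cite{mazur-rubin-book} from Frobenius eigenspaces of the coefficient module, and this construction commutes with the base change $\Lambda \twoheadrightarrow \Lambda/\mathfrak{P} \hookrightarrow S_{\mathfrak{P}}$. Consequently the defining axiom
\[
\mathrm{loc}^s_\ell(\kappa^\infty_{n\ell}) = \phi^{\mathrm{fs}}_\ell \circ \mathrm{loc}_\ell(\kappa^\infty_n)
\]
descends to the corresponding identity for $(\kappa_{\mathfrak{P},n})_n$.

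The main obstacle, if any, is the bookkeeping for (a) at the bad primes in $\Sigma_\Lambda$: one must confirm that the local conditions genuinely correspond under the specialization, rather than merely up to the bounded kernel/cokernel in Proposition 5.3.14. However, since we are working at the level of cohomology classes (not computing lengths of Selmer groups), it suffices to trace through the definitions of $\mathrm{H}^1_{\mathrm{ur}}$ and $\mathrm{H}^1_{\mathrm{tr}}$, both of which are manifestly functorial in the coefficient ring; the finite versus bounded-error discrepancy plays no role. Finally, the same recipe produces the analogous map $\KSbar(T \otimes \Lambda, \mathcal{F}_\Lambda) \to \KSbar(T \otimes S_{\mathfrak{P}}, \mathcal{F}_{\mathrm{can}})$ by passing to the natural completions, so both the $\KS$ and $\KSbar$ versions are handled uniformly.
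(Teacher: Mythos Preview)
Your proposal is correct and essentially unpacks what lies behind the paper's proof, which is simply the citation ``See \cite[Corollary 5.3.15]{mazur-rubin-book}.'' One simplification you could make: the paper defines the canonical Selmer structure on $T \otimes S_{\mathfrak{P}}$ as the one \emph{induced from} $\mathcal{F}_\Lambda$ via the coefficient map $T \otimes \Lambda \to T \otimes S_{\mathfrak{P}}$ (see the sentence immediately preceding the definition of $\Sigma_\Lambda$), so the local conditions at primes $\ell \in \Sigma \setminus \{p\}$ are satisfied by construction rather than requiring a separate argument about unramified classes landing in the finite subgroup. With that definition in hand, the corollary is nearly tautological, and your checks (a) and (b) reduce to the functoriality of the transverse condition and of $\phi^{\mathrm{fs}}_\ell$, exactly as you describe.
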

\begin{proof}
See \cite[Cor. 5.3.15]{mazur-rubin-book}.
\end{proof}
The core rank of the ($\Lambda$-adic) Kolyvagin system for $T\otimes\Lambda$ is defined by the common value of the core rank of the Kolyvagin system for $T \otimes S_{\mathfrak{P}}$ for every $\mathfrak{P} \not\in \Sigma_{\Lambda}$.
It is well-defined and it is one in our case.

Let $\ks^{\infty} \in \KS(T \otimes \Lambda, \mathcal{F}_{\Lambda})$ and $\mathfrak{P}$ a height one prime ideal of $\Lambda$.
Denote by $\ks^{(\mathfrak{P})}$ the image of $\ks^{\infty}$ in $\KS(T \otimes S_{\mathfrak{P}}, \mathcal{F}_{\mathrm{can}})$ under the natural map in Corollary \ref{cor:specialization}.
\begin{cor}
Let $\ks^{\infty} \in \KS(T \otimes \Lambda)$ with $\kappa^\infty_1 \neq 0$.
Then for all but finitely many height one primes $\mathfrak{P}$ of $\Lambda$, 
the class $\kappa^{(\mathfrak{P})}_1 \in \mathrm{H}^1(\mathbb{Q}, T \otimes S_{\mathfrak{P}})$ is non-trivial.
\end{cor}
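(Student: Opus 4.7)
The plan is to reduce the statement to a straightforward UFD observation via Kato's freeness theorem and the universal injectivity of the specialization map $\pi_{\mathfrak{P}}$. By Theorem \ref{thm:kato-12-4}.(1) applied under our surjectivity hypothesis, $\mathrm{H}^1(\mathbb{Q}, T \otimes \Lambda)$ is free of rank one over $\Lambda$. Fix a $\Lambda$-generator $e$, and write $\kappa^{\infty}_1 = f \cdot e$ for some $f \in \Lambda$. Since $\kappa^{\infty}_1 \neq 0$ and $\Lambda$ is a domain, $f$ is nonzero.

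Next, I would invoke the proposition preceding Corollary \ref{cor:specialization}, which asserts that $\pi_{\mathfrak{P}}$ is injective \emph{for every} height one prime $\mathfrak{P}$ of $\Lambda$ (not only those outside $\Sigma_{\Lambda}$). By construction, $\kappa^{(\mathfrak{P})}_1$ is the image of $\kappa^{\infty}_1 \bmod \mathfrak{P}$ under $\pi_{\mathfrak{P}}$, so the vanishing of $\kappa^{(\mathfrak{P})}_1$ is equivalent to the vanishing of $\kappa^{\infty}_1$ in $\mathrm{H}^1(\mathbb{Q}, T \otimes \Lambda) / \mathfrak{P} \mathrm{H}^1(\mathbb{Q}, T \otimes \Lambda)$, which in turn, using freeness of $\mathrm{H}^1(\mathbb{Q}, T \otimes \Lambda)$, is equivalent to $f \in \mathfrak{P}$.

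Finally, $\Lambda \simeq \mathbb{Z}_p\llbracket X \rrbracket$ is a two-dimensional regular local ring and in particular a unique factorization domain, so the nonzero element $f$ has only finitely many irreducible divisors up to units. Hence $f$ is contained in only finitely many height one prime ideals of $\Lambda$, and for every height one prime $\mathfrak{P}$ outside this finite set we have $\kappa^{(\mathfrak{P})}_1 \neq 0$.

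There is essentially no obstacle: all the ingredients (freeness of $\mathrm{H}^1(\mathbb{Q}, T \otimes \Lambda)$, universal injectivity of $\pi_{\mathfrak{P}}$, and UFD-structure of $\Lambda$) are already available. The only subtlety worth highlighting is that injectivity of $\pi_{\mathfrak{P}}$ is stated for all $\mathfrak{P}$ while finiteness of $\mathrm{coker}(\pi_{\mathfrak{P}})$, $\mathrm{ker}(\pi^{*}_{\mathfrak{P}})$, $\mathrm{coker}(\pi^{*}_{\mathfrak{P}})$ requires $\mathfrak{P} \notin \Sigma_{\Lambda}$; for the present corollary only injectivity of $\pi_{\mathfrak{P}}$ is needed, so no further restriction on $\mathfrak{P}$ is incurred.
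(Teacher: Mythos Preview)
Your argument is correct. The paper itself does not spell out a proof here but simply cites \cite[Corollary 5.3.19]{mazur-rubin-book}; your write-up supplies the content, and in fact streamlines it by invoking Theorem \ref{thm:kato-12-4}.(1) (freeness of rank one), which is available under the paper's standing surjectivity hypothesis on $\overline{\rho}$. The general Mazur--Rubin argument only uses that $\mathrm{H}^1(\mathbb{Q}, T \otimes \Lambda)$ is finitely generated and $\Lambda$-torsion-free, embedding it into a free module to run the same UFD step; your use of Kato's theorem short-circuits this. Either way the key inputs are exactly the ones you identify: universal injectivity of $\pi_{\mathfrak{P}}$ (so that $\kappa^{(\mathfrak{P})}_1 = 0$ iff $\kappa^{\infty}_1 \in \mathfrak{P}\,\mathrm{H}^1(\mathbb{Q}, T \otimes \Lambda)$) and the UFD property of $\Lambda$.
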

\begin{proof}
See \cite[Cor. 5.3.19]{mazur-rubin-book}.
\end{proof}

\begin{prop} \label{prop:non-triviality-equivalence}
Let $\ks^{\infty} \in \KS(T \otimes \Lambda, \mathcal{F}_{\Lambda})$ and $\mathfrak{P}$ a height one prime ideal of $\Lambda$.
Then the following statements are equivalent.
\begin{enumerate}
\item $\ks^{\infty} \pmod{\mathfrak{P}} \in \KS(T \otimes \Lambda / \mathfrak{P}, \mathcal{F}_{\mathrm{can}})$ is non-trivial, i.e. $\mathfrak{P}$ is not a blind spot of $\ks^{\infty}$.
\item $\ks^{(\mathfrak{P})} \in \KS(T \otimes S_{\mathfrak{P}}, \mathcal{F}_{\mathrm{can}})$ is non-trivial.
\end{enumerate}
\end{prop}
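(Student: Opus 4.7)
The plan is to write the specialization map of Corollary~\ref{cor:specialization} as the two-step composition
\[
\KS(T \otimes \Lambda, \mathcal{F}_\Lambda) \xrightarrow{\;\bmod \mathfrak{P}\;} \KS(T \otimes \Lambda/\mathfrak{P}, \mathcal{F}_{\mathrm{can}}) \xrightarrow{\;-\otimes_{\Lambda/\mathfrak{P}} S_{\mathfrak{P}}\;} \KS(T \otimes S_{\mathfrak{P}}, \mathcal{F}_{\mathrm{can}}),
\]
where the first arrow is componentwise reduction modulo $\mathfrak{P}$ and the second is componentwise extension of scalars along the inclusion $\Lambda/\mathfrak{P} \hookrightarrow S_{\mathfrak{P}}$ of a local Noetherian domain into its normalization. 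By construction the image of $\ks^{\infty}$ under the first arrow is $\ks^{\infty} \bmod \mathfrak{P}$ and under the composition is $\ks^{(\mathfrak{P})}$. The implication $(2) \Rightarrow (1)$ is then automatic by contrapositive, since extension of scalars sends the zero system to the zero system.

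For the converse $(1) \Rightarrow (2)$, I will suppose that some component $\kappa^{\infty}_n \bmod \mathfrak{P}$ is nonzero in $\mathrm{H}^1(\mathbb{Q}, T \otimes \Lambda/(\mathfrak{P}, I_n))$ and show that its image $\kappa^{(\mathfrak{P})}_n$ in $\mathrm{H}^1(\mathbb{Q}, T \otimes S_{\mathfrak{P}}/I_n S_{\mathfrak{P}})$ is still nonzero. Since $T$ is $\mathbb{Z}_p$-free, tensoring the short exact sequence $0 \to \Lambda/\mathfrak{P} \to S_{\mathfrak{P}} \to C \to 0$ with $T$ stays exact, where $C := S_{\mathfrak{P}}/(\Lambda/\mathfrak{P})$ is a finitely generated torsion $\Lambda/\mathfrak{P}$-module (and $C = 0$ whenever $\Lambda/\mathfrak{P}$ is already integrally closed, which is the generic case). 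The Galois cohomology long exact sequence applied modulo $I_n$ identifies the kernel of the base-change map
\[
\beta_n \colon \mathrm{H}^1(\mathbb{Q}, T \otimes \Lambda/(\mathfrak{P}, I_n)) \to \mathrm{H}^1(\mathbb{Q}, T \otimes S_{\mathfrak{P}}/I_n S_{\mathfrak{P}})
\]
with a quotient of $\mathrm{H}^0(\mathbb{Q}, T \otimes C/I_n C)$, reducing the task to verifying the vanishing of this invariants module.

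This vanishing is where the surjectivity of $\overline{\rho}$ enters essentially. The mod $p$ representation $E[p]$ is absolutely irreducible, so any twist of it by a continuous character of $\mathrm{Gal}(\mathbb{Q}_\infty/\mathbb{Q})$ --- in particular by the action on $C$ inherited from the Iwasawa twist $G_{\mathbb{Q}} \twoheadrightarrow \mathrm{Gal}(\mathbb{Q}_\infty/\mathbb{Q}) \hookrightarrow \Lambda^\times$ --- is still absolutely irreducible and has no nonzero $G_{\mathbb{Q}}$-fixed vector. A filtration of $C/I_n C$ whose graded pieces are annihilated by the maximal ideal of $\Lambda/\mathfrak{P}$, combined with a Nakayama d\'{e}vissage, propagates the mod $p$ vanishing to $T \otimes C/I_n C$, yielding the injectivity of $\beta_n$ and hence $\kappa^{(\mathfrak{P})}_n \neq 0$.

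The hard part will be pinning down the Galois structure of $C$ when $\Lambda/\mathfrak{P}$ fails to be normal, since the quotient admits no canonical description there. I will not need an explicit one: it suffices that the $G_{\mathbb{Q}}$-action on $C$ factors through the projection $G_{\mathbb{Q}} \twoheadrightarrow \mathrm{Gal}(\mathbb{Q}_\infty/\mathbb{Q})$, which is formal from the construction, and then the combination of absolute irreducibility of $\overline{\rho}$ and the d\'{e}vissage above closes the argument.
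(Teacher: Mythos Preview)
The paper does not prove this proposition directly; it cites \cite[Lemma 5.3.20]{mazur-rubin-book}. Your overall strategy---factoring the specialization through $\Lambda/\mathfrak{P}$ and checking injectivity of the second step via vanishing of Galois invariants---is the natural one, and the direction $(2)\Rightarrow(1)$ is fine.

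There is, however, a genuine gap in your argument for $(1)\Rightarrow(2)$. When $\mathfrak{P}\neq p\Lambda$, the quotient $C=S_{\mathfrak{P}}/(\Lambda/\mathfrak{P})$ is not merely a finitely generated torsion module but a \emph{finite} $p$-group (both $\Lambda/\mathfrak{P}$ and $S_{\mathfrak{P}}$ are $\mathbb{Z}_p$-lattices of the same rank in a common $\mathbb{Q}_p$-vector space). Consequently the reduction of your short exact sequence modulo $I_n=p^k\mathbb{Z}_p$ is \emph{not} short exact on the left: one picks up the Tor term $\mathrm{Tor}_1^{\mathbb{Z}_p}(C,\mathbb{Z}/p^k)=C[p^k]$, and the resulting four-term exact sequence reads
\[
0 \to T/I_nT\otimes C[p^k] \to T/I_nT\otimes(\Lambda/\mathfrak{P})/I_n \to T/I_nT\otimes S_{\mathfrak{P}}/I_n \to T/I_nT\otimes C/I_nC \to 0.
\]
Chasing the associated long exact sequences shows that the kernel of your map $\beta_n$ is the \emph{image of} $\mathrm{H}^1(\mathbb{Q},T\otimes C[p^k])$, not a quotient of $\mathrm{H}^0(\mathbb{Q},T\otimes C/I_nC)$. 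Your irreducibility-plus-d\'evissage argument kills all the $\mathrm{H}^0$ terms, but it says nothing about this $\mathrm{H}^1$ contribution. (Your argument \emph{is} correct for $n=1$, since $I_1=0$ and no reduction is needed; but the proposition requires handling all $n$.)

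This is repairable, but it needs an extra idea. One route: since $C$ is finite, $\mathrm{H}^1(\mathbb{Q}_\Sigma/\mathbb{Q},T\otimes C)$ is a fixed finite group, so $\ker\beta_n$ has order bounded independently of $n$; one can then invoke the Kolyvagin-system structure theory to locate an $n$ for which $\kappa_n^\infty\bmod\mathfrak{P}$ has order exceeding this bound. Alternatively, one can argue that $\ks^{(\mathfrak{P})}=0$ forces each $\kappa_n^\infty\bmod\mathfrak{P}$ to lie in this bounded kernel, hence to be $p^m$-torsion for $m$ independent of $n$, and combine this with the inverse-limit definition of $\KSbar$ to conclude vanishing. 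Either way, the step you flagged as ``the hard part'' is not the difficulty; the difficulty is the Tor obstruction you did not flag.
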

\begin{proof}
See \cite[Lem. 5.3.20]{mazur-rubin-book}.
\end{proof}

\subsection{Proof of Theorem \ref{thm:main-technical}} \label{subsec:proof-of-main-technical}
Recall that $(-)^* = \mathrm{Hom}(-, \mu_{p^\infty})$ means the Cartier dual.

Let $\Lambda$ be the Iwasawa algebra and identify it with $\mathbb{Z}_p\llbracket X \rrbracket$.
Let $\mathfrak{P}$ be a height one prime ideal of $\Lambda$ with $\mathfrak{P} \neq p\Lambda$.
Write $f_{\mathfrak{P}}(X)$ to be a distinguished polynomial such that $\mathfrak{P} = \left( f_{\mathfrak{P}}(X) \right)$ in $\Lambda$.
For an integer $M \geq 1$, let
$$\mathfrak{P}_M = (f_{\mathfrak{P}}(X) + p^M) \Lambda .$$
Fix a pseudo-isomorphism
\begin{equation} \label{eqn:pseudo-isom}
\mathrm{Sel}_0(\mathbb{Q}_{\infty}, E[p^\infty] )^\vee \to \bigoplus_i \Lambda / \mathfrak{P}^{m_i} \oplus \bigoplus_j \Lambda / f_j \Lambda
\end{equation}
where each $f_j$ is prime to $\mathfrak{P}$. We write
$$\mathrm{ord}_{\mathfrak{P}} ( \mathrm{char}_{\Lambda} (\mathrm{Sel}_0(\mathbb{Q}_{\infty}, E[p^\infty] )^\vee) ) = \sum_i m_i.$$
For convenience, we also write
$T \otimes S_{\mathfrak{P}_M} = T \otimes_{\mathbb{Z}_p} \Lambda \otimes_{\Lambda} S_{\mathfrak{P}_M}$.
\begin{lem} \label{lem:higher-power}
If $M$ is sufficiently large, $\mathfrak{P}_M$ satisfies the following properties:
\begin{enumerate}
\item $\mathfrak{P}_M$ is a height one prime ideal of $\Lambda$ and $\Lambda / \mathfrak{P} \simeq \Lambda / \mathfrak{P}_M$,
\item the image of $\kappa^{\mathrm{Kato},\infty}_1$ in $\mathrm{H}^1(\mathbb{Q}, T  \otimes S_{\mathfrak{P}_M})$ is non-zero, i.e. $\kappa^{\mathrm{Kato},(\mathfrak{P}_M)}_1 \neq 0$,
\item the cokernel of the injection
$$\mathrm{H}^1(\mathbb{Q}, T \otimes \Lambda) / \mathfrak{P}_M \mathrm{H}^1(\mathbb{Q}, T \otimes \Lambda) \hookrightarrow \mathrm{Sel}_{\mathrm{rel}} (\mathbb{Q}, T \otimes S_{\mathfrak{P}_M})$$
 is finite with bounded order by a constant independent of $M$, and
\item $\mathfrak{P}_M$ is prime to each $f_j$ in (\ref{eqn:pseudo-isom}), and $\mathfrak{P}_M \not\in \Sigma_{\Lambda}$. 
\end{enumerate}
\end{lem}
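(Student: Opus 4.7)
The strategy is to treat the $\mathfrak{P}_M$ as $p$-adic perturbations of $\mathfrak{P}$: as $M \to \infty$, the distinguished polynomial $f_{\mathfrak{P}}(X) + p^M$ converges coefficientwise to $f_{\mathfrak{P}}(X)$, and each of the four assertions concerns an invariant that is continuous or stable along this limit. I would handle the four parts in turn.

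For (1), $f_{\mathfrak{P}}(X) + p^M$ is again distinguished of degree $d := \deg f_{\mathfrak{P}}$, so once I know it is irreducible, $\mathfrak{P}_M$ is a height one prime and $\Lambda/\mathfrak{P}_M \simeq \mathbb{Z}_p^{\oplus d} \simeq \Lambda/\mathfrak{P}$ as $\mathbb{Z}_p$-modules. The key step is irreducibility: if $f_{\mathfrak{P}} + p^{M_k} = g_k h_k$ factored non-trivially for infinitely many $M_k$ (the factors being distinguished by Weierstrass preparation, since $f_{\mathfrak{P}} + p^M$ is itself distinguished), then after passing to a subsequence with fixed degrees $\deg g_k$ and $\deg h_k$, the compactness of the space of distinguished polynomials of a given degree (a copy of $\mathbb{Z}_p^n$ via coefficients) yields convergent $g_{k_j} \to g$ and $h_{k_j} \to h$, producing a non-trivial factorization $f_{\mathfrak{P}} = gh$, contradicting the primality of $\mathfrak{P}$.

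For (2), Theorem \ref{thm:kato-12-4} gives $\mathrm{H}^1_{\mathrm{Iw}}(\mathbb{Q}, T) \simeq \Lambda$ as $\Lambda$-modules, under which $\kappa^{\mathrm{Kato}, \infty}_1$ corresponds to some nonzero $\alpha \in \Lambda$ (nonzero by Kato--Rohrlich). Writing $\alpha = p^\mu \cdot g(X) \cdot u(X)$ via Weierstrass preparation, the class vanishes in $\mathrm{H}^1(\mathbb{Q}, T \otimes S_{\mathfrak{P}_M})$ precisely when $\alpha \in \mathfrak{P}_M$, equivalently when $f_{\mathfrak{P}} + p^M$ divides $g(X)$. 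Since $g$ has only finitely many irreducible distinguished factors and the $\mathfrak{P}_M$ are pairwise distinct (by Weierstrass uniqueness, equality $(f_{\mathfrak{P}} + p^{M_1}) = v \cdot (f_{\mathfrak{P}} + p^{M_2})$ with $v$ a unit forces $M_1 = M_2$), this forced divisibility fails for all sufficiently large $M$.

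For (3), the Mazur--Rubin proposition preceding Corollary \ref{cor:specialization} already supplies the cokernel bound; the task is to make it uniform in $M$. The index $[S_{\mathfrak{P}_M} : \Lambda/\mathfrak{P}_M]$ is controlled by $\mathrm{ord}_p\,\mathrm{disc}(f_{\mathfrak{P}}(X) + p^M)$ via the standard relation $\mathrm{disc}(f_{\mathfrak{P}}(X) + p^M) = [S_{\mathfrak{P}_M} : \Lambda/\mathfrak{P}_M]^2 \cdot \mathrm{disc}(S_{\mathfrak{P}_M})$, and the discriminant of $f_{\mathfrak{P}}(X) + p^M$ converges $p$-adically to $\mathrm{disc}(f_{\mathfrak{P}}(X))$, which is nonzero because $f_{\mathfrak{P}}$ is irreducible over $\mathbb{Q}_p$; thus $\mathrm{ord}_p\,\mathrm{disc}(f_{\mathfrak{P}} + p^M)$ stabilizes for $M \gg 0$, yielding the desired uniform bound. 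For (4), $\Sigma_\Lambda$ is finite by Lemma \ref{lem:iwasawa-cohomology-finitely-generated}, and each $f_j$ in (\ref{eqn:pseudo-isom}) has only finitely many irreducible distinguished divisors, so the pairwise distinct $\mathfrak{P}_M$ avoid all these exceptions for $M$ sufficiently large. The principal difficulty is the compactness-based irreducibility argument in (1); once in place, everything else reduces to continuity of standard invariants along the approximating sequence.
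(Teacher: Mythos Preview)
Your argument is correct and is essentially a detailed unpacking of the Mazur--Rubin reference that the paper cites (\cite[Proof of Theorem 5.3.10, Page 66]{mazur-rubin-book}); the paper does not give an independent proof. The compactness argument for irreducibility in (1), the pairwise-distinctness of the $\mathfrak{P}_M$ to avoid finite bad sets in (2) and (4), and the discriminant-stabilization bound on $[S_{\mathfrak{P}_M}:\Lambda/\mathfrak{P}_M]$ in (3) are exactly the ingredients Mazur--Rubin use, so there is no meaningful methodological divergence to report.
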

\begin{proof}
See \cite[Proof of Thm. 5.3.10, pp. 66]{mazur-rubin-book}.
\end{proof}
Let $f_\kappa(X)$ be a generator of $\mathrm{char}_{\Lambda} \left( \dfrac{\mathrm{H}^1_{\mathrm{Iw}}(\mathbb{Q}, T)}{\Lambda \kappa^{\mathrm{Kato}, \infty}_1 } \right)$.
If a height one prime $\mathfrak{P} = ( f_{\mathfrak{P}}(X) )$ has no common zero with $f_\kappa(X)$, then
$\kappa^{\mathrm{Kato}, (\mathfrak{P})}_1 = \kappa^{\mathrm{Kato},\infty}_1 \pmod{\mathfrak{P}}$ is non-zero, so there is nothing to prove.

Now we assume that $f_\kappa(X)$ and  $f_{\mathfrak{P}}(X)$  have a common zero.

By taking large $M \gg 0$, we have $\kappa^{\mathrm{Kato},(\mathfrak{P}_M)}_1 \neq 0$.
In other words, $f_\kappa(X)$ and $f_{\mathfrak{P}_M}(X)$ have no common zero
where $f_{\mathfrak{P}_M}(X) = f_{\mathfrak{P}}(X) +p^M$.
Then we have
$$\langle \kappa^{\mathrm{Kato},(\mathfrak{P}_M)}_1 \rangle = \mathfrak{m}^{j_{\mathfrak{P}_M}+ \lambda(1, S_{\mathfrak{P}_M}) }_{\mathfrak{P}_M} \mathrm{Sel}_{\mathrm{rel}}(\mathbb{Q}, T \otimes S_{\mathfrak{P}_M})$$
for some constant $j_{\mathfrak{P}_M}$
where $\lambda(1, S_{\mathfrak{P}_M}) = \mathrm{length}_{S_{\mathfrak{P}_M}} \mathrm{Sel}_0(\mathbb{Q}, (T \otimes S_{\mathfrak{P}_M})^* )$ as in Proposition \ref{prop:kolyvagin-system-location}.
In particular, we have
\begin{equation} \label{eqn:j-lambda-partial-zero}
\xymatrix{
\partial^{(0)} (\ks^{\mathrm{Kato},(\mathfrak{P}_M)})  = j_{\mathfrak{P}_M}+ \lambda(1, S_{\mathfrak{P}_M}) , & \partial^{(\infty)} (\ks^{\mathrm{Kato},(\mathfrak{P}_M)}) = j_{\mathfrak{P}_M}
}
\end{equation}
by \cite[Thm 5.2.12]{mazur-rubin-book}, which is a slightly more general version of Theorem \ref{thm:kato-kolyvagin-main}.

Let $e(S_{\mathfrak{P}_M}/\mathbb{Z}_p)$ be the ramification degree of $S_{\mathfrak{P}_M}/\mathbb{Z}_p$.
Since $$\Lambda/ (\mathfrak{P}^{\mathrm{ord}_{\mathfrak{P}} ( f_{\kappa}(X) )  } , \mathfrak{P}_M ) = \Lambda/ (p^{M \cdot \mathrm{ord}_{\mathfrak{P}}(  f_{\kappa}(X))}, \mathfrak{P}_M ) ,$$
we obtain
\begin{equation} \label{eqn:j-lambda-M-e-ord}
\partial^{(0)} (\ks^{\mathrm{Kato},(\mathfrak{P}_M)})  = M \cdot  e(S_{\mathfrak{P}_M}/\mathbb{Z}_p) \cdot \mathrm{ord}_{\mathfrak{P}} (  f_{\kappa}(X)) + O(1)
\end{equation}
from the properties of $M$ (Lemma \ref{lem:higher-power}.(2) and (3))
where $O(1)$ means an integer bounded independently of $M$.
By a property of $M$ (Lemma \ref{lem:higher-power}.(4)) again, we also have
\begin{equation} \label{eqn:length-control}
 \dfrac{r(S_{\mathfrak{P}_M}/\mathbb{Z}_p)}{e(S_{\mathfrak{P}_M}/\mathbb{Z}_p)} \cdot \mathrm{length}_{S_{\mathfrak{P}_M}} \mathrm{Sel}_0(\mathbb{Q}, (T \otimes S_{\mathfrak{P}_M})^* )
=
\mathrm{length}_{\mathbb{Z}_p} \mathrm{Sel}_0(\mathbb{Q}, (T \otimes \Lambda)^* )[\mathfrak{P}_M]
 + O(1) .
\end{equation}
Following the computation in \cite[Page 67]{mazur-rubin-book}, we obtain
\begin{align} \label{eqn:length-comparison-pM}
\begin{split}
& \mathrm{length}_{\mathbb{Z}_p} \mathrm{Sel}_0(\mathbb{Q}, (T \otimes \Lambda)^* )[\mathfrak{P}_M] \\
&  = M \cdot r(S_{\mathfrak{P}_M}/\mathbb{Z}_p) \cdot \mathrm{ord}_{\mathfrak{P}} ( \mathrm{char}_{\Lambda} (\mathrm{Sel}_0(\mathbb{Q}_{\infty}, E[p^\infty] )^\vee) ) + O(1) .
 \end{split}
\end{align}
We now use the Iwasawa main conjecture localized at $\mathfrak{P}$.
In other words, we assume
\begin{equation} \label{eqn:imc-P_M}
\mathrm{ord}_{\mathfrak{P}} (  f_{\kappa}(X)) = \mathrm{ord}_{\mathfrak{P}} ( \mathrm{char}_{\Lambda} (\mathrm{Sel}_0(\mathbb{Q}_{\infty}, E[p^\infty] )^\vee) ) .
\end{equation}
Combining all the computations, we obtain the following equalities up to some constants bounded independently of $M$.
\begin{align*}
& M \cdot e(S_{\mathfrak{P}_M}/\mathbb{Z}_p) \cdot \mathrm{ord}_{\mathfrak{P}} ( f_{\kappa}(X)) + O(1) \\
& = \partial^{(0)} (\ks^{\mathrm{Kato},(\mathfrak{P}_M)})  + O(1)  & (\ref{eqn:j-lambda-M-e-ord})  \\
& = j_{\mathfrak{P}_M}+ \lambda(1, S_{\mathfrak{P}_M}) + O(1)  & (\ref{eqn:j-lambda-partial-zero})   \\
& =  j_{\mathfrak{P}_M} +  \dfrac{e(S_{\mathfrak{P}_M}/\mathbb{Z}_p)}{r(S_{\mathfrak{P}_M}/\mathbb{Z}_p)} \cdot \mathrm{length}_{\mathbb{Z}_p} \mathrm{Sel}_0(\mathbb{Q}, (T \otimes \Lambda)^* )[\mathfrak{P}_M] +O(1) & (\ref{eqn:length-control}) \\
& =  j_{\mathfrak{P}_M} + 
M \cdot e(S_{\mathfrak{P}_M}/\mathbb{Z}_p) \cdot \mathrm{ord}_{\mathfrak{P}} ( \mathrm{char}_{\Lambda} (\mathrm{Sel}_0(\mathbb{Q}_{\infty}, E[p^\infty] )^\vee) ) +O(1)
 & (\ref{eqn:length-comparison-pM}) \\
& =  j_{\mathfrak{P}_M} + 
M \cdot e(S_{\mathfrak{P}_M}/\mathbb{Z}_p) \cdot \mathrm{ord}_{\mathfrak{P}} (  f_{\kappa}(X)) +O(1)
& (\ref{eqn:imc-P_M})
\end{align*}
This computation shows that $j_{\mathfrak{P}_M}$ is also a constant independent of $M$.
Since $p^M = \mathfrak{m}^{M \cdot e(S_{\mathfrak{P}_M}/\mathbb{Z}_p)}_{S_{\mathfrak{P}_M}}$ in $S_{\mathfrak{P}_M}$ and $j_{\mathfrak{P}_M}$ is constant,
we are able to choose a large $M$ so that 
$$j_{\mathfrak{P}_M} < M \cdot e(S_{\mathfrak{P}_M}/\mathbb{Z}_p) .$$
With this choice of $M$, $\ks^{ \mathrm{Kato}, (\mathfrak{P}_M) } \pmod{p^M}$ does not vanish since $\partial^{(\infty)} (\ks^{\mathrm{Kato},(\mathfrak{P}_M)}) = j_{\mathfrak{P}_M}$.
Also, since
$$\ks^{ \mathrm{Kato}, (\mathfrak{P}_M) } \equiv \ks^{ \mathrm{Kato}, (\mathfrak{P}) } \pmod{p^M}$$
 by definition,
the mod $p^M$ non-vanishing of $\ks^{ \mathrm{Kato}, (\mathfrak{P}_M) }$ is equivalent to the mod $p^M$ non-vanishing of $\ks^{ \mathrm{Kato}, (\mathfrak{P}) }$.
Thus, we obtain Theorem \ref{thm:main-technical} for $\mathfrak{P} \neq p\Lambda$.

The same argument works for $\mathfrak{P} = p\Lambda$ by taking $\mathfrak{P}_{M} = (X^{M}+p)\Lambda$.

\section{The structure of Selmer groups: Proof of Theorem \ref{thm:refined-bsd}} \label{sec:proof-refined-bsd}
The goal of this section is to give a complete proof of Theorem \ref{thm:refined-bsd}.
In the Kolyvagin system argument (Proposition \ref{prop:structure-fine-selmer-p^k}), it is essential to change the local conditions of $p$-strict Selmer groups at carefully chosen Kolyvagin primes in order to reduce the number of generators of $p$-strict Selmer groups.
We develop a similar reduction argument for classical Selmer groups by using the comparison with the argument for $p$-strict Selmer groups.

We summarize the strategy of the proof before going through the details.
We first measure the difference between $n$-transverse Selmer groups and $n$-transverse $p$-strict Selmer groups for every $n \in \mathcal{N}_1$ 
by using the global Poitou--Tate duality as in (\ref{eqn:kappa_n-widedelta_n-sequence}).
Then we measure the difference between the valuations of $\kappa^{\mathrm{Kato}}_n$ and $\widedelta_n$ for every $n \in \mathcal{N}_1$ as in (\ref{eqn:kappa_n-widedelta_n}), and these two differences match perfectly well. 
Thus, we would like to transplant the structure theorem of $p$-strict Selmer groups (Theorem \ref{thm:kato-kolyvagin-main}) to the case of classical Selmer groups.
However, an ``half" of $\kn$ are forced to vanish by functional equation (Proposition \ref{prop:vanishing-delta-n}).
Fortunately, by using Flach's generalized Cassels--Tate pairing (Theorem \ref{thm:cassels-tate}), it suffices to know the \emph{other} half of $\kn$ in order to determine the structure of Selmer groups.
In particular, the computation shows that the vanishing half of $\kn$ \emph{helps} to deduce the non-vanishing of the other half of $\kn$.
Of course, the argument also contains careful choices of Kolyvagin primes based on Chebotarev density theorem (Proposition \ref{prop:non-vanishing-kappa-n}).

This strategy can be viewed as a structural refinement of the four term exact sequence argument comparing two different main conjectures in Iwasawa theory (e.g. \cite[$\S$17.13]{kato-euler-systems}).

\subsection{A generalized Cassels--Tate pairing} \label{subsec:cassels-tate}
We use the following variant of the generalized Cassels--Tate pairing\cite{flach-cassels-tate, howard-kolyvagin}.
\begin{thm}[Flach, Howard] \label{thm:cassels-tate}
\begin{enumerate}
\item Let $s, t$ be positive integers with $s+t\leq k$. Then there exists a pairing
$$\mathrm{Sel}_{\mathcal{F}}(\mathbb{Q} ,T/p^sT) \times \mathrm{Sel}_{\mathcal{F}^*}(\mathbb{Q}, E[p^t]) \to \mathbb{Z}/p^k\mathbb{Z}$$
whose kernels on the left and right are the image of
\[
\xymatrix@R=0em{
\mathrm{Sel}_{\mathcal{F}}(\mathbb{Q} ,T/p^{s+t}T) \ar[r] & \mathrm{Sel}_{\mathcal{F}}(\mathbb{Q} ,T/p^{s}T) , \\
\mathrm{Sel}_{\mathcal{F}^*}(\mathbb{Q}, E[p^{s+t}]) \ar[r]^-{p^s \cdot } & \mathrm{Sel}_{\mathcal{F}^*}(\mathbb{Q}, E[p^t]) .
}
\]
\item Let $n \in \mathcal{N}_k$.
Then we have
$$\mathrm{Sel}_{n}(\mathbb{Q}, E[I_{n}] ) \simeq \left( \mathbb{Z}_p / I_{n}\mathbb{Z}_p \right)^{r_{n}} \oplus M_{n} \oplus M_{n} $$
where 
$r_{n}$ is a non-negative integer and $M_{n}$ is a finite abelian $p$-group.
In particular, we can choose $r_{n}$ is 0 or 1.
If we further assume that $\mathrm{length}_{\mathbb{Z}_p} \mathrm{Sel}_{n}(\mathbb{Q}, E[I_{n}] ) < \mathrm{length}_{\mathbb{Z}_p} (\mathbb{Z}_p/I_n \mathbb{Z}_p)$, then $r_n = 0$.
\end{enumerate}
\end{thm}
\begin{proof}
\begin{enumerate}
\item See \cite[Prop. 1.4.1]{howard-kolyvagin}. See also \cite{flach-cassels-tate} for the detailed construction.
\item This is a simple variant of \cite[Thm. 1.4.2]{howard-kolyvagin}.
\end{enumerate}
\end{proof}

\subsection{Kurihara numbers and Kato's Kolyvagin systems}
We write 
\begin{align*}
\mathrm{coker} ( \mathrm{loc}^s_p(\mathrm{Sel}_{\mathrm{rel}}) )^\vee & = \left( \dfrac{  \mathrm{H}^1_{/f}(\mathbb{Q}_p, T)  }{ \mathrm{loc}^s_p \left( \mathrm{Sel}_{\mathrm{rel},n}(\mathbb{Q}, T) \right) } \right)^\vee  , \\
\mathrm{coker} ( \mathrm{loc}^s_p(\mathrm{Sel}_{\mathrm{rel},n}) )^\vee & = \left( \dfrac{  \mathrm{H}^1_{/f}(\mathbb{Q}_p, T/I_nT)  }{ \mathrm{loc}^s_p \left( \mathrm{Sel}_{\mathrm{rel},n}(\mathbb{Q}, T/I_nT) \right) } \right)^\vee
\end{align*}
for convenience. These modules are cyclic.
By the global Poitou--Tate duality, we have the exact sequences
\begin{equation} \label{eqn:kappa_n-widedelta_n-sequence}
\begin{split}
\xymatrix@R=0em@C=1em{
		0 \ar[r] & \mathrm{Sel}_{0}(\mathbb{Q}, E[p^\infty]) 
		 \ar[r]  & \mathrm{Sel}(\mathbb{Q}, E[p^\infty]) 
       	\ar[r]	^-{\mathrm{loc}_p} & \mathrm{coker} ( \mathrm{loc}^s_p(\mathrm{Sel}_{\mathrm{rel}}) )^\vee \ar[r]   & 0 , \\
       			0 \ar[r] & \mathrm{Sel}_{0,n}(\mathbb{Q}, E[I_n]) 
		 \ar[r]  & \mathrm{Sel}_n(\mathbb{Q}, E[I_n]) 
       	\ar[r]	^-{\mathrm{loc}_p} & \mathrm{coker} ( \mathrm{loc}^s_p(\mathrm{Sel}_{\mathrm{rel},n}) )^\vee \ar[r]   & 0  .
}
\end{split}
\end{equation}

Following (\ref{eqn:exp-loc}), we have the map
\begin{equation} \label{eqn:dual-exponential-valuation}
\begin{split}
\xymatrix@R=0em{
\mathrm{Sel}_{\mathrm{rel},n}(\mathbb{Q}, T/I_nT) \ar[r]^-{\mathrm{loc}^s_p} & \mathrm{H}^1_{/f}(\mathbb{Q}_p, T/I_nT) \ar[r]^-{\xi \circ \mathrm{exp}^*_{\omega_E}}_-{\simeq} & \mathbb{Z}_p/I_n\mathbb{Z}_p  \\
\kappa^{\mathrm{Kato}}_n \ar@{|->}[r] & \mathrm{loc}^s_p \kappa^{\mathrm{Kato}}_n \ar@{|->}[r] & p^t \cdot \widedelta_n  .
}
\end{split}
\end{equation}
We ignore $u \in (\mathbb{Z}_p /  I_n \mathbb{Z}_p)^\times$ in Theorem \ref{thm:from-ks-to-kn} since we focus only on the divisibilities of the elements.
For notational convenience, we write
\begin{align*}
\mathrm{ord}_p(\kappa^{\mathrm{Kato}}_n) & = \mathrm{min} \lbrace j : \kappa^{\mathrm{Kato}}_n \in p^j  \mathrm{Sel}_{\mathrm{rel},n}(\mathbb{Q}, T/I_nT) \rbrace  , \\
\mathrm{ord}_p(\mathrm{loc}^s_p \kappa^{\mathrm{Kato}}_n) & = \mathrm{min} \lbrace j : \mathrm{loc}^s_p \kappa^{\mathrm{Kato}}_n \in p^j  \mathrm{H}^1_{/f}(\mathbb{Q}_p, T/I_nT)  \rbrace , \\
\mathrm{ord}_p(\widedelta_n) & = \mathrm{min} \lbrace j : \widedelta_n \in p^j \mathbb{Z}_p/I_n \mathbb{Z}_p \rbrace .
\end{align*}
Then (\ref{eqn:dual-exponential-valuation}) shows that
\begin{align} \label{eqn:kappa_n-widedelta_n}
\begin{split}
& \mathrm{ord}_p(\kappa^{\mathrm{Kato}}_1)
+ \mathrm{length}_{\mathbb{Z}_p} ( \mathrm{coker} ( \mathrm{loc}^s_p(\mathrm{Sel}_{\mathrm{rel},n}) )^\vee )
= \mathrm{ord}_p(\mathrm{loc}^s_p \kappa^{\mathrm{Kato}}_1) = \mathrm{ord}_p(\widedelta_1) +t , \\
& \mathrm{ord}_p(\kappa^{\mathrm{Kato}}_n)
+ \mathrm{length}_{\mathbb{Z}_p} ( \mathrm{coker} ( \mathrm{loc}^s_p(\mathrm{Sel}_{\mathrm{rel},n}) )^\vee )
= \mathrm{ord}_p(\mathrm{loc}^s_p \kappa^{\mathrm{Kato}}_n) = \mathrm{ord}_p(\widedelta_n) +t .
\end{split} 
\end{align}
To sum up, we have the following statement.
\begin{prop} \label{prop:behavior-kappa-n-localization}
Assume that the Manin constant is prime to $p$.
Let $n \in \mathcal{N}_k$.
The followings are equivalent.
\begin{enumerate}
\item $ p^t \cdot\widedelta_n =0$.
\item $\kappa^{\mathrm{Kato}}_n \in \mathrm{Sel}_{n}(\mathbb{Q}, T/I_nT)$.
\item $\mathrm{ord}_p(\kappa^{\mathrm{Kato}}_n)
+ \mathrm{length}_{\mathbb{Z}_p}  ( \mathrm{coker} ( \mathrm{loc}^s_p(\mathrm{Sel}_{\mathrm{rel},n}) )^\vee )  \geq   \mathrm{length}_{\mathbb{Z}_p}  \left( \mathbb{Z}_p/I_n\mathbb{Z}_p \right)$.
\end{enumerate}
\end{prop}
\begin{proof}
$(1) \Leftrightarrow (2)$: It follows from that $ p^t \cdot\widedelta_n  = 0$ is equivalent to $\mathrm{loc}^s_p (\kappa^{\mathrm{Kato}}_n) = 0$.\\
$(1) \Leftrightarrow (3)$: It follows from (\ref{eqn:kappa_n-widedelta_n}). 
\end{proof}

\begin{lem} \label{lem:vanishing-localization}
Assume that  $\ks^{\mathrm{Kato}}$ is non-trivial.
Then
$$\partial^{(\infty)}(\ks^{\mathrm{Kato}}) = \partial^{(\infty)}(\mathrm{loc}^s_p\ks^{\mathrm{Kato}}) $$
where
$\partial^{(\infty)}(\mathrm{loc}^s_p\ks^{\mathrm{Kato}}) = \mathrm{min} \left\lbrace \mathrm{ord}_\pi(\mathrm{loc}^s_p\kappa^{\mathrm{Kato}}_n) : n \in \mathcal{N}_1 \right\rbrace $.
\end{lem}
\begin{proof}
Suppose that
$\partial^{(\infty)}(\ks^{\mathrm{Kato}}) < \partial^{(\infty)}(\mathrm{loc}^s_p\ks^{\mathrm{Kato}}) $.
Let $n \in \mathcal{N}_1$ satisfying
$\mathrm{ord}_\pi \kappa^{\mathrm{Kato}}_n = \partial^{(\infty)}(\ks^{\mathrm{Kato}}) < \infty$.
By Theorem \ref{thm:kato-kolyvagin-main}, we have $\mathrm{Sel}_{0,n}(\mathbb{Q}, E[I_n]) = 0$.
Thus, we obtain $\mathrm{Sel}_{n}(\mathbb{Q}, E[I_n]) \simeq \mathrm{coker} ( \mathrm{loc}^s_p(\mathrm{Sel}_{\mathrm{rel},n}) )^\vee$ from (\ref{eqn:kappa_n-widedelta_n-sequence}).
By (\ref{eqn:kappa_n-widedelta_n}) and the inequality, it is always non-trivial.
Therefore, $\mathrm{Sel}_{n}(\mathbb{Q}, E[I_n])[p] \simeq \mathrm{Sel}_{n}(\mathbb{Q}, E[p])$ is non-trivial for every $n \in \mathcal{N}_1$.
However, we can always find $n_0 \in \mathcal{N}_1$ such that 
$\mathrm{Sel}_{n_0}(\mathbb{Q}, E[p]) = 0$
by applying Proposition \ref{prop:chebotarev} and Lemma \ref{lem:surjectivity-at-ell}. The conclusion follows.
\end{proof}

\subsection{Proof of Theorem \ref{thm:refined-bsd}.(1): the corank part} \label{subsec:selmer-corank-part}
In this subsection, we prove Theorem \ref{thm:refined-bsd}.(1), i.e.
if $\overline{\rho}$ is surjective, the Manin constant is prime to $p$, and $\mathrm{ord}(\kn) < \infty$, then
$$\mathrm{ord} (\kn) = \mathrm{cork}_{\mathbb{Z}_p} (\mathrm{Sel}(\mathbb{Q}, E[p^\infty])) .$$

\subsubsection{When the corank is zero}
One direction follows from the theorem of Gross--Zagier and Kolyvagin and Kato \cite{gross-zagier-original,kolyvagin-euler-systems,kato-euler-systems}.
\begin{thm}[Gross--Zagier, Kolyvagin, Kato]
If $L(E, 1) \neq 0$, then $\mathrm{Sel}(\mathbb{Q}, E[p^\infty]) $ is finite.
\end{thm}
We now assume that $\mathrm{Sel}(\mathbb{Q}, E[p^\infty])$ is finite.
Then both
$\mathrm{Sel}_0(\mathbb{Q}, E[p^\infty])$
and
$\mathrm{coker} ( \mathrm{loc}^s_p(\mathrm{Sel}_{\mathrm{rel}}) )^\vee$
are finite due to (\ref{eqn:kappa_n-widedelta_n-sequence}).
Since $\kn$ is non-zero, $\ks^{\mathrm{Kato}}$ is also non-trivial (Proposition \ref{prop:ks-kn-equi-non-vanishing}).
Therefore, we have $\kappa^{\mathrm{Kato}}_1 \neq 0$ by the finiteness of $\mathrm{Sel}_0(\mathbb{Q}, E[p^\infty])$ and Corollary \ref{cor:kato-kolyvagin-main}.

Suppose $ p^t \cdot \widedelta_1 = 0$. Then Proposition \ref{prop:behavior-kappa-n-localization} implies that
$$\kappa^{\mathrm{Kato}}_1 \in \mathrm{Sel}(\mathbb{Q}, T) = \mathrm{ker} (\mathrm{loc}^s_p : \mathrm{Sel}_{\mathrm{rel}}(\mathbb{Q}, T) \to \mathrm{H}^1_{/f}(\mathbb{Q}_p, T)) .$$
Write $\kappa^{\mathrm{Kato},(k)}_1 = \kappa^{\mathrm{Kato}}_1 \pmod{p^k}$ for an integer $k > 0$.
We assume that $k$ is large enough to have $\kappa^{\mathrm{Kato},(k)}_1 \neq 0$.
Then we have
\begin{align*}
\kappa^{\mathrm{Kato},(k)}_1 & \in \mathrm{Sel}(\mathbb{Q}, T) / p^k \\
 & \subseteq \mathrm{Sel}(\mathbb{Q}, T/ p^kT)  \\
 & \simeq \mathrm{Sel}(\mathbb{Q}, E[p^k])  \\
 & \simeq \mathrm{Sel}(\mathbb{Q}, E[p^\infty])[p^k] 
\end{align*}
where the last isomorphism follows from \cite[Lem. 3.5.3]{mazur-rubin-book}.
Since $\kappa^{\mathrm{Kato},(k)}_1$ generates $p^j \mathcal{H}'(1)$ (Proposition \ref{prop:kolyvagin-system-location}), we have
$$\langle \kappa^{\mathrm{Kato},(k)}_1 \rangle \simeq \mathbb{Z}_p/p^{k-\lambda(1, E[p^k]) - j} \mathbb{Z}_p$$
where $\lambda(1, E[p^k]) = \mathrm{length}_{\mathbb{Z}_p} \mathrm{Sel}_0(\mathbb{Q}, E[p^k])$.
Since $\mathrm{Sel}_0(\mathbb{Q}, E[p^\infty])$ is finite, $\lambda(1, E[p^k])$ stabilizes as $k \to \infty$.
Hence, the size of $\langle \kappa^{\mathrm{Kato},(k)}_1 \rangle$ can be arbitrarily large as $k$ increases.
This shows that $\mathrm{Sel}(\mathbb{Q}, E[p^\infty])$ must be infinite, so we get contradiction.

Since the non-vanishing properties of $\widedelta_1$ and  $p^t \cdot \widedelta_1$ are equivalent, we are done.

\subsubsection{When the corank is positive} \label{subsubsec:three-conditions-on-n}
Let $k \gg 0$ be a sufficiently large integer.
Let $n \in \mathcal{N}_k$ such that
\begin{itemize}
\item $I_n = p^k \mathbb{Z}_p$,
\item $\mathrm{ord}(\ks^{\mathrm{Kato}}) = \nu(n) = \mathrm{cork}_{\mathbb{Z}_p} \mathrm{Sel}_{0}(\mathbb{Q}, E[p^\infty])$, and
\item $\kappa^{\mathrm{Kato}}_n \neq 0$.
\end{itemize}
These conditions mean that our choice of $n$ is compatible with that in the Kolyvagin system argument \cite[Prop. 4.5.8]{mazur-rubin-book}.
By Proposition \ref{prop:kolyvagin-system-location}, we have
$$\langle \kappa^{\mathrm{Kato}}_n \rangle = p^{\lambda(n, E[I_n]) + j} \mathrm{Sel}_{\mathrm{rel}, n}(\mathbb{Q}, T/I_nT) \simeq \mathbb{Z}_p/p^{k-\lambda(n, E[I_n]) - j} \mathbb{Z}_p .$$
Since $k \gg 0$, we may assume that $\mathrm{Sel}_{0,n}(\mathbb{Q}, E[I_n]) \simeq \mathrm{Sel}_0(\mathbb{Q}, E[p^\infty])_{/\mathrm{div}}$.
Therefore, $\lambda(n, E[I_n])$ is actually bounded independent of $n$ as long as 
we choose $n$ satisfying the above three conditions.
Also, $j = \partial^{(\infty)} (\ks^{\mathrm{Kato}})$ since $k \gg 0$; thus, $j$ is also independent of $n$.

With our choice of $n$, we have exact sequence
\begin{equation} \label{eqn:chebotarev-density-argument}
\begin{split}
\xymatrix{
0 \ar[r] & 
\mathrm{Sel}_{n\textrm{-}\mathrm{str}}(\mathbb{Q}, E[I_n]) \ar[r] \ar@{=}[d]_-{\textrm{Lem. \ref{lem:surjectivity-at-ell}}} & 
\mathrm{Sel}(\mathbb{Q}, E[I_n]) \ar[r] & 
\bigoplus_{\ell \vert n} E(\mathbb{Q}_\ell) / I_n E(\mathbb{Q}_\ell) \ar[r] & 0  \\
 & \mathrm{Sel}_{n}(\mathbb{Q}, E[I_n]) .
}
\end{split}
\end{equation}
Following Proposition \ref{prop:behavior-kappa-n-localization}, $\mathrm{loc}^s_p \kappa^{\mathrm{Kato}}_n = 0$ if and only if $\kappa^{\mathrm{Kato}}_n \in \mathrm{Sel}_n(\mathbb{Q}, E[I_n])$. We consider two possible cases separately.
\begin{enumerate}
\item $\mathrm{loc}^s_p \kappa^{\mathrm{Kato}}_n \neq 0$.
\item $\mathrm{loc}^s_p \kappa^{\mathrm{Kato}}_n = 0$.
\end{enumerate}

\subsubsection{}
\underline{Suppose that  $\mathrm{loc}^s_p \kappa^{\mathrm{Kato}}_n \neq 0$.}
Then
\begin{align*}
& \mathrm{length}_{\mathbb{Z}_p} ( \mathrm{Sel}_{n}(\mathbb{Q}, E[I_n]) ) \\
& = \mathrm{length}_{\mathcal{O}} ( \mathrm{Sel}_{0,n}(\mathbb{Q}, E[I_n]) )
+ \mathrm{length}_{\mathcal{O}} ( \mathrm{coker} ( \mathrm{loc}^s_p(\mathrm{Sel}_{\mathrm{rel},n}) )^\vee ) \\
& =\mathrm{ord}_p ( \kappa^{\mathrm{Kato}}_n) - \partial^{(\infty)}( \ks^{\mathrm{Kato}})
+ \mathrm{length}_{\mathbb{Z}_p} ( \mathrm{coker} ( \mathrm{loc}^s_p(\mathrm{Sel}_{\mathrm{rel},n}) )^\vee ) \\
& = \mathrm{ord}_\pi (\mathrm{loc}^s_p \kappa^{\mathrm{Kato}}_n) - \partial^{(\infty)}(\mathrm{loc}^s_p \ks^{\mathrm{Kato}}) < k-j .
\end{align*}
This computation with (\ref{eqn:chebotarev-density-argument}) implies that 
$\mathrm{cork}_{\mathbb{Z}_p} \mathrm{Sel}(\mathbb{Q}, E[p^\infty]) = \nu(n)$.
Therefore, we have
$$\mathrm{ord} ( \mathrm{loc}^s_p \ks^{\mathrm{Kato}} ) =
\mathrm{ord} (  \ks^{\mathrm{Kato}} ) = \nu(n) =
\mathrm{cork}_{\mathcal{O}} \mathrm{Sel}_{0}(\mathbb{Q}, E[p^\infty]) =  
\mathrm{cork}_{\mathcal{O}} \mathrm{Sel}(\mathbb{Q}, E[p^\infty]) .$$
We also have $p^t \cdot \widedelta_n \neq 0$ since $k \gg 0$, so
$\mathrm{ord} ( \mathrm{loc}^s_p \ks^{\mathrm{Kato}} ) = \mathrm{ord} ( \kn ) = \nu(n)$.

\subsubsection{}
\underline{Suppose that  $\mathrm{loc}^s_p \kappa^{\mathrm{Kato}}_n = 0$.}
Thus, we have
$\kappa^{\mathrm{Kato}}_n \in  \mathrm{Sel}_n(\mathbb{Q}, E[I_n])$. Proposition \ref{prop:kolyvagin-system-location}  implies
\begin{align} \label{eqn:kappa_n-order}
\begin{split}
\mathrm{ord}_p \kappa^{\mathrm{Kato}}_n & = k - j + \lambda(n, E[I_n]) \\
& = k - \partial^{(\infty)}( \ks^{\mathrm{Kato}}) - \mathrm{length}_{\mathbb{Z}_p}  \mathrm{Sel}_{0}(\mathbb{Q}, E[p^\infty])_{/\mathrm{div}} .
\end{split}
\end{align}
Suppose that $\mathrm{cork}_{\mathbb{Z}_p} \mathrm{Sel}(\mathbb{Q}, E[p^\infty]) = \nu(n)$.
Then (\ref{eqn:chebotarev-density-argument}) implies that
$\mathrm{Sel}_{n}(\mathbb{Q}, E[I_n]) \simeq \mathrm{Sel}(\mathbb{Q}, E[p^\infty])_{/\mathrm{div}}$,
so its length is bounded independently on $n$ as long as 
we choose $n$ satisfying the three conditions in \S\ref{subsubsec:three-conditions-on-n} as before.
However, it is impossible because $k$ can be arbitrarily large in (\ref{eqn:kappa_n-order}).

Thus, we have $\mathrm{cork}_{\mathbb{Z}_p} \mathrm{Sel}(\mathbb{Q}, E[p^\infty]) = \nu(n) +1 $ by (\ref{eqn:kappa_n-widedelta_n-sequence}).
By (\ref{eqn:chebotarev-density-argument}) again, $\mathrm{Sel}_n(\mathbb{Q}, E[I_n])$ is of rank one over $\mathbb{Z}_p/I_n\mathbb{Z}_p$.
Since $k \gg 0$, we may assume that
$\mathrm{length}_{\mathbb{Z}_p} \mathrm{Sel}_{0,n}(\mathbb{Q}, E[I_n]) < k$.
We also have
$$\mathrm{Sel}_{0,n}(\mathbb{Q}, E[I_n]) 
\subseteq 
\mathrm{Sel}_{n}(\mathbb{Q}, E[I_n]) 
\subseteq
\mathrm{Sel}_{\mathrm{rel},n}(\mathbb{Q}, E[I_n]) \simeq \mathbb{Z}_p/I_n\mathbb{Z}_p \oplus \mathrm{Sel}_{0,n}(\mathbb{Q}, E[I_n])$$
due to Theorem \ref{thm:splitting-mazur-rubin} and the self-duality $E[I_n]\simeq T/I_nT$,
so the $\mathbb{Z}_p/I_n \mathbb{Z}_p$-component in $\mathrm{Sel}_{\mathrm{rel},n}(\mathbb{Q}, E[I_n])$
is also contained in $\mathrm{Sel}_{n}(\mathbb{Q}, E[I_n])$.

By using Chebotarev density argument (Proposition \ref{prop:non-vanishing-kappa-n}), we choose a useful prime $\ell$ for $\kappa^{\mathrm{Kato}}_n$ such that
\begin{itemize}
\item $I_{n\ell} = I_n = p^k\mathbb{Z}_p$,
\item $\kappa^{\mathrm{Kato}}_{n\ell} \neq 0$, and
\item the restriction map $\mathrm{Sel}_{n}(\mathbb{Q}, E[I_n]) \to E(\mathbb{Q}_\ell) \otimes \mathbb{Z}_p/I_n\mathbb{Z}_p$ is surjective.
\end{itemize}
The last condition implies that
$\mathrm{Sel}_{n\ell }(\mathbb{Q}, E[I_n]) = \mathrm{Sel}_{n, \ell\textrm{-str}}(\mathbb{Q}, E[I_n])$ due to Lemma \ref{lem:surjectivity-at-ell}.
Hence, $\mathrm{Sel}_{n\ell }(\mathbb{Q}, E[I_n])$ is of rank zero over $\mathbb{Z}_p/I_n\mathbb{Z}_p$, and
$\mathrm{Sel}_{n\ell }(\mathbb{Q}, E[I_n]) \subseteq \mathrm{Sel}(\mathbb{Q}, E[p^\infty])_{/\mathrm{div}}$.
Here, we regard $\mathrm{Sel}(\mathbb{Q}, E[p^\infty])_{/\mathrm{div}}$ as a submodule of $\mathrm{Sel}(\mathbb{Q}, E[p^\infty])$.
This shows that $\kappa^{\mathrm{Kato}}_{n\ell}  \not\in \mathrm{Sel}_{n\ell }(\mathbb{Q}, E[I_n])$, so
$\mathrm{loc}^s_p \kappa^{\mathrm{Kato}}_{n\ell} \neq 0$.
Therefore, we have
$$\mathrm{ord} ( \mathrm{loc}^s_p \ks^{\mathrm{Kato}} ) =
\mathrm{ord} (  \ks^{\mathrm{Kato}} ) +1 = \nu(n) +1 =
\mathrm{cork}_{\mathbb{Z}_p} \mathrm{Sel}_{0}(\mathbb{Q}, E[p^\infty]) +1 =  
\mathrm{cork}_{\mathbb{Z}_p} \mathrm{Sel}(\mathbb{Q}, E[p^\infty]) .$$
We also have $p^t \cdot \widedelta_{n\ell} \neq 0$ since $k \gg 0$; thus, we have
$$\mathrm{ord} (\kn)= \mathrm{ord} ( p^t \cdot \kn) = \mathrm{ord} ( \mathrm{loc}^s_p \ks^{\mathrm{Kato}} )  = \nu(n)+1 .$$

\subsection{Proof of Theorem \ref{thm:refined-bsd}.(2): the structure of ``Tate--Shafarevich" groups} \label{subsec:selmer-corank-zero}
In this subsection, we determine the structure of $\mathrm{Sel}(\mathbb{Q}, E[p^\infty])_{/\mathrm{div}}$ in terms of $\kn$.

\subsubsection{}
For $k \gg 0$, we fix $n \in \mathcal{N}_1$ such that
\begin{itemize}
\item $\mathbb{Z}_p/I_n\mathbb{Z}_p = \mathbb{Z}/p^k\mathbb{Z}$
\item $\widedelta_n \neq 0$
\item $\nu(n) =\mathrm{ord}(\kn)$
\item $\mathrm{ord}_p  \widedelta_n = \partial^{(\nu(n))} (\kn)$
\item $\mathrm{Sel}_{n}(\mathbb{Q}, E[I_n]) = \mathrm{Sel}_{n\textrm{-}\mathrm{str}}(\mathbb{Q}, E[I_n]) =\mathrm{Sel}(\mathbb{Q}, E[p^\infty])_{/\mathrm{div}}$
\item $\mathrm{length}_{\mathbb{Z}_p} \mathrm{Sel}(\mathbb{Q}, E[p^\infty])_{/\mathrm{div}} < k = \mathrm{length}_{\mathbb{Z}_p}\mathbb{Z}_p/I_n\mathbb{Z}_p$
\end{itemize}

\subsubsection{}
By Flach's generalized Cassels--Tate pairing (Theorem \ref{thm:cassels-tate}) and the fifth condition of $n$ above, we have
$$\mathrm{Sel}_n(\mathbb{Q}, E[I_n]) \simeq M_n \oplus M_n$$
for a finite abelian $p$-group $M_n$.
This determines the structure of $\mathrm{Sel}_n(\mathbb{Q}, E[I_n])$ uniquely
due to the following reasons:
\begin{itemize}
\item we know the structure of $\mathrm{Sel}_{0,n}(\mathbb{Q}, E[I_n])$ by applying Proposition \ref{prop:structure-fine-selmer-p^k} to
$\ks^{\mathrm{Kato},n, (k)} = \left\lbrace \kappa^{\mathrm{Kato},n, (k)}_m \right\rbrace$ with $\kappa^{\mathrm{Kato},n, (k)}_m = \kappa^{\mathrm{Kato},(k)}_{n \cdot m}$ where $m \in \mathcal{N}_1$ with $(m,n)=1$.
\item as in (\ref{eqn:kappa_n-widedelta_n-sequence}), we have exact sequence \[
\xymatrix{
	0 \ar[r] & \mathrm{Sel}_{0,n}(\mathbb{Q}, E[I_n]) 
		 \ar[r]  & \mathrm{Sel}_n(\mathbb{Q}, E[I_n]) 
       	\ar[r]	^-{\mathrm{loc}_p} & \mathrm{coker} ( \mathrm{loc}^s_p(\mathrm{Sel}_{\mathrm{rel},n}) )^\vee \ar[r]   & 0  .
}
\]
\item $\mathrm{coker} ( \mathrm{loc}^s_p(\mathrm{Sel}_{\mathrm{rel},n}) )^\vee$ is cyclic.
\end{itemize}

\subsubsection{}
Following Proposition \ref{prop:structure-fine-selmer-p^k},
we write
$$\mathrm{Sel}_{0,n}(\mathbb{Q}, E[p^k]) \simeq \bigoplus_{i \geq 1}  \mathbb{Z} / p^{d_i} \mathbb{Z}$$
with non-negative integers $d_1 \geq d_2 \geq \cdots \geq 0$.

By Chebotarev density argument (Proposition \ref{prop:chebotarev}),
we are able to choose a useful Kolyvagin prime $\ell_1 \in \mathcal{N}_1$ for $\kappa^{\mathrm{Kato}}_n$ such that
\begin{itemize}
\item  $I_{n\ell_1} \mathbb{Z}_p = p^k \mathbb{Z}_p$,
\item $\kappa^{\mathrm{Kato}}_{n\ell_1} \neq 0$,
\item $\mathrm{loc}_{\ell_1} : p^{k-1}\mathrm{Sel}_{\mathrm{rel},n}(\mathbb{Q}, T/p^kT) \to E(\mathbb{Q}_{\ell_1}) \otimes \mathbb{Z}/p^k\mathbb{Z}$ is non-zero, and
\item $\mathrm{loc}_{\ell_1} : p^{d_1 - 1} \mathrm{Sel}_{0,n}(\mathbb{Q}, E[p^k]) \to E(\mathbb{Q}_{\ell_1}) \otimes \mathbb{Z}/p^k\mathbb{Z}$ is non-zero.
\end{itemize}
This choice of $\ell_1$ is compatible with that in the Kolyvagin system argument \cite[Prop. 4.5.8]{mazur-rubin-book}, i.e.
$$\mathrm{Sel}_{0,n\ell_1}(\mathbb{Q}, E[p^k]) \simeq \bigoplus_{i \geq 2}  \mathbb{Z} / p^{d_i} \mathbb{Z} .$$
The third condition implies that 
$$\mathrm{loc}_{\ell_1} : \mathrm{Sel}_{\mathrm{rel},n}(\mathbb{Q}, T/p^kT) \to E(\mathbb{Q}_{\ell_1}) \otimes \mathbb{Z}/p^k\mathbb{Z}$$
is surjective, so we have
$\mathrm{Sel}_{\mathrm{rel},n,\ell_1\textrm{-}\mathrm{str}}(\mathbb{Q}, T/p^kT)
= 
\mathrm{Sel}_{\mathrm{rel},n\ell_1}(\mathbb{Q}, T/p^kT)$
by Lemma \ref{lem:surjectivity-at-ell}.
\subsubsection{}
Consider the inclusions
\begin{align*}
\mathrm{Sel}_{0,n\ell_1}(\mathbb{Q}, E[p^k]) \subseteq
\mathrm{Sel}_{n\ell_1}(\mathbb{Q}, E[p^k]) & \subseteq
\mathrm{Sel}_{\mathrm{rel},n\ell_1}(\mathbb{Q}, E[p^k]) \\
& \simeq \mathrm{Sel}_{0,n\ell_1}(\mathbb{Q}, E[p^k]) \oplus \mathbb{Z}/p^k\mathbb{Z}
\end{align*}
where the last isomorphism follows from Theorem \ref{thm:splitting-mazur-rubin} and the self-duality $E[p^k] \simeq T/p^kT$.

We explicitly characterize the ``free component of $\mathrm{Sel}_{n\ell_1}(\mathbb{Q}, E[p^k])$".

We have $ p^t \cdot \widedelta_{n\ell_1} = 0$ due to Proposition \ref{prop:vanishing-delta-n} and $ p^t \cdot \widedelta_n \neq  0$.
Thus, we have
\begin{align} \label{eqn:kappa_ell_1_local_condition}
\begin{split}
\kappa^{\mathrm{Kato}}_{n\ell_1} & \in \mathrm{Sel}_{n\ell_1}(\mathbb{Q}, T/I_{n\ell_1}T) \\
& \simeq \mathrm{Sel}_{n\ell_1}(\mathbb{Q}, E[I_{n\ell_1}]) \\
& = \mathrm{Sel}_{n\ell_1}(\mathbb{Q}, E[p^k]) .
\end{split}
\end{align}
Proposition \ref{prop:kolyvagin-system-location} implies that
\begin{align*}
\langle \kappa^{\mathrm{Kato}}_{n\ell_1} \rangle & = p^{\lambda(n\ell_1, E[I_{n\ell_1}]) + j} \mathrm{Sel}_{\mathrm{rel}, n\ell_1}(\mathbb{Q}, T / I_{n\ell_1} T) \\
& \simeq \mathbb{Z} / p^{k - \lambda(n\ell_1, E[I_{n\ell_1}]) - j } \mathbb{Z} .
\end{align*}
Write 
$$k_2 = k - \lambda(n\ell_1, E[I_{n\ell_1}]) - j$$
for convenience.
Then we have
\begin{align} \label{eqn:kappa_ell_1_order}
\begin{split}
 p^{\lambda(n\ell_1, E[I_{n\ell_1}]) + j} \mathrm{Sel}_{\mathrm{rel}, n\ell_1}(\mathbb{Q}, T / I_{n\ell_1} T) 
&\simeq p^{\lambda(n\ell_1, E[I_{n\ell_1}]) + j} \mathrm{Sel}_{\mathrm{rel}, n\ell_1}(\mathbb{Q}, E[I_{n\ell_1}])  \\
& \subseteq \mathrm{Sel}_{\mathrm{rel}, \ell_1}(\mathbb{Q}, E[I_{n\ell_1}]) [p^{k - \lambda(n\ell_1, E[I_{n\ell_1}]) + j}] \\
& = \mathrm{Sel}_{\mathrm{rel}, n\ell_1}(\mathbb{Q}, E[p^{k_2}]) .
\end{split}
\end{align}
By (\ref{eqn:kappa_ell_1_local_condition}) and (\ref{eqn:kappa_ell_1_order}), we have
$$\langle \kappa^{\mathrm{Kato}}_{n\ell_1} \rangle \simeq \mathbb{Z}/p^{k_2}\mathbb{Z} \subseteq \mathrm{Sel}_{n\ell_1}(\mathbb{Q}, E[p^{k_2}]) .$$
Since $k$ is sufficiently large, we may assume $d_2 < k_2 = k - \lambda(n\ell_1, E[I_{n\ell_1}]) - j$ without loss of generality.
Thus, $\mathrm{Sel}_{n\ell_1}(\mathbb{Q}, E[p^{k_2}])$ is of rank one over $\mathbb{Z}/p^{k_2}\mathbb{Z}$.

\subsubsection{}
By using Chebotarev density argument (Proposition \ref{prop:non-vanishing-kappa-n}) again, we are able to choose a useful Kolyvagin prime $\ell_2 \in \mathcal{N}_{1}$ for $\kappa^{\mathrm{Kato}}_{n\ell_1}$ such that
\begin{itemize}
\item $I_{n\ell_1 \ell_2} \mathbb{Z}_p = p^{k_2} \mathbb{Z}_p$, 
\item $\kappa^{\mathrm{Kato}}_{n\ell_1\ell_2} \neq 0$,
\item $\mathrm{loc}_{\ell_2} : p^{k_2-1}\mathrm{Sel}_{\mathrm{rel}, n\ell_1}(\mathbb{Q}, T/p^{k_2}T) \to E(\mathbb{Q}_{\ell_2}) \otimes \mathbb{Z} / p^{k_2} \mathbb{Z}$, is non-zero, and
\item $\mathrm{loc}_{\ell_2} : p^{d_2-1} \mathrm{Sel}_{0, n\ell_1}(\mathbb{Q}, E[p^{k_2}]) \to E(\mathbb{Q}_{\ell_2}) \otimes \mathbb{Z} / p^{k_2} \mathbb{Z}$ is non-zero.
\end{itemize}
Since the $\mathbb{Z}/p^{k_2}\mathbb{Z}$-component of $\mathrm{Sel}_{\mathrm{rel}, n\ell_1}(\mathbb{Q}, T/p^{k_2}T)$ is contained in $\mathrm{Sel}_{n\ell_1}(\mathbb{Q}, E[p^{k_2}])$ (with self-duality $E[p^{k_2}] \simeq T/p^{k_2}T$), the third condition on $\ell_2$ implies that we have exact sequence
\[
\xymatrix{
0 \ar[r] & \mathrm{Sel}_{n\ell_1, \ell_2\textrm{-}\mathrm{str}}(\mathbb{Q}, E[p^{k_2}]) \ar[r] & \mathrm{Sel}_{n\ell_1}(\mathbb{Q}, E[p^{k_2}]) \ar[r]^-{\mathrm{loc}_{\ell_2}} & E(\mathbb{Q}_{\ell_2}) \otimes \mathbb{Z}_p / p^{k_2} \mathbb{Z}_p \ar[r] & 0 
}
\]
and isomorphism
 $$\mathrm{Sel}_{n\ell_1, \ell_2\textrm{-}\mathrm{str}}(\mathbb{Q}, E[p^{k_2}]) \simeq \mathrm{Sel}_{n\ell_1 \ell_2}(\mathbb{Q}, E[p^{k_2}]) $$ by Lemma \ref{lem:surjectivity-at-ell}.
In particular,
 $\mathrm{Sel}_{n\ell_1 \ell_2}(\mathbb{Q}, E[p^{k_2}])$ is of rank zero over $\mathbb{Z}_p /p^{k_2} \mathbb{Z}_p$.
By Flach's generalized Cassels--Tate pairing (Theorem \ref{thm:cassels-tate}), we have
$$\mathrm{Sel}_{n\ell_1 \ell_2}(\mathbb{Q}, E[p^{k_2}]) \simeq M_{n\ell_1 \ell_2}  \oplus M_{n\ell_1 \ell_2}$$  for a finite abelian $p$-group $M_{n\ell_1 \ell_2}$.
As before, the structure of $\mathrm{Sel}_{n\ell_1 \ell_2}(\mathbb{Q}, E[p^{k_2}])$ is uniquely determined again
due to the following reasons:
\begin{itemize}
\item we know the structure of $\mathrm{Sel}_{0,n\ell_1 \ell_2}(\mathbb{Q}, E[p^{k_2}])$ by applying Proposition \ref{prop:structure-fine-selmer-p^k} to
$\ks^{\mathrm{Kato},n\ell_1 \ell_2, (k_2)} = \left\lbrace \kappa^{\mathrm{Kato},n\ell_1 \ell_2, (k_2)}_m \right\rbrace$ with $\kappa^{\mathrm{Kato},n\ell_1 \ell_2, (k_2)}_m = \kappa^{\mathrm{Kato},(k_2)}_{n\ell_1 \ell_2 \cdot m}$ where $m \in \mathcal{N}_{1}$ with $(m,n\ell_1 \ell_2)=1$.
\item as in (\ref{eqn:kappa_n-widedelta_n-sequence}), we have exact sequence \[
\xymatrix{
	0 \ar[r] & \mathrm{Sel}_{0,n\ell_1 \ell_2}(\mathbb{Q}, E[p^{k_2}]) 
		 \ar[r]  & \mathrm{Sel}_{n\ell_1 \ell_2}(\mathbb{Q}, E[p^{k_2}]) 
       	\ar[r]	^-{\mathrm{loc}_p} & \mathrm{coker} ( \mathrm{loc}^s_p(\mathrm{Sel}_{\mathrm{rel},n\ell_1 \ell_2 }) )^\vee \ar[r]   & 0  .
}
\]
\item $\mathrm{coker} ( \mathrm{loc}^s_p(\mathrm{Sel}_{\mathrm{rel},n \ell_1 \ell_2}) )^\vee$ is cyclic.
\end{itemize}
\subsubsection{}
We now compare
$\mathrm{Sel}_{n\ell_1 \ell_2}(\mathbb{Q}, E[p^{k_2}]) \simeq M_{n\ell_1\ell_2} \oplus M_{n\ell_1\ell_2}$
with $\mathrm{Sel}_{n}(\mathbb{Q}, E[p^{k}]) \simeq M_n \oplus M_n$ more explicitly.
Write $\mathrm{Sel}_{0,n}(\mathbb{Q}, E[p^{k}]) \simeq M_n \oplus M'_n$ with $M'_n \subseteq M_n$, and then $M_n/M'_n$ is cyclic since $\mathrm{coker} ( \mathrm{loc}^s_p(\mathrm{Sel}_{\mathrm{rel},n }) )^\vee \simeq  M_n/M'_n$.
\begin{itemize}
\item Suppose that $M_n/M'_n$ is not isomorphic to the largest cyclic factor of $M_n$.
Write
$\mathrm{Sel}_{0,n\ell_1 \ell_2}(\mathbb{Q}, E[p^{k_2}]) \simeq M_{n\ell_1\ell_2} \oplus M'_{n\ell_1\ell_2}$.
Then, by the Kolyvagin system argument for $\mathrm{Sel}_{0,n}(\mathbb{Q}, E[p^{k}])$,
it is not difficult to see that $M_{n}/M_{n\ell_1\ell_2}$ is isomorphic to the largest cyclic factor of $M_n$.
\item Suppose that $M_n/M'_n$ is isomorphic to the largest cyclic factor of $M_n$.
Then we have $\mathrm{Sel}_{0,n\ell_1 \ell_2}(\mathbb{Q}, E[p^{k_2}]) \hookrightarrow M'_n \oplus M'_n$ with cyclic cokernel.
This implies that $\mathrm{Sel}_{n\ell_1 \ell_2}(\mathbb{Q}, E[p^{k_2}]) \simeq M'_n \oplus M'_n$.
\end{itemize}
Therefore, $\mathrm{Sel}_{n\ell_1 \ell_2}(\mathbb{Q}, E[p^{k_2}])$ is (non-canonically) isomorphic to
the quotient of $\mathrm{Sel}_{n}(\mathbb{Q}, E[p^{k}])$ by the two copies of its largest cyclic factor.

\subsubsection{}
Putting it all together, we have
\begin{align*}
& \mathrm{length}_{\mathbb{Z}_p}   \mathrm{Sel}_n(\mathbb{Q}, E[p^k]) - \mathrm{length}_{\mathbb{Z}_p}   \mathrm{Sel}_{n \ell_1 \ell_2}(\mathbb{Q}, E[p^{k_2}]) \\
& = \left( \mathrm{length}_{\mathbb{Z}_p}   \mathrm{Sel}_{0,n}(\mathbb{Q}, E[p^k])  +  \mathrm{length}_{\mathbb{Z}_p}  \mathrm{coker} ( \mathrm{loc}^s_p(\mathrm{Sel}_{\mathrm{rel},n}) )^\vee \right) \\
& \ \ \  - \left( \mathrm{length}_{\mathbb{Z}_p}   \mathrm{Sel}_{n \ell_1 \ell_2}(\mathbb{Q}, E[p^{k_2}]) + \mathrm{length}_{\mathbb{Z}_p}  \mathrm{coker} ( \mathrm{loc}^s_p(\mathrm{Sel}_{\mathrm{rel},n \ell_1 \ell_2}) )^\vee \right)
\\
& =
\left( \mathrm{ord}_p (\kappa^{\mathrm{Kato}}_n) + \mathrm{length}_{\mathbb{Z}_p}  \mathrm{coker} ( \mathrm{loc}^s_p(\mathrm{Sel}_{\mathrm{rel},n}) )^\vee \right)
- \left( \mathrm{ord}_p (\kappa^{\mathrm{Kato}}_{n \ell_1 \ell_2}) + \mathrm{length}_{\mathbb{Z}_p}  \mathrm{coker} ( \mathrm{loc}^s_p(\mathrm{Sel}_{\mathrm{rel},n \ell_1 \ell_2}) )^\vee \right)
\\
& =
\mathrm{ord}_p (\widedelta_n) 
-  \mathrm{ord}_p (\widedelta_{n \ell_1 \ell_2}).
\end{align*}
By replacing $n$ by $n\ell_1\ell_2$, we repeat the same process until we get $\mathrm{Sel}_n(\mathbb{Q}, E[I_n]) = 0$. This completes the proof of Theorem \ref{thm:refined-bsd}.(2).

\section{The conjecture of Kurihara: Proof of Theorem \ref{thm:kurihara-conjecture}} \label{sec:kurihara-conjecture}
We first recall the main result of \cite{kazim-Lambda-adic}.
\begin{thm}[B\"{u}y\"{u}kboduk] \label{thm:rigidity}
Let $E$ be an elliptic curve over $\mathbb{Q}$ and $p \geq 5$ a prime such that
 $\overline{\rho}$ is surjective,
 $E(\mathbb{Q}_p)[p] = 0$, and
 all the Tamagawa factors are prime to $p$.
Let $\KSbar(T \otimes \Lambda)$ be the generalized module of $\Lambda$-adic Kolyvagin systems as in \cite[$\S$5.3]{mazur-rubin-book} and $\KS(T)$ the module of Kolyvagin systems.
Then
\begin{enumerate}
\item $\KSbar(T \otimes \Lambda)$ is free of rank one over $\Lambda$, and
\item the natural map $\KSbar(T \otimes \Lambda) \to \KS(T)$ is surjective.
\end{enumerate}
\end{thm}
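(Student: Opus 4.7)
The plan is to derive this rigidity theorem from Mazur--Rubin's core-rank-one structure theory applied over the Iwasawa algebra $\Lambda$, combined with a specialization argument at the augmentation ideal $X\Lambda$. The strategy is first to establish that the $\Lambda$-adic canonical Selmer structure $\mathcal{F}_\Lambda$ on $T \otimes \Lambda$ has core rank one, then to invoke the $\Lambda$-adic analogue of Theorem \ref{thm:core-rank-elliptic-curves}.(2), and finally to upgrade the resulting rank-one torsion-free statement to freeness and to surjectivity of specialization.

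For part (1), I would verify that the $\Lambda$-adic core rank equals one by combining Corollary \ref{cor:specialization} with Theorem \ref{thm:core-rank-elliptic-curves}.(2): for every height one prime $\mathfrak{P} \notin \Sigma_\Lambda$ of $\Lambda$, specialization produces the canonical Selmer structure on $T \otimes S_\mathfrak{P}$, which has $S_\mathfrak{P}$-core rank one by the same surjectivity-of-$\overline{\rho}$ argument that proves Theorem \ref{thm:core-rank-elliptic-curves}.(2) for $T$. By the definition of $\Lambda$-adic core rank via generic specializations, the common value is then one. Mazur--Rubin's $\Lambda$-adic analogue of the core-rank-one structure theorem then yields that $\KSbar(T \otimes \Lambda, \mathcal{F}_\Lambda)$ is a torsion-free $\Lambda$-module of rank one. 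To promote torsion-free rank one to free of rank one, I would exploit that $\Lambda \simeq \mathbb{Z}_p\llbracket X \rrbracket$ is a regular two-dimensional local UFD, on which every reflexive module of rank one is free; reflexivity of $\KSbar$ is built into its definition as a completion/inverse limit of finite-level Kolyvagin systems.

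For part (2), the plan is to identify the canonical Selmer structure on $T$ with the specialization of $\mathcal{F}_\Lambda$ at the augmentation ideal $X\Lambda$, so that one obtains a specialization map $\KSbar(T \otimes \Lambda) \to \KSbar(T) = \KS(T)$ (the last equality by core rank one, as recalled in the excerpt). The hypotheses $E(\mathbb{Q}_p)[p]=0$ and ``Tamagawa factors prime to $p$'' are exactly what is needed for the local conditions to match under specialization: at $v=p$ both sides take the relaxed local condition, and at $\ell \mid N$ with $\ell \neq p$, Lemma \ref{lem:iwasawa-cohomology}.(2) identifies the $\Lambda$-adic condition with the unramified one, whose specialization coincides with $\mathrm{H}^1_f(\mathbb{Q}_\ell, T)$ exactly when the $p$-part of the Tamagawa factor vanishes. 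Once specialization is in place, surjectivity is immediate from freeness: any non-zero element in the image is a $\mathbb{Z}_p$-generator of $\KS(T)$, and Kato's $\Lambda$-adic Kolyvagin system $\ks^{\mathrm{Kato}, \infty}$ from Theorem \ref{thm:euler-to-kolyvagin-Lambda-adic} specializes to a non-zero class in $\KS(T)$ since $\kappa^{\mathrm{Kato}, \infty}_1 \pmod{X\Lambda}$ agrees with $z^{\mathrm{Kato}}_\mathbb{Q}$ up to a non-vanishing Euler factor, and Kato's explicit reciprocity law guarantees the non-vanishing needed via generic twisted $L$-value non-vanishing (Kato--Rohrlich).

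The main obstacle is the precise control of local conditions under the specialization $\Lambda \twoheadrightarrow \mathbb{Z}_p$. At the prime $p$, one must control the failure of exactness in
\[
0 \to \mathrm{H}^1(\mathbb{Q}_p, T \otimes \Lambda)/X \to \mathrm{H}^1(\mathbb{Q}_p, T) \to \mathrm{H}^2(\mathbb{Q}_p, T \otimes \Lambda)[X] \to 0,
\]
and local Tate duality identifies the rightmost torsion with a module built from $\mathrm{H}^0(\mathbb{Q}_p, E[p^\infty])$; the hypothesis $E(\mathbb{Q}_p)[p]=0$ is exactly what kills this obstruction. At primes $\ell \neq p$ dividing $N$, the discrepancy between $\mathrm{H}^1_f(\mathbb{Q}_\ell, T)$ and the specialization of $\mathrm{H}^1_{\mathrm{ur}}(\mathbb{Q}_\ell, T \otimes \Lambda)$ is measured by the $p$-part of the Tamagawa factor, which the second hypothesis eliminates. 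A secondary technical point is the upgrade from ``torsion-free rank one'' to ``free rank one''; this step requires a careful verification that $\KSbar(T \otimes \Lambda)$ is reflexive as a $\Lambda$-module, which should follow from its definition as an inverse limit but warrants explicit checking.
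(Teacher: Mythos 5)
The paper does not give its own proof; it cites B\"{u}y\"{u}kboduk's theorem directly, so your proposal should be judged as an attempted reconstruction of that argument. As written it has two genuine gaps.

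First, for part (1), you write that ``Mazur--Rubin's $\Lambda$-adic analogue of the core-rank-one structure theorem then yields that $\KSbar(T\otimes\Lambda,\mathcal{F}_\Lambda)$ is a torsion-free $\Lambda$-module of rank one.'' No such theorem exists in \cite{mazur-rubin-book}: over $\mathbb{Z}_p$ (or a DVR) they show $\KS(T)$ is free of rank one when the core rank is one, but over $\Lambda$ they establish only non-vanishing of $\KSbar(T\otimes\Lambda)$ and control of specializations, not rank one or torsion-freeness. The statement that $\KSbar(T\otimes\Lambda)$ has $\Lambda$-rank one is precisely the substance of B\"{u}y\"{u}kboduk's theorem, so invoking it as an available ``analogue'' is circular. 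Your subsequent reflexivity argument is also unjustified: it is correct that over the regular two-dimensional UFD $\Lambda$ a reflexive rank one module is free, but there is no reason that the inverse limit $\KSbar(T\otimes\Lambda)=\varprojlim_k \varinjlim_j \KS\bigl((T\otimes\Lambda)/\mathfrak{m}^k,\mathcal{F}_\Lambda,\mathcal{P}_j\bigr)$ is reflexive ``by construction''; inverse limits are not automatically reflexive, and the proof that this particular one is well-behaved is where B\"{u}y\"{u}kboduk's hypotheses on Tamagawa factors and $E(\mathbb{Q}_p)[p]$ actually do their work, via a careful comparison of the finite-level modules $\KS(T/p^kT)$ and $\KS\bigl((T\otimes\Lambda)/\mathfrak{m}^k\bigr)$ and the surjectivity of transition maps.

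Second, for part (2), the sentence ``surjectivity is immediate from freeness: any non-zero element in the image is a $\mathbb{Z}_p$-generator of $\KS(T)$'' is false. $\KS(T)$ is free of rank one over $\mathbb{Z}_p$, so a non-zero element can perfectly well be $p^j$ times a generator for some $j>0$; the image of the specialization map is the cyclic $\mathbb{Z}_p$-module generated by the image of a $\Lambda$-generator of $\KSbar(T\otimes\Lambda)$, and showing that this image is \emph{not} contained in $p\,\KS(T)$ is exactly the nontrivial content of surjectivity. Your appeal to Kato's $\Lambda$-adic Kolyvagin system and Kato--Rohrlich non-vanishing only shows that the specialization of a \emph{particular} class is non-zero, which gives nothing about whether it (or a generator) is primitive mod $p$; indeed, whether Kato's Kolyvagin system is primitive is precisely the open question that Corollary~\ref{cor:primitivity-iff-Lambda-primitivity} addresses and that would become trivial if your argument worked. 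Your identification of the roles of the two local hypotheses (controlling $\mathrm{H}^2(\mathbb{Q}_p,T\otimes\Lambda)[X]$ via $E(\mathbb{Q}_p)[p]=0$, and matching the unramified versus finite conditions at $\ell\mid N$ via Tamagawa factors) is correct and is indeed where these assumptions enter B\"{u}y\"{u}kboduk's proof; but the proof itself is an abstract rigidity argument on the modules $\KS(T/p^kT)$, not a deduction from freeness plus a specific Euler system.
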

\begin{proof}
See \cite[Thm. 3.23]{kazim-Lambda-adic}.
\end{proof}
\begin{rem}
When $p$ divides Tamagawa factors, the natural map in Theorem \ref{thm:rigidity}.(2) is not surjective \cite[Rem. 3.25]{kazim-Lambda-adic}.
In particular, there exists a Kolyvagin system which cannot lift to a $\Lambda$-adic Kolyvagin system (e.g. the primitive Kolyvagin system).
In this sense, Tamagawa factors can be understood as an obstruction to the existence of the $\Lambda$-adic lift of Kolyvagin systems. 
Since every Kolyvagin system arising from an Euler system lifts to a $\Lambda$-adic Kolyvagin system, Kato's Kolyvagin system cannot be primitive when $p$ divides Tamagawa factors.
Due to Theorem \ref{thm:main-technical},  the primitivity of Kato's Kolyvagin system is strictly stronger than the $\Lambda$-primitivity of the $\Lambda$-adic Kato's Kolyvagin system are not equivalent when $p$ divides any Tamagawa factor.
\end{rem}

\begin{cor} \label{cor:primitivity-iff-Lambda-primitivity}
Let $E$ be an elliptic curve over $\mathbb{Q}$ and $p \geq 5$  a prime such that
$\overline{\rho}$ is surjective,
 $E(\mathbb{Q}_p)[p] = 0$, and
 all the Tamagawa factors are prime to $p$.
Then $\ks^{\mathrm{Kato},(1)} \in \KS(T /pT, \mathcal{F}_{\mathrm{can}}, \mathcal{P})$ is non-zero if and only if $\ks^{\mathrm{Kato}, \infty}$ is a generator of $\KSbar(T \otimes \Lambda, \mathcal{F}_{\mathrm{can}}, \mathcal{P})$ over $\Lambda$.
\end{cor}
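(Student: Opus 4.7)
The plan is to combine Theorem \ref{thm:rigidity} with a Nakayama-type argument for the local ring $\Lambda$. Since $\KSbar(T \otimes \Lambda, \mathcal{F}_{\mathrm{can}}, \mathcal{P})$ is free of rank one over $\Lambda$ by Theorem \ref{thm:rigidity}.(1), the first step is to fix a generator $\ks^{\infty}_0$ and write $\ks^{\mathrm{Kato}, \infty} = f \cdot \ks^{\infty}_0$ for some $f \in \Lambda$. The ring $\Lambda$ is local with maximal ideal $\mathfrak{m}_\Lambda = (p, X)\Lambda$, so $\ks^{\mathrm{Kato}, \infty}$ generates $\KSbar(T \otimes \Lambda)$ if and only if $f \in \Lambda^\times$, equivalently if and only if the image $\overline{f} \in \Lambda/\mathfrak{m}_\Lambda \simeq \mathbb{F}_p$ is non-zero. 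The task then reduces to showing that $\overline{f} \neq 0$ is precisely what detects the non-vanishing of $\ks^{\mathrm{Kato}, (1)}$.

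To carry this out, one traces $\ks^{\infty}_0$ through a two-step specialization. First, specialization at the height one prime $X\Lambda$: by Theorem \ref{thm:rigidity}.(2), the induced map $\KSbar(T \otimes \Lambda) \to \KS(T, \mathcal{F}_{\mathrm{can}}, \mathcal{P})$ is surjective, and under the hypothesis $E(\mathbb{Q}_p)[p] = 0$ one may identify $\KSbar(T) = \KS(T)$ (Theorem \ref{thm:euler-to-kolyvagin}). Since $\KS(T)$ is free of rank one over $\mathbb{Z}_p$ (Theorem \ref{thm:core-rank-elliptic-curves}.(2)), the specialization descends to an isomorphism
\[
\KSbar(T \otimes \Lambda)/X\KSbar(T \otimes \Lambda) \xrightarrow{\ \simeq\ } \KS(T),
\]
so that $\ks_0 := \ks^{\infty}_0 \bmod X$ generates $\KS(T)$ and $\ks^{\mathrm{Kato}} = f(0) \cdot \ks_0$. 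Second, reducing modulo $p$: since $\KS(T/pT, \mathcal{F}_{\mathrm{can}}, \mathcal{P})$ is free of rank one over $\mathbb{F}_p$ (again by the core rank one argument) and $\KS(T) = \varprojlim_k \KS(T/p^k T)$ with surjective transition maps among free rank-one modules, the reduction map $\KS(T) \to \KS(T/pT)$ has kernel $p\KS(T)$, so $\ks^{(1)}_0 := \ks_0 \bmod p$ generates $\KS(T/pT)$. Combining both specializations, $\ks^{\mathrm{Kato}, (1)} = \overline{f} \cdot \ks^{(1)}_0$ in $\KS(T/pT)$, and the desired equivalence drops out: $\ks^{\mathrm{Kato}, (1)} \neq 0$ if and only if $\overline{f} \neq 0$, if and only if $\ks^{\mathrm{Kato}, \infty}$ generates $\KSbar(T \otimes \Lambda)$.

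The main technical input is Theorem \ref{thm:rigidity}, whose hypotheses (surjective $\overline{\rho}$, $E(\mathbb{Q}_p)[p]=0$, and Tamagawa factors prime to $p$) underpin both the freeness of $\KSbar(T \otimes \Lambda)$ and the surjectivity of the specialization onto $\KS(T)$. The only subtle bookkeeping is to verify that a generator of $\KS(T)$ reduces to a generator of $\KS(T/pT)$; this is where the uniform core-rank-one structure across the residue rings — preserved under the running hypotheses — is invoked, and no serious technical obstacle is expected beyond that.
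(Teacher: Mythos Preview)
Your proposal is correct and follows essentially the same approach as the paper: the paper's proof is the one-liner ``Since $\KS(T)$ is free of rank one over $\mathbb{Z}_p$ (Theorem \ref{thm:core-rank-elliptic-curves}), it follows from Theorem \ref{thm:rigidity},'' and your argument is a detailed unpacking of exactly this Nakayama-type reasoning through the specializations $\Lambda \to \mathbb{Z}_p \to \mathbb{F}_p$.
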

\begin{proof}
Since $\KS(T)$ is free of rank one over $\mathbb{Z}_p$ (Theorem \ref{thm:core-rank-elliptic-curves}), it follows from Theorem \ref{thm:rigidity}.
\end{proof}
\begin{proof}[Proof of Theorem \ref{thm:kurihara-conjecture}]
If $\widedelta^{(1)}_n \neq 0$, then $\kappa^{\mathrm{Kato}}_n \pmod{p}$ also does not vanish; thus, $\ks^{\mathrm{Kato}}$ is primitive. By the argument in \cite[Props. 4.1 and 4.2]{kazim-Lambda-adic} (see also \cite[Prop. 4.19]{kks}), the corresponding $\Lambda$-adic Kato's Kolyvagin system $\ks^{\mathrm{Kato},\infty}$ is $\Lambda$-primitive.
Then the Iwasawa main conjecture without $p$-adic $L$-functions follows \cite[Thm. 5.3.10]{mazur-rubin-book}.
In other words,
$$\mathrm{char}_{\Lambda} \left( \dfrac{\mathrm{H}^1_{\mathrm{Iw}}(\mathbb{Q}, T)}{\Lambda \cdot \kappa^{\mathrm{Kato},\infty}_{1} } \right)  = \mathrm{char}_{\Lambda} ( \mathrm{Sel}_0(\mathbb{Q}_\infty, E[p^\infty])^\vee ).$$

Suppose that $\widedelta^{(1)}_n = 0$ for every $n \in \mathcal{N}_1$. Then each $\kappa^{\mathrm{Kato},(1)}_n$ lies in the Selmer group whose Selmer structure at $p$ is the usual Selmer structure. However, the Kolyvagin system with the classical Selmer structure has core rank zero, and so 
$\ks^{\mathrm{Kato},(1)}$ is trivial (Theorem \ref{thm:core-rank-elliptic-curves}).

By Corollary \ref{cor:primitivity-iff-Lambda-primitivity}, $\ks^{\mathrm{Kato},(1)}$ is non-trivial if and only if $\ks^{\mathrm{Kato}, \infty}$ is a $\Lambda$-generator of $\KSbar(T \otimes \Lambda)$.

Let $\ks^{\mathrm{prim}, \infty}$ be a $\Lambda$-generator of $\KSbar(T \otimes \Lambda)$.
Then $\ks^{\mathrm{prim},\infty} = f \cdot \ks^{\mathrm{Kato}, \infty}$ with $f \in \Lambda$.
Since $\ks^{\mathrm{prim},\infty}$ is $\Lambda$-primitive by definition,
we have
$$\mathrm{char}_{\Lambda} \left( \dfrac{\mathrm{H}^1_{\mathrm{Iw}}(\mathbb{Q}, T)}{\Lambda \cdot \kappa^{\mathrm{prim},\infty}_{1} } \right)  = \mathrm{char}_{\Lambda} ( \mathrm{Sel}_0(\mathbb{Q}_\infty, E[p^\infty])^\vee ).$$
Since the Iwasawa main conjecture is equivalent to $f \in \Lambda^\times$, the equivalence statement follows.

Now we suppose that $\widedelta^{(1)}_n \neq 0$ with minimal $n$, i.e. $\nu(n) = \mathrm{ord}(\kn^{(1)})$.
The kernel of the canonical map
$$\mathrm{loc}_{n} : \mathrm{Sel}(\mathbb{Q}, E[p]) \to \bigoplus_{\ell \mid n} E(\mathbb{Q}_\ell) \otimes \mathbb{Z}/p\mathbb{Z}$$ 
is contained in $\mathrm{Sel}_{n}(\mathbb{Q}, E[p])$.
Since $\widedelta^{(1)}_n \neq 0$ implies $\kappa^{\mathrm{Kato}, (1)}_n \neq 0$, we have $\mathrm{Sel}_{0,n}(\mathbb{Q}, E[p])=0$.
Since $\widedelta^{(1)}_n \neq 0$, we also have
$ \left( \dfrac{\mathrm{H}^1_{/f}(\mathbb{Q}, T/pT)}{ \mathrm{loc}^s_p ( \mathrm{Sel}_{\mathrm{rel}, n} (\mathbb{Q}, T/pT) ) } \right)^\vee = 0$. 
Thus, $\mathrm{Sel}_{n}(\mathbb{Q}, E[p]) = 0$ by (\ref{eqn:kappa_n-widedelta_n-sequence}), so $\mathrm{loc}_n$ is injective under $\widedelta^{(1)}_n \neq 0$.

In order to prove the surjectivity, it suffices to show that
$\mathrm{dim}_{\mathbb{F}_p} \mathrm{Sel}(\mathbb{Q}, E[p]) = \mathrm{ord}(\kn^{(1)}) $.
By \cite[Thm. 5.1.1.(iii)]{mazur-rubin-book}, we have
$\mathrm{dim}_{\mathbb{F}_p} \mathrm{Sel}_0(\mathbb{Q}, E[p]) = \mathrm{ord}(\ks^{\mathrm{Kato},(1)}) $.
Say $\kappa^{\mathrm{Kato},(1)}_{n_0} \neq 0$ with $\nu(n') = \mathrm{ord}(\ks^{\mathrm{Kato},(1)})$.
Then $\mathrm{Sel}_{0,n_0}(\mathbb{Q}, E[p])  = 0$ by \cite[Thm. 5.1.1.(ii)]{mazur-rubin-book}.
Thus, we have an isomorphism
\begin{equation} \label{eqn:loc_p-isom}
\xymatrix{
 \mathrm{Sel}_{n_0}(\mathbb{Q}, E[p]) 
       	\ar[r]	^-{\mathrm{loc}_p}_-{\simeq} & \left( \dfrac{  \mathrm{H}^1_{/f}(\mathbb{Q}_p, T/pT)  }{ \mathrm{loc}^s_p \left( \mathrm{Sel}_{\mathrm{rel},n_0}(\mathbb{Q}, T/pT) \right) } \right)^\vee .
	}
\end{equation}
Since $\kappa^{\mathrm{Kato},(1)}_{n_0} \neq 0$, 
the following statements are equivalent.
\begin{enumerate}
\item $\widedelta^{(1)}_{n_0} \neq 0$.
\item $\left( \dfrac{  \mathrm{H}^1_{/f}(\mathbb{Q}_p, T/pT)  }{ \mathrm{loc}^s_p \left( \mathrm{Sel}_{\mathrm{rel},n_0}(\mathbb{Q}, T/pT) \right) } \right)^\vee = 0$.
\item (\ref{eqn:loc_p-isom}) is the zero map between the zero spaces.
\item $\mathrm{Sel}_{0,n_0}(\mathbb{Q}, E[p]) = \mathrm{Sel}_{n_0}(\mathbb{Q}, E[p]) = 0$.
\end{enumerate}
In this case, we have
$$\mathrm{ord}(\kn^{(1)}) = \mathrm{ord}(\ks^{\mathrm{Kato},(1)}) = \nu(n_0) = \mathrm{dim}_{\mathbb{F}_p} \mathrm{Sel}_{0}(\mathbb{Q}, E[p]) = \mathrm{dim}_{\mathbb{F}_p} \mathrm{Sel}(\mathbb{Q}, E[p]) ,$$
so we are done.

Now we suppose $\widedelta^{(1)}_{n_0} = 0$.
Then we have
 $$\mathrm{dim}_{\mathbb{F}_p} \mathrm{Sel}(\mathbb{Q}, E[p]) - \nu(n_0) = \mathrm{dim}_{\mathbb{F}_p} \mathrm{Sel}_{n_0}(\mathbb{Q}, E[p]) = 1 .$$
By Proposition \ref{prop:non-vanishing-kappa-n}, we are able to choose a useful Kolyvagin prime $\ell$ for $\kappa^{\mathrm{Kato},(1)}_{n_0}$
such that the localization map
$\mathrm{loc}_\ell : \mathrm{Sel}_{n_0}(\mathbb{Q}, E[p]) \to E(\mathbb{Q}_\ell) \otimes \mathbb{Z}/p\mathbb{Z}$
is surjective, so it is an isomorphism.
Furthermore, the kernel of $\mathrm{loc}_\ell$ is $\mathrm{Sel}_{n_0, \ell\textrm{-}\mathrm{str}}(\mathbb{Q}, E[p]) = \mathrm{Sel}_{n_0 \ell}(\mathbb{Q}, E[p]) = 0$.
Since $\kappa^{\mathrm{Kato}, (1)}_{n_0 \ell} \neq 0$, we have $\widedelta^{(1)}_{n_0 \ell} \neq 0$ with $n_0 \ell = \mathrm{ord}(\kn^{(1)})$ due to the above equivalence.
Thus, we have
$$\mathrm{ord}(\kn^{(1)}) = \mathrm{ord}(\ks^{\mathrm{Kato},(1)}) +1= \nu(n_0) + 1 = \mathrm{dim}_{\mathbb{F}_p} \mathrm{Sel}_{0}(\mathbb{Q}, E[p]) +1 = \mathrm{dim}_{\mathbb{F}_p} \mathrm{Sel}(\mathbb{Q}, E[p]) .$$

Regarding the rank formula, it immediately follows from
$\mathrm{dim}_{\mathbb{F}_p} \mathrm{Sel}(\mathbb{Q}, E[p]) = \mathrm{rk}_{\mathbb{Z}}E(\mathbb{Q}) + \mathrm{dim}_{\mathbb{F}_p} \sha(E/\mathbb{Q})[p]$ under the running hypotheses.
\end{proof}

\section{Iwasawa modules and $p$-adic BSD conjectures: Proof of Theorem \ref{thm:main-structure-fine-iwasawa}} \label{sec:iwasawa-modules}
The goal of this section is to prove Theorem \ref{thm:main-structure-fine-iwasawa}.
The idea of proof is similar to that of Theorem \ref{thm:main-technical}.
\subsection{One-sided divisibility}
We recall Kato's result on Conjecture \ref{conj:p-adic-BSD-kato-zeta}.
\begin{prop}[Kato] \label{prop:kato-inequality-p-adic-bsd}
The following inequality is valid
\begin{align*}
\mathrm{cork}_{\mathbb{Z}_p} \mathrm{Sel}_0(\mathbb{Q}, E[p^\infty]) & \leq 
\mathrm{ord}_{X\Lambda} \left( \mathrm{char}_{\Lambda} \left( \mathrm{Sel}_0(\mathbb{Q}_\infty, E[p^\infty])^\vee \right) \right) \\
 & \leq \mathrm{ord}_{X\Lambda} \left( \mathrm{char}_{\Lambda} \left( \dfrac{\mathrm{H}^1_{\mathrm{Iw}}(\mathbb{Q}, T)}{\kappa^{\mathrm{Kato}, \infty}_1} \right) \right) .
\end{align*}
\end{prop}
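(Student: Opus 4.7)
The plan is to prove the two inequalities in Proposition \ref{prop:kato-inequality-p-adic-bsd} by independent arguments, with the second one being essentially immediate from material already reviewed and the first one requiring a control-theoretic comparison.

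For the second (rightmost) inequality, I would apply Theorem \ref{thm:mazur-rubin-main-conjecture}.(1) directly to $\ks^{\mathrm{Kato},\infty}$: it yields the ideal containment
\[
\mathrm{char}_{\Lambda}\!\left( \dfrac{\mathrm{H}^1_{\mathrm{Iw}}(\mathbb{Q}, T)}{\Lambda \kappa^{\mathrm{Kato},\infty}_1 }\right) \subseteq \mathrm{char}_{\Lambda}\!\left( \mathrm{Sel}_0(\mathbb{Q}_{\infty}, E[p^\infty])^\vee\right),
\]
equivalent to divisibility of the characteristic generators in $\Lambda$. Localizing at the height one prime $X\Lambda$ and taking $\mathrm{ord}_{X\Lambda}$ reverses the inclusion into the desired inequality. (Note that Theorem \ref{thm:mazur-rubin-main-conjecture}.(1) requires $\kappa^{\mathrm{Kato},\infty}_1 \neq 0$, which is the Kato--Rohrlich theorem recalled earlier; if $\kappa^{\mathrm{Kato},\infty}_1 = 0$, the right-hand side is infinite and the inequality is trivial.)

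For the first (leftmost) inequality, I would invoke the control theorem for the fine Selmer group. The key input is that the natural restriction map
\[
\mathrm{Sel}_0(\mathbb{Q}, E[p^\infty]) \longrightarrow \mathrm{Sel}_0(\mathbb{Q}_\infty, E[p^\infty])^{\Gamma}
\]
has finite kernel and cokernel, which can be verified by a Hochschild--Serre diagram chase using the finiteness of $\mathrm{H}^0(\mathbb{Q}_\infty, E[p^\infty])$, the triviality of the local terms at primes in $\Sigma\setminus\{p\}$ after passing to the inert tower, and the fact that the local conditions at $p$ are relaxed (so no delicate Bloch--Kato comparison enters, unlike for classical Selmer groups). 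This gives an equality of $\mathbb{Z}_p$-coranks
\[
\mathrm{cork}_{\mathbb{Z}_p}\mathrm{Sel}_0(\mathbb{Q}, E[p^\infty]) = \mathrm{cork}_{\mathbb{Z}_p}\mathrm{Sel}_0(\mathbb{Q}_\infty, E[p^\infty])^{\Gamma}.
\]
Dualizing, the right-hand side equals the $\mathbb{Z}_p$-rank of the $\Gamma$-coinvariants $\bigl(\mathrm{Sel}_0(\mathbb{Q}_\infty, E[p^\infty])^\vee\bigr)_{\Gamma}$.

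Now the pseudo-isomorphism (\ref{eqn:pseudo-isom-XLambda}) does the rest. Modding out by $X$ (which represents $\gamma - 1$ under the identification $\Lambda \simeq \mathbb{Z}_p\llbracket X\rrbracket$), each factor $\Lambda/X^{m_i}\Lambda$ contributes a $\mathbb{Z}_p$-free summand of rank exactly $1$, while each $\Lambda/f_j\Lambda$ with $f_j$ prime to $X\Lambda$ contributes a finite group. Up to the finite error from the pseudo-isomorphism, this yields
\[
\mathrm{cork}_{\mathbb{Z}_p}\mathrm{Sel}_0(\mathbb{Q}_\infty, E[p^\infty])^{\Gamma} = \#\{i\} \leq \sum_i m_i = \mathrm{ord}_{X\Lambda}\!\left( \mathrm{char}_{\Lambda}\!\left(\mathrm{Sel}_0(\mathbb{Q}_\infty, E[p^\infty])^\vee\right)\right),
\]
since each $m_i \geq 1$ by construction. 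Concatenating with the identification above proves the first inequality. The hard part of the argument is really just setting up the control theorem correctly; once the kernel/cokernel finiteness is established, everything else is bookkeeping with the pseudo-isomorphism, and I expect the whole proof (modulo Kato's divisibility, which is cited) to be short.
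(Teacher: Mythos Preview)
Your approach is correct and matches the paper's own proof, which likewise cites the control theorem for fine Selmer groups for the first inequality and Kato's one-sided divisibility (Theorem \ref{thm:mazur-rubin-main-conjecture}.(1)) for the second. One small slip: the local condition at $p$ defining $\mathrm{Sel}_0$ is \emph{strict}, not relaxed---though your parenthetical point (no Bloch--Kato comparison is needed) remains valid, since the local quotient is the full $\mathrm{H}^1$ and the control argument goes through as you describe.
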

\begin{proof}
See \cite[Lem. 18.7]{kato-euler-systems}. The first statement basically follows from the control theorem for fine Selmer groups and the second statement follows from the one-sided divisibility of the Iwasawa main conjecture.
\end{proof}

\subsection{A general reduction}
We adapt the notation in $\S$\ref{subsec:proof-of-main-technical}.

Let $\mathfrak{P} = (f_{\mathfrak{P}}(X) )$ be a hight one prime of $\Lambda$ with $\mathfrak{P} \neq p\Lambda$.
Recall the fixed pseudo-isomorphism (\ref{eqn:pseudo-isom-XLambda})
\begin{equation*} 
\mathrm{Sel}_0(\mathbb{Q}_{\infty}, E[p^\infty] )^\vee \to \bigoplus_i \Lambda / \mathfrak{P}^{m_i} \Lambda \oplus \bigoplus_j \Lambda / f_j \Lambda
\end{equation*}
where each $f_j$ is prime to $\mathfrak{P}$ and $\mathfrak{P}_M = ( f_{\mathfrak{P}}(X) +p^M )$ by choosing sufficiently large $M$ as explained in Lemma \ref{lem:higher-power}.
Then
\begin{align} \label{eqn:sel-0-p-M}
\begin{split}
\left( \mathrm{Sel}_0(\mathbb{Q}_{\infty}, E[p^\infty])[\mathfrak{P}_M] \right)^\vee & \simeq \bigoplus_i \Lambda / (\mathfrak{P}_M, \mathfrak{P}^{m_i}) \\
& \simeq \bigoplus_i \Lambda / (\mathfrak{P}_M, p^{Mm_i})
\end{split}
\end{align}
up to a finite abelian group whose size is independent of $M$.
Thus, we have
$$\mathrm{Sel}_0(\mathbb{Q}, (T \otimes S_{\mathfrak{P}_M} )^* )^\vee \simeq \bigoplus_i S_{\mathfrak{P}_M} /p^{Mm_i} S_{\mathfrak{P}_M}$$
up to a finite abelian group whose size is independent of $M$ again. In terms of the sizes, we have
$$\mathrm{length}_{S_{\mathfrak{P}_M}} \mathrm{Sel}_0(\mathbb{Q}, (T \otimes S_{\mathfrak{P}_M} )^* )^\vee = M \cdot \sum_i m_i +O(1) .$$
On the other hand, by taking mod $p^M$ reduction of (\ref{eqn:sel-0-p-M}), we obtain
\begin{align*}
\left( \mathrm{Sel}_0(\mathbb{Q}_{\infty}, E[p^\infty])[\mathfrak{P}_M, p^M] \right)^\vee & \simeq \bigoplus_i \Lambda / (\mathfrak{P}_M, \mathfrak{P}^{m_i}, p^M) \\
& \simeq \bigoplus_i \Lambda / (\mathfrak{P}_M, p^M) \\
& \simeq \bigoplus_i \Lambda / (\mathfrak{P}, p^M)
\end{align*}
up to a finite abelian group whose size is independent of $M$.
This shows that
\begin{align*}
\mathrm{Sel}_0(\mathbb{Q}, (T/p^MT \otimes S_{\mathfrak{P}_M} )^* )^\vee & \simeq \bigoplus_i S_{\mathfrak{P}_M} /p^M S_{\mathfrak{P}_M} \\
& \simeq \bigoplus_i S_{\mathfrak{P}} /p^M S_{\mathfrak{P}} \\
& \simeq \mathrm{Sel}_0(\mathbb{Q}, (T/p^MT \otimes S_{\mathfrak{P}} )^* )^\vee 
\end{align*}
up to a finite abelian group whose size is independent of $M$.
Thus, we have
$$\mathrm{Sel}_0(\mathbb{Q}, (T \otimes S_{\mathfrak{P}} )^* )^\vee \simeq \bigoplus_i S_{\mathfrak{P}} $$
up to a finite abelian group whose size is independent of $M$ again, so we have
$$\mathrm{cork}_{S_{\mathfrak{P}}} \mathrm{Sel}_0(\mathbb{Q}, (T \otimes S_{\mathfrak{P}} )^* )  = \sum_i 1.$$
We now put $\mathfrak{P} =X\Lambda$ and assume Conjecture \ref{conj:p-adic-BSD-kato-zeta}.
Then by Proposition \ref{prop:kato-inequality-p-adic-bsd}, we have
\begin{align*}
\sum_i 1 & = \mathrm{cork}_{\mathbb{Z}_p} \mathrm{Sel}_0(\mathbb{Q}, E[p^\infty] )  \\
& = \mathrm{ord}_{X\Lambda} \left( \mathrm{char}_{\Lambda} \left( \mathrm{Sel}_0(\mathbb{Q}_\infty, E[p^\infty])^\vee \right) \right) \\
& = \sum_i m_i.
\end{align*}
Thus, Theorem \ref{thm:main-structure-fine-iwasawa} follows.
\begin{rem}
It seems that the same argument works for the classical Selmer groups with $p$-adic $L$-functions or signed $p$-adic $L$-functions when $E$ has good ordinary reduction or good supersingular reduction at $p$, respectively.
\end{rem}

\section{Numerical examples} \label{sec:examples}
We illustrate some numerical examples 
regarding Theorem \ref{thm:refined-bsd} and Theorem \ref{thm:kurihara-conjecture}
based on \cite{lmfdb-2021}.
We fix $p = 5$.
\begin{enumerate}
\item Let $E_{\textrm{389.a1}}$ be the elliptic curve defined by minimal Weierstrass equation $y^2 +y=x^3 +x^2 -2x $.
Then we have $\widedelta^{(1)}_{41 \cdot 61} (E_{\textrm{389.a1}}) \neq 0 \in \mathbb{F}_5$.
The following statements follow from the Kurihara number computation.
\begin{itemize}
\item $\mathrm{cork}_{\mathbb{Z}_5}\mathrm{Sel}(\mathbb{Q}, E_{\textrm{389.a1}}[5^\infty]) \leq 2$. Since it is the elliptic curve of rank 2 with the smallest conductor, the inequality becomes the equality.
\item All the Tamagawa factors of $E_{\textrm{389.a1}}$ are not divisible by 5.
\item $\sha(E_{\textrm{389.a1}}/\mathbb{Q})[5^\infty]$ is trivial.
\item There exists a canonical isomorphism 
$$\mathrm{Sel}(\mathbb{Q}, E_{\textrm{389.a1}}[5]) \simeq E_{\textrm{389.a1}}(\mathbb{Q}_{41}) \otimes \mathbb{Z}/5\mathbb{Z} \oplus E_{\textrm{389.a1}}(\mathbb{Q}_{61}) \otimes \mathbb{Z}/5\mathbb{Z} .$$
\end{itemize}
\item Let $E_{\textrm{5077.a1}}$  be the elliptic curve defined by minimal Weierstrass equation $y^2+y=x^3-7x+6$.
Then we have $\widedelta^{(1)}_{71 \cdot 401 \cdot 631}(E_{\textrm{5077.a1}}) \neq 0 \in \mathbb{F}_5$.
The following statements follow from the Kurihara number computation.
\begin{itemize}
\item $\mathrm{cork}_{\mathbb{Z}_5}\mathrm{Sel}(\mathbb{Q}, E_{\textrm{5077.a1}}[5^\infty]) \leq 3$. Since it is the elliptic curve of rank 3 with the smallest conductor, the inequality becomes the equality.
\item All the Tamagawa factors of $E_{\textrm{5077.a1}}$ are not divisible by 5.
\item $\sha(E_{\textrm{5077.a1}}/\mathbb{Q})[5^\infty]$ is trivial.
\item There exists a canonical isomorphism 
\begin{align*}
& \mathrm{Sel}(\mathbb{Q}, E_{\textrm{5077.a1}}[5])  \\
& \simeq E_{\textrm{5077.a1}}(\mathbb{Q}_{71}) \otimes \mathbb{Z}/5\mathbb{Z} \oplus E_{\textrm{5077.a1}}(\mathbb{Q}_{401}) \otimes \mathbb{Z}/5\mathbb{Z} \oplus E_{\textrm{5077.a1}}(\mathbb{Q}_{631}) \otimes \mathbb{Z}/5\mathbb{Z}.
\end{align*}
\end{itemize}
\item Let $E_{\textrm{1058.e1}}$ be the elliptic curve defined by minimal Weierstrass equation $y^2 +xy=x^3-x^2 -   332311x-73733731$.
Then we have 
\[
\xymatrix{
\widedelta_1 = \dfrac{L(E_{\textrm{1058.e1}},1)}{\Omega^+_{E_{\textrm{1058.e1}}}} = 25 , & \widedelta^{(1)}_{131 \cdot 151}(E_{\textrm{1058.e1}}) \neq 0 \in \mathbb{F}_5 .
}
\]
It is not difficult to observe that  all the Tamagawa factors of $E_{\textrm{1058.e1}}$ are not divisible by 5.
By Proposition \ref{prop:vanishing-delta-n}, we have $\mathrm{ord}(\kn^{(1)}) =2$ in this case.
The following statements follow from the above discussion.
\begin{itemize}
\item $\mathrm{cork}_{\mathbb{Z}_5}\mathrm{Sel}(\mathbb{Q}, E_{\textrm{1058.e1}}[5^\infty]) =0$, so $\mathrm{rk}_{\mathbb{Z}}E_{\textrm{1058.e1}}(\mathbb{Q}) = 0$.
\item $\sha(E_{\textrm{1058.e1}}/\mathbb{Q})[5^\infty] \simeq (\mathbb{Z}/5\mathbb{Z})^{\oplus 2}$.
\item There exists a canonical isomorphism 
$$\mathrm{Sel}(\mathbb{Q}, E_{\textrm{1058.e1}}[5]) \simeq E_{\textrm{1058.e1}}(\mathbb{Q}_{131}) \otimes \mathbb{Z}/5\mathbb{Z} \oplus E_{\textrm{1058.e1}}(\mathbb{Q}_{151}) \otimes \mathbb{Z}/5\mathbb{Z} .$$
\end{itemize}
\item 
Let $E_{\textrm{196794.bf1}}$ be the elliptic curve defined by minimal Weierstrass equation $y^2+xy=x^3-x^2-672055191x-6705708066275$.
Then we have
\[
\xymatrix@R=0em{
\widedelta_1 = \dfrac{L(E_{\textrm{196794.bf1}},1)}{\Omega^+_{E_{\textrm{196794.bf1}}}} = 0 , & \widedelta^{(3)}_{93251}(E_{\textrm{196794.bf1}}) = 5^2 \cdot u \in \mathbb{Z}/5^3 \mathbb{Z} , \\
\widedelta^{(1)}_{11 \cdot 13 \cdot 131}(E_{\textrm{196794.bf1}}) \neq 0 \in \mathbb{F}_5 
}
\]
where $u \in (\mathbb{Z}/5^3 \mathbb{Z})^\times$.
The following statements follow from the above computation.
\begin{itemize}
\item
$\mathrm{cork}_{\mathbb{Z}_5}\mathrm{Sel}(\mathbb{Q}, E_{\textrm{196794.bf1}}[5^\infty]) =1$; indeed, $\mathrm{rk}_{\mathbb{Z}}E_{\textrm{196794.bf1}}(\mathbb{Q}) = 1$.
\item All the Tamagawa factors of $E_{\textrm{196794.bf1}}$ are not divisible by 5.
\item $\sha(E_{\textrm{196794.bf1}}/\mathbb{Q})[5^\infty] \simeq (\mathbb{Z}/5\mathbb{Z})^{\oplus 2}$.
\item There exists a canonical isomorphism 
\begin{align*}
& \mathrm{Sel}(\mathbb{Q}, E_{\textrm{196794.bf1}}[5])  \\
& \simeq E_{\textrm{196794.bf1}}(\mathbb{Q}_{11}) \otimes \mathbb{Z}/5\mathbb{Z} \oplus E_{\textrm{196794.bf1}}(\mathbb{Q}_{13}) \otimes \mathbb{Z}/5\mathbb{Z} \oplus E_{\textrm{196794.bf1}}(\mathbb{Q}_{131}) \otimes \mathbb{Z}/5\mathbb{Z}.
\end{align*}
\end{itemize}
\item \label{exam:625}
Let $E_{\textrm{423801.ci1}}$ be the elliptic curve defined by minimal Weierstrass equation $y^2+y=x^3-17034726259173x-27061436852750306309$.
Then we have
\[
\xymatrix@R=0em{
\widedelta_1 = \dfrac{L(E_{\textrm{423801.ci1}},1)}{\Omega^+_{E_{\textrm{423801.ci1}}}} = 10000 = 2^4 \cdot 5^4, &
\widedelta^{(1)}_{11 \cdot 41}(E_{\textrm{423801.ci1}}) \neq 0 \in \mathbb{F}_5 .
}
\]
The following statements follow from the above computation.
\begin{itemize}
\item
$\mathrm{cork}_{\mathbb{Z}_5}\mathrm{Sel}(\mathbb{Q}, E_{\textrm{423801.ci1}}[5^\infty]) =0$; indeed, $\mathrm{rk}_{\mathbb{Z}}E_{\textrm{423801.ci1}}(\mathbb{Q}) = 0$.
\item All the Tamagawa factors of $E_{\textrm{423801.ci1}}$ are not divisible by 5.
\item $\sha(E_{\textrm{423801.ci1}}/\mathbb{Q})[5^\infty] \simeq (\mathbb{Z}/25\mathbb{Z})^{\oplus 2} (\not\simeq (\mathbb{Z}/5\mathbb{Z})^{\oplus 4} )$.
\item There exists a canonical isomorphism 
\begin{align*}
& \mathrm{Sel}(\mathbb{Q}, E_{\textrm{423801.ci1}}[5])  \\
& \simeq E_{\textrm{423801.ci1}}(\mathbb{Q}_{11}) \otimes \mathbb{Z}/5\mathbb{Z} \oplus E_{\textrm{423801.ci1}}(\mathbb{Q}_{41}) \otimes \mathbb{Z}/5\mathbb{Z}.
\end{align*}
\end{itemize}
It is remarkable that $\widedelta^{(1)}_{11 \cdot 41}(E_{\textrm{423801.ci1}}) \neq 0$ implies $\sha(E_{\textrm{423801.ci1}}/\mathbb{Q})[5^\infty] \simeq  (\mathbb{Z}/25\mathbb{Z})^{\oplus 2}$ and this structural information is not observed in Birch and Swinnerton-Dyer conjecture.
We took this example from \cite[Ex. 4]{kurihara-analytic-quantities}, and the question was raised by D. Prasad--Shekhar \cite[Ex. 2]{prasad-shekhar}.
\end{enumerate}
See also \cite[$\S$3.8]{grigorov-thesis}, \cite[$\S$10.15]{kurihara-munster}, \cite[$\S$5.3]{kurihara-iwasawa-2012}, \cite[$\S$8]{kks}, \cite[Appendix A]{kim-survey}, and \cite{kurihara-analytic-quantities} for various examples on the computations of $\widedelta_n$'s.
Both the Iwasawa main conjecture and the low analytic rank assumption in Theorem \ref{thm:main-intro} are \emph{not} essential \emph{at all} when we compute numerical examples in practice.

\section*{Acknowledgement}
As will be clear to the reader, this article is strongly inspired from the philosophy of refined Iwasawa theory of
Kurihara \cite{kurihara-fitting, kurihara-documenta, kurihara-plms, kurihara-munster, kurihara-iwasawa-2012} 
and the ideas in the work of Mazur--Rubin on Kolyvagin systems \cite{mazur-rubin-book}.
The author would like to thank Masato Kurihara and Karl Rubin for extensive and very helpful discussions.
Especially, Masato Kurihara pointed out a serious gap in an earlier version.
Indeed, the initial motivation of this work is to develop a precise comparison between 
\cite[Thm. B]{kurihara-munster} and \cite[Thm. 5.2.12]{mazur-rubin-book}.
The author are also benefited from the discussions with K\^{a}z{\i}m B\"{u}y\"{u}kboduk, Francesc Castella, Tatsuya Ohshita, and Chris Wuthrich.

We are greatly benefited from the discussion with Alberto Angurel Andres.
The discussion with him led to a significant simplification and clarification of the proof of the main result.
The revision was essentially made while the author worked at Ewha Institute of Mathematical Sciences.
The author gratefully acknowledges their support and warm hospitality.

We would also like to heartily thank for the anonymous referee for a very careful reading of the paper and pointing out many inaccuracies in an earlier version.
\bibliographystyle{amsalpha}
\bibliography{library}

\providecommand{\bysame}{\leavevmode\hbox to3em{\hrulefill}\thinspace}
\providecommand{\MR}{\relax\ifhmode\unskip\space\fi MR }
% \MRhref is called by the amsart/book/proc definition of \MR.
\providecommand{\MRhref}[2]{%
  \href{http://www.ams.org/mathscinet-getitem?mr=#1}{#2}
}
\providecommand{\href}[2]{#2}
\begin{thebibliography}{{LMF}21}

\bibitem[BBV16]{berti-bertolini-venerucci}
A.~Berti, M.~Bertolini, and R.~Venerucci, \emph{Congruences between modular
  forms and the {B}irch and {S}winnerton-{D}yer conjecture}, Elliptic Curves,
  Modular Forms and Iwasawa Theory (David Loeffler and Sarah~Livia Zerbes,
  eds.), Springer Proc. Math. Stat., vol. 188, Springer, 2016, In Honour of
  John H. Coates' 70th Birthday, Cambridge, UK, March 2015, pp.~1--31.

\bibitem[BCDT01]{bcdt}
C.~Breuil, B.~Conrad, F.~Diamond, and R.~Taylor, \emph{On the modularity of
  elliptic curves over {$\mathbb{Q}$}: wild 3-adic exercises}, J. Amer. Math.
  Soc. \textbf{14} (2001), no.~4, 843--939.

\bibitem[BCGS]{burungale-castella-grossi-skinner-indivisibility}
A.~A. Burungale, F.~Castella, G.~Grossi, and C.~Skinner, \emph{Non-vanishing of
  {K}olyvagin systems and {I}wasawa theory}, preprint,
  \href{https://arxiv.org/abs/2312.09301}{arXiv:2312.09301}.

\bibitem[BCS25]{burungale-castella-skinner-gl2}
A.~A. Burungale, F.~Castella, and C.~Skinner, \emph{Base change and {I}wasawa
  main conjectures for {$\mathrm{GL}_2$}}, Int. Math. Res. Not. IMRN
  \textbf{2025} (2025), no.~8, rnaf082.

\bibitem[BDV22]{bertolini-darmon-venerucci}
M.~Bertolini, H.~Darmon, and R.~Venerucci, \emph{Heegner points and
  {B}eilinson--{K}ato elements: a conjecture of {P}errin-{R}iou}, Adv. Math.
  \textbf{398} (2022), 108172.

\bibitem[BKS24]{burns-kurihara-sano}
D.~Burns, M.~Kurihara, and T.~Sano, \emph{On derivatives of {K}ato's {E}uler
  system for elliptic curves}, J. Math. Soc. Japan \textbf{76} (2024), no.~3,
  855--919.

\bibitem[BMS16]{balakrishnan-muller-stein}
J.~Balakrishnan, S.~M{\"{u}}ller, and W.~Stein, \emph{A $p$-adic analogue of
  the conjecture of {B}irch and {S}winnerton-{D}yer for modular abelian
  varieties}, Math. Comp. \textbf{85} (2016), 983--1016.

\bibitem[BPS]{kazim-pollack-sasaki}
K.~B{\"{u}}y{\"{u}}kboduk, R.~Pollack, and S.~Sasaki, \emph{{$p$}-adic
  {G}ross--{Z}agier formula at critical slope and a conjecture of
  {P}errin-{R}iou}, preprint,
  \href{https://arxiv.org/abs/1811.08216}{arXiv:1811.08216}.

\bibitem[BST21]{burungale-skinner-tian-survey}
A.~A. Burungale, C.~Skinner, and Y.~Tian, \emph{The {B}irch and
  {S}winnerton-{D}yer conjecture: a brief survey}, Nine mathematical
  challenges--an elucidation (Providence, RI) (A.~Kechris, N.~Makarov,
  D.~Ramakrishnan, and X.~Zhu, eds.), Proc. Sympos. Pure Math., vol. 104, Amer.
  Math. Soc., 2021, pp.~11--29.

\bibitem[BSTW]{burungale-skinner-tian-wan}
A.~A. Burungale, C.~Skinner, Y.~Tian, and X.~Wan, \emph{Zeta elements for
  elliptic curves and application}, preprint,
  \href{https://arxiv.org/abs/2409.01350}{arXiv:2409.01350}.

\bibitem[BT20]{burungale-tian-p-converse}
A.~A. Burungale and Y.~Tian, \emph{{$p$}-converse to a theorem of {G}ross--{Z}agier, {K}olyvagin
  and {R}ubin}, Invent. Math. \textbf{220} (2020), no.~1, 211--253.

\bibitem[BT22]{burungale-tian-jnt}
\bysame, \emph{The even parity {G}oldfeld conjecture: congruent number elliptic
  curves}, J. Number Theory \textbf{230} (2022), 161--195.

\bibitem[BT]{burungale-tian-rank-zero-p-converse}
\bysame, \emph{A rank zero {$p$}-converse to a theorem of
  {G}ross--{Z}agier, {K}olyvagin and {R}ubin}, preprint.

\bibitem[B{\"{u}}y11]{kazim-Lambda-adic}
K.~B{\"{u}}y{\"{u}}kboduk, \emph{{$\Lambda$}-adic {K}olyvagin systems}, Int.
  Math. Res. Not. IMRN (2011), no.~14, 3141--3206.

\bibitem[Cai07]{bryden-cais-thesis}
B.~Cais, \emph{Correspondences, integral structures, and compatibilities in
  {$p$}-adic cohomology}, Ph.D. thesis, The University of Michigan, 2007, under
  the supervision of Brian Conrad.

\bibitem[Cas18]{castella-cambridge}
F.~Castella, \emph{On the $p$-part of the {B}irch--{S}winnerton-{D}yer formula
  for multiplicative primes}, Camb. J. Math. \textbf{6} (2018), no.~1, 1--23.

\bibitem[CGLS22]{castella-grossi-lee-skinner}
F.~Castella, G.~Grossi, J.~Lee, and C.~Skinner, \emph{On the anticyclotomic
  {I}wasawa theory of rational elliptic curves at {E}isenstein primes}, Invent.
  Math. \textbf{227} (2022), 517--580.

\bibitem[CW77]{coates-wiles-bsd-1977}
J.~Coates and A.~Wiles, \emph{On the conjecture of {B}irch and
  {S}winnerton-{D}yer}, Invent. Math. \textbf{39} (1977), no.~3, 223--252.

\bibitem[DW08]{david-weston}
C.~David and T.~Weston, \emph{Local torsion of elliptic curves and the
  deformation theory of {G}alois representations}, Math. Res. Lett. \textbf{15}
  (2008), no.~3, 599--611.

\bibitem[Fla90]{flach-cassels-tate}
M.~Flach, \emph{A generalisation of the {C}assels--{T}ate pairing}, J. Reine
  Angew. Math. \textbf{412} (1990), 113--127.

\bibitem[Gha05]{ghate-2005}
E.~Ghate, \emph{Ordinary forms and their local {G}alois representations},
  Algebra and Number Theory (R.~Tandon, ed.), Hindustan Book Agency, 2005,
  pp.~226--242.

\bibitem[Gre99]{greenberg-lnm}
R.~Greenberg, \emph{Iwasawa theory for elliptic curves}, Arithmetic theory of
  elliptic curves ({C}etraro, 1997) (Berlin) (C.~Viola, ed.), Lecture Notes in
  Math., vol. 1716, Centro Internazionale Matematico Estivo (C.I.M.E.),
  Florence, Springer-Verlag, 1999, Lectures from the 3rd {C.I.M.E.}~{S}ession
  held in {C}etraro, {J}uly 12-–19, 1997, pp.~51–--144.

\bibitem[Gri05]{grigorov-thesis}
G.~T. Grigorov, \emph{Kato's {E}uler {S}ystems and the {M}ain {C}onjecture},
  Ph.D. thesis, Harvard, May 2005, under the supervision of R. Taylor.

\bibitem[Gro90]{gross-tameness}
B.~Gross, \emph{A tameness criterion for {G}alois representations associated to
  modular forms (mod {$p$})}, Duke Math. J. \textbf{61} (1990), no.~2,
  445--517.

\bibitem[GZ86]{gross-zagier-original}
B.~Gross and D.~Zagier, \emph{Heegner points and derivatives of {$L$}-series},
  Invent. Math. \textbf{84} (1986), no.~2, 225--320.

\bibitem[How04]{howard-kolyvagin}
B.~Howard, \emph{The {H}eegner point {K}olyvagin system}, Compos. Math.
  \textbf{140} (2004), no.~6, 1439--1472.

\bibitem[JSW17]{jetchev-skinner-wan}
D.~Jetchev, C.~Skinner, and X.~Wan, \emph{The {B}irch and {S}winnerton-{D}yer
  formula for elliptic curves of analytic rank one}, Camb. J. Math. \textbf{5}
  (2017), no.~3, 369--434.

\bibitem[Kat99]{kato-euler-iwasawa-selmer}
K.~Kato, \emph{Euler systems, {I}wasawa theory, and {S}elmer groups}, Kodai
  Math. J. \textbf{22} (1999), no.~3, 313--372.

\bibitem[Kat04]{kato-euler-systems}
\bysame, \emph{{$p$}-adic {H}odge theory and values of zeta functions of
  modular forms}, Ast\'{e}risque \textbf{295} (2004), 117--290.

\bibitem[Kat17]{kato-survey}
\bysame, \emph{Some topics in recent developments in number theory
  ({J}apanese)}, S{\={u}}gaku \textbf{69} (2017), no.~4, 413--428.

\bibitem[Kat21]{kataoka-thesis}
T.~Kataoka, \emph{Equivariant {I}wasawa theory for elliptic curves}, Math. Z.
  \textbf{298} (2021), 1653--1725.

\bibitem[Kim21]{kim-survey}
C.-H. Kim, \emph{Indivisibility of {K}ato's {E}uler systems and {K}urihara
  numbers}, RIMS K{\^{o}}ky{\^{u}}roku Bessatsu \textbf{B86} (2021), 63--86,
  with an appendix by A. Ghitza.

\bibitem[Kim23]{kim-p-converse}
\bysame, \emph{On the soft {$p$}-converse to a theorem of {G}ross--{Z}agier and
  {K}olyvagin}, Math. Ann. \textbf{387} (2023), 1961--1968.

\bibitem[Kim24]{kim-gross-zagier}
\bysame, \emph{A higher {G}ross--{Z}agier formula and the structure of {S}elmer
  groups}, Trans. Amer. Math. Soc. \textbf{377} (2024), no.~5, 3691--3725.

\bibitem[KKS20]{kks}
C.-H. Kim, M.~Kim, and H.-S. Sun, \emph{On the indivisibility of derived
  {K}ato's {E}uler systems and the main conjecture for modular forms}, Selecta
  Math. (N.S.) \textbf{26} (2020), no.~31, 47 pages.

\bibitem[KN20]{kim-nakamura}
C.-H. Kim and K.~Nakamura, \emph{Remarks on {K}ato's {E}uler systems for
  elliptic curves with additive reduction}, J. Number Theory \textbf{210}
  (2020), 249--279.

\bibitem[Kob06]{kobayashi-elementary}
S.~Kobayashi, \emph{An elementary proof of the {M}azur--{T}ate--{T}eitelbaum
  conjecture for elliptic curves}, Doc. Math. (2006), 567--575, Extra Volume:
  John H. Coates' Sixtieth Birthday.

\bibitem[Kob13]{kobayashi-gross-zagier}
\bysame, \emph{The {$p$}-adic {G}ross--{Z}agier formula for elliptic curves at
  supersingular primes}, Invent. Math. \textbf{191} (2013), no.~3, 527--629.

\bibitem[Kol90]{kolyvagin-euler-systems}
V.~Kolyvagin, \emph{Euler systems}, The {G}rothendieck {F}estschrift {V}olume
  {II} (P.~Cartier, L.~Illusie, N.~M. Katz, G.~Laumon, Y.~Manin, and K.~A.
  Ribet, eds.), Progr. Math., vol.~87, Birkh\"{a}user {B}oston, 1990,
  pp.~435--483.

\bibitem[Kol91]{kolyvagin-selmer}
\bysame, \emph{On the structure of {S}elmer groups}, Math. Ann. \textbf{291}
  (1991), no.~2, 253--259.

\bibitem[KP]{kosters-pannekoek}
M.~Kosters and R.~Pannekoek, \emph{On the structure of elliptic curves over
  finite extensions of {$\mathbb{Q}_p$} with additive reduction}, preprint,
  \href{https://arxiv.org/abs/1703.07888}{arXiv:1703.07888}.

\bibitem[Kur03a]{kurihara-fitting}
M.~Kurihara, \emph{Iwasawa theory and {F}itting ideals}, J. Reine Angew. Math.
  \textbf{561} (2003), 39--86.

\bibitem[Kur03b]{kurihara-documenta}
\bysame, \emph{On the structure of ideal class groups of {CM}-fields}, Doc.
  Math. (2003), 539--563, Extra {V}olume: {K}azuya {K}ato's {F}iftieth
  {B}irthday.

\bibitem[Kur12]{kurihara-plms}
\bysame, \emph{Refined {I}wasawa theory and {K}olyvagin systems of {G}auss sum
  type}, Proc. Lond. Math. Soc. (3) \textbf{104} (2012), no.~4, 728--769.

\bibitem[Kur14a]{kurihara-munster}
\bysame, \emph{Refined {I}wasawa theory for {$p$}-adic representations and the
  structure of {S}elmer groups}, M{\"{u}}nster J. of Math. \textbf{7} (2014),
  no.~1, 149--223.

\bibitem[Kur14b]{kurihara-iwasawa-2012}
\bysame, \emph{The structure of {S}elmer groups of elliptic curves and modular
  symbols}, Iwasawa Theory 2012: State of the Art and Recent Advances
  (T.~Bouganis and O.~Venjakob, eds.), Contrib. Math. Comput. Sci., vol.~7,
  Springer, 2014, pp.~317--356.

\bibitem[Kur24]{kurihara-analytic-quantities}
\bysame, \emph{Some anayltic quantities yielding arithemtic information about
  elliptic curves}, Arithmetic Geometry---{P}roceedings of the {I}nternational
  {C}olloquium, {M}umbai, 2020, Tata Inst. Fundam. Res. Stud. Math., vol.~24,
  Hindustan Book Agency, New Delhi, 2024, pp.~345--384.

\bibitem[{LMF}21]{lmfdb-2021}
The {LMFDB Collaboration}, \emph{The {L}-functions and modular forms database},
  \url{http://www.lmfdb.org}, 2021, [Online; accessed 18 March 2021].

\bibitem[Lor11]{lorenzini-torsion-tamagawa}
D.~Lorenzini, \emph{Torsion and {T}amagawa numbers}, Ann. Inst. Fourier
  (Grenoble) \textbf{61} (2011), no.~5, 1995--2037.

\bibitem[Maz78]{mazur-rational-isogenies}
B.~Mazur, \emph{Rational isogenies of prime degree}, Invent. Math. \textbf{44}
  (1978), 129--162, with an appendix by D. Goldfeld.

\bibitem[MR04]{mazur-rubin-book}
B.~Mazur and K.~Rubin, \emph{{K}olyvagin {S}ystems}, Mem. Amer. Math. Soc.,
  vol. 168, American {M}athematical {S}ociety, March 2004.

\bibitem[MR05]{mazur-rubin-organizing}
\bysame, \emph{Organizing the arithmetic of elliptic curves}, Adv. Math.
  \textbf{198} (2005), 504--546.

\bibitem[MT87]{mazur-tate}
B.~Mazur and J.~Tate, \emph{Refined conjectures of the ``{B}irch and
  {S}winnerton-{D}yer type"}, Duke Math. J. \textbf{54} (1987), no.~2,
  711--750.

\bibitem[MTT86]{mtt}
B.~Mazur, J.~Tate, and J.~Teitelbaum, \emph{On {$p$}-adic analogues of the
  conjectures of {B}irch and {S}winnerton-{D}yer}, Invent. Math. \textbf{84}
  (1986), no.~1, 1--48.

\bibitem[Ota18]{ota-thesis}
K.~Ota, \emph{Kato's {E}uler system and the {M}azur--{T}ate refined conjecture
  of {BSD} type}, Amer. J. Math. \textbf{140} (2018), no.~2, 495--542.

\bibitem[Ots09]{otsuki}
R.~Otsuki, \emph{Construction of a homomorphism concerning {E}uler systems for
  an elliptic curve}, Tokyo J. Math. \textbf{32} (2009), no.~1, 253--278.

\bibitem[PR93]{perrin-riou-rational-pts}
B.~Perrin-Riou, \emph{Fonctions {$L$} {$p$}-adiques d'une courbe elliptique et
  points rationnels}, Ann. Inst. Fourier (Grenoble) \textbf{43} (1993), no.~4,
  945--995.

\bibitem[PR98]{perrin-riou-euler-systems}
\bysame, \emph{Syst\`{e}mes d'{E}uler {$p$}-adiques et th\'{e}orie
  d'{I}wasawa}, Ann. Inst. Fourier (Grenoble) \textbf{48} (1998), no.~5,
  1231--1307.

\bibitem[PS21]{prasad-shekhar}
D.~Prasad and S.~Shekhar, \emph{Relating the {T}ate--{S}hafarevich group of an
  elliptic curve with the class group}, Pacific J. Math. \textbf{312} (2021),
  no.~1, 203--218.

\bibitem[Roh84]{rohrlich-nonvanishing}
D.~Rohrlich, \emph{On {$L$}-functions of elliptic curves and cyclotomic
  towers}, Invent. Math. \textbf{75} (1984), 409--423.

\bibitem[Roh88]{rohrlich-nonvanishing-2}
\bysame, \emph{{$L$}-functions and division towers}, Math. Ann. \textbf{281}
  (1988), 611--632.

\bibitem[Rub87]{rubin-tate-shafarevich}
K.~Rubin, \emph{Tate--{S}hafarevich groups and {$L$}-functions of elliptic
  curves with complex multiplication}, Invent. Math. \textbf{89} (1987),
  527--559.

\bibitem[Rub91]{rubin-main-conj-cm}
\bysame, \emph{The ``main conjectures" of {I}wasawa theory for imaginary
  quadratic fields}, Invent. Math. \textbf{103} (1991), no.~1, 25--68.

\bibitem[Rub97]{rubin-modularity-mod-5}
\bysame, \emph{Modularity of mod 5 representations}, Modular {F}orms and
  {F}ermat's {L}ast {T}heorem (G.~Cornell, J.~Silverman, and G.~Stevens, eds.),
  Springer, 1997, pp.~463--474.

\bibitem[Rub00]{rubin-book}
\bysame, \emph{Euler {S}ystems}, Ann. of Math. Stud., vol. 147, Princeton
  {U}niversity {P}ress, 2000.

\bibitem[Sak22]{sakamoto-p-selmer}
R.~Sakamoto, \emph{$p$-{S}elmer groups and modular symbols}, Doc. Math.
  \textbf{27} (2022), 1891--1922.

\bibitem[Sak24]{sakamoto-p-3}
\bysame, \emph{The theory of {K}olyvagin systems for {$p=3$}}, J. Th{\'{e}}or.
  Nombres Bordeaux \textbf{36} (2024), no.~3, 919--946.

\bibitem[Sch85]{schneider-height-2}
P.~Schneider, \emph{{$p$}-adic height pairings. {II}}, Invent. Math.
  \textbf{79} (1985), 329--374.

\bibitem[Sil99]{silverman2}
J.~Silverman, \emph{Advanced {T}opics in the {A}rithmetic of {E}lliptic
  {C}urves}, corrected second printing ed., Grad. Texts in Math., vol. 151,
  Springer-{V}erlag, 1999.

\bibitem[Sil09]{silverman}
\bysame, \emph{The {A}rithmetic of {E}lliptic {C}urves}, 2nd ed., Grad. Texts
  in Math., vol. 106, Springer-{V}erlag, 2009.

\bibitem[Ski16]{skinner-pacific}
C.~Skinner, \emph{Multiplicative reduction and the cyclotomic main conjecture
  for {$\mathrm{GL}_2$}}, Pacific J. Math. \textbf{283} (2016), no.~1,
  171--200.

\bibitem[Ski20]{skinner-converse}
\bysame, \emph{A converse to a theorem of {G}ross, {Z}agier, and {K}olyvagin},
  Ann. of Math. \textbf{191} (2020), no.~2, 329--354.

\bibitem[SU14]{skinner-urban}
C.~Skinner and E.~Urban, \emph{The {I}wasawa main conjectures for
  {$\mathrm{GL}_2$}}, Invent. Math. \textbf{195} (2014), no.~1, 1--277.

\bibitem[SW13]{stein-wuthrich}
W.~Stein and C.~Wuthrich, \emph{Algorithms for the arithmetic of elliptic
  curves using {I}wasawa theory}, Math. Comp. \textbf{82} (2013), no.~283,
  1757--1792.

\bibitem[Tat75]{tate-algorithm}
J.~Tate, \emph{Algorithm for determining the type of a singular fiber in an
  elliptic pencil}, Modular {F}unctions of {O}ne {V}ariable {IV}: Proceedings
  of the International Summer School, University of Antwerp, RUCA, July
  17--August 3, 1972 (B.~J. Birch and W.~Kuyk, eds.), Lecture Notes in Math.,
  vol. 476, Springer, 1975, pp.~33--52.

\bibitem[Wan15]{wan_hilbert}
X.~Wan, \emph{The {I}wasawa main conjecture for {H}ilbert modular forms}, Forum
  Math. Sigma \textbf{3} (2015), e18 (95 pages).

\bibitem[Wan20]{wan-rankin-selberg}
\bysame, \emph{Iwasawa main conjecture for {R}ankin--{S}elberg {$p$}-adic
  {$L$}-functions}, Algebra Number Theory \textbf{14} (2020), no.~2, 383--483.

\bibitem[Zha14a]{wei-zhang-cdm}
W.~Zhang, \emph{The {B}irch--{S}winnerton-{D}yer conjecture and {H}eegner
  points: a survey}, Current {D}evelopments in {M}athematics, vol. 2013, 2014,
  pp.~169--203.

\bibitem[Zha14b]{wei-zhang-mazur-tate}
\bysame, \emph{Selmer groups and the indivisibility of {H}eegner points}, Camb.
  J. Math. \textbf{2} (2014), no.~2, 191--253.

\end{thebibliography}

\end{document}